\documentclass[11pt]{amsart}%
\usepackage{palatino, mathpazo}
\usepackage{amssymb}
\usepackage{amsfonts}
\usepackage{graphicx}
\usepackage{amsmath}
\usepackage[shortlabels,inline]{enumitem}
\usepackage{hyperref}
\usepackage{cite}
\usepackage{caption}
\usepackage{comment}%
\providecommand{\U}[1]{\protect\rule{.1in}{.1in}}
\hypersetup{
	colorlinks=true,
	linkcolor=blue,
	filecolor=black,
	urlcolor=black,
	citecolor=red,
}
\newtheorem{theorem}{Theorem}[section]

\newtheorem{conjecture}{Conjecture}
\newtheorem{corollary}[theorem]{Corollary}

\newtheorem{example}{Example}
\newtheorem{lemma}[theorem]{Lemma}

\newtheorem{proposition}[theorem]{Proposition}
\newtheorem{remark}[theorem]{Remark}

\numberwithin{equation}{section}
\makeatletter
\def\fps@figure{htbp}
\def\fnum@figure{\textbf{Fig. \thefigure}}
\makeatother
\begin{document}
\title[Componentwise Geometry and Monodromy]{Componentwise Geometry and Monodromy of Generalized Lam\'e Equations}
\author{Ting-Jung Kuo}
\address[Ting-Jung Kuo]{Department of mathematics, National Taiwan Normal University,
Taipei, 11677, Taiwan \& National Center for Theoretical Sciences, No.1 Sec.4
Roosevelt Rd., National Taiwan University, Taipei 10617, Taiwan.}
\email{tjkuo1215@ntnu.edu.tw, tjkuo1215@gmail.com}
\author{Xuanpu Liang}
\address[Xuanpu Liang]{Laboratory of Mathematics and Complex Systems (Ministry of
Education), School of Mathematical Sciences, Beijing Normal University,
Beijing 100875, China.}
\email{xuanpuliang@mail.bnu.edu.cn}

\begin{abstract}
We develop a componentwise geometric and monodromy theory for the
one-support generalized Lam\'e equation on an elliptic curve, with
singularities at \(0\) and \(\pm p\). Its log-free curve decomposes
canonically into irreducible even and non-even components, the former
being governed by elliptic Painlev\'e~VI.

For the non-even component, we construct the hyperelliptic spectral
curve, Baker--Akhiezer functions, and addition map, and prove that
\[
\deg \sigma_{n,p}^{(1)}=n(n+1).
\]
After quotienting by the involution \(T\mapsto -T\), we identify the
non-even spectral curve with the classical Lam\'e spectral curve of
weight \(n\), compatibly with the addition map and the rational function
\(\kappa\). This identification is
realized by
\[
\widetilde B=T^2-n(n+1)\wp(p),
\]
and associates every non-even generalized Lam\'e equation with a unique
classical Lam\'e equation on the same elliptic curve having equivalent
period monodromy.

Fixing \(\widetilde B\) yields an isomonodromic deformation with
\(\tau\) fixed. Together with the Painlev\'e-VI deformation on the even
component, it gives a componentwise interpretation of the collision
\(p\to0\), and yields a finite descent on the admissible
completely reducible locus. The classical spectral, finite-gap,
finite-monodromy, and curvature theories consequently transfer to the
non-even component.

Finally, we place the one-support family within the broader symmetric
setting $\left(n_0,n_1,n_2,n_3,\frac12,\frac12\right).$
The one-support family forms an affine genus-zero hierarchy. By
contrast, for $\left(1,1,0,0,\frac12,\frac12\right),$
the non-even normalization is generically elliptic and degenerates to
genus zero on the discriminant locus, while the arithmetic genus of the
full compactified log-free curve remains equal to two. This first genus
jump delineates the boundary of the present affine theory and motivates
a genus-dependent componentwise geometry beyond it.
\end{abstract}

\maketitle
\tableofcontents

\vspace{1em}

\noindent\textbf{Notation and conventions.}

Let
\[
\Lambda_{\tau}=\mathbb{Z}\oplus\mathbb{Z}\tau, \qquad E_{\tau}=\mathbb{C}%
/\Lambda_{\tau}, \qquad\tau\in\mathbb{H}.
\]
Write
\[
\omega_{0}=0,\qquad\omega_{1}=1,\qquad\omega_{2}=\tau,\qquad\omega_{3}%
=1+\tau,
\]
so that
\[
E_{\tau}[2] = \left\{  \frac{\omega_{k}}{2}:0\leqslant k\leqslant3 \right\}
\pmod{\Lambda_{\tau}}.
\]
We write
\[
\wp(z)=\wp(z;\tau),\qquad\zeta(z)=\zeta(z;\tau),\qquad\sigma(z)=\sigma
(z;\tau)
\]
when $\tau$ is fixed, and set
\[
e_{j}=\wp\left( \frac{\omega_{j}}{2}\right) , \qquad\eta_{j}=2\zeta\left(
\frac{\omega_{j}}{2}\right) , \qquad j=1,2,3.
\]

\section{Introduction}\label{Introduction}

\subsection{The symmetric deformation and the component problem}
The Darboux--Treibich--Verdier potential is
\begin{equation}
q_{\mathrm{DTV}}(z)=\sum_{k=0}^{3}n_k(n_k+1)
  \wp\left(z+\frac{\omega_k}{2}\right),
\qquad n_k\in\mathbb{Z}_{\geqslant 0}.
\end{equation}
Its one-support specialization is the classical Lam\'e potential;
its spectral geometry is part of the theory of elliptic finite-gap
equations and singular curvature equations
\cite{BBEIM,CLW,GW1,LW,Takemura1,TV}.

Within the global theory of generalized Lam\'e curves of
Chou--Wang--Wu (C--W--W) \cite{Chou-Wang-Wu-II}, we consider the symmetric data
\begin{equation}
\mathbf n=(n_0,n_1,n_2,n_3,\tfrac12,\tfrac12),
\qquad
\mathbf p=(0,\tfrac{\omega_1}{2},\tfrac{\omega_2}{2},
  \tfrac{\omega_3}{2},p,-p),
\end{equation}
where $p\in E_\tau\setminus E_\tau[2]$.
Adding one apparent pair $\pm p$ preserves the symmetry of the DTV
configuration. This family has a distinguished irreducible even
log-free component: its parameter curve is rational, and its
isomonodromic deformation is governed by elliptic Painlev\'e VI
\cite{Chen-Kuo-Lin-Hamiltonian,Chen-Kuo-Lin-Lame I,Chen-Kuo-Lin-Lame II,Chen-Kuo-Lin-Lame III}. The remaining problem is to determine the
non-even components and their monodromy.

The non-even parameter geometry varies with the integral weights.
One positive weight gives a rational component; the first
two-support example is generically elliptic. This genus variation
organizes the componentwise problem. Throughout, the genus of a
log-free component means the geometric genus of its normalization,
not the genus of its spectral cover. The global arithmetic genus
does not determine these genera, since it also contains singularity
and intersection defects.

We develop the spectral and monodromy theory of the one-support
genus-zero layer
\begin{equation}
\mathbf n=(n,0,0,0,\tfrac12,\tfrac12),\qquad n\in\mathbb Z_{>0},
\end{equation}
up to translation by a half-period. Consider
\begin{equation}
w''(z)=q_n(z;p,T_1,T_2,B)w(z),\label{GLEn}
\end{equation}
with
\begin{equation}
\begin{aligned}
q_n(z;p,T_1,T_2,B)
={}&n(n+1)\wp(z)+\frac34\bigl(\wp(z+p)+\wp(z-p)\bigr)\\
&+T_1\bigl(\zeta(z+p)-\zeta(z)\bigr)
 +T_2\bigl(\zeta(z-p)-\zeta(z)\bigr)+B.\label{potential T_1, T_2}
\end{aligned}
\end{equation}
Here $T_1,T_2,B\in\mathbb C$, and $B$ is determined by the log-free
conditions.

\begin{theorem}\label{Main Thm log free decomposition}
The log-free variety of \eqref{GLEn} decomposes as
\begin{equation}
V_{n,p}(\tau)=V^{(0)}_{n,p}(\tau)\cup V^{(1)}_{n,p}(\tau).\label{decomposition}
\end{equation}
where the two irreducible components are
\begin{equation}
V^{(0)}_{n,p}(\tau)
=\{(T_1,T_2)\in V_{n,p}(\tau)\mid T_1+T_2=0\},\label{apparent space, even, A}
\end{equation}
\begin{equation}
V^{(1)}_{n,p}(\tau)
=\left\{(T_1,T_2)\in V_{n,p}(\tau)\mid
 T_1-T_2+\frac{\wp''(p)}{2\wp'(p)}=0\right\}.\label{apparent space, noneven, T}
\end{equation}
They are respectively even and non-even, and meet at
\begin{equation}
\mathbf T_*=\left(-\frac{\wp''(p)}{4\wp'(p)},
                         \frac{\wp''(p)}{4\wp'(p)}\right).
\end{equation}
\end{theorem}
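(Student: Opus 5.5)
The plan is to write down the local log-free (apparent singularity) conditions at $z=\pm p$ explicitly, eliminate $B$, and observe that the resulting equation in $(T_1,T_2)$ factors into two linear factors. Near $z=\pm p$ the potential \eqref{potential T_1, T_2} has a double pole with coefficient $\tfrac34$, so the local exponents are $-\tfrac12,\tfrac32$. Their difference is $2$, so each of $\pm p$ imposes exactly one algebraic condition. For a local equation $w''=(\tfrac34 t^{-2}+a t^{-1}+b+O(t))w$ I will run the Frobenius recursion with $w=t^{-1/2}\sum d_k t^k$, $d_0=1$:
\begin{itemize}
\item at $k=1$ the indicial factor equals $-1$, which gives $d_1=-a$;
\item at $k=2$ the indicial factor vanishes, so the recursion reduces to the condition $a d_1+b=0$, i.e.\ $b=a^2$.
\end{itemize}
This is the only log-free condition at each of $\pm p$.

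Next I will read off $a$ and $b$ at each point. At $z=-p$ the residue is $a=T_1$ and the constant term is
\[
b_{-p}=n(n+1)\wp(p)+\tfrac34\wp(2p)+T_1\zeta(p)+T_2\bigl(\zeta(p)-\zeta(2p)\bigr)+B .
\]
At $z=p$ the residue is $a=T_2$ and
\[
b_{p}=n(n+1)\wp(p)+\tfrac34\wp(2p)+T_1\bigl(\zeta(2p)-\zeta(p)\bigr)-T_2\zeta(p)+B .
\]
The two conditions are $T_1^2=b_{-p}$ and $T_2^2=b_p$. Either one determines $B$ as a polynomial in $(T_1,T_2)$. Subtracting them eliminates $B$:
\[
T_1^2-T_2^2=(T_1+T_2)\bigl(2\zeta(p)-\zeta(2p)\bigr).
\]
Using the duplication identity $\zeta(2p)-2\zeta(p)=\wp''(p)/(2\wp'(p))$, this becomes
\[
(T_1+T_2)\Bigl(T_1-T_2+\frac{\wp''(p)}{2\wp'(p)}\Bigr)=0 .
\]
This is exactly the decomposition \eqref{decomposition}.

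Each piece is the graph of a polynomial $B$ over a line in the $(T_1,T_2)$-plane. It is therefore isomorphic to $\mathbb{C}$, hence rational and irreducible, and the two pieces are distinct. Solving $T_1+T_2=0$ together with $T_1-T_2=-\wp''(p)/(2\wp'(p))$ gives the intersection point $\mathbf T_*$. To establish the even/non-even dichotomy, I substitute $z\mapsto -z$ and use the oddness of $\zeta$. This sends $T_1(\zeta(z+p)-\zeta(z))$ to $-T_1(\zeta(z-p)-\zeta(z))$, and similarly for the $T_2$ term. Hence $q_n$ is even if and only if $T_2=-T_1$. So $V^{(0)}_{n,p}$ is the even component, while on $V^{(1)}_{n,p}$ away from $\mathbf T_*$ the potential is not even.

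The step I expect to be the main obstacle is the singularity at $z=0$. There the potential carries the simple-pole term $-(T_1+T_2)/z$, and the exponents $-n,n+1$ differ by the odd integer $2n+1$, so a log-free condition at $0$ must also be accounted for. On $V^{(0)}_{n,p}$ this is harmless: the potential is even, and the standard parity argument makes the odd-difference recursion at $0$ automatically consistent. On $V^{(1)}_{n,p}$ I would expand the recursion at $0$ up to step $2n+1$. I would first try to show that, modulo the two conditions at $\pm p$, the obstruction is divisible by the non-even linear factor, or vanishes identically. Failing that, I would check whether the C--W--W framework \cite{Chou-Wang-Wu-II} defines $V_{n,p}(\tau)$ via the conditions at $\pm p$ only, with the condition at $0$ enforced by the chosen normalization of the potential. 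If that normalization does not account for it, the description of $V^{(1)}_{n,p}$ as a full line would need correcting.
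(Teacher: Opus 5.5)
Your analysis at $z=\pm p$ is correct and coincides with the paper's. You obtain the conditions $T_2^2=q_{0,p}$ and $T_1^2=q_{0,-p}$, whose difference factors as $(T_1+T_2)\bigl(T_1-T_2+\wp''(p)/(2\wp'(p))\bigr)=0$. The parity argument at $0$ on $V^{(0)}_{n,p}(\tau)$, the evenness criterion and $\mathbf T_*$ are also correct. The gap is exactly the step you flag and leave open: that $\mathrm{GLE}^{(1)}_n(p,T,\tau)$ is log-free at $z=0$ for \emph{every} $T$. The theorem asserts that the two full lines are the locus where the equation is log-free at all of $0,\pm p$. So you cannot assume this away by defining $V_{n,p}(\tau)$ through the conditions at $\pm p$ only, and your hedge that $V^{(1)}_{n,p}(\tau)$ might need correcting is unfounded. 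On this line the residue at $0$ is $-(T_1+T_2)=-2T\neq0$, so the parity argument no longer applies. The Frobenius obstruction at step $2n+1$ is then a polynomial $f_p(T)$ that must be shown to vanish identically for all $n$. ``Expanding the recursion'' gives no mechanism for this, and the paper explicitly calls the direct recursion intractable.

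The paper supplies this step indirectly, in Theorem~\ref{theorem, noneven's apparent}. Log-freeness is equivalent to the existence of a Hermite--Halphen solution $y_{\mathbf a,c}$. At $T=0$ the equation lies on the even component, which gives such a solution with zero divisor $\mathbf a_0(p)$. After showing that one of the residue equations is redundant, Krull's height theorem yields a one-dimensional family of admissible divisors through $\mathbf a_0(p)$. On this family $T(\mathbf a)$ takes infinitely many values, so $f_p\equiv0$. The finitely many $p$ where $\mathbf a_0(p)$ degenerates are then handled by a $p\to0$ asymptotic analysis and continuity in $p$.

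A shorter way to close your gap is the factorization of Section~\ref{Section, Mono Equiv}. Set $N=n(n+1)$ and $\widetilde B=T^2-N\wp(p)$. The map $y\mapsto(\wp(z)-\wp(p))^{-1/2}(y'-Ty)$ sends solutions of $\mathrm L_n(\widetilde B,\tau)$ to solutions of $\mathrm{GLE}^{(1)}_n(p,T,\tau)$; the potentials match by a direct addition-formula check (Proposition~\ref{rev:prop:darboux}). By the Wronskian identity the map is injective, hence onto. Near $z=0$ the factor $(\wp(z)-\wp(p))^{-1/2}$ is single-valued and holomorphic with a simple zero. Lam\'e solutions are meromorphic there, since the Lam\'e potential is even. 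Hence every local solution of $\mathrm{GLE}^{(1)}_n(p,T,\tau)$ at $0$ is meromorphic. With either argument inserted your proof is complete; without it, you have only described the locus that is log-free at $\pm p$.
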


The explicit equations give affine coordinates
\[
(T_1,T_2)=(A,-A),\qquad
(T_1,T_2)=\left(T-\frac{\wp''(p)}{4\wp'(p)},
                    T+\frac{\wp''(p)}{4\wp'(p)}\right).
\]
Thus $V^{(0)}_{n,p}(\tau)\simeq\mathbb A^1_A$ and
$V^{(1)}_{n,p}(\tau)\simeq\mathbb A^1_T$.
We denote the corresponding equations by
$\mathrm{GLE}^{(0)}_n(p,A,\tau)$ and $\mathrm{GLE}^{(1)}_n(p,T,\tau)$.
The coordinate $T$ is the basis of the intrinsic non-even theory
constructed in Sections~\ref{Log-free variety}--\ref{Isomonodromic deformation}.

\subsection{Intrinsic spectral geometry and monodromy}
The decomposition lifts to the generalized Lam\'e curve
\[
Y_{n,p}(\tau)=Y^{(0)}_{n,p}(\tau)\cup Y^{(1)}_{n,p}(\tau).
\]
The addition map $\sigma_{n,p}:Y_{n,p}(\tau)\to E_\tau$ has
total degree $2n(n+1)+1$ \cite{Chou-Wang-Wu-II}.

\begin{theorem}\label{Main Thm 0}
The componentwise degrees are
\[
\deg\sigma^{(0)}_{n,p}=n(n+1)+1,
\qquad \deg\sigma^{(1)}_{n,p}=n(n+1).
\]
\end{theorem}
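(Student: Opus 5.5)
The plan is to compute one of the two degrees directly and obtain the other by subtracting from the total degree $2n(n+1)+1$ of \cite{Chou-Wang-Wu-II}. Since $\sigma_{n,p}$ is additive over the decomposition $Y_{n,p}(\tau)=Y^{(0)}_{n,p}(\tau)\cup Y^{(1)}_{n,p}(\tau)$ of Theorem~\ref{Main Thm log free decomposition}, a generic fibre of $\sigma_{n,p}$ is the disjoint union of the generic fibres of $\sigma^{(0)}_{n,p}$ and $\sigma^{(1)}_{n,p}$. Here we use that the components meet only over the single point $\mathbf T_*$ (and its preimages), which does not affect degrees. Thus $\deg\sigma^{(0)}_{n,p}+\deg\sigma^{(1)}_{n,p}=2n(n+1)+1$, and it is enough to show $\deg\sigma^{(1)}_{n,p}=n(n+1)$.

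For the non-even component I would work in the affine coordinate $T$ on $V^{(1)}_{n,p}(\tau)\simeq\mathbb A^1_T$ and proceed in four steps.

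\textbf{Step 1.} Write the Hermite--Halphen ansatz for a common eigenfunction of $\mathrm{GLE}^{(1)}_n(p,T,\tau)$:
\[
w_{\mathbf a}(z)=\frac{e^{cz}\prod_{i}\sigma(z-a_i)}{\sigma(z)^{n}\,\sigma(z-p)^{1/2}\sigma(z+p)^{1/2}}.
\]
The number of zeros $a_i$ is fixed by the exponents at $0$ and $\pm p$. The point of $Y^{(1)}_{n,p}(\tau)$ is the unordered $\{a_i\}$, and $\sigma^{(1)}_{n,p}(\mathbf a)=\sum_i a_i$.

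\textbf{Step 2.} Show that $Y^{(1)}_{n,p}(\tau)$ is the hyperelliptic curve
\[
C^2=\ell^{(1)}_{n,p}(T),
\]
where $\ell^{(1)}_{n,p}$ comes from the second symmetric product (the product $w_{\mathbf a}w_{-\mathbf a}$ is an elliptic function determined by $T$). Then compute $\deg_T\ell^{(1)}_{n,p}$ and the leading-order behaviour of $\sigma^{(1)}_{n,p}$ at the point(s) at infinity.

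\textbf{Step 3.} Compute the degree as the pole order of $\wp(\sigma^{(1)}_{n,p})$, or of the rational function $\kappa$. Equivalently, count preimages of a generic $z_0\in E_\tau$ as the number of $T$-values whose monodromy has a prescribed unitary character. This is the approach in \cite{Chou-Wang-Wu-II}, and there $\deg\sigma$ is read off from the asymptotic expansion $\sum a_i\sim$ (local parameter)$^{-1}$ at infinity. The expected outcome is a pole of total order $2n(n+1)$ for $\wp(\sigma^{(1)}_{n,p})$, hence degree $n(n+1)$.

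\textbf{Step 4 (cross-check).} Use the identification $\widetilde B=T^2-n(n+1)\wp(p)$ with the classical Lam\'e curve. The classical Lam\'e addition map has degree $n(n+1)/2$, and $T\mapsto -T$ is a double cover compatible with $\sigma$, which gives $2\cdot n(n+1)/2=n(n+1)$. This argument, however, needs the later identification.

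I expect the main obstacle to be the degree computation on the non-even component itself, in particular controlling the asymptotic expansion of the $a_i$ as $T\to\infty$. The two points at infinity of the hyperelliptic curve must be shown to contribute symmetrically, with no cancellation. I would first try to find the leading term of $w_{\mathbf a}$ by WKB as $T\to\infty$, using $q_n\approx 2T\,\zeta$-differences. This exhibits the $a_i$ tending to $0$ or $\pm p$ at explicit rates, and summing gives the pole order. After that, the even degree $n(n+1)+1$ follows by subtraction. It could also be checked against the Painlev\'e~VI count of Chen--Kuo--Lin, which gives $n(n+1)+1$ solutions on the rational even component.
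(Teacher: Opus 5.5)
Your reduction to the non-even component is fine. Additivity of $\sigma_{n,p}$ over $Y^{(0)}_{n,p}(\tau)\cup Y^{(1)}_{n,p}(\tau)$, together with the total $2n(n+1)+1$ of \cite{Chou-Wang-Wu-II}, gives one degree from the other; the paper makes exactly this remark after Theorem~\ref{thm, sigma's degree}, and it takes $n(n+1)+1$ from \cite{Chen-Kuo-Lin-Lame I}.

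The gap is that Steps~2--3 never actually produce $n(n+1)$. The large-$T$ (WKB) behaviour of the $a_i$ is purely local at $\infty^{(1)}_\pm$. It shows $\mathbf a\to\infty_\pm(p)$ and $\sigma^{(1)}_{n,p}\sim c/T$, i.e.\ each point at infinity is a simple preimage of $0$. On the quotient this is the statement $\wp(\hat\sigma^{(1)}_{n,p})\sim\frac{4}{n^2(n+1)^2}X$, which fixes only $\deg_X\hat P_1-\deg_X\hat P_2=1$. The degree, however, is $\deg_X\hat P_1$. All the other preimages of $0$ lie at finite points of $\Gamma^{(1)}_{n,p}(\tau)$, over the zeros of $\hat P_2$ (finite $T$ with $r+s\tau\in\Lambda_\tau$), and nothing at infinity counts them. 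So the ``total pole order $2n(n+1)$'' is asserted rather than derived.

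What is missing is a global comparison. The paper supplies it in Section~\ref{Section, Degree of the Non-Even Addition Map} in three steps:
\begin{enumerate}
\item $\deg\sigma^{(1)}_{n,p}=2\deg\hat\sigma^{(1)}_{n,p}$.
\item $\deg\hat\sigma^{(1)}_{n,p}=\deg_X\hat P_1$ is independent of $(p,\tau)$ (Proposition~\ref{prop, degree of hat sigma}): the leading coefficient cannot drop, since $\hat\sigma^{(1)}_{n,p}\to0$ as $X\to\infty$.
\item As $p\to0$, every zero and pole of $\wp(\hat\sigma^{(1)}_{n,p})$ has the form $n(n+1)p^{-2}+\widetilde B+o(1)$; divergent poles are excluded by the factor $p^{k_1+2k_2}$ in Lemma~\ref{lem, Xi infty's asy}. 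The rational function therefore degenerates to $\wp(\tilde\sigma_n)$, and $m=n(n+1)/2$.
\end{enumerate}

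Your Step~4 cannot replace this as stated. In the paper, $\iota_p$ is derived from $[K(\overline{\hat\Gamma^{(1)}_{n,p}(\tau)}):K(E_\tau)]=n(n+1)/2$ in Theorem~\ref{thm, three W coincide}, so invoking it here is circular. It can be made non-circular with the Darboux operator $\mathcal D_{p,T}y=h_p(y'-Ty)$ of Proposition~\ref{rev:prop:darboux}. That operator is verified by a direct potential computation, and it intertwines $\mathrm L_n(T^2-n(n+1)\wp(p),\tau)$ with $\mathrm{GLE}^{(1)}_n(p,T,\tau)$ with equal period matrices. Since $\sigma^{(1)}_{n,p}(P)=r(P)+s(P)\tau$ depends only on the multipliers of $\psi(P;z)$, the following holds for generic $z_0$. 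Each $\widetilde P\in\tilde\sigma_n^{-1}(z_0)$ with coordinate $\widetilde B$ corresponds to exactly one point of $\sigma^{(1)-1}_{n,p}(z_0)$ over each of $T=\pm\sqrt{\widetilde B+n(n+1)\wp(p)}$, and conversely. This gives $\deg\sigma^{(1)}_{n,p}=2\deg\tilde\sigma_n=n(n+1)$, by a route more elementary than the paper's degeneration argument.
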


For the non-even component we construct the spectral curve,
Baker--Akhiezer functions, addition map and rational function
$\kappa$. Section~\ref{Section, Degree of the Non-Even Addition Map} computes its degree independently of the
global formula. A rigidity theorem rules out nontrivial
isomonodromy with varying $\tau$ and forces the minimal relation
between the addition map and $\kappa$ to be independent of $p$.
Degeneration then identifies this relation with the classical
Lam\'e relation.

Let $\Gamma^{(i)}_{n,p}(\tau)$ be the spectral curve identified
with $Y^{(i)}_{n,p}(\tau)$ by its zero-divisor embedding, and let
$\hat\Gamma^{(1)}_{n,p}(\tau)$ be the quotient by
$(T,C)\mapsto(-T,C)$. Write $\widetilde\Gamma_n(\tau)$ for the
spectral curve of the classical equation
\begin{equation}
w''(z)=\bigl(n(n+1)\wp(z)+\widetilde B\bigr)w(z),
\qquad \widetilde B\in\mathbb C,
\end{equation}
denoted by $\mathrm L_n(\widetilde B,\tau)$.

\begin{theorem}\label{thm, isomorphism}
For every $\tau\in\mathbb H$ and $p\in E_\tau\setminus E_\tau[2]$,
there is a natural isomorphism
\[
\iota_p:\hat\Gamma^{(1)}_{n,p}(\tau)
\xrightarrow{\sim}\widetilde\Gamma_n(\tau)
\]
compatible with the addition maps and primitive rational functions:
\[
\tilde\sigma_n\circ\iota_p=\hat\sigma^{(1)}_{n,p},
\quad \tilde\kappa\circ\iota_p=\kappa.
\]
\end{theorem}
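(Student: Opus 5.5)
Our plan is to realize $\iota_p$ through the explicit substitution $\widetilde B=T^2-n(n+1)\wp(p)$ announced in the abstract, and to prove compatibility by comparing the two monodromy data attached to a point of each curve.

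\textbf{Step 1: the spectral polynomial factors through $T^2$.} On $V^{(1)}_{n,p}(\tau)\simeq\mathbb A^1_T$ we write the hyperelliptic spectral curve as $C^2=Q_{n,p}(T)$. The involution $T\mapsto -T$ comes from $z\mapsto -z$: this swaps the poles $\pm p$, and hence swaps $T_1$ and $T_2$ up to the shift by $\wp''(p)/(4\wp'(p))$. The equation $\mathrm{GLE}^{(1)}_n(p,-T,\tau)$ is therefore the pullback of $\mathrm{GLE}^{(1)}_n(p,T,\tau)$ under $z\mapsto-z$. Consequently $Q_{n,p}$ is even in $T$, so $Q_{n,p}(T)=R_{n,p}(T^2)$, and the quotient $\hat\Gamma^{(1)}_{n,p}(\tau)$ is the curve $C^2=R_{n,p}(S)$ with $S=T^2$. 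The classical curve $\widetilde\Gamma_n(\tau)$ is $C^2=\ell_n(\widetilde B)$, where $\ell_n$ is the Lam\'e polynomial of degree $2n+1$. We must therefore show
\[
R_{n,p}(S)=\ell_n\bigl(S-n(n+1)\wp(p)\bigr),
\]
up to a constant normalization.

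\textbf{Step 2: match the monodromy data.} By the rigidity statement quoted before the theorem, there is a minimal polynomial relation between $\hat\sigma^{(1)}_{n,p}$ and $\kappa$, and it is independent of $p$. Degeneration $p\to0$ identifies this relation with the classical Lam\'e relation between $\tilde\sigma_n$ and $\tilde\kappa$.

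We then need to show that $\widetilde B(T)=T^2-n(n+1)\wp(p)$ is the correct matching of points. The first attempt is a direct check of the Baker--Akhiezer data. A non-even point has a Baker--Akhiezer solution $\prod_i\sigma(z-a_i)/\sigma(z)^{n}\cdot(\text{factor at }\pm p)\cdot e^{cz}$ whose zero divisor has sum $\sigma^{(1)}$. We would compare it with the Lam\'e ansatz and check that both have the same multiplier pair $(\sigma,\kappa)$.

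Alternatively, we can argue through isomonodromy. Fixing $\widetilde B$ makes the monodromy of $\mathrm{GLE}^{(1)}_n$ independent of $p$, and the relation $\widetilde B=T^2-n(n+1)\wp(p)$ is then forced by letting $p\to 0$. To make this limit precise, expand $q_n$ near $p=0$ along $T_1-T_2=-\wp''(p)/(2\wp'(p))$. The terms $\tfrac34(\wp(z+p)+\wp(z-p))$ combine with the $\zeta$-differences and with $B$, and after a gauge by $\sigma(z-p)\sigma(z+p)/\sigma(z)^2$ raised to suitable powers, they give a constant shift $T^2-n(n+1)\wp(p)$ plus $O(p)$.

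\textbf{Step 3: identify the maps.} Since both sides are hyperelliptic curves of genus $n$ over the $S$-line and the $\widetilde B$-line, which are related by an affine change of variable, equality of branch loci gives the isomorphism $\iota_p(S,C)=(S-n(n+1)\wp(p),\lambda C)$ for a suitable constant $\lambda$. For the branch loci, the branch points of $\hat\Gamma^{(1)}$ are the $T^2$ at which the equation has a unique (even-type) common eigenfunction. Compatibility $\tilde\sigma_n\circ\iota_p=\hat\sigma^{(1)}_{n,p}$ then follows because both sides are the period monodromy data of equations with equal monodromy. The identity $\tilde\kappa\circ\iota_p=\kappa$ follows from the $p$-independent relation of Step 2, after fixing the sign of $C$.

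\textbf{Expected obstacle.} The hardest step is proving that the monodromy of $\mathrm{GLE}^{(1)}_n(p,T,\tau)$ depends only on $T^2-n(n+1)\wp(p)$. This is the isomonodromy claim with $\tau$ fixed. We would first attempt to verify that $p$-flow along level sets of $\widetilde B$ is a Schlesinger-type deformation, checked on the degree count $\deg\sigma^{(1)}_{n,p}=n(n+1)=\deg\tilde\sigma_n$ of Theorem~\ref{Main Thm 0}. That degree equality, together with properness, gives a finite map of equal degree.
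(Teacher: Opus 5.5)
There is a genuine gap, and it sits exactly at the step you flag as the expected obstacle. That step carries the whole argument: Steps 2--3 assume that $\mathrm{GLE}^{(1)}_n(p,T,\tau)$ and $\mathrm L_n(T^2-n(n+1)\wp(p),\tau)$ have the same period monodromy, but you give no mechanism for it.

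\begin{itemize}
\item The isomonodromy route is circular: ``fixing $\widetilde B$ makes the monodromy independent of $p$'' is Theorem~\ref{Main Thm 2} itself.
\item The limit $p\to0$ only gives $T(p)^2=n(n+1)p^{-2}+\widetilde B+O(p)$ along a family you must already have. To upgrade this to $T(p)^2=n(n+1)\wp(p)+\widetilde B$, you need a global isomonodromic family on which $X(p)=T(p)^2$ is single-valued and holomorphic on $E_\tau\setminus\{0\}$, including across the nonzero half-periods. Only then is $X(p)-n(n+1)\wp(p)$ constant by Liouville.
\item The degree count cannot substitute for this. Equal degrees of two finite maps to $E_\tau$ neither produce a map between the curves nor detect isomonodromy. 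Moreover, the degrees you equate differ: $\deg\sigma^{(1)}_{n,p}=n(n+1)$, whereas $\deg\tilde\sigma_n=n(n+1)/2$. The relevant equality is $\deg\hat\sigma^{(1)}_{n,p}=n(n+1)/2$ on the quotient.
\item The gauge by powers of $\sigma(z-p)\sigma(z+p)/\sigma(z)^2$ is zeroth order, so it cannot intertwine the two Schr\"odinger operators.
\end{itemize}

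Your quoted ``rigidity plus degeneration'' sentence is where the paper's proof actually lives, and you use it only as a citation. The paper's argument runs as follows.
\begin{itemize}
\item It proves that $\mathrm{GLE}^{(1)}_n$ admits no isomonodromic deformation with varying $\tau$. The residues at $\pm p$ of the inhomogeneous third-order equation force $T\equiv0$, then $A_1=A_2$, then $n(n+1)\wp'(p)=0$.
\item Via the implicit function theorem, this makes the minimal polynomial $\widehat W_{n,p}(\sigma;\kappa)$ independent of $p$.
\item Hence every zero $(\sigma_0,\kappa_0)$ yields a completely reducible isomonodromic $p$-family whose $p\to0$ limit is a Lam\'e equation with the same $(r,s)$. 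So $Z_n(r,s,\tau)=0$, $\widehat W_n\mid\widetilde W_n$, and the two coincide by irreducibility.
\item Together with $\deg\hat\sigma^{(1)}_{n,p}=n(n+1)/2$, this makes $\kappa$ primitive. Both curves then normalize $\{W_n(\sigma;\kappa)=0\}$, and $\iota_p$ with both compatibilities is forced; no branch-locus comparison is needed.
\item The formula $\widetilde B=T^2-n(n+1)\wp(p)$ is derived only afterwards (Liouville in $p$), and is used to extend $\iota_p$ across singular fibers.
\end{itemize}

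If you prefer your explicit route, the missing idea is first order. For Lam\'e solutions, $(\partial_z+T)(\partial_z-T)y=n(n+1)(\wp(z)-\wp(p))y$. Then $v=(\wp(z)-\wp(p))^{-1/2}(y'-Ty)$ solves $\mathrm{GLE}^{(1)}_n(p,T,\tau)$. The Wronskian is multiplied by $-n(n+1)\neq0$, and the prefactor is a constant multiple of $\Psi_p$, hence invariant along the period cycles. This gives monodromy equivalence for every $T$. After that your Step 3 does go through: match the branch points as the non-completely reducible parameters on both sides, and read off $(\hat\sigma,\kappa)$ from the multipliers of corresponding Baker--Akhiezer functions, fixing the sign of $C$ by continuity off the branch locus.
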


On the nonsingular locus, both curves normalize the same model
$W_n(\sigma;\kappa)$ $=0$. The spectral-polynomial identity extends
holomorphically across the discriminant, so the explicit coordinate
translation and its compatibilities also hold on singular fibers
(Proposition~\ref{prop:extension-singular-fibers}).

\begin{theorem}\label{Main Thm 1}
Under this isomorphism, every $\mathrm{GLE}^{(1)}_n(p,T,\tau)$ corresponds
to a unique $\mathrm L_n(\widetilde B,\tau)$ on the same elliptic curve,
where
\begin{equation}
\widetilde B=T^2-n(n+1)\wp(p).\label{correspondence}
\end{equation}
The two equations have equivalent period monodromy.
\end{theorem}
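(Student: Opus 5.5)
The plan is to derive Theorem~\ref{Main Thm 1} from the isomorphism of Theorem~\ref{thm, isomorphism}, tracking three things through $\iota_p$: the spectral parameter, the Baker--Akhiezer functions, and the period monodromy.

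\textbf{Step 1: the point of $\widetilde\Gamma_n(\tau)$ attached to an equation.} On $V^{(1)}_{n,p}(\tau)\simeq\mathbb A^1_T$, each $\mathrm{GLE}^{(1)}_n(p,T,\tau)$ determines a fibre of $\Gamma^{(1)}_{n,p}(\tau)\to\mathbb A^1_T$, consisting of points $(T,\pm C)$. Under the involution $T\mapsto -T$, the image in $\hat\Gamma^{(1)}_{n,p}(\tau)$ depends only on $T^2$, together with the sheet sign. I will write the spectral polynomial of the non-even component as $\ell^{(1)}_n(T)$, with $C^2=\ell^{(1)}_n(T)$, and the Lam\'e spectral polynomial as $\ell_n(\widetilde B)$. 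The key identity to prove is
\[
\ell^{(1)}_n(T)=c\,\ell_n\bigl(T^2-n(n+1)\wp(p)\bigr)
\]
for a nonzero constant $c$. This identity is exactly the coordinate form of $\iota_p$, and it forces $\widetilde B=T^2-n(n+1)\wp(p)$.

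\textbf{Step 2: identifying the coordinate map.} I would use the compatibilities $\tilde\sigma_n\circ\iota_p=\hat\sigma^{(1)}_{n,p}$ and $\tilde\kappa\circ\iota_p=\kappa$. Both curves normalize the same model $W_n(\sigma;\kappa)=0$, and $\widetilde B$ is a rational function on $\widetilde\Gamma_n$ with a single pole, at $\infty$, of order $2$. Hence $\iota_p^*\widetilde B$ is a function on $\hat\Gamma^{(1)}_{n,p}$ with a double pole only at infinity, so it is a polynomial of degree one in $T^2$, say $aT^2+b$.
\begin{itemize}
\item The coefficient $a=1$ comes from comparing the leading asymptotics of the Baker--Akhiezer functions, or of $\sigma$, at infinity.
\item The constant $b=-n(n+1)\wp(p)$ comes from a subleading expansion.
\end{itemize}
For $b$, the natural route is the degeneration $p\to 0$ already used in the paper to match the minimal relation with the classical Lam\'e relation, or a direct local expansion at $z=\pm p$. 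This constant is the step I expect to be the main obstacle. I would try to compute it via the trace formula for $\widetilde B$ in terms of the zero divisor $\{a_i\}$, namely $\widetilde B=(2n-1)\sum\wp(a_i)$, and compare this with the corresponding expression on the non-even side.

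\textbf{Step 3: uniqueness.} Given $T$, the value $\widetilde B=T^2-n(n+1)\wp(p)$ is unique, so the equation $\mathrm L_n(\widetilde B,\tau)$ is uniquely determined.

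\textbf{Step 4: equivalent period monodromy.} The local monodromy at $\pm p$ is $-I$ and apparent, so the non-even equation has period monodromy matrices $N_1,N_2$ up to sign. In the generic case, when $C\neq 0$, the Baker--Akhiezer solutions diagonalize the monodromy, and the multipliers are $e^{\pm 2\pi i(r+s\tau)}$. Here $(r,s)$ is determined by $\sigma$ and $\kappa$ through the Legendre relation: $\sigma = r+s\tau$ mod $\Lambda_\tau$, with $\kappa$ fixing the lift. Since $\iota_p$ preserves both $\sigma$ and $\kappa$, the two equations have the same $(r,s)$, hence conjugate monodromy.

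The non-generic points, where $C=0$, are handled separately. There the monodromy is either unitary or reducible (triangular). In this case I would compare the $\kappa$-data, or argue by continuity from generic $T$: the conjugacy class in the reducible case is determined by the same $(r,s)$ data together with the nondiagonalizability, which is read off from $\kappa$ through Proposition~\ref{prop:extension-singular-fibers}.
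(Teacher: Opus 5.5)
\textbf{Step 2: the constant $b$.} Your reduction is fine as far as it goes. On the nonsingular locus, $\iota_p^*\widetilde B\in L(2\widehat\infty^{(1)})=\langle 1,X\rangle$, and $a=1$ follows by comparing $\wp\circ\hat\sigma^{(1)}_{n,p}\sim\frac{4}{n^2(n+1)^2}X$ with \eqref{lame's P(sigma n)}. You would also have to check $\iota_p(\widehat\infty^{(1)})=\widetilde\infty$, which is not automatic: $(\sigma,\kappa)=(0,\infty)$ is also attained at the finite points $X_j^\infty$.

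The real content of \eqref{correspondence} is the value of $b$, and neither of your routes produces it.
\begin{itemize}
\item The trace-formula route has nothing to compare. $\iota_p$ only matches $\sigma=\sum a_i$ and $\kappa$. It says nothing about how the $(n+1)$-point divisor $\mathbf a(P)$ relates to the $n$-point zero divisor of the Lam\'e Baker--Akhiezer function.
\item The $p\to0$ route, made precise, means following $\hat P(p)=\iota_p^{-1}(\widetilde P_0)$ for a fixed completely reducible $\widetilde P_0$. This is an isomonodromic family. Lemmas~\ref{lem, q converges iff c converges}, \ref{lem, gle converges to lame} and~\ref{lem, gle to lame r,s} then give only the principal part $X(p)=n(n+1)p^{-2}+\widetilde B_0+O(p)$, that is, $b(p)=-n(n+1)p^{-2}+O(p)$.
\end{itemize}
What is missing is the global argument in $p$ from Theorem~\ref{thm, X(p)=n*n+1}:
\begin{itemize}
\item $X(p)$ is single-valued and holomorphic on $E_\tau\setminus E_\tau[2]$, by the uniqueness in Theorem~\ref{thm, com red iff Zn=0}.
\item It extends across the nonzero half-periods by Lemma~\ref{lem, p to wk/2}, because $c(p)=r\eta_1+s\eta_2$ is constant.
\item Hence $X(p)-n(n+1)\wp(p)$ is a pole-free elliptic function, and by Liouville it equals $\widetilde B_0$.
\end{itemize}
The half-period step is genuinely needed, since \eqref{potential, noneven} involves $\wp''(p)/\wp'(p)$. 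Once that argument is in place, your Riemann--Roch reduction is superfluous: it already gives the $X$-coordinate of $\iota_p^{-1}(\widetilde P_0)$ for a dense set of $\widetilde P_0$.

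\textbf{Step 4 at $C=0$.} Your argument is correct for $C\neq0$: equal $\sigma$ and $\kappa$ give equal $(r,s)$ via the Legendre relation, exactly as in Theorem~\ref{thm, com red iff Zn=0}. It fails at $C=0$.
\begin{itemize}
\item By \eqref{m2}--\eqref{m3}, a non-completely reducible pair carries an extra invariant $\mathcal D\in\mathbb C\cup\{\infty\}$. So $(r,s)\in\frac12\mathbb Z^2$ together with non-diagonalizability does not fix the conjugacy class.
\item $\kappa$ cannot detect $\mathcal D$. At a branch point, $\kappa(P_0)=\zeta(r_0+s_0\tau)-r_0\eta_1-s_0\eta_2$ equals $0$ when $(r_0,s_0)\in\frac12\mathbb Z^2\setminus\mathbb Z^2$ (since $2\zeta(\omega_k/2)=\eta_k$), and equals $\infty$ when $(r_0,s_0)\in\mathbb Z^2$.
\item Naive continuity from generic $T$ also fails: conjugacy of diagonalizable pairs need not survive in a Jordan-type limit.
\item Proposition~\ref{prop:extension-singular-fibers} does not address $\mathcal D$. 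Its proof also already uses Theorem~\ref{thm, X(p)=n*n+1}.
\end{itemize}
The paper handles these points with the first-order operator $\mathcal D_{p,T}y=h_p(y'-Ty)$ of Proposition~\ref{rev:prop:darboux}. It sends a Lam\'e fundamental system to a $\mathrm{GLE}^{(1)}_n(p,T,\tau)$ fundamental system with identical period matrices for every $T$, discriminant points included. A continuity argument can be rescued only through Lemma~\ref{rev:lem:jordan-limit} and its Lam\'e analogue. These express $\mathcal D$ as the limit of $(r(P)-r_0)/(s_0-s(P))$, a quantity that $\iota_p$ preserves near the branch point.

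A minor correction: the multipliers are $e^{-2\pi i s}$ and $e^{2\pi i r}$, not $e^{\pm2\pi i(r+s\tau)}$.
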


The Lam\'e correspondence is obtained from the componentwise
monodromy theory. Its accessory-coordinate formula then yields
an invertible first-order Darboux-type operator.
The constructions of Sections ~\ref{Log-free variety}--\ref{Isomonodromic deformation}---the spectral curve, zero-divisor embedding, addition map, component degree, and monodromy rigidity---constitute the intrinsic theory. The Darboux operator of Section~\ref{Section, Mono Equiv} realizes, but does not generate, this structure in the one-support genus-zero setting. Beyond this setting, the direct Darboux construction generally leaves the prescribed one-pair family, while Proposition~\ref{prop:exceptional-spectral-obstruction} shows that even a rational parameter component need not admit a birational reduction to a pure DTV potential.

\subsection{Componentwise isomonodromy}
\begin{theorem}\label{Main Thm 2}
Fix $\tau\in\mathbb{H}$ and $\widetilde B\in\mathbb{C}$. On the two-sheeted
cover of $E_\tau\setminus E_\tau[2]$ defined by
\[
T(p)^2=n(n+1)\wp(p)+\widetilde B,\label{T(p)'s square}
\]
the family $\mathrm{GLE}^{(1)}_n(p,T(p),\tau)$ is isomonodromic. Its
two local branches have the same period monodromy.
\end{theorem}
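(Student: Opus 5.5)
The plan is to derive Theorem~\ref{Main Thm 2} directly from Theorem~\ref{Main Thm 1}, without solving a deformation system. Fix $\tau$ and $\widetilde B$, and let $\mathcal C$ be the two-sheeted cover of $E_\tau\setminus E_\tau[2]$ cut out by $T^2=n(n+1)\wp(p)+\widetilde B$. For each point $(p,T(p))\in\mathcal C$, the equation $\mathrm{GLE}^{(1)}_n(p,T(p),\tau)$ is a non-even log-free equation, since $(T_1,T_2)$ lies on $V^{(1)}_{n,p}(\tau)$ by Theorem~\ref{Main Thm log free decomposition}. Relation \eqref{correspondence} then assigns to it the classical equation $\mathrm L_n(\widetilde B,\tau)$. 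The key observation is that this target does not depend on the point of $\mathcal C$. Because Theorem~\ref{Main Thm 1} gives equivalent period monodromy, every member of the family has period monodromy conjugate to that of one fixed Lam\'e equation. Hence all period monodromies are conjugate to one another, and this applies to both branches $T$ and $-T$ over a given $p$, which is the second assertion.

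Conjugacy at each point is not yet isomonodromy. To upgrade it, I would follow these steps.

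\begin{enumerate}
\item Check that the coefficients $q_n(z;p,T(p),B(p,T))$ depend holomorphically on the point of $\mathcal C$. The accessory parameter $B$ is a polynomial or rational function of $T$, $\wp(p)$ and $\wp'(p)$, given by the log-free condition. I also need that $\mathcal C$ is a smooth curve away from $\wp'(p)=0$, which holds because the branch points satisfy $n(n+1)\wp(p)+\widetilde B=0$.
\item Show that the local monodromy at $\pm p$ is $\pm I$ (apparent singularities) and that the local monodromy at $0$ is trivial up to sign. The monodromy representation of the punctured torus is then determined by the period matrices $M_1,M_2$.
\item Realize the conjugation explicitly using the first-order Darboux-type operator $\mathcal D_{p,T}$ from Section~\ref{Section, Mono Equiv}. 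This operator maps solutions of $\mathrm{GLE}^{(1)}_n$ to solutions of $\mathrm L_n(\widetilde B,\tau)$ and commutes with translation by periods. Its coefficients depend holomorphically on $(p,T)$, and it is invertible.
\item Fix a basis $Y$ of solutions of the Lam\'e equation, which does not depend on $(p,T)$. Then $\mathcal D_{p,T}^{-1}Y$ is a holomorphically varying fundamental system of the generalized equation whose monodromy matrices are literally constant. This is isomonodromy in the strict sense.
\end{enumerate}

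The main obstacle is the locus where the identification degenerates. This includes points where $\mathcal D_{p,T}$ fails to be invertible, for instance the intersection point $\mathbf T_*$ with the even component, and the branch points $T=0$ of $\mathcal C$. It also includes fibers where the Lam\'e monodromy is non-semisimple, so that "equivalent monodromy" might only mean equal traces. On the generic open set, holomorphic dependence together with constancy of conjugacy classes gives isomonodromy directly.

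To pass across the exceptional points I would use the following argument.

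\begin{itemize}
\item The monodromy matrices of a holomorphically varying fundamental system are holomorphic functions on the connected curve $\mathcal C$.
\item On a dense open set they are constant, so they are constant everywhere by the identity theorem.
\item Where the Darboux operator degenerates, I would instead invoke Proposition~\ref{prop:extension-singular-fibers}, which says the spectral identification extends holomorphically across the discriminant, to keep the solution basis holomorphic.
\end{itemize}

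Finally, I would remark that an isomonodromic deformation on $\mathcal C$ with $\tau$ fixed is consistent with the rigidity statement, which excludes nontrivial variation of $\tau$.
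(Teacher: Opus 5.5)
Your argument is correct and is essentially the paper's: Theorem~\ref{Main Thm 2} is read off from Theorem~\ref{Main Thm 1}, whose completion uses the Darboux realization (Proposition~\ref{rev:prop:darboux}), and since $\widetilde B=T(p)^2-n(n+1)\wp(p)$ is constant on the cover, pushing one fixed basis of $\mathrm L_n(\widetilde B,\tau)$ through $\mathcal D_{p,T(p)}$ gives literally constant period matrices on both sheets. The paper also obtains the completely reducible case intrinsically, via Theorem~\ref{thm, com red iff Zn=0} and Theorem~\ref{thm, X(p)=n*n+1}.

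The degenerations you anticipate do not occur. The Wronskian identity \eqref{Darboux Wronskian}, $W(\mathcal D_{p,T}y_1,\mathcal D_{p,T}y_2)=-n(n+1)W(y_1,y_2)$, makes the operator invertible for every $T$. This includes $T=0$, which covers both $\mathbf T_*$ and the branch points of the cover, and it includes the non-semisimple fibers. Your identity-theorem fallback is therefore unnecessary, and so is the appeal to Proposition~\ref{prop:extension-singular-fibers}, which in any case concerns singular spectral fibers in $(\tau,p)$ rather than points of your cover.
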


The even Painlev\'e-VI deformation varies $\tau$ and reaches
Lam\'e weights $n\pm1$ as $p\to0$; the non-even deformation
fixes $\tau$ and preserves weight $n$. They account for the
three terms in the global degeneration formula of \cite{Chou-Wang-Wu-II}.

\begin{theorem}\label{thm, componentwise degeneration}
Under $p\to0$, the multiplicity-one limits of weights $n-1$ and
$n+1$ are realized by the even Painlev\'e-VI branches. The
multiplicity-two limit of weight $n$ is realized by the non-even
sheets $T(p)$ and $-T(p)$.
\end{theorem}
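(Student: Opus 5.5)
We will prove Theorem~\ref{thm, componentwise degeneration} by tracking the fibres of the addition map under $p\to0$, one component at a time, and then matching the limits with the three terms of the global degeneration formula of \cite{Chou-Wang-Wu-II}. That formula states that, as $p\to0$, the curve $Y_{n,p}(\tau)$ degenerates to $Y_{n-1}\cup 2Y_n\cup Y_{n+1}$ (Lam\'e curves). A degree check is consistent with this:
\[
n(n-1)/2+2\cdot n(n+1)/2+(n+1)(n+2)/2=2n(n+1)+1 .
\]
By Theorem~\ref{Main Thm 0} the even part has degree $n(n+1)+1$, which equals $\tfrac{n(n-1)}2+\tfrac{(n+1)(n+2)}2$, and the non-even part has degree $n(n+1)$, which equals $2\cdot\tfrac{n(n+1)}2$. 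The plan is therefore to show that the even component accounts exactly for the weights $n\pm1$, and the non-even component exactly for the doubled weight $n$.

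\emph{Non-even component.}
\begin{enumerate}[(a)]
\item Fix $\widetilde B$ and follow the family $\mathrm{GLE}^{(1)}_n(p,T(p),\tau)$ of Theorem~\ref{Main Thm 2}. As $p\to0$, $T(p)^2=n(n+1)\wp(p)+\widetilde B$ blows up like $n(n+1)/p^2$, so $T(p)\sim\pm\sqrt{n(n+1)}/p$.
\item Expand $q_n$ in the coordinate $(T_1,T_2)=(T-\tfrac{\wp''}{4\wp'},\,T+\tfrac{\wp''}{4\wp'})$. The singular terms at $0,\pm p$ then merge, and we must check that the potential converges locally uniformly on compact subsets away from $0$ to $n(n+1)\wp(z)+\widetilde B$.
\item Isomonodromy (Theorem~\ref{Main Thm 2}) together with the monodromy equivalence of Theorem~\ref{Main Thm 1} then identify the limiting equation with $\mathrm L_n(\widetilde B,\tau)$, with the same period monodromy.
\item The two sheets $\pm T(p)$ give two distinct branches converging to the same limit. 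Because $\deg\hat\sigma^{(1)}=n(n+1)/2=\deg\tilde\sigma_n$ via $\iota_p$ (Theorem~\ref{thm, isomorphism}), each sheet covers $Y_n$ with degree one. This gives multiplicity exactly two.
\end{enumerate}
The step I expect to be the main obstacle is (b): the potential limit must be controlled uniformly in the spectral parameter, including near the discriminant. For this I would use Proposition~\ref{prop:extension-singular-fibers}, which makes the identification holomorphic across singular fibres.

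\emph{Even component.} On $V^{(0)}$, set $(T_1,T_2)=(A,-A)$ and use the Painlev\'e~VI (elliptic form) Hamiltonian description \cite{Chen-Kuo-Lin-Hamiltonian}. Collisions $p\to0$ correspond to the poles of PVI solutions at $0$. The local expansion of a pole,
\[
p(\tau)\approx c(\tau-\tau_0)^{1/2},\qquad A\sim\pm\,\text{const}/p ,
\]
has two possible residue signs. The signs would be matched to the exponent data $-n$ and $n+1$ at $z=0$, the leading coefficient of $A$ being fixed by the local exponent condition in each case. The limiting local exponent at $z=0$ is then $-(n-1)$ or $-(n+1)$, giving Lam\'e weight $n-1$ or $n+1$ respectively. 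This follows \cite{Chen-Kuo-Lin-Lame I,Chen-Kuo-Lin-Lame II}.

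\emph{Exactness.} Degree additivity shows the even limits have total degree $n(n+1)+1$, which equals the sum of the degrees of $Y_{n-1}$ and $Y_{n+1}$. Since each of these appears with multiplicity one in the global formula, and the non-even component cannot produce the weights $n\pm1$ (its weight is preserved by Theorem~\ref{Main Thm 1}), the even branches realize precisely those two limits, each with multiplicity one. This completes the matching with the three terms of the global formula.
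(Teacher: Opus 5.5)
Your proposal is correct and follows essentially the paper's route. Both arguments take the C--W--W multiplicities $1,2,1$ as given and show that the two sheets $\pm T(p)$ do not coalesce; your asymptotic $T(p)\sim\pm\sqrt{n(n+1)}/p$ is exactly the paper's rescaling $U=pT\to\pm\sqrt{n(n+1)}$ to two distinct simple roots. Both then note that the two sheets converge to the same $\mathrm{L}_n(\widetilde B,\tau)$ and assign the remaining weights $n\pm1$ to the even Painlev\'e~VI branches, with your degree bookkeeping serving as a consistent cross-check.

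Step (b) is not a real obstacle. For fixed $\widetilde B$ it is the direct expansion of Lemma~\ref{lem, gle converges to lame}, since $T(p)\,O(p^2)=O(p)$, so no uniformity near the discriminant is needed.
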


\begin{theorem}\label{thm, connectivity intro}
Fix $n\in\mathbb{Z}_{>0}$ and let $(r,s)\in\mathcal A_n$, with
$\mathcal A_n$ defined in \eqref{def, An}. Every completely reducible
$\mathrm{GLE}_n(p,\mathbf T,\tau)$ realizing $(r,s)$ is isomonodromically
connected to a classical Lam\'e equation of weight one on a suitable
elliptic curve through a finite chain of Painlev\'e-VI and non-even
isomonodromic deformations.
\end{theorem}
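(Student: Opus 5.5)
The plan is a descent on the weight $n$, alternating the two componentwise isomonodromic deformations. Since complete reducibility and the admissible pair $(r,s)\in\mathcal A_n$ are monodromy data, both are preserved along any isomonodromic path. Let $\mathrm{GLE}_n(p,\mathbf T,\tau)$ be completely reducible and realize $(r,s)$. By Theorem~\ref{Main Thm log free decomposition}, $\mathbf T$ lies in $V^{(0)}_{n,p}(\tau)$ or in $V^{(1)}_{n,p}(\tau)$; at the intersection point $\mathbf T_*$ either description may be used.

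\emph{Non-even case.} Here $\mathbf T$ corresponds to a coordinate $T$ on the non-even component. By Theorem~\ref{Main Thm 1}, the equation has the same period monodromy as the classical Lam\'e equation $\mathrm L_n(\widetilde B,\tau)$ with $\widetilde B=T^2-n(n+1)\wp(p)$. Theorem~\ref{Main Thm 2} supplies the fixed-$\tau$ isomonodromic family along $T(p)^2=n(n+1)\wp(p)+\widetilde B$. Following it as $p\to0$ gives, by Theorem~\ref{thm, componentwise degeneration}, the weight-$n$ Lam\'e equation $\mathrm L_n(\widetilde B,\tau)$ as the limit. This step does not lower the weight. Instead it converts the problem into a completely reducible classical Lam\'e equation of weight $n$ on $E_\tau$ realizing $(r,s)$.

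\emph{Even case.} On $V^{(0)}_{n,p}(\tau)$, the Painlev\'e~VI deformation varies $(p,\tau)$ isomonodromically. By Theorem~\ref{thm, componentwise degeneration}, its branches degenerate as $p\to0$ to Lam\'e equations of weight $n-1$ or $n+1$. The descent requires choosing the branch that reaches weight $n-1$. I would identify this branch from the asymptotics of the Painlev\'e solution at $p\to0$, which are determined by the monodromy data $(r,s)$. The role of the admissibility condition defining $\mathcal A_n$ should be precisely to guarantee that the weight-$(n-1)$ limit exists for the given data.

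\emph{Lowering the weight of a classical Lam\'e equation.} Once we have a classical Lam\'e equation $\mathrm L_m$ realizing $(r,s)$ with $m\ge2$, we must lower $m$. Let $\mathrm{GLE}_{m-1}$ denote the one-support equation with data $(m-1,0,0,0,\tfrac12,\tfrac12)$. Its Painlev\'e~VI branch with $+1$ jump degenerates to weight $m$. I would realize $\mathrm L_m$ as such a limit. Running the Painlev\'e~VI flow backwards from the $p=0$ endpoint then produces a completely reducible $\mathrm{GLE}^{(0)}_{m-1}$ with the same $(r,s)$. Its $-1$ branch in turn degenerates to $\mathrm L_{m-2}$, or one can pass through the non-even component as needed. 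Iterating this strictly decreases the weight until weight one is reached. Each Painlev\'e~VI step may change $\tau$, which explains the phrase ``a suitable elliptic curve'' in the statement. Every link in the chain is a Painlev\'e~VI or non-even deformation, and there are at most $O(n)$ steps, so the chain is finite.

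\emph{Main obstacle.} The hardest part is the existence and selection of the degenerating branch. One must show that, for completely reducible monodromy realizing $(r,s)\in\mathcal A_n$, the Painlev\'e~VI solution actually reaches $p\to0$ with a weight-$(n\pm1)$ limit rather than a pole or a non-admissible limit. In particular the weight must actually descend. My first attempt would use the explicit parametrization of completely reducible monodromy for classical Lam\'e equations by $(r,s)$, via the Hermite--Halphen ansatz. With it I would verify that the weight-$(m-1)$ Lam\'e equation with the same $(r,s)$ exists exactly when $(r,s)\in\mathcal A_{m-1}$, and that $\mathcal A_m\subset\mathcal A_{m-1}$ or a comparable nesting holds. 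That nesting closes the induction.
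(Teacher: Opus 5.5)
Your skeleton, which alternates the fixed-$\tau$ non-even flow with Painlev\'e~VI flows and descends in weight, matches the paper's. But the step you flag as the ``main obstacle'' is the actual content of the proof, and your plan for it would not close it. The paper settles it with two inputs you never invoke.

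\emph{First input.} The paper uses the premodular representation \eqref{eq, p by premodular forms}, $\wp(p^{(k)}_{r,s}(\tau))=P_k/(Z_{k-1}Z_{k+1})$, whose three factors are pairwise coprime. The even family with data $(r,s)$ is unique by \cite{Chen-Kuo-Lin-Lame III}, and this formula shows it reaches $p=0$ exactly at the zeros of $Z_{k\mp1}(r,s,\cdot)$, degenerating there to $\mathrm L_{k\mp1}$. So nothing has to be ``selected''; one only chooses a target $\tau$.

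\emph{Second input.} The paper uses \cite[Remark~6.21]{Chou-Wang-Wu-II}: for $(r,s)\in\mathcal B_n\setminus\mathcal B$ and every $1\leqslant k\leqslant n$ there is $\tau_k$ with $Z_k(r,s,\tau_k)=0$, which is \eqref{eq:Zk-degeneration}. This is a global statement in $\tau$ for fixed $(r,s)$. A fixed-$\tau$ Hermite--Halphen parametrization cannot produce it. Your proposed equivalence also fails as stated: data in $\mathcal E_{m-1,j}$ are realized by $\mathrm L_{m-1}$, yet they are excluded from $\mathcal A_{m-1}$.

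You also misread the role of admissibility, and you never supply the mechanism for switching from a Lam\'e limit back onto an even family at the same weight. Removing $\mathcal B$ is what makes the C--W--W remark available. Removing $\mathcal E_n$ does something different: it gives $\widetilde B_k\neq -k(k+1)e_j(\tau_k)$, which is \eqref{Bk is not k(k+1)ej}. Consequently the non-even branch $T_k(p)^2=k(k+1)\wp(p;\tau_k)+\widetilde B_k$ through $\mathrm L_k$ vanishes at some $p_k^*\notin E_{\tau_k}[2]$. At that point the non-even equation is the even equation at $\mathbf T_*$. Uniqueness of the even family then places it on the weight-$k$ Painlev\'e~VI solution, which runs to $\mathrm L_{k-1}$ at $\tau_{k-1}$. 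This is the paper's lowering step $\mathrm L_k\to\mathrm{GLE}^{(1)}_k\cap\mathrm{GLE}^{(0)}_k\to\mathrm L_{k-1}$, carried out entirely at weight $k$.

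Your alternative realizes $\mathrm L_m$ as the $+1$ limit of the weight-$(m-1)$ family and continues it to a zero of $Z_{m-2}$. That could work once the two inputs above are in hand, but it lowers the weight by two. Depending on parity you then arrive at $\mathrm L_2$, and the weight-one family's $-1$ branch goes to weight $0$, not weight one. Your repair, ``pass through the non-even component as needed,'' requires exactly the switch described above. That includes the $\mathcal E$ condition preventing $p_k^*$ from being a half-period, which your proposal never uses.
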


\subsection{The first positive-genus component}
For the next weight distribution, the non-even parameter geometry
changes, while the even component remains rational. 

\begin{theorem}
\label{thm, (1,1,0,0) non-even component}
Fix $\tau\in\mathbb{H}$. For generic $p\in E_\tau\setminus E_\tau[2]$,
the non-even log-free component for
$(1,1,0,0,\tfrac12,\tfrac12)$ has smooth projective model
\begin{equation}
W^2=t(t+1)P(t),
\end{equation}
where
\begin{equation}
\begin{aligned}
P(t)={}&\left(\wp(p)+\wp\left(p+\frac{\omega_1}{2}\right)+e_1\right)t^2
       +(2\wp(p)+e_1)t\\
&+e_1-\wp\left(p+\frac{\omega_1}{2}\right).
\end{aligned}
\end{equation}
Its genus is one. On the finite locus where the discriminant of $P$
vanishes, this model is nodal and its normalization is rational.
The full compactified log-free curve has arithmetic genus two
throughout.
\end{theorem}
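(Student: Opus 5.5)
We will compute directly with the generalized Lamé potential for $\mathbf n=(1,1,0,0,\tfrac12,\tfrac12)$,
\[
q(z)=2\wp(z)+2\wp\!\left(z+\tfrac{\omega_1}{2}\right)+\tfrac34\bigl(\wp(z+p)+\wp(z-p)\bigr)+T_1\bigl(\zeta(z+p)-\zeta(z)\bigr)+T_2\bigl(\zeta(z-p)-\zeta(z)\bigr)+B .
\]

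\textbf{Log-free equations.} At $z=\pm p$ (local exponents $-\tfrac12,\tfrac32$) the apparent-singularity condition is the standard quadratic relation between the residue $T_j$, the constant term of $q$ at $\pm p$, and $B$. Each condition is linear in $B$. Eliminating $B$ between the two conditions, as in the proof of Theorem~\ref{Main Thm log free decomposition}, gives one relation in $(T_1,T_2)$. The integral singularities at $0$ and $\tfrac{\omega_1}{2}$ with $n_k=1$ are automatically log-free, because the odd-order coefficient there is forced by the potential's Laurent expansion.

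The symmetry $z\mapsto -z$ exchanges $T_1\leftrightarrow -T_2$. So the eliminated relation should factor as $(T_1+T_2)\cdot F(T_1,T_2)$, where the first factor cuts out the even component.

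\textbf{Reduction to the model.} We write the non-even factor $F=0$ in the symmetric coordinates $s=T_1+T_2$ and $d=T_1-T_2$. For one positive weight, $F$ was linear and gave a rational component. Here the extra $\wp(z+\tfrac{\omega_1}{2})$ term should make $F$ quadratic in one variable, say in $s$, with coefficients polynomial in $d$. The addition formula at $p+\tfrac{\omega_1}{2}$ brings in $\wp(p+\tfrac{\omega_1}{2})$ and $e_1$.

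We then solve for that variable and complete the square, which gives $W^2=\operatorname{disc}(d)$. After an affine change $d\mapsto t$, chosen so that the two obvious roots of the discriminant sit at $t=0$ and $t=-1$, this should take the form $t(t+1)P(t)$ with the stated $P$. The two roots $t=0,-1$ should come from the points where the non-even component meets the even one, analogous to $\mathbf T_*$.

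\textbf{Genus.} Genus one follows once the four roots of $t(t+1)P(t)$ are distinct for generic $p$. It suffices to check two things:
\begin{enumerate*}[(i)]
\item $P(0)P(-1)\neq 0$ generically, where $P(0)=e_1-\wp(p+\tfrac{\omega_1}{2})$ and $P(-1)=\wp(p+\tfrac{\omega_1}{2})-\wp(p)$, both nonzero since $p\notin E_\tau[2]$ translates;
\item $\operatorname{disc}P\not\equiv 0$, which we verify by letting $p\to\tfrac{\omega_1}{4}$ or taking an expansion as $p\to0$.
\end{enumerate*}
Then $W^2=$ (quartic with simple roots) is a smooth elliptic curve. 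On $\operatorname{disc}P=0$, $P$ has a double root $t_0\neq 0,-1$, so $(t_0,0)$ is a node and the normalization is $W'^2=t(t+1)$, which is rational. This locus is finite because $\operatorname{disc}P$ is a nonconstant elliptic function of $p$.

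\textbf{Arithmetic genus.} The full compactified log-free curve is the union of the rational even component and the non-even component $Y^{(1)}$. Since $p_a(C_1\cup C_2)=p_a(C_1)+p_a(C_2)+\#(C_1\cap C_2)-1$, we need the number of intersection points counted with local multiplicity, including points at infinity in the compactification used by Chou--Wang--Wu. In the generic case, $p_a=0+1+k-1$, so we expect $k=2$, namely the two points $t=0,-1$. In the nodal case the non-even component has $p_a=1$ as a curve, with geometric genus $0$ plus one node, so the total stays $2$. Alternatively, we can invoke flatness: the family over $p$ is flat, so $p_a$ is constant.

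\textbf{Main obstacle.} The hardest step is the explicit elimination that produces exactly $P(t)$ and identifies the correct intersection count, in particular checking transversality of the two components at the intersection points and the behaviour at infinity. We will first try to settle this by computing local multiplicities in the $(s,d)$ coordinates.
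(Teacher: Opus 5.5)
\textbf{The proposal fails at the first step, the log-free equations.} Your potential has no residue at the second integral pole $\tfrac{\omega_1}{2}$. You also assert that the weight-one poles $0,\tfrac{\omega_1}{2}$ are automatically log-free. Both are wrong.

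The family carries a third accessory parameter: the residue $T_1$ in the term $T_1\bigl(\zeta(z-\tfrac12)-\zeta(z)\bigr)$ of the paper's potential, with the residue at $0$ then fixed by the residue theorem. This residue vanishes on the even component, since an even potential has an even expansion at a two-torsion point. It does not vanish on the non-even component.

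At a weight-one pole with $q=2\xi^{-2}+A_j\xi^{-1}+B_j+C_j\xi+\cdots$, the condition $A_j^3-4A_jB_j+4C_j=0$ is a genuine constraint. Even in the one-support case, log-freeness at $0$ needed Theorem~\ref{theorem, noneven's apparent}.

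With your ansatz, the term $2\wp(z+\tfrac{\omega_1}{2})$ enters the conditions at $p$ and $-p$ only through the common constant $2\wp(p+\tfrac{\omega_1}{2})$, which cancels when you eliminate $B$. You therefore recover exactly the one-support factorization $(T_1+T_2)\bigl(T_1-T_2+\tfrac{\wp''(p)}{2\wp'(p)}\bigr)=0$. The non-even factor stays linear, so there is no quadratic to complete the square. Once the weight-one conditions are imposed, this line collapses to its intersection point with $T_1+T_2=0$. In the paper's coordinates below it is $t=0$, where the condition at $0$ reads $-8SP(0)=0$. Inside your family the non-even component is empty, so no elimination can produce $W^2=t(t+1)P(t)$.

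\textbf{How the paper does it.} Set $S=T_++T_-$, $D=T_+-T_-$ and $\rho=\wp'(p)/\bigl(2(\wp(p)-e_1)\bigr)$, so that $\rho^2$ is the leading coefficient of $P$. The half-integral conditions $B_\pm=A_\pm^2$, after eliminating $B$, do not cut down the $(S,D)$-plane. Instead they solve for the new residue: $T_1=St$ with $t=-\frac{1}{2\rho}\bigl(D+\frac{\wp''(p)}{2\wp'(p)}\bigr)$. The weight-one conditions at $0$ and $\tfrac{\omega_1}{2}$ then become $S(t+2)\bigl(S^2t(t+1)-4P(t)\bigr)=0$ and $S(t-1)\bigl(S^2t(t+1)-4P(t)\bigr)=0$. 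Their common factor is the non-even component, and $W=St(t+1)/2$ gives the model.

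This also corrects your geometric picture. The values $t=0$ and $t=-1$ are where $S\to\infty$, i.e.\ points at infinity of the affine component. The non-even component meets the even component $S=0$ at the two roots of $P$. On the discriminant locus it meets it at the node $t_p$ with multiplicity two, which is why the total $\delta$ rises from $2$ to $3$ there.

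Your arithmetic-genus count $0+1+k-1$ with $k=2$ is read off from the desired answer and presumes unverified behaviour of the compactification at infinity. Flatness alone gives constancy, not the value. The paper simply evaluates the Chou--Wang--Wu formula \eqref{rev:eq:arithmetic-general}, where $F(\mathbf n)=10$ and $\prod_i(2n_i+1)=9$ give $p_a=2$ for every admissible $p$.

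Your genus-one and nodal/rational arguments do match the paper's. Note that $P(-1)=e_1-\wp(p)$, not $\wp(p+\tfrac{\omega_1}{2})-\wp(p)$, though it is still nonzero.
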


For a reducible non-even locus, we take the sum of the genera of its normalizations. We conjecture that, for generic $(\tau,p)$, this sum vanishes exactly when at most one integral weight is positive. The one-support family has genus zero, whereas Theorem~\ref{thm, (1,1,0,0) non-even component} shows that the first two-support case is generically elliptic.

\subsection{Consequences}
\begin{corollary}
If $\widetilde Q_n(\widetilde B;\tau)$ is the monic Lam\'e
spectral polynomial, then
\begin{equation}
Q^{(1)}_{n,p}(T;\tau)
=\widetilde Q_n\bigl(T^2-n(n+1)\wp(p);\tau\bigr).
\end{equation}
\end{corollary}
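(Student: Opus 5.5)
The natural route is to compare the two spectral polynomials through the isomorphism of Theorem~\ref{thm, isomorphism} together with the accessory correspondence \eqref{correspondence} of Theorem~\ref{Main Thm 1}. Here $Q^{(1)}_{n,p}(T;\tau)$ is the spectral polynomial of the non-even component. Its zeros in $T$ are, with multiplicity, the values of the coordinate $T$ on $V^{(1)}_{n,p}(\tau)\simeq\mathbb A^1_T$ at which $\mathrm{GLE}^{(1)}_n(p,T,\tau)$ has reducible, that is unitary-degenerate, monodromy: these are the branch points of the two-sheeted spectral cover $C^2=Q^{(1)}_{n,p}(T)$. The right-hand side is the monic polynomial in $T$ whose zeros are exactly the $T$ with $\mathrm L_n(T^2-n(n+1)\wp(p),\tau)$ having reducible monodromy.

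The proof would proceed in four steps.

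\begin{enumerate}
\item \textbf{Equal zero sets.} By Theorem~\ref{Main Thm 1}, the equations $\mathrm{GLE}^{(1)}_n(p,T,\tau)$ and $\mathrm L_n(\widetilde B,\tau)$ with $\widetilde B=T^2-n(n+1)\wp(p)$ have equivalent period monodromy. Reducibility of monodromy is detected by the period monodromy, since the local monodromies at $0,\pm p$ are $\pm I$ by log-freeness. Hence the zero sets of $Q^{(1)}_{n,p}(T)$ and of $\widetilde Q_n(T^2-n(n+1)\wp(p))$ coincide.

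\item \textbf{Degrees and leading coefficients.} $\widetilde Q_n$ has degree $2n+1$ in $\widetilde B$, so the composite has degree $4n+2$ in $T$. I would check that $Q^{(1)}_{n,p}$ has the same degree. One way is from its construction as the characteristic polynomial of the commuting odd-order operator, whose leading behaviour is $T^{4n+2}$ after normalization. Another is to use the evenness in $T$ coming from the involution $T\mapsto -T$, so that $Q^{(1)}_{n,p}(T)=R(T^2)$ for a polynomial $R$. Matching leading coefficients is then a normalization check in the large-$T$ asymptotics.

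\item \textbf{Multiplicities via the quotient curve.} Equal zero sets do not yet give equal multiplicities, which is the main subtlety on singular (nodal) fibers. I would use the isomorphism $\iota_p$ and the quotient map $(T,C)\mapsto(T^2,C)$. Evenness gives $Q^{(1)}_{n,p}(T)=R(T^2)$. Then $\hat\Gamma^{(1)}_{n,p}(\tau)$ is the curve $C^2=R(s)$ with $s=T^2$, while $\widetilde\Gamma_n(\tau)$ is $C^2=\widetilde Q_n(\widetilde B)$. The map $\iota_p$ is affine in the base coordinate, $\widetilde B=s-n(n+1)\wp(p)$, and preserves $C$ up to a constant fixed by monicity. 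So on smooth fibers $R(s)=c\,\widetilde Q_n(s-n(n+1)\wp(p))$, and $c=1$ by monicity.

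\item \textbf{Extension to singular fibers.} For the discriminant locus I would invoke Proposition~\ref{prop:extension-singular-fibers}. Alternatively, both sides are holomorphic in $(p,\tau)$ and agree on the dense nonsingular locus, so the identity of polynomial coefficients extends by continuity.
\end{enumerate}

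The hardest step is confirming that $\iota_p$ pulls $C$ back to $C$ exactly, not to $C$ times a nonconstant function of $T$. I would handle this by comparing the explicit Baker--Akhiezer product formula for $C$ on both sides at $T\to\infty$, where both curves have a single point at infinity.
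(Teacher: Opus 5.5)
This is essentially the paper's own argument (the proof of Proposition~\ref{prop:extension-singular-fibers}, combined with $Q^{(1)}_{n,p}(T)=\widehat Q^{(1)}_{n,p}(T^2)$ from Proposition~\ref{prop, sym}): identify the roots in $X=T^2$ on the dense locus where both curves are nonsingular, match leading coefficients, and extend by holomorphic dependence on $(\tau,p)$; quoting Proposition~\ref{prop:extension-singular-fibers} in Step~4 would be quoting the identity itself, so your continuity argument is the step that actually does the work. Two corrections: the zeros of $Q^{(1)}_{n,p}$ mark \emph{non-completely reducible} monodromy (Theorem~\ref{thm,Q(T) neq 0 iff com.red.}), because the monodromy is abelian and hence reducible for every $T$, and unitarity plays no role; and on the smooth locus, equal zero sets with simple roots, equal degree $2n+1$ and equal leading coefficients already force equality, so the comparison of $C$ through Baker--Akhiezer asymptotics that you single out as the hardest step is not needed.
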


Define the period discriminant and spectral set by
\[
\widetilde\Delta_n(\widetilde B)=\tfrac12\operatorname{tr}
 \widetilde M_1(\widetilde B),\qquad
\Sigma_n^L=\{\widetilde B\in\mathbb{C}\mid
 \widetilde\Delta_n(\widetilde B)\in[-1,1]\},
\]
and define $\Delta^{(1)}_{n,p}$ and $\Sigma^{(1)}_{n,p}$ analogously.
Theorem~1.4 gives
\[
\Delta^{(1)}_{n,p}(T)
=\widetilde\Delta_n\bigl(T^2-n(n+1)\wp(p)\bigr).
\]

\begin{corollary}
The spectral set $\Sigma^{(1)}_{n,p}$ has the finite-gap property
and is invariant under $T\mapsto-T$.
\end{corollary}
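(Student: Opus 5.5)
The plan is to deduce the corollary from the discriminant identity
\[
\Delta^{(1)}_{n,p}(T)=\widetilde\Delta_n\bigl(T^2-n(n+1)\wp(p)\bigr),
\]
which follows from Theorem~\ref{Main Thm 1}, and then import the known spectral theory of the classical Lam\'e equation.

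\emph{Invariance under $T\mapsto-T$.} Set $\phi_p(T)=T^2-n(n+1)\wp(p)$. By definition and the identity above,
\[
\Sigma^{(1)}_{n,p}=\{T\in\mathbb C\mid \Delta^{(1)}_{n,p}(T)\in[-1,1]\}=\phi_p^{-1}\bigl(\Sigma^L_n\bigr).
\]
Since $\phi_p(-T)=\phi_p(T)$, invariance is immediate. One point needs care. Theorem~\ref{Main Thm 1} gives equivalent period monodromy, i.e.\ conjugate monodromy matrices, and I will read "equivalent" that way. Conjugation preserves the trace, so the normalizations of $\Delta^{(1)}_{n,p}$ and $\widetilde\Delta_n$ agree provided the same period loop is used on the same $E_\tau$ for both. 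Theorem~\ref{Main Thm 1} supplies exactly this, because both equations live on the same elliptic curve.

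\emph{Finite-gap property.} For the Lam\'e operator $-d^2/dz^2+n(n+1)\wp(z)$, the classical result (Ince, and \cite{GW1,Takemura1} in the complex setting) is that $\Sigma^L_n$ is a finite union of analytic arcs. These arcs have endpoints among the $2n+1$ roots of $\widetilde Q_n$, and one arc is unbounded. Equivalently, $\widetilde\Delta_n^2-1$ has only finitely many simple zeros, namely the roots of $\widetilde Q_n$. All other zeros are double, because the monodromy is then $\pm I$ since the eigenfunctions are meromorphic.

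I will pull this structure back under the degree-two polynomial map $\phi_p$:
\[
(\Delta^{(1)}_{n,p})^2-1=(\widetilde\Delta_n^2-1)\circ\phi_p .
\]
\begin{enumerate}
\item At a point $T_0\neq0$, $\phi_p$ is locally biholomorphic, so zero orders are preserved.
\item At $T_0=0$, orders double.
\end{enumerate}
Hence the odd-order zeros of $(\Delta^{(1)}_{n,p})^2-1$ lie among the roots of $\widetilde Q_n\circ\phi_p=Q^{(1)}_{n,p}$, by the previous corollary. That is a set of at most $2(2n+1)$ points.

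The preimage of each analytic arc under $\phi_p$ consists of at most two analytic arcs. So $\Sigma^{(1)}_{n,p}$ is a finite union of analytic arcs whose endpoints lie in the zero set of $Q^{(1)}_{n,p}$. This is the finite-gap property.

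\emph{Main obstacle.} The delicate case is when the critical value $\phi_p(0)=-n(n+1)\wp(p)$ lies on $\Sigma^L_n$. Then two preimage arcs may cross at $T=0$, and one must check that this only creates a crossing point, not a new band edge. A band edge would require an odd-order zero of $(\Delta^{(1)}_{n,p})^2-1$. At $T=0$ the order is doubled, hence even, unless the point is also a root of $Q^{(1)}_{n,p}$, which is already allowed. So the set of gap endpoints stays finite, which is all that finite-gap requires. I would state the result in this form: finitely many arcs, with endpoints in the zero set of $Q^{(1)}_{n,p}$.
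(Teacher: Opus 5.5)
Your proposal is correct and follows the paper's route. The identity $\Delta^{(1)}_{n,p}(T)=\widetilde\Delta_n\bigl(T^2-n(n+1)\wp(p)\bigr)$ from Theorem~\ref{Main Thm 1} exhibits $\Sigma^{(1)}_{n,p}$ as the preimage of $\Sigma^L_n$ under the even quadratic map $T\mapsto T^2-n(n+1)\wp(p)$, so both the $T\mapsto -T$ symmetry and the finite-gap structure are inherited from the classical Lam\'e spectrum.

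One small correction. $M_1=\pm I$ only shows that $\widetilde\Delta_n^2-1$ vanishes to order at least two, not that the order is even. The evenness you use comes instead from writing $\widetilde\Delta_n=\cos(2\pi s)$ with $s$ locally holomorphic away from the roots of $\widetilde Q_n$. Your conclusion does not actually need this step, because the arc decomposition of $\Sigma^L_n$ pulls back directly.
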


For integral $n$, finite monodromy of the algebraic Lam\'e
equation is dihedral. Let $L_n(N)$ be the number of its
scalar-equivalence classes with monodromy group $D_N$, as in the
Dahmen--Beukers conjecture \cite{S. Dahmen}, proved in \cite{Chen-Kuo-Lin-Dahmen,Chou-Wang-Wu-I}.

\begin{corollary}
A non-even isomonodromic family has finite monodromy if and only if
its associated classical Lam\'e equation has finite monodromy.
\end{corollary}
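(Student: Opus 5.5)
The plan is to reduce finiteness of monodromy to the period monodromy, and then to use the equivalence of period monodromy from Theorem~\ref{Main Thm 1}. For $\mathrm{GLE}^{(1)}_n(p,T,\tau)$ the first step is to show that, since the equation is log-free (apparent) at $0$ and $\pm p$, the local monodromy at every singularity is $\pm I$. The local exponents at $0$ are $-n,n+1$, whose difference is an integer, and log-freeness removes the logarithm, so the local monodromy is $I$. At $\pm p$ the exponents are $-\tfrac12,\tfrac32$, the difference is $2$, log-freeness again applies, and the local monodromy is $-I$. Hence the monodromy group is generated by $\pm I$ together with the two period matrices $M_1,M_2$, and it is finite if and only if $\langle M_1,M_2\rangle$ is finite. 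The same argument gives the same conclusion for $\mathrm L_n(\widetilde B,\tau)$, where the exponents at $0$ are $-n,n+1$ and the equation is always log-free for integral $n$.

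Next, fix a non-even isomonodromic family. By Theorem~\ref{Main Thm 2} it is the family $\mathrm{GLE}^{(1)}_n(p,T(p),\tau)$ with $T(p)^2=n(n+1)\wp(p)+\widetilde B$, for fixed $\tau$ and $\widetilde B$. Since the family is isomonodromic, finiteness of monodromy is constant along it, so it suffices to test a single member. For that member, Theorem~\ref{Main Thm 1} associates the unique $\mathrm L_n(\widetilde B,\tau)$ with $\widetilde B=T^2-n(n+1)\wp(p)$, and this is the same $\widetilde B$ for every point of the family. The theorem also gives equivalent period monodromy, which I will interpret as the existence of $P\in GL_2(\mathbb C)$ and signs $\varepsilon_j=\pm1$ with $PM_jP^{-1}=\varepsilon_j\widetilde M_j$. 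This interpretation must be checked against the precise definition in Section~\ref{Section, Mono Equiv}, where the Darboux-type operator produces the conjugation, with signs possibly coming from the half-integer exponents at $\pm p$. It then follows that $\langle M_1,M_2\rangle$ and $\langle\widetilde M_1,\widetilde M_2\rangle$ agree up to conjugation and up to the central subgroup $\{\pm I\}$. Since a central extension by $\{\pm I\}$ preserves finiteness, the two groups are finite or infinite together.

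The main obstacle is the step that makes "equivalent period monodromy" precise enough to transfer finiteness. The equivalence should come from the invertible first-order Darboux operator $w\mapsto w'+\phi w$ with elliptic-type coefficient $\phi$ intertwining the two equations. If $\phi$ is elliptic, the operator commutes with analytic continuation along the periods. It may instead be elliptic only up to the sign of $\sqrt{\wp(z)-\wp(p)}$-type factors, and this is what introduces the $\varepsilon_j$. The operator has to be checked to be invertible, meaning that its kernel meets neither solution space. Invertibility was asserted earlier via the accessory-coordinate formula, but it may degenerate at the finitely many spectral points where the period monodromy is not diagonalizable. At those points I would argue directly: on both sides the monodromy is then unipotent-type and infinite, as in the classical Lam\'e theory, so both groups are infinite and the equivalence holds trivially.
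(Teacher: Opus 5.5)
Your proposal is correct and follows the route the paper intends: log-freeness makes every local monodromy $\pm I$, so finiteness is decided by the period group, and the equivalence of period monodromy in Theorem~\ref{Main Thm 1} transfers it in both directions. Your worry about degeneration at non-diagonalizable points is unnecessary. The identity \eqref{Darboux Wronskian} shows that the operators of Proposition~\ref{rev:prop:darboux} are invertible for every $T$, including discriminant parameters, and since $h_p^{\ell_j}=h_p$ they preserve the period matrices exactly, so all your signs $\varepsilon_j$ equal $1$.
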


The algebraic group $D_N$ corresponds to the period group $C_N$
of the elliptic equation. Under
\[
(x,y,x_0,y_0,T)\mapsto
(\lambda^2x,\lambda^3y,\lambda^2x_0,\lambda^3y_0,\lambda T),
\]
where $(x,y)=(\wp(z),\wp'(z))$ and
$(x_0,y_0)=(\wp(p),\wp'(p))$, the quantity
$T^2-n(n+1)x_0$ has the Lam\'e accessory weight.

\begin{corollary}
For $n\in\mathbb{Z}_{>0}$ and $N\geqslant 3$, there are exactly $L_n(N)$
scalar-equivalence classes of non-even isomonodromic families of
one-integer weight $n$ with period monodromy group $C_N$.
\end{corollary}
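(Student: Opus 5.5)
The counting statement will follow from the one-to-one correspondence of Theorem~\ref{Main Thm 1}, made precise through the isomonodromy of Theorem~\ref{Main Thm 2} and the monodromy equivalence of the corollaries above.

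\textbf{Step 1: from families to classical Lam\'e equations.} A non-even isomonodromic family is, by Theorem~\ref{Main Thm 2}, the family $\mathrm{GLE}^{(1)}_n(p,T(p),\tau)$ over the two-sheeted cover $T(p)^2=n(n+1)\wp(p)+\widetilde B$, with $(\tau,\widetilde B)$ fixed. By Theorem~\ref{Main Thm 1}, every member has period monodromy equivalent to that of $\mathrm L_n(\widetilde B,\tau)$. The assignment of the pair $(\tau,\widetilde B)$, hence of $\mathrm L_n(\widetilde B,\tau)$, is therefore well defined. It is also surjective: for any $(\tau,\widetilde B)$ the cover is nonempty. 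It is injective because $\widetilde B$ is recovered as $T^2-n(n+1)\wp(p)$.

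\textbf{Step 2: compatibility with scalings.} Under $z\mapsto z/\lambda$ (equivalently $\Lambda\mapsto\lambda\Lambda$), the weights $(\lambda^2x,\lambda^3y,\lambda^2x_0,\lambda^3y_0,\lambda T)$ give $\widetilde B=T^2-n(n+1)x_0$ weight $2$. This is exactly the weight of the Lam\'e accessory parameter. The bijection of Step 1 thus intertwines scalar equivalence of non-even families with scalar equivalence of Lam\'e equations, and so it descends to a bijection of scalar-equivalence classes.

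\textbf{Step 3: matching the monodromy groups.} The non-even families with period monodromy group $C_N$ must correspond to the classical Lam\'e classes with algebraic monodromy $D_N$. By the earlier corollary, finite monodromy is preserved. The period monodromy of $\mathrm L_n(\widetilde B,\tau)$ is generated by the commuting matrices $\widetilde M_1,\widetilde M_2$. For integral $n$ with finite monodromy, the completely reducible case gives a diagonal cyclic group. Passing to the algebraic form $x=\wp(z)$, the involution $z\mapsto -z$ adjoins a reflection, and the group becomes $D_N$ exactly when the period group is $C_N$. The restriction $N\geqslant 3$ avoids the degenerate Klein four and abelian cases, where $D_2$ and $C_2$ do not match cleanly.

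Because the period monodromy of the non-even equation is equivalent to that of $\mathrm L_n$ (Theorem~\ref{Main Thm 1}), the non-even period group is $C_N$ if and only if the associated Lam\'e period group is $C_N$, that is, if and only if its algebraic group is $D_N$. Counting gives $L_n(N)$ by the Dahmen--Beukers count.

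\textbf{Main obstacle.} The delicate step is verifying that "equivalent period monodromy" in Theorem~\ref{Main Thm 1} means conjugacy of the full period group, not just equality of traces. This matters in the non-semisimple or apparent-singularity settings. Here the local monodromy at $\pm p$ is $-I$ (exponents $-\tfrac12,\tfrac32$, log-free), so it is scalar and does not enlarge the projective group. I would check this first from the Darboux-type operator of Section~\ref{Section, Mono Equiv}, which intertwines solution spaces and hence conjugates monodromy representations exactly.
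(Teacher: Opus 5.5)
Your proposal is correct and takes essentially the same route as the paper. The correspondence $\widetilde B=T^2-n(n+1)\wp(p)$ of Theorems~\ref{Main Thm 1} and~\ref{Main Thm 2} identifies non-even isomonodromic families with classical Lam\'e equations on the same curve. The weighted scaling shows that this identification descends to scalar-equivalence classes. The standard correspondence between the period group $C_N$ and the algebraic group $D_N$ then reduces the count to the Dahmen--Beukers number $L_n(N)$. Your plan to confirm exact conjugacy of the period matrices through the Darboux operators of Proposition~\ref{rev:prop:darboux} matches how the paper itself establishes equivalent period monodromy.
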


The curvature equation
\begin{equation}
\Delta v+e^v=8\pi n\delta_0\quad\text{on }E_\tau
\label{PDE 8pin at 0}
\end{equation}
has a solution, equivalently an even family of solutions, precisely
when some $\mathrm L_n(\widetilde B,\tau)$ has unitary monodromy
\cite{CLW,LW-AnnMath,LW}. For
\begin{equation}
\Delta u+e^u=8\pi n\delta_0+4\pi(\delta_p+\delta_{-p})
\quad\text{on }E_\tau,
\label{PDE, 8pin+4pi+-p}
\end{equation}
the even and non-even families lie over $V^{(0)}_{n,p}(\tau)$
and $V^{(1)}_{n,p}(\tau)$. The even problem depends on $p$
through Painlev\'e VI \cite{Chen-Kuo-Lin-Painleve VI}; Theorem~\ref{Main Thm 1} gives
\[
\text{a non-even family of \eqref{PDE, 8pin+4pi+-p}}
\quad\Longleftrightarrow\quad
\text{an even family of \eqref{PDE 8pin at 0}}.
\]
Thus non-even families exist either for every admissible $p$ or for
none on a fixed $E_\tau$. The rectangular nonexistence result of
Chen--Lin \cite{Chen-Lin-sharp nonexistence} yields:

\begin{corollary}
For $\tau\in i\mathbb R_{>0}$ and $n\in\mathbb{N}$, equation
\eqref{PDE, 8pin+4pi+-p} has no non-even family of solutions for any
$p\in E_\tau\setminus E_\tau[2]$.
\end{corollary}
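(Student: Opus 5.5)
The corollary is a direct consequence of the correspondence in Theorem~\ref{Main Thm 1} together with the cited rectangular nonexistence theorem. The plan is to argue by contradiction.

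Fix $\tau\in i\mathbb R_{>0}$, $n\in\mathbb N$ and $p\in E_\tau\setminus E_\tau[2]$, and suppose that \eqref{PDE, 8pin+4pi+-p} has a non-even family of solutions. The first step is the standard Liouville-type translation. A solution $u$ of \eqref{PDE, 8pin+4pi+-p} produces, through its developing map, a generalized Lam\'e equation with singularities at $0,\pm p$ and exponents matching the weights $(n,0,0,0,\tfrac12,\tfrac12)$. This equation is log-free, and its monodromy is unitary, i.e.\ conjugate into $SU(2)$. By definition of the families, non-even solutions lie over $V^{(1)}_{n,p}(\tau)$. The resulting equation is therefore $\mathrm{GLE}^{(1)}_n(p,T,\tau)$ for some $T$, and its monodromy is unitary. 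Since $-1$ is the local monodromy at $\pm p$ and the identity is the local monodromy at $0$, unitarity is equivalent to the period monodromy $M_1,M_2$ being simultaneously unitarizable.

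The second step applies Theorem~\ref{Main Thm 1}. Set $\widetilde B=T^2-n(n+1)\wp(p)$. Then $\mathrm L_n(\widetilde B,\tau)$, on the same $E_\tau$, has period monodromy equivalent to that of $\mathrm{GLE}^{(1)}_n(p,T,\tau)$. I will need to check that ``equivalent period monodromy'' means simultaneous conjugacy of the pair $(M_1,M_2)$, possibly up to the sign/scalar normalization coming from the half-integer exponents at $\pm p$. Simultaneous unitarizability is invariant under simultaneous conjugacy and under multiplication by signs. Hence $\mathrm L_n(\widetilde B,\tau)$ has unitary monodromy.

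The third step uses \cite{CLW,LW-AnnMath,LW}: this unitarity yields a solution, indeed an even family of solutions, of \eqref{PDE 8pin at 0} on the rectangular torus $E_\tau$. That contradicts Chen--Lin \cite{Chen-Lin-sharp nonexistence}, which asserts that \eqref{PDE 8pin at 0} has no solution when $\tau\in i\mathbb R_{>0}$. Since $p$ was arbitrary, the corollary follows for every admissible $p$.

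The main obstacle is the first step. One must make precise that the correspondence between solutions of \eqref{PDE, 8pin+4pi+-p} and unitary log-free generalized Lam\'e equations respects the even/non-even splitting, so that a non-even family lands in $V^{(1)}_{n,p}(\tau)$ and not at the intersection point $\mathbf T_*$ viewed as even. If the solution lands at $\mathbf T_*$, it lies on both components, and the argument above still applies because it only uses membership in $V^{(1)}$. A second check is that unitarity passes through the monodromy equivalence of Theorem~\ref{Main Thm 1} including the central factors.
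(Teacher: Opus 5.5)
Your proposal is correct and follows essentially the paper's argument. A non-even family of \eqref{PDE, 8pin+4pi+-p} gives a unitary $\mathrm{GLE}^{(1)}_n(p,T,\tau)$, and Theorem~\ref{Main Thm 1} transfers this to unitary monodromy of $\mathrm L_n(\widetilde B,\tau)$, i.e.\ an even family of solutions of \eqref{PDE 8pin at 0}, which Chen--Lin exclude on rectangular tori. Your second check is settled by Proposition~\ref{rev:prop:darboux}: its operators preserve the period matrices exactly, so unitarity transfers with no sign or scalar ambiguity.
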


\medskip
\noindent\textit{Acknowledgements.}
Ting-Jung Kuo was supported by NSTC 113-2628-M-003-001-MY4 and
thanks the National Center for Theoretical Sciences for its
continued support.

\section{Geometry of the Log-Free Locus}

\label{Log-free variety}

Fix $n\in\mathbb{Z}_{>0}$ and $p\in E_{\tau}\setminus E_{\tau}[2]$, and
consider  \eqref{GLEn}$_{n,p}$:
\[
y^{\prime\prime}(z)=q_{n}(z;p,T_{1},T_{2},B)y(z),\quad z\in\mathbb{C},
\]
where the potential $q_{n}(z;p,T_{1},T_{2},B)$ is defined by \eqref{potential T_1, T_2}.

\begin{theorem}
\label{apparent,main thm} The generalized Lam\'{e} equation
\eqref{GLEn}$_{n,p}$ is log-free at each singularity $z=0,\pm p$ if and only
if the parameters $T_{1},T_{2}$ satisfy
\begin{equation}
(T_{1}+T_{2})\left(  T_{1}-T_{2}+\frac{\wp^{\prime\prime}(p)}{2\wp^{\prime
}(p)}\right)  =0, \label{AP1}%
\end{equation}
and $B$ is determined by%
\begin{equation}
B=\frac{T_{1}^{2}+T_{2}^{2}}{2}-\frac{T_{1}-T_{2}}{2}\zeta(2p)-\frac{3}{4}%
\wp(2p)-n(n+1)\wp(p). \label{AP2}%
\end{equation}

\end{theorem}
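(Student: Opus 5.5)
The plan is to carry out a local Frobenius analysis at each of the three singular points $z=0$, $z=p$, $z=-p$ and then combine the resulting conditions.

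\emph{The point $z=0$.} Here the local exponents are $-n$ and $n+1$. The only singular contribution of the terms $T_1\zeta(z+p)$, $T_2\zeta(z-p)$ is $-(T_1+T_2)\zeta(z)$, which has a simple pole. First I will check whether the standard symmetric-potential argument survives this term. In the pure Lam\'e case, log-freeness at $0$ is automatic because the potential is even. Here the potential is no longer even, so the obstruction at $0$ is a polynomial condition in $(T_1,T_2,B)$ of degree up to about $n$. I therefore expect that it is \emph{not} the source of the displayed conditions. Instead, I will argue that once the conditions at $\pm p$ hold, the equation at $0$ is automatically log-free. The tool is the Chou--Wang--Wu global result \cite{Chou-Wang-Wu-II}: integral-weight points of a generalized Lam\'e equation with apparent $\pm p$ impose no extra condition beyond those determining the curve. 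If that citation is unavailable, the fallback is a direct argument: use the substitution $z\mapsto -z$, which exchanges $T_1\leftrightarrow T_2$ up to sign, together with the exponent gap $2n+1$. Concretely, I will expand $q_n$ at $0$ and show that the recursion has a solution at step $2n+1$.

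\emph{The points $z=\pm p$.} Here the local exponents are $-\tfrac12$ and $\tfrac32$, which differ by $2$. Near $z=p$, write $\xi=z-p$. The expansion is
\[
q_n=\frac{3}{4\xi^2}+\frac{T_1}{\xi}+c_0+c_1\xi+\cdots,
\]
where
\[
c_0=n(n+1)\wp(p)+\tfrac34\wp(2p)-T_1\zeta(p)+T_2\bigl(\zeta(2p)-\zeta(p)\bigr)+B .
\]
Substituting $w=\xi^{-1/2}\sum a_k\xi^k$ into the equation gives $a_1=-T_1a_0$ at first order. The compatibility condition at order $2$ is the standard apparent-singularity relation
\[
c_0=\tfrac{T_1^2}{?}\ \text{(fixed multiple of }T_1^2).
\]
More precisely, for $w''=(\tfrac34\xi^{-2}+\beta\xi^{-1}+c_0+\dots)w$ the relation reads $c_0=\beta^2$. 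Applying this at $p$ gives one linear equation for $B$, namely $B=T_1^2-\dots$. The same computation at $-p$ uses the potential with $p\to-p$ and the roles of $T_1,T_2$ swapped, and gives $B=T_2^2-\dots$.

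\emph{Combining the conditions.} Subtracting the two expressions for $B$ leaves a relation of the form
\[
T_1^2-T_2^2+(T_1+T_2)\,L(p)=0,
\]
where $L$ involves $\zeta(2p)-2\zeta(p)$. Rewriting with the duplication identity
\[
\zeta(2p)-2\zeta(p)=\frac{\wp''(p)}{2\wp'(p)}
\]
factors this as \eqref{AP1}. Averaging the two expressions for $B$, and simplifying with the same identity, yields \eqref{AP2}.

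\emph{Main obstacle.} The hardest step will be the point $z=0$. The key facts to establish are that this point is indeed log-free on both branches, and that it imposes no further constraint that would cut the two lines down to finitely many points. If the general argument fails, I will first try to verify it via the Wronskian/monodromy-invariant trick: a log at $0$ would force the local monodromy at $0$ to be nontrivial unipotent. This should be compared with the product of the local monodromies at $\pm p$, each of which is $-I$ under the apparent conditions. Failing that, I will use the explicit Hermite--Halphen ansatz.
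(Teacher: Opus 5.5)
\emph{Gap at $z=0$.} Your analysis at $z=\pm p$ is the paper's: impose the apparency condition $c_0=\beta^2$ at each point, then subtract and add using $\zeta(2p)-2\zeta(p)=\wp''(p)/(2\wp'(p))$. Watch the bookkeeping, though. Near $z=p$ the residue of $q_n$ is $T_2$, coming from $T_2\zeta(z-p)$, and the $\zeta(2p)$ term carries $T_1$. If you carry your assignment through symmetrically at $-p$, the subtraction gives the second factor with the wrong sign, $T_1-T_2-\wp''(p)/(2\wp'(p))$, and the addition flips the sign of the $\zeta(2p)$ term in \eqref{AP2}.

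The genuine gap is the point $z=0$, which is the whole content of the ``if'' direction. None of your routes supplies it.
\begin{itemize}
\item \emph{The Chou--Wang--Wu citation.} There is no general principle that integral-weight points impose no extra condition. In the paper's own $(1,1,0,0,\tfrac12,\tfrac12)$ computation, the log-free conditions at $0$ and $\tfrac{\omega_1}{2}$ are exactly what cut the surface of $\pm p$-apparent equations down to a curve. Invoking such a principle here assumes the theorem.
\item \emph{The symmetry $z\mapsto -z$.} It sends $(T_1,T_2)\mapsto(-T_2,-T_1)$. It settles $z=0$ only on its fixed line $T_1+T_2=0$, where the potential is even; this is how the paper handles the even component. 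On the non-even line it acts by $T\mapsto -T$ and gives only a parity of the obstruction.
\item \emph{The monodromy comparison.} With $N_{\pm p}=-I$ the relation reads $M_1M_2M_1^{-1}M_2^{-1}=N_0$. A nontrivial unipotent matrix is a commutator in $SL(2,\mathbb C)$ (take a diagonal matrix against a unipotent one), so there is no contradiction. Showing $N_0=I$ is equivalent to what you want to prove.
\item \emph{The Hermite--Halphen ansatz.} ``Use the ansatz'' is not yet an argument.
\end{itemize}

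\emph{What is needed.} You need a mechanism giving log-freeness at $0$ for \emph{every} $T$ on $V^{(1)}_{n,p}(\tau)$. The paper substitutes $B(T)$ from \eqref{B(T),noneven}, so the Frobenius obstruction at step $2n+1$ becomes a polynomial $f_p(T)$, and then shows $f_p$ has infinitely many zeros. It starts from the even, hence log-free, point $T=0$ and its ansatz divisor $\mathbf a_0(p)$. Krull's height theorem then shows the Hermite--Halphen variety through $\mathbf a_0(p)$ has dimension at least one. This count works only because of the rank lemma: under $\sum_i(x_i-x_p)^{-1}=0$ one of the $n$ residue equations is redundant, leaving $2n+1$ equations in $2n+2$ unknowns. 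It also uses that the fibres of $T$ have at most two points, so $T$ takes infinitely many values. The finite exceptional set $\Theta(\tau)$ of values of $p$ is then removed by continuity of the coefficients of $f_p$ in $p$.

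If you want to lean on Chou--Wang--Wu, the usable input is their total degree $2n(n+1)+1$ against the even degree $n(n+1)+1$. This forces infinitely many log-free parameters on the non-even line, hence $f_p\equiv 0$.

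A more elementary fix is the Darboux map of Proposition~\ref{rev:prop:darboux}. The map $v=(\wp(z)-\wp(p))^{-1/2}(y'-Ty)$ carries solutions of $\mathrm L_n(T^2-n(n+1)\wp(p),\tau)$ bijectively onto those of $\mathrm{GLE}^{(1)}_n(p,T,\tau)$; this is verified by addition formulas alone. Lam\'e solutions are log-free at $0$, and $(\wp(z)-\wp(p))^{-1/2}=\pm z\,(1+O(z^2))$ is single-valued there, so every solution of $\mathrm{GLE}^{(1)}_n(p,T,\tau)$ is log-free at $0$ for all $T$.
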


Applying the Frobenius method at $z=\pm p$ gives the following necessary conditions.

\begin{lemma}
\label{apparent,necessary condition}The generalized Lam\'{e} equation
\eqref{GLEn}$_{n,p}$ is log-free at $z=\pm p$ whenever the parameters
$T_{1},T_{2}$ and $B$ satisfy (\ref{AP1}) and (\ref{AP2}).
\end{lemma}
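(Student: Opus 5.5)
The plan is to run the Frobenius method at $z=p$ and at $z=-p$ and compute directly the single obstruction to a logarithm-free solution at each point. We then check that the two resulting scalar conditions are equivalent to the pair \eqref{AP1}, \eqref{AP2}. Since the lemma only asks for the implication from \eqref{AP1}, \eqref{AP2} to log-freeness, it suffices to show that these equations imply both local conditions. In fact the computation will show the two systems are equivalent.

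\emph{Local expansion at $z=p$.} Put $s=z-p$. Near $s=0$ only $\tfrac34\wp(z-p)$ and $T_2\zeta(z-p)$ are singular. Since $\zeta(s)=s^{-1}+O(s^3)$ and $\wp(s)=s^{-2}+O(s^2)$, we get
\[
q_n=\tfrac34 s^{-2}+a_+s^{-1}+b_++O(s),\qquad a_+=T_2,
\]
\[
b_+=n(n+1)\wp(p)+\tfrac34\wp(2p)+T_1\bigl(\zeta(2p)-\zeta(p)\bigr)-T_2\zeta(p)+B .
\]
The indicial equation $\rho(\rho-1)=\tfrac34$ gives exponents $-\tfrac12$ and $\tfrac32$, which differ by $2$. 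Substitute $w=s^{-1/2}\sum_k c_ks^k$ with $c_0=1$. The recursion reads
\[
k(k-2)\,c_k=a_+c_{k-1}+b_+c_{k-2}+\cdots .
\]
At $k=1$ it gives $c_1=-a_+$. At the resonant step $k=2$ the left side vanishes, so the recursion can be continued without a logarithm exactly when $0=a_+c_1+b_+$, that is, $b_+=a_+^2$. Hence $z=p$ is log-free if and only if
\[
T_2^2=n(n+1)\wp(p)+\tfrac34\wp(2p)+T_1\bigl(\zeta(2p)-\zeta(p)\bigr)-T_2\zeta(p)+B .
\]

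\emph{Local expansion at $z=-p$.} The same computation, using the oddness of $\zeta$ and the evenness of $\wp$, gives $a_-=T_1$ and the condition
\[
T_1^2=n(n+1)\wp(p)+\tfrac34\wp(2p)-T_2\bigl(\zeta(2p)-\zeta(p)\bigr)+T_1\zeta(p)+B .
\]

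\emph{Combining the two conditions.} Adding the two equations gives
\[
T_1^2+T_2^2=2n(n+1)\wp(p)+\tfrac32\wp(2p)+(T_1-T_2)\zeta(2p)+2B,
\]
which is exactly \eqref{AP2}. Subtracting them gives
\[
T_2^2-T_1^2=(T_1+T_2)\bigl(\zeta(2p)-2\zeta(p)\bigr),
\]
that is,
\[
(T_1+T_2)\bigl(T_1-T_2+\zeta(2p)-2\zeta(p)\bigr)=0 .
\]
It remains to use the duplication identity
\[
\zeta(2p)-2\zeta(p)=\frac{\wp''(p)}{2\wp'(p)} .
\]
This follows by expanding $\sigma(2p)=-\wp'(p)\sigma(p)^4$ and taking the logarithmic derivative. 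With this identity the difference equation becomes \eqref{AP1}. Therefore \eqref{AP1} and \eqref{AP2} together imply both local resonance conditions, so the equation is log-free at $\pm p$.

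The only delicate point is bookkeeping the constant terms $b_\pm$ correctly: the regular parts of $\zeta(z\pm p)-\zeta(z)$ at the opposite singularity, and the absence of an $s^0$ term in $\zeta(s)$ and $\wp(s)$. Once these are right, the rest is linear algebra.
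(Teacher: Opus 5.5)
Your proof is correct and follows the paper's argument. You use the same Frobenius recursion at $\pm p$, the same resonance condition $q_{0,\pm p}=q_{-1,\pm p}^{2}$ at step $k=2$ with the same constant terms, and the same reduction to \eqref{AP1}--\eqref{AP2} via $\zeta(2p)-2\zeta(p)=\wp''(p)/(2\wp'(p))$. Your remark that adding and subtracting the two local conditions is reversible is worth keeping: the paper's write-up derives these as necessary conditions, while the lemma asserts sufficiency, and that remark covers both directions.
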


\begin{proof}
The local exponents of \eqref{GLEn}$_{n,p}$ at $z=\pm p$ are $-1/2$ and $3/2$.
Suppose \eqref{GLEn}$_{n,p}$ is log-free at $\pm p$, there exist local
solutions of the form
\begin{equation}
\label{1024equ1}%
\begin{split}
y_{p}(z)  &  =\sum_{j=0}^{\infty}c_{j}(z-p)^{j-\frac{1}{2}},\quad
c_{0}=1,\quad c_{j}\in\mathbb{C},\quad j\in\mathbb{Z}_{>0},\\
y_{-p}(z)  &  =\sum_{j=0}^{\infty}d_{j}(z+p)^{j-\frac{1}{2}},\quad
d_{0}=1,\quad d_{j}\in\mathbb{C},\quad j\in\mathbb{Z}_{>0}.
\end{split}
\end{equation}
Near $z=\pm p$ the potential admits Laurent expansions
\[%
\begin{split}
q_{n}(z;p,T_{1},T_{2},B)  &  =\sum_{i=-2}^{\infty}q_{i,p}(z-p)^{i}%
\quad\text{near}\quad z=p,\\
q_{n}(z;p,T_{1},T_{2},B)  &  =\sum_{i=-2}^{\infty}q_{i,-p}(z+p)^{i}%
\quad\text{near}\quad z=-p,
\end{split}
\]
with coefficients
\begin{equation}
\label{1024equ6}%
\begin{split}
q_{-2,p}  &  =q_{-2,-p}=\frac{\,3\,}{4},\quad q_{-1,p}=T_{2},\quad
q_{-1,-p}=T_{1},\\
q_{0,p}  &  =B-\left(  T_{1}+T_{2}\right)  \zeta(p)+T_{1}\,\zeta
(2p)+n(n+1)\wp(p)+\frac{\,3\,}{4}\wp(2p),\\
q_{0,-p}  &  =B+\left(  T_{1}+T_{2}\right)  \zeta(p)-T_{2}\,\zeta
(2p)+n(n+1)\wp(p)+\frac{\,3\,}{4}\wp(2p).
\end{split}
\end{equation}
Substituting \eqref{1024equ1} into \eqref{GLEn}$_{n,p}$ and equating
coefficients of $(z\pm p)^{j-\frac{5}{2}}$ gives, for each $j\in
\mathbb{Z}_{\geq0}$,
\begin{equation}
\label{1024equ3}%
\begin{split}
c_{j}\left(  j-\frac{1}{2}\right)  \left(  j-\frac{3}{2}\right)   &
=\sum_{i=-2}^{j-2}q_{i,p}\,c_{j-2-i}\,,\\
d_{j}\left(  j-\frac{1}{2}\right)  \left(  j-\frac{3}{2}\right)   &
=\sum_{i=-2}^{j-2}q_{i,-p}\,d_{j-2-i}\,.
\end{split}
\end{equation}
Using $q_{-2,\pm p}=\tfrac34$, the relations \eqref{1024equ3} reduce to
\begin{equation}
\label{1024equ4}%
\begin{split}
j\left(  j-2\right)  c_{j}  &  =\sum_{i=-1}^{j-2}q_{i,p}\,c_{j-2-i}\,,\quad
j\in\mathbb{Z}_{>0},\\
j\left(  j-2\right)  d_{j}  &  =\sum_{i=-1}^{j-2}q_{i,-p}\,d_{j-2-i}\,,\quad
j\in\mathbb{Z}_{>0}.
\end{split}
\end{equation}
Taking $j=1,2$ in \eqref{1024equ4} yields the algebraic constraints
\begin{equation}
\label{1024equ5}%
\begin{cases}
~q_{-1,p}\,T_{2}-q_{0,p}=0,\\
~q_{-1,-p}\,T_{1}-q_{0,-p}=0.
\end{cases}
\end{equation}
Substituting \eqref{1024equ6} into \eqref{1024equ5} yields
\[%
\begin{cases}
~\left(  T_{1}+T_{2}\right)  \left(  T_{1}-T_{2}+\dfrac{\wp^{\prime\prime}
(p)}{\,2\wp^{\prime}(p)\,}\right)  =0,\\[4pt]%
~B=\dfrac{1}{2}\left(  T_{1}^{2}+T_{2}^{2}\right)  -\dfrac{1}{2}\left(
T_{1}-T_{2}\right)  \,\zeta(2p)-\dfrac{3}{\,4\,}\, \wp(2p)-n(n+1)\,\wp(p).
\end{cases}
\]

\end{proof}

Define
\begin{equation}
V_{n,p}(\tau):=\left\{  \mathbf{T}=(T_{1},T_{2})\in\mathbb{C}^{2}\text{ such
that (\ref{AP1}) holds true}\right\}  .\label{apparent space, whole}%
\end{equation}
Then
\begin{equation}
V_{n,p}(\tau)=V_{n,p}^{(0)}(\tau)\cup V_{n,p}^{(1)}(\tau
)\label{Apparent space}%
\end{equation}
where $V_{n,p}^{(0)}(\tau)$ and $V_{n,p}^{(1)}(\tau)$ are defined by
\eqref{apparent space, even, A} and \eqref{apparent space, noneven, T}, respectively.

\textbf{Case I (Even component): }Let $\mathbf{T}$ $\in$ $V_{n,p}^{(0)}(\tau
)$, and parameterize the accessory parameters as
\[
\mathbf{T}=\left(  A,-A\right)  ,\text{ }A\in\mathbb{C}.
\]
The three-singularity GLE (\ref{GLEn}) reduces to GLE$_{n}^{(0)}(p,A,\tau)$:
\begin{equation}
y^{\prime\prime}(z)=q_{n}^{(0)}(z;p,A)y(z),\text{ }z\in E_{\tau},
\label{GLE 2, even}%
\end{equation}
where%
\begin{equation}
q_{n}^{(0)}(z;p,A)=\left[
\begin{array}
[c]{l}%
n(n+1)\wp(z)+\frac{3}{4}(\wp(z+p)+\wp(z-p))\\
+A(\zeta(z+p)-\zeta(z-p))+B
\end{array}
\right]  \label{potential, even}%
\end{equation}
with the accessory parameter $B$ determined by
\begin{equation}
B=A^{2}-\zeta(2p)A-\frac{3}{4}\wp(2p)-n(n+1)\wp(p). \label{B(A),even}%
\end{equation}

\textbf{Case II (Non-even component): }Let $\mathbf{T}\in$ $V_{n,p}^{(1)}%
(\tau)$, and parameterize the accessory parameters as
\[
\mathbf{T}=\left(  T-\frac{\wp^{\prime\prime}(p)}{4\wp^{\prime}(p)}%
,T+\frac{\wp^{\prime\prime}(p)}{4\wp^{\prime}(p)}\right)  ,\text{ }%
T\in\mathbb{C}.
\]
In this setting, the three-singularity GLE (\ref{GLEn}) becomes GLE$_{n}%
^{(1)}(p,T,\tau)$:%
\begin{equation}
y^{\prime\prime}(z)=q_{n}^{(1)}(z;p,T)y(z),\text{ }z\in E_{\tau},
\label{GLE 3, noneven}%
\end{equation}
where the non-even potential takes the form%
\begin{equation}
q_{n}^{(1)}(z;p,T)=\left[
\begin{array}
[c]{l}%
n(n+1)\wp(z)+\frac{3}{4}(\wp(z+p)+\wp(z-p))\\
+T(\zeta(z+p)+\zeta(z-p)-2\zeta(z))\\
-\frac{\wp^{\prime\prime}(p)}{4\wp^{\prime}(p)}(\zeta(z+p)-\zeta(z-p))+B
\end{array}
\right]  , \label{potential, noneven}%
\end{equation}
where the corresponding value of $B$ is given by%
\begin{equation}
B=T^{2}+\frac{\wp^{\prime\prime}(p)}{2\wp^{\prime}(p)}\zeta(p)-\frac{1}%
{2}(2n^{2}+2n-3)\wp(p). \label{B(T),noneven}%
\end{equation}
The potential $q_{n}^{(1)}$ is even precisely when $T=0$, which is the unique
intersection point of $V_{n,p}^{(0)}(\tau)$ and $V_{n,p}^{(1)}(\tau)$.

The Frobenius argument at $z=0$ gives the following result
\cite{Chen-Kuo-Lin-Lame I}.

\begin{lemma}
[\cite{Chen-Kuo-Lin-Lame I}]\label{lemma apparent,T1+T2=0} Let $\mathbf{T}%
=(A,-A)\in V_{n,p}^{(0)}(\tau),$ and define $B$ by \eqref{B(A),even}. Then the
even symmetry equation $\mathrm{GLE}_{n}^{(0)}$($p,A,\tau$) is log-free at
$z=0$.
\end{lemma}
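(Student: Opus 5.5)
The plan is to use the parity of the potential: it forces the Frobenius resonance condition at $z=0$ to hold identically, so the particular value \eqref{B(A),even} of $B$ plays no role at $z=0$. It only matters at $z=\pm p$, where Lemma~\ref{apparent,necessary condition} applies.

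\emph{Step 1: parity of the potential.} I first check that $q^{(0)}_n(\cdot\,;p,A)$ in \eqref{potential, even} is an even function of $z$. The terms $\wp(z)$ and $\wp(z+p)+\wp(z-p)$ are clearly even. Since $\zeta$ is odd,
\[
\zeta(-z+p)-\zeta(-z-p)=-\zeta(z-p)+\zeta(z+p),
\]
so $A(\zeta(z+p)-\zeta(z-p))$ is even as well. This is exactly where the even-component condition $T_1+T_2=0$ enters: for $T_1+T_2\neq0$ the term $(T_1+T_2)\zeta(z)$ would survive and destroy parity. Consequently the Laurent expansion at $z=0$ has only even powers:
\[
q^{(0)}_n(z)=n(n+1)z^{-2}+\sum_{k\ge0}a_{2k}z^{2k}.
\]
In particular the residue term vanishes, since $\zeta(z+p)-\zeta(z-p)$ is holomorphic at $0$.

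\emph{Step 2: Frobenius recursion at $0$.} The local exponents at $z=0$ are $-n$ and $n+1$, which differ by the odd integer $2n+1$. I seek a solution of the form $y(z)=\sum_{j\ge0}c_jz^{j-n}$ with $c_0=1$. Substituting and comparing coefficients of $z^{j-n-2}$ gives
\[
j(j-2n-1)\,c_j=\sum_{k\ge0}a_{2k}\,c_{j-2-2k},\qquad j\ge1.
\]
The right-hand side only involves $c_{j-2},c_{j-4},\dots$, so the recursion splits by the parity of $j$. I set $c_j=0$ for all odd $j<2n+1$; this is consistent, because at $j=1$ the right-hand side is empty. The even-indexed coefficients are then determined uniquely.

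\emph{Step 3: the resonance step.} The only possible obstruction is at $j=2n+1$. There the left-hand side is identically zero, and the right-hand side is a combination of the odd-indexed coefficients $c_{2n-1},c_{2n-3},\dots,c_1$, all of which vanish. So the compatibility condition holds automatically. I may choose $c_{2n+1}$ freely, say $0$, and continue the recursion for all larger $j$. The resulting series converges by the standard Frobenius theory, so $z=0$ carries two independent solutions without logarithms, and the equation is log-free there for every $A$ and every $B$.

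The main point requiring care is Step 1: verifying that no odd Laurent term survives, in particular that no $z^{-1}$ term appears from the $\zeta$-differences. Once parity is established, the resonance argument in Steps 2--3 is the standard one for even potentials. Combined with Lemma~\ref{apparent,necessary condition}, which handles $z=\pm p$, this shows that $\mathrm{GLE}^{(0)}_n(p,A,\tau)$ is log-free at all its singularities.
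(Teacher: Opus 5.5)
Your proposal is correct and follows essentially the same route as the paper: the paper attributes this lemma to the Frobenius argument at $z=0$ and later invokes exactly your parity observation (``since this equation is even, it is log-free at $z=0$''), so the value of $B$ in \eqref{B(A),even} plays no role there. One small refinement: the odd-indexed coefficients $c_j$ with $j<2n+1$ are not merely consistently set to zero but are forced to vanish inductively, since $j(j-2n-1)\neq0$ for those $j$.
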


By Lemma \ref{apparent,necessary condition}, it remains to prove log-freeness
at $z=0$ for $\mathbf{T}\in V_{n,p}^{(1)}(\tau)$. At $z=0$, the direct
Frobenius recurrence is intractable; we instead prove the following result by
the Hermite--Halphen method.

\begin{theorem}
\label{theorem, noneven's apparent} Let $\mathbf{T}\in$ $V_{n,p}^{(1)}(\tau)$
and $B$ be given by \eqref{B(T),noneven}. Then the non-even equation
GLE$_{n}^{(1)}$($p,T,\tau$) is log-free at $z=0$.
\end{theorem}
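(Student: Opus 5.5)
We prove log-freeness at $z=0$ by producing, for every $T\in\mathbb C$, a Hermite--Halphen ansatz solution of $\mathrm{GLE}^{(1)}_n(p,T,\tau)$ whose local behaviour at $0$ has the exponent $-n$. The local exponents of \eqref{GLE 3, noneven} at $0$ are $-n$ and $n+1$. Their difference $2n+1$ is an integer, so log-freeness at $0$ is equivalent to the existence of a solution which is meromorphic near $0$ (up to the harmless half-integral behaviour coming from $\pm p$) with a pole of exact order $n$ there. Equivalently, we need a two-dimensional space of solutions, none of which carries a logarithm at $0$.

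\textbf{Ansatz.} Guided by the Lam\'e case and by the exponents $-\tfrac12$ at $\pm p$, we take
\[
y_{\mathbf a}(z)=e^{cz}\,\frac{\prod_{j=1}^{n}\sigma(z-a_j)}{\sigma(z)^{n}\,\sigma(z-p)^{1/2}\sigma(z+p)^{1/2}}\,\sigma(z-b)\sigma(z+b')\cdots,
\]
with the number of numerator zeros adjusted so that $y_{\mathbf a}$ is multiplicatively quasi-periodic. Substituting into \eqref{GLE 3, noneven}, the quantity $y''/y-q^{(1)}_n$ becomes an elliptic function with at most simple poles at the $a_j$, at $0$ and at $\pm p$ (the double poles cancel by the exponent choice). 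Requiring all residues to vanish gives the usual Hermite--Halphen system:
\begin{itemize}
\item at each $a_j$, a balance of $\zeta$-sums;
\item at $\pm p$, two conditions which are exactly (\ref{1024equ5}) and are already satisfied on $V^{(1)}_{n,p}(\tau)$ with $B$ given by \eqref{B(T),noneven};
\item at $0$, a single condition linking $\sum a_j$, $c$ and $T$.
\end{itemize}
Once all residues vanish, the difference is a constant, and that constant must equal $B$.

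\textbf{Generic solutions.} For generic $T$ we solve this system by a counting and continuity argument. The unknowns $(a_1,\dots,a_n,c)$ satisfy $n+1$ equations, and the system defines an algebraic curve over the $T$-line, namely the future spectral curve. Its projection to $\mathbb A^1_T$ is dominant, which we verify at a convenient limiting value (for instance $T\to\infty$, where the $a_j$ approach $\pm p$ or $0$ in a controlled way). Over generic $T$ the two points $\mathbf a$ and $-\mathbf a$ (the latter obtained through the symmetry $z\mapsto -z$, $T\mapsto$ the corresponding sheet) give two independent solutions $y_{\mathbf a}$ and $y_{-\mathbf a}$. Both are free of logarithms at $0$, hence the equation is log-free at $0$ for generic $T$.

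\textbf{Extension to all $T$.} The log-free condition at $0$ is a polynomial identity in $T$: it is the vanishing of the obstruction coefficient of the Frobenius recursion at $j=2n+1$, which is polynomial in $(T,B)$, and $B$ is itself polynomial in $T$. Since this polynomial vanishes for generic $T$, it vanishes identically on $V^{(1)}_{n,p}(\tau)$. This step removes any need to treat degenerate fibres, where $\mathbf a=-\mathbf a$ or some $a_j$ collides with $0$ or $\pm p$, separately.

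\textbf{Main obstacle.} The hardest part is the residue condition at $0$, together with showing that the system is consistent exactly when $T_1-T_2=-\wp''(p)/(2\wp'(p))$. Here the non-even term $T(\zeta(z+p)+\zeta(z-p)-2\zeta(z))$ contributes a residue $-2T$ at $0$, and this must be matched by the asymmetry of the $a_j$. I would first check the case $n=1$ by hand, where $\mathbf a=\{a_1\}$ and the conditions reduce to $\wp$-addition formulas. I would then organize the general case as a recursion on $n$ via the $\zeta$-sum identities.
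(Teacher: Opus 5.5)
\textbf{There is a genuine gap: the existence of ansatz solutions for infinitely many $T$ is asserted, not proved.}

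Two of your steps are sound and agree with the paper. First, a solution $y_{\mathbf a,c}$ with no $a_i=0$ behaves like $z^{-n}h(z)$ with $h(0)\neq0$, and this already rules out a logarithm at $0$. You need only one such solution, so the $y_{-\mathbf a}$ remark is unnecessary. It is also wrong: $q^{(1)}_n(-z;p,T)=q^{(1)}_n(z;p,-T)$, so $y_{\mathbf a}(-z)$ solves the equation with parameter $-T$, not $T$. Second, the Frobenius obstruction at $j=2n+1$ is a polynomial in $T$, so it suffices to produce ansatz solutions for infinitely many $T$.

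The middle step is the whole content of the theorem, and two things go wrong there.

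\emph{Your count is wrong.} The ansatz must have exactly $n+1$ zeros. The residue conditions at $\pm p$ are not the Frobenius conditions \eqref{1024equ5}; those constrain the equation, not the ansatz. The actual residue conditions read
$c+T=\sum_i\zeta(a_i-p)+(n+1)\zeta(p)$ and $c+T=\sum_i\zeta(a_i+p)-(n+1)\zeta(p)$.
Their difference is the genuine constraint $\sum_{i=1}^{n+1}1/(\wp(a_i)-\wp(p))=0$. Counting honestly, $(\mathbf a,c,T)$ has $n+3$ unknowns. There are $n+4$ residues, minus one from the residue theorem, giving $n+3$ conditions. This predicts isolated solutions, not a curve over the $T$-line.

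A curve exists only because of an extra, nontrivial dependency. In the variables $x_i=\wp(a_i)$, $y_i=\wp'(a_i)$, the $a_j$-equations are linear in $y$. Under $\sum_i 1/(x_i-x_p)=0$, their coefficient matrix has rank $\leqslant n-1$ (Lemma~\ref{lem, rank M<n}).

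\emph{Dominance is not established.} Even granting dimension one, you need the solution set to be nonempty and $T$ to take infinitely many values on it. ``Verify dominance as $T\to\infty$'' presupposes that solutions with large $T$ exist, which is what is to be shown. You give no local existence mechanism near the boundary configurations.

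\textbf{How the paper closes this gap.}
\begin{enumerate}
\item At $T=0$ the non-even equation coincides with the even equation $\mathrm{GLE}^{(0)}_n(p,A_0,\tau)$, $A_0=-\wp''(p)/(4\wp'(p))$. This is log-free by the even theory, so it has an ansatz solution $\mathbf a_0(p)$; this is the nonempty base point.
\item If $\mathbf a_0(p)$ lies in the generic stratum (distinct $a_i$, $a_i\neq\pm p$, $a_i\neq-a_j$), the rank drop leaves $2n+1$ equations in $2n+2$ affine variables. Krull's height theorem then gives a component of dimension $\geqslant1$ through $\mathbf a_0(p)$.
\item Each $T$-fibre has at most two points, because the commuting period monodromies are not both scalar. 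So $T$ takes infinitely many values on that component, forcing $f_p\equiv0$.
\item The base point fails to be generic only for $p$ in a finite set $\Theta(\tau)$. This is proved via the $p\to0$ asymptotics of $d_{n,p}(A_0(p))$ and $Q^{(0)}_{n,p}(A_0(p))$. Those exceptional $p$ are handled by continuity of the coefficients of $f_p$ in $p$.
\end{enumerate}
So, contrary to your last remark, degenerate configurations do need separate treatment, in the $p$-direction rather than the $T$-direction.

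Alternatively, you can bypass the existence problem entirely. The operator $v=(\wp(z)-\wp(p))^{-1/2}(y'-Ty)$ from the paper's Darboux realization sends the two meromorphic Lam\'e solutions at $0$ to two independent solutions of $\mathrm{GLE}^{(1)}_n(p,T,\tau)$. These are meromorphic at $0$, since $(\wp(z)-\wp(p))^{-1/2}$ is holomorphic there, and this works for every $T$ at once.
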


The non-even symmetric equation GLE$_{n}^{(1)}$($p,T,\tau$) is log-free if and
only if it is log-free at $z=0$. Define, for any%
\[
\mathbf{a}=\left(  a_{1},\cdot\cdot\cdot,a_{n+1}\right)  \in\mathrm{Sym}%
^{n+1}E_{\tau}^{\times},\text{ where }E_{\tau}^{\times}=E_{\tau}%
\setminus\{0\},
\]
and for any constant $c\in\mathbb{C}$, the \textbf{Hermite-Halphen type}
function $y_{\mathbf{a},c,p}(z)$ (denoted by $y_{\mathbf{a},c}(z)$ when $p\in
E_{\tau}\setminus E_{\tau}[2]$ is fixed) by%
\begin{equation}
y_{\mathbf{a},c}(z):=\dfrac{e^{cz}\,\prod\limits_{i=1}^{n+1}\sigma(z-a_{i}%
)}{~\sigma^{n}(z)\sqrt{\sigma(z+p)\,}\sqrt{\sigma(z-p)~}\,}.
\label{Hermite noneven}%
\end{equation}
Equation (\ref{Hermite noneven}) gives
\begin{equation}
y_{\mathbf{a},c}(z)=C\cdot z^{-n}\cdot h(z) \label{asymptotic}%
\end{equation}
for some constant $C\not =0$, where $h(z)$ is holomorphic at $z=0$.

Suppose that GLE$_{n}^{(1)}$($p,T,\tau$) admits a Hermite-Halphen type
solution $y_{\mathbf{a},c}(z)$. Then, by (\ref{asymptotic}), the singularity
at $z=0$ is log-free. Conversely, if the equation is log-free at $z=0$, then
it is log-free at all its singularities. This implies that the monodromy
matrices $M_{1}(T)$ and $M_{2}(T)$ satisfy the abelian relation%
\begin{equation}
M_{1}(T)M_{2}(T)=M_{2}(T)M_{1}(T),\label{M1M2=M2M1}%
\end{equation}
which in turn ensures the existence of a solution of the form $y_{\mathbf{a}%
,c}(z)$.

\begin{proposition}
\label{lemma, existence of Hermite Ansatz} The equation GLE$_{n}^{(1)}%
$($p,T,\tau$) is log-free if and only if there exists $\mathbf{a}$
$\mathbf{\in}\mathrm{Sym}^{n+1}E_{\tau}^{\times}$ and $c\in\mathbb{C}$ such
that $y_{\mathbf{a},c}(z)$ defined by \eqref{Hermite noneven} is a solution to
$\mathrm{GLE}_{n}^{(1)}$($p,T,\tau$).
\end{proposition}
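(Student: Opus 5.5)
The plan is to prove the two implications separately. The direction from the Hermite--Halphen solution to log-freeness is local. The converse goes through the monodromy representation: log-freeness forces the period monodromy to be abelian, and a common eigenvector of the period monodromy is then identified with $y_{\mathbf a,c}$ by a theta-function argument.

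($\Leftarrow$) Suppose $y_{\mathbf a,c}$ solves $\mathrm{GLE}^{(1)}_n(p,T,\tau)$.
\begin{itemize}
\item At $z=\pm p$, log-freeness is already given by Lemma~\ref{apparent,necessary condition}, since $\mathbf T\in V^{(1)}_{n,p}(\tau)$ and $B$ is given by \eqref{B(T),noneven}.
\item At $z=0$, the local exponents are $-n$ and $n+1$. A logarithm can only enter the Frobenius solution attached to the smaller exponent $-n$.
\item By \eqref{asymptotic} and $a_i\neq 0$, the solution $y_{\mathbf a,c}$ is exactly a log-free local solution with exponent $-n$. Together with the exponent $n+1$ solution it spans the local solution space, so $z=0$ is log-free.
\end{itemize}

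($\Rightarrow$) Assume the equation is log-free at $0,\pm p$.
\begin{itemize}
\item The exponent differences are integers: $2n+1$ at $0$ and $2$ at $\pm p$. Log-freeness therefore makes the local monodromies scalar, namely $I$ at $0$ and $-I$ at $\pm p$.
\item In $\pi_1(E_\tau\setminus\{0,\pm p\})$ the commutator of the two period loops equals the product of the three puncture loops. Hence $M_1M_2M_1^{-1}M_2^{-1}=I\cdot(-I)\cdot(-I)=I$, which is \eqref{M1M2=M2M1}.
\item Commuting matrices share an eigenvector. So there is a nonzero solution $y$ with $y(z+\omega_j)=\varepsilon_j y(z)$ for $j=1,2$, after continuation along a fixed choice of paths.
\end{itemize}

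Next I would set
\[
f(z):=y(z)\,\sigma^n(z)\sqrt{\sigma(z+p)}\sqrt{\sigma(z-p)}.
\]
\begin{itemize}
\item Single-valuedness: both $y$ and $\sqrt{\sigma(z+p)\sigma(z-p)}$ change sign around each of $\pm p$ and are single-valued around $0$. Hence $f$ is single-valued on $\mathbb C$ once the branch of the square roots is chosen consistently with the continuation of $y$.
\item Entireness: near $\pm p$, $y$ is $(z\mp p)^{-1/2}$ or $(z\mp p)^{3/2}$ times a holomorphic function, so $f$ is holomorphic there. Near $0$, $y$ has order $\geq -n$, so $f$ is holomorphic there as well.
\item Quasi-periodicity: $f$ inherits it from $\varepsilon_j$ and the standard transformation law of $\sigma$, namely $f(z+\omega_j)=c_j e^{\eta_j(n+1)z}f(z)$ up to constants. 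This is the factor of automorphy of a product of $n+1$ sigma functions.
\item Structure theorem: an entire function with this factor has exactly $n+1$ zeros $a_1,\dots,a_{n+1}$ modulo $\Lambda_\tau$, counted with multiplicity, by the argument principle on a fundamental parallelogram. The quotient $f/\prod\sigma(z-a_i)$ is then entire, zero-free, and has a factor of the form $e^{\alpha+\beta z}$. It is therefore $Ce^{cz}$, so $y=C\,y_{\mathbf a,c}$.
\end{itemize}

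The step I expect to be the main obstacle is guaranteeing $a_i\neq 0$, i.e.\ $\mathbf a\in\mathrm{Sym}^{n+1}E_\tau^\times$; I would settle it by a zero count.
\begin{itemize}
\item If $f(0)=0$, then $y$ is not of exponent $-n$. So $y$ must be the exponent-$(n+1)$ solution, and $f$ vanishes to order $(n+1)+n=2n+1$ at $0$.
\item Since $n\geq1$, we have $2n+1>n+1$, which exceeds the total number of zeros of $f$. This is a contradiction.
\end{itemize}
Zeros at $\pm p$, which occur when $y$ has exponent $3/2$ there, are harmless because they lie in $E_\tau^\times$. A secondary point to check carefully is that the chosen branch of the square roots is compatible with the analytic continuation defining $\varepsilon_j$. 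Any mismatch only changes $\varepsilon_j$ by a sign, which is absorbed into $c$ and the constant $C$.
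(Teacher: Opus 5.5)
Your proposal is correct and follows essentially the same route as the paper: the local expansion \eqref{asymptotic} gives log-freeness at $z=0$. Conversely, log-freeness forces the abelian relation \eqref{M1M2=M2M1}, whose common eigenfunction has the Hermite--Halphen form. You also supply details the paper only sketches: the factor-of-automorphy and zero-count argument identifying the eigenfunction with $y_{\mathbf a,c}$, and the check $a_i\neq0$ (which the paper notes right after the proposition using the exponent difference $2n+1$).
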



Suppose $y_{\mathbf{a},c}(z)$ is a solution of GLE$_{n}^{(1)}$($p,T,\tau$) for
some $\mathbf{a}$ $\mathbf{\in}$ $\operatorname{Sym}^{n+1}E_{\tau}^{\times}$
and $c$ $\in$ $\mathbb{C}$. Since the local exponent difference of
GLE$_{n}^{(1)}$($p,T,\tau$) at $z$ $=$ $0$ is $2n+1$, it follows that
$a_{i}\not =0$ $\forall$ $1\leqslant$ $i$ $\leqslant$ $n+1$. In contrast, the
local exponent difference at $z=\pm p$ is $2$, which allows for the
possibility that two of the $a_{i}$'s may equal $p$ or $-p$.

After reordering, exactly one of the following occurs:

\begin{itemize}
\item[(a-i)] $a_{i}\not =\pm p$ and $a_{i}\not =a_{j}$ for any $1\leqslant
i\not =j\leqslant n+1$.\medskip

\item[(a-ii)] $a_{n}=a_{n+1}=p$, $a_{i}\not =\pm p$ and $a_{i}\not =a_{j}$ for
any $1\leqslant i\not =j\leqslant n-1$.\medskip

\item[(a-iii)] $a_{n}=a_{n+1}=-p$, $a_{i}\not =\pm p$ and $a_{i}\not =a_{j}$
for any $1\leqslant i\not =j\leqslant n-1$.\medskip
\end{itemize}

We define the admissible set $Y_{n,p}^{(1)}(\tau)$ $\subset$
$\operatorname{Sym}^{n+1}E_{\tau}^{\times}$ as follows:
\begin{equation}
Y_{n,p}^{(1)}(\tau):=\left\{  \mathbf{a}\in\operatorname{Sym}^{n+1}E_{\tau
}^{\times}\left\vert
\begin{array}
[c]{l}%
\text{There exists }c\in\mathbb{C}\text{ such that }y_{\mathbf{a},c}(z)\\
\text{defined in (\ref{Hermite noneven}) is a solution }\\
\text{to GLE}_{n}^{(1)}(p,T(\mathbf{a}),\tau)\text{ for some }T(\mathbf{a}%
)\in\mathbb{C}%
\end{array}
\right.  \right\} , \label{Yn's def}%
\end{equation}
where $T(\mathbf{a})$ is given in Theorem~\ref{thm, ya solves noneven ode} or
Theorem~\ref{thm, ya solves noneven ode 2} below.

Let
\[
\mathcal{V}^{(1)}_{n,p}(\tau):=\{T(\mathbf{a})\mid\,\mathbf{a}\in
Y_{n,p}^{(1)}(\tau)\}\subset{V}^{(1)}_{n,p}(\tau).
\]
By Proposition~\ref{lemma, existence of Hermite Ansatz}, there is a natural
morphism
\[
T:Y_{n,p}^{(1)}(\tau)\longrightarrow\mathcal{V}^{(1)}_{n,p}(\tau),
\quad\mathbf{a}\longmapsto T(\mathbf{a}).
\]

\begin{lemma}
\label{lem, Mj not +-Id} For each $T\in\mathcal{V}^{(1)}_{n,p}(\tau)$, the two
period monodromy matrices $M_{1}(T)$ and $M_{2}(T)$ cannot both belong to
$\{\pm I\}$.
\end{lemma}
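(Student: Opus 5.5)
We argue by contradiction. Suppose $T\in\mathcal{V}^{(1)}_{n,p}(\tau)$ and $M_1(T),M_2(T)\in\{\pm I\}$. The idea is to turn this into the statement that a ratio of two solutions is an elliptic function on $E_\tau$. We then compute its degree by Riemann--Hurwitz and compare it with the ramification forced at $z=0$.

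\emph{Step 1 (local monodromies).} Since $T\in\mathcal{V}^{(1)}_{n,p}(\tau)$, the equation $\mathrm{GLE}^{(1)}_n(p,T,\tau)$ is log-free at $0$ and at $\pm p$ (Proposition~\ref{lemma, existence of Hermite Ansatz} together with Lemma~\ref{apparent,necessary condition}).
\begin{itemize}
\item At $z=0$ the local exponents are $-n$ and $n+1$, which are integers, so the local monodromy is $I$.
\item At $z=\pm p$ the exponents are $-\tfrac12$ and $\tfrac32$, so the local monodromy is $-I$.
\end{itemize}

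\emph{Step 2 (an elliptic ratio).} Fix a basis $y_1,y_2$ of solutions and set $f=y_1/y_2$. By Step 1 and the assumption on $M_1,M_2$, every monodromy matrix, local or global, is $\pm I$. Multiplying both $y_1$ and $y_2$ by the same sign leaves $f$ unchanged, so $f$ is single-valued and doubly periodic. It is meromorphic at $0$ and $\pm p$, since the local monodromy there is scalar. Hence $f:E_\tau\to\mathbb{P}^1$ is a nonconstant elliptic function of some degree $d$.

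\emph{Step 3 (ramification of $f$).} The Wronskian $W$ is a nonzero constant and $f'=-W/y_2^2$.
\begin{itemize}
\item At a regular point of the equation, $y_1$ and $y_2$ have no common zero, so $f$ is locally injective there, after composing with a Möbius map if $f$ has a pole. Thus $f$ is unramified away from $\{0,\pm p\}$.
\item At $z=\pm p$, take a Frobenius basis $z^{-1/2}(1+\cdots)$ and $z^{3/2}(1+\cdots)$. Their ratio is $z^2\cdot(\text{unit})$, so $f$ has ramification index $2$.
\item At $z=0$, take the basis $z^{-n}(1+\cdots)$ and $z^{n+1}(1+\cdots)$. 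Their ratio gives ramification index $2n+1$.
\end{itemize}
A change of basis only composes $f$ with a Möbius transformation, so these indices do not depend on the choice of basis.

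\emph{Step 4 (contradiction).} Riemann--Hurwitz for $f:E_\tau\to\mathbb{P}^1$ reads
\[
0=2d\cdot(0-1)+\bigl((2n+1)-1\bigr)+(2-1)+(2-1),
\]
so $2d=2n+2$, i.e. $d=n+1$. But a ramification index can never exceed the degree, while the index at $0$ is $2n+1>n+1$ because $n\geqslant1$. This contradiction proves the lemma.

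The step needing most care is Step 3. One must justify that the local exponents at $0$ and $\pm p$ determine the ramification indices of $f$ exactly, and this is where log-freeness enters essentially, via the Frobenius basis. One must also justify that $f$ has no extra ramification at regular points. Only the ramification at $0$ is really needed, since any ramification elsewhere would only increase the right-hand side of Riemann--Hurwitz. The exact counts nevertheless make the degree computation $d=n+1$ clean.
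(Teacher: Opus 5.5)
Your proof is correct and follows the paper's argument essentially step for step. The ratio $f=y_1/y_2$ becomes a nonconstant elliptic function, the exponent differences $2n+1$ and $2$ give the only ramification, Riemann--Hurwitz forces $\deg f=n+1$, and $e_0(f)=2n+1>n+1$ gives the contradiction.

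Your closing remark is backwards, however. Extra ramification away from $0$ would increase $d$, so it would weaken the contradiction rather than leave it intact. Ramification at $0$ alone gives no contradiction: for example $\wp^{(2n-1)}$ has a single pole of order $2n+1$ and degree $2n+1$. What the argument really needs is the upper bound on ramification away from $0$: $f$ is unramified at ordinary points and has index exactly $2$ at $\pm p$. Your Step~3 does establish this, so the proof itself is complete.
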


\begin{proof}
Suppose, to the contrary, that $M_{j}(T)\in\{\pm I\},j=1,2.$ Let $y_{1},y_{2}$
be two linearly independent solutions of $\mathrm{GLE}_{n}^{(1)}(p,T,\tau)$
and set
\[
f=\frac{y_{1}}{y_{2}}.
\]
Since the period monodromies are both scalar, $f$ is single-valued on
$E_{\tau}\setminus\{0,\pm p\}$. Moreover, since all singularities are
log-free, $f$ extends to a nonconstant meromorphic map
\[
f:E_{\tau}\longrightarrow\mathbb{P}^{1}.
\]

The local exponent differences at $0$ and $\pm p$ are $2n+1$ and $2$,
respectively. Hence the ramification indices of $f$ at these points are $2n+1$
and $2$. Since
\[
f^{\prime}(z)=\frac{W(y_{1},y_{2})}{y_{2}(z)^{2}},
\]
where the Wronskian $W(y_{1},y_{2})\neq0$ is constant, $f$ has no other
ramification points on $E_{\tau}\setminus\{0,\pm p\}$. Let $d=\deg f$. The
Riemann--Hurwitz formula gives
\[
0=-2d+2n+2,
\]
and hence
\[
d=n+1.
\]

But the local degree at $0$ satisfies
\[
e_{0} (f)=2n+1\leqslant\deg f=n+1,
\]
which is a contradiction for $n\in\mathbb{N}$.
\end{proof}

Define
\begin{equation}
\overset{o}{X}_{n,p}(\tau):=\{\mathbf{a}\in Y_{n,p}^{(1)}(\tau)\mid
a_{i}\not =\pm p\text{ and }a_{i}\neq\pm a_{j},\forall1\leqslant i,j\leqslant
n+1\}. \label{Xn,p's def}%
\end{equation}
Lemma~\ref{lem, Mj not +-Id} gives $\#T^{-1}(T)$ $\leqslant2.$  Since
\[
\dim_{\mathbb{C}}\mathcal{V}_{n,p}^{(1)}(\tau)\leqslant\dim_{\mathbb{C}%
}V_{n,p}^{(1)}(\tau)=1\ \text{and }\overset{o}{X}_{n,p}(\tau)\subset
Y_{n,p}^{(1)}(\tau),
\]
we obtain
\begin{equation}
\dim_{\mathbb{C}}\overset{o}{X}_{n,p}(\tau) \leqslant\dim_{\mathbb{C}}%
Y_{n,p}^{(1)}(\tau) \leqslant1.\label{dim<=1}%
\end{equation}

\begin{theorem}
\label{thm, ya solves noneven ode} Assume that $\mathbf{a}\in
\operatorname{Sym}^{n+1}E_{\tau}^{\times}$ satisfies (a-i). Then
$y_{\mathbf{a},c}(z)$ defined in \eqref{Hermite noneven} is a solution to
$\mathrm{GLE}_{n}^{(1)}$($p,T,\tau$) if and only if $\mathbf{a}$ satisfies the
following equations
\begin{equation}%
\begin{split}
&  \sum_{i=1}^{n+1}\zeta(a_{i})+\dfrac{1}{\,2(n+1)\,}\sum_{i=1}^{n+1}%
\dfrac{\wp^{\prime}(a_{i})}{\,\wp(a_{i})-\wp(p)\,}\\[4pt]
=  &  \sum_{\substack{i=1\\i\neq j}}^{n+1}\zeta(a_{i}-a_{j})+n\,\zeta
(a_{j})+\dfrac{1}{\,2\,}\Big(\zeta(a_{j}+p)+\zeta(a_{j}-p)\Big),
\end{split}
\label{ya is sol. condition1}%
\end{equation}
for all $1\leqslant j\leqslant n$, and
\begin{equation}
\sum_{i=1}^{n+1}\dfrac{1}{\,\wp(a_{i})-\wp(p)\,}=0.
\label{ya is sol. condition2}%
\end{equation}
Moreover, the constants $c$ and $T$ are determined by $\mathbf{a}$ and $p$ as
follows:
\begin{align}
c  &  =\sum_{i=1}^{n+1}\zeta(a_{i})+\dfrac{1}{\,2(n+1)\,}\sum_{i=1}%
^{n+1}\dfrac{\wp^{\prime}(a_{i})}{\,\wp(a_{i})-\wp(p)\,}%
,\label{c in terms of aj}\\[4pt]
T  &  =\dfrac{n}{\,2(n+1)\,}\sum_{i=1}^{n+1}\dfrac{\wp^{\prime}(a_{i})}%
{\,\wp(a_{i})-\wp(p)\,}, \label{T in terms of aj}%
\end{align}
and the constant $B$ is given by \eqref{B(T),noneven}.
\end{theorem}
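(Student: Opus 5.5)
We follow the classical Hermite--Halphen computation: compare the logarithmic second derivative of $y_{\mathbf a,c}$ with the potential $q^{(1)}_n(z;p,T)$ of \eqref{potential, noneven}. Set
\[
u(z)=\frac{y_{\mathbf a,c}'}{y_{\mathbf a,c}}
= c+\sum_{i=1}^{n+1}\zeta(z-a_i)-n\zeta(z)-\tfrac12\bigl(\zeta(z+p)+\zeta(z-p)\bigr),
\]
so that $y_{\mathbf a,c}$ solves the equation if and only if $F:=u'+u^2-q^{(1)}_n=0$. Since $y_{\mathbf a,c}$ has the same factor of automorphy behaviour as any solution, and $u$ changes by constants along periods, $F$ is an elliptic function; first we check this by computing the quasi-periodicity of $u$, which is $\eta_j$ times $(n+1-n-1)=0$. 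Thus $u$ is itself elliptic and $F$ is elliptic. Hence $F\equiv0$ iff all principal parts of $F$ vanish and its constant term vanishes.

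\emph{Poles at $z=0,\pm p$.} Near $0$, $u=-n/z+u_0+O(z)$, and $u'+u^2$ has leading term $n(n+1)/z^2$, which matches. The residue of $F$ at $0$ is $-2n u_0+2T$ (the coefficient of $\zeta(z)$ in $q^{(1)}_n$ is $-2T$). Near $\pm p$, $u=-\frac{1}{2(z\mp p)}+\dots$ gives $\frac34(z\mp p)^{-2}$, matching, and the residues give linear conditions. Rather than imposing these directly, note that $\sum(\text{residues})=0$ for elliptic $F$, and that the three simple-pole conditions at $0,\pm p$ together with the constant term fix $c$, $T$, $B$.

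\emph{Poles at $a_j$.} Here $u=\frac{1}{z-a_j}+v_j+O(z-a_j)$, so $u'+u^2$ has no double pole, and its residue is $2v_j$. The condition is $v_j=0$, i.e.
\[
c=\sum_{i\ne j}\zeta(a_j-a_i)...
\]
more precisely $c+\sum_{i\neq j}\zeta(a_j-a_i)-n\zeta(a_j)-\tfrac12(\zeta(a_j+p)+\zeta(a_j-p))=0$ for every $j=1,\dots,n+1$. Summing these $n+1$ equations, the antisymmetric $\zeta(a_j-a_i)$ terms cancel. Using the addition formula $\zeta(a+p)+\zeta(a-p)-2\zeta(a)=\frac{\wp'(a)}{\wp(a)-\wp(p)}$, we get $(n+1)c=\sum(n+1)\zeta(a_i)+\frac12\sum\frac{\wp'(a_i)}{\wp(a_i)-\wp(p)}$, which is \eqref{c in terms of aj}. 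Substituting this back gives \eqref{ya is sol. condition1} for $j\le n$; the $j=n+1$ equation is then the consequence of the sum, so it is redundant.

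\emph{Remaining conditions.} Next we compute the expansions at $0$ and $\pm p$. At $0$ we use $u_0=c-\sum\zeta(a_i)-\frac12(\zeta(p)+\zeta(-p))=c-\sum\zeta(a_i)$. The residue condition $T=n u_0$ yields \eqref{T in terms of aj} after inserting \eqref{c in terms of aj}. At $z=p$ the residue of $F$ must equal the coefficient $T_2=T+\frac{\wp''(p)}{4\wp'(p)}$, and the $O(1)$ Frobenius condition at $\pm p$ encoded in Lemma \ref{apparent,necessary condition} must hold. The difference of the residue conditions at $p$ and $-p$ produces, after expanding $\sum\zeta(p-a_i)-\sum\zeta(-p-a_i)$ via $\zeta(p-a)+\zeta(p+a)-2\zeta(p)=\frac{\wp'(p)}{\wp(p)-\wp(a)}$, the relation $\wp'(p)\sum\frac{1}{\wp(a_i)-\wp(p)}=0$ (with the $\frac{\wp''}{\wp'}$ shift absorbing the $\zeta(2p)$ terms), which is \eqref{ya is sol. condition2}. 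Their sum is then automatic from the residue theorem. Finally the constant term of $F$ determines $B$, which must agree with \eqref{B(T),noneven}; this agreement is forced because a solution with simple-pole exponent $-1/2$ at $\pm p$ makes the equation log-free there, and Lemma \ref{apparent,necessary condition} pins $B$. Conversely, if the conditions hold, all principal parts vanish, and with $B$ chosen as above $F\equiv0$.

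\emph{Main obstacle.} The hard step is the bookkeeping at $z=\pm p$: showing that the residue conditions there reduce exactly to \eqref{ya is sol. condition2} together with the prescribed $T_1-T_2=-\frac{\wp''(p)}{2\wp'(p)}$, which requires carefully expanding $\sum_i\zeta(\pm p-a_i)$ and $\zeta(2p)$ with addition formulas. I would first verify this in the case $n=1$ as a check.
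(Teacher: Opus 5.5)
Your proposal is correct and follows essentially the paper's argument. Both impose vanishing of the residues of the elliptic function $u'+u^2-q^{(1)}_n$ at $0,\pm p,a_1,\dots,a_{n+1}$, discard one redundant condition via the residue theorem, and fix $B$ from the constant term (equivalently the Frobenius condition $q_{0,p}=T_2^2$ at $z=p$). The only differences are bookkeeping: you extract $c$ by summing the $a_j$-conditions and let the residue theorem supply the sum of the $\pm p$ conditions, whereas the paper gets $c,T$ from the residues at $0$ and $p$ and drops one $a_j$-equation instead.
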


\begin{theorem}
\label{thm, ya solves noneven ode 2} Assume that $\mathbf{a}$ $\in
\operatorname{Sym}^{n+1}E_{\tau}^{\times}$ satisfies type (a-ii). Then
$y_{\mathbf{a},c}(z)$ defined in \eqref{Hermite noneven} is a solution to
GLE$_{n}^{(1)}$($p,T,\tau$) if and only if $\mathbf{a}$ satisfies the
following equations
\begin{equation}%
\begin{split}
&  \sum_{i=1}^{n-1}\zeta(a_{i})+2\zeta(p)+\dfrac{1}{4}\sum_{i=1}^{n-1}%
\dfrac{2\wp^{\prime}(a_{i})+\wp^{\prime}(p)}{\,\wp(a_{i})-\wp(p)\,}+\dfrac
{1}{2}\dfrac{\wp^{\prime\prime}(p)}{\wp^{\prime}(p)}\\[4pt]
=  &  \sum_{\substack{i=1\\i\neq j}}^{n+1}\zeta(a_{i}-a_{j})+n\,\zeta
(a_{j})+\dfrac{1}{\,2\,}\Big(\zeta(a_{j}+p)-3\zeta(a_{j}-p)\Big),
\end{split}
\label{ya is sol. condition2-1}%
\end{equation}
for all $1\leqslant j\leqslant n-1$, and
\begin{equation}
\sum_{i=1}^{n-1}\dfrac{\wp^{\prime}(p)}{\,\wp(a_{i})-\wp(p)\,}+\dfrac{2n}%
{n+3}\sum_{i=1}^{n-1}\dfrac{\wp^{\prime}(a_{i})}{\,\wp(a_{i})-\wp(p)\,}%
+\dfrac{2(n-1)}{n+3}\dfrac{\wp^{\prime\prime}(p)}{\wp^{\prime}(p)}=0.
\label{ya is sol. condition2-2}%
\end{equation}
Moreover, the constants $c$ and $T$ are determined by $\mathbf{a}$ and $p$ as
follows:%
\begin{align}
c  &  =\sum_{i=1}^{n-1}\zeta(a_{i})+2\zeta(p)+\dfrac{1}{4}\sum_{i=1}%
^{n-1}\dfrac{2\wp^{\prime}(a_{i})+\wp^{\prime}(p)}{\,\wp(a_{i})-\wp
(p)\,}+\frac{1}{2}\frac{\wp^{\prime\prime}(p)}{\wp^{\prime}(p)}%
,\label{c in terms of aj,2}\\[4pt]
T  &  =-\dfrac{3}{4}\sum_{i=1}^{n-1}\dfrac{\wp^{\prime}(p)}{\,\wp(a_{i}%
)-\wp(p)\,}+\frac{1}{2}\frac{\wp^{\prime\prime}(p)}{\wp^{\prime}(p)},
\label{T in terms of aj,2}%
\end{align}
and the constant $B$ is given by \eqref{B(T),noneven}.
\end{theorem}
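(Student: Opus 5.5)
Set $a_n=a_{n+1}=p$ in \eqref{Hermite noneven}, so that
\[
y_{\mathbf a,c}(z)=\frac{e^{cz}\,\sigma(z-p)^{3/2}\prod_{i=1}^{n-1}\sigma(z-a_i)}{\sigma^n(z)\sqrt{\sigma(z+p)}}.
\]
The plan is to substitute this into $\mathrm{GLE}^{(1)}_n(p,T,\tau)$ and compare poles, following the same scheme as in Theorem~\ref{thm, ya solves noneven ode}.

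Write $u=(\log y_{\mathbf a,c})'$. The equation $y''=q^{(1)}_n y$ is equivalent to $u'+u^2=q^{(1)}_n$, where
\[
u(z)=c+\sum_{i=1}^{n-1}\zeta(z-a_i)+\tfrac32\zeta(z-p)-n\zeta(z)-\tfrac12\zeta(z+p).
\]
The function $F:=u'+u^2-q^{(1)}_n$ is elliptic. Its only possible poles are at $a_1,\dots,a_{n-1}$, $0$, $\pm p$. We require $F\equiv 0$.

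\textbf{Poles at $z=a_j$.} These are simple with residue $2\bigl(u(z)-\zeta(z-a_j)\bigr)\big|_{z=a_j}$. Setting them to zero gives
\[
c+\sum_{i\neq j}\zeta(a_j-a_i)+\tfrac32\zeta(a_j-p)-n\zeta(a_j)-\tfrac12\zeta(a_j+p)=0,\qquad 1\leqslant j\leqslant n-1.
\]
After eliminating $c$ by its formula \eqref{c in terms of aj,2}, and moving terms so that $\sum_{i\ne j}\zeta(a_i-a_j)$ includes the terms $i=n,n+1$ equal to $p$, this is \eqref{ya is sol. condition2-1}.

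\textbf{Poles at $z=\pm p$.} The double poles match automatically, since the exponents $3/2$ at $p$ and $-1/2$ at $-p$ both give coefficient $3/4$. The simple-pole coefficients must equal $T_2$ at $p$ and $T_1$ at $-p$. Each of these is an affine expression in $c$, $\sum_i\zeta(p-a_i)$, $\sum_i\zeta(-p-a_i)$, $\zeta(p)$ and $\zeta(2p)$. Matching them gives two linear relations. One determines $T$, and the other fixes $c$, in agreement with \eqref{c in terms of aj,2}. The two relations are consistent with the constraint $T_1-T_2=-\wp''(p)/(2\wp'(p))$ defining $V^{(1)}_{n,p}(\tau)$ exactly when the $a_i$ satisfy an extra scalar condition.

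For both this step and the previous one, I will rewrite expressions using the addition formula
\[
\zeta(a+p)+\zeta(a-p)-2\zeta(a)=\frac{\wp'(a)}{\wp(a)-\wp(p)},
\]
and its variants, for example
\[
\zeta(p-a)+\zeta(p+a)-2\zeta(p)=\frac{\wp'(p)}{\wp(p)-\wp(a)}.
\]
These convert the $\zeta$-sums into the rational expressions in $\wp(a_i)$, $\wp'(a_i)$ appearing in \eqref{c in terms of aj,2} and \eqref{T in terms of aj,2}. The terms involving $\wp''(p)/\wp'(p)$ arise from the Taylor expansion of $\zeta(z+p)$ and of $\frac12\zeta(z-p)$ near $z=p$, in particular from $\zeta(2p)-2\zeta(p)=\tfrac12\wp''(p)/\wp'(p)$.

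\textbf{Pole at $z=0$.} The order-$2$ coefficient is $n(n+1)$ on both sides automatically. The residue of $F$ at $0$ must vanish for $F$ to be an elliptic function with no poles other than these, by the residue theorem on $E_\tau$. This is implied by the residue conditions already imposed once the simple poles at the other points vanish.

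Once all poles are removed, $F$ is constant, and the constant term fixes $B$. The paper's $B$ in \eqref{B(T),noneven} is then recovered from Lemma~\ref{apparent,necessary condition}: log-freeness at $\pm p$ forces $B$ by \eqref{AP2}. The converse direction is the same computation read backwards, so the conditions are both necessary and sufficient.

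The main obstacle is the algebra at $z=\pm p$. There, the higher-order coincidence $a_n=a_{n+1}=p$ produces the derivative terms $\wp''(p)/\wp'(p)$. One must extract the exact coefficients $2n/(n+3)$ and $2(n-1)/(n+3)$ in \eqref{ya is sol. condition2-2} by eliminating $c$ between the two simple-pole equations and the averaged form of the $a_j$-equations. I would first sum the $n-1$ equations \eqref{ya is sol. condition2-1} over $j$; by antisymmetry the $\zeta(a_i-a_j)$ terms for $i,j\leqslant n-1$ cancel. I would then combine the result with the two $\pm p$ equations, which should yield both \eqref{T in terms of aj,2} and the constraint \eqref{ya is sol. condition2-2}.
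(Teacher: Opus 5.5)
Your scheme is the paper's: kill the residues of $g_{\mathbf a,c}=(y'/y)'+(y'/y)^2-q^{(1)}_n$ at $a_j$, $\pm p$ and $0$, as in the proof of Theorem~\ref{thm, ya solves noneven ode}, which the paper says carries over to type (a-ii). Your residue formulas are correct, and the summation in your last paragraph does produce \eqref{ya is sol. condition2-2}.

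\textbf{The false claim.} You say the two $\pm p$ equations are compatible with $T_1-T_2=-\wp''(p)/(2\wp'(p))$ only under an extra scalar condition on $\mathbf a$. That is false: they are compatible for every $\mathbf a$. Put
\[
\gamma:=c-\sum_{i=1}^{n-1}\zeta(a_i)-2\zeta(p),\qquad
\Sigma_1:=\sum_{i=1}^{n-1}\frac{\wp'(a_i)}{\wp(a_i)-\wp(p)},\qquad
\Sigma_2:=\sum_{i=1}^{n-1}\frac{\wp'(p)}{\wp(a_i)-\wp(p)},
\]
and $\beta:=\wp''(p)/\wp'(p)$, so that $T_{1,2}=T\mp\beta/4$. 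The residue conditions at $p$, $-p$ and $0$ read
\[
3\gamma-T=\tfrac32(\Sigma_1+\Sigma_2)+\beta,\qquad
\gamma+T=\tfrac12(\Sigma_1-\Sigma_2)+\beta,\qquad
T=n\gamma .
\]
The first two form a $2\times2$ system with determinant $4$. Its unique solution is
\[
\gamma=\tfrac12\Sigma_1+\tfrac14\Sigma_2+\tfrac12\beta,\qquad T=-\tfrac34\Sigma_2+\tfrac12\beta,
\]
which is exactly \eqref{c in terms of aj,2}--\eqref{T in terms of aj,2}. No constraint arises at $\pm p$.

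\textbf{Where the constraint comes from.} It is the residue at $0$. Substituting into $T=n\gamma$ gives
\[
\tfrac n2\Sigma_1+\tfrac{n+3}{4}\Sigma_2+\tfrac{n-1}{2}\beta=0,
\]
which is \eqref{ya is sol. condition2-2}. Write $R_j:=c+\sum_{i\neq j}\zeta(a_j-a_i)+\frac32\zeta(a_j-p)-n\zeta(a_j)-\frac12\zeta(a_j+p)$. Since $y'/y$ is elliptic, the residues of $g_{\mathbf a,c}$ sum to zero identically. Given the $\pm p$ equations, the residue condition at $0$ is therefore equivalent to
\[
\sum_j R_j=(n-1)\gamma+\tfrac12\Sigma_1+\Sigma_2=0,
\]
and this is why your summation works. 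Either take \eqref{ya is sol. condition2-2} from the residue at $0$, or call that residue redundant and get \eqref{ya is sol. condition2-2} from the sum. It never comes from $\pm p$. Note also that \eqref{ya is sol. condition2-2} is implied by \eqref{ya is sol. condition2-1} for all $j$, so the theorem's list of conditions is redundant, which is harmless.

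\textbf{Index range in \eqref{ya is sol. condition2-1}.} Your $a_j$-equation equals the display only when the sum runs over $i\leqslant n-1$, $i\neq j$. Adding the terms $i=n,n+1$ (where $a_i=p$) contributes $-2\zeta(a_j-p)$ and turns $-3\zeta(a_j-p)$ into $+\zeta(a_j-p)$. So the displayed upper limit $n+1$ must be read as $n-1$; moving those terms into the sum does not give the display.

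\textbf{Fixing $B$.} In type (a-ii), $y_{\mathbf a,c}$ has the larger exponent $3/2$ at $p$, so its existence does not give log-freeness at $p$. Once $g_{\mathbf a,c}\equiv K$ is constant, use $-p$ alone, where the exponent is $-1/2$. The Frobenius condition \eqref{1024equ5} there, for $y''=(q^{(1)}_n+K)y$, reads $T_1^2=q_{0,-p}$. This is affine in $B$ with slope one, and on the line $T_1-T_2=-\wp''(p)/(2\wp'(p))$ its unique solution is \eqref{B(T),noneven}. Hence $K=0$.
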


The type (a-iii) formulas are obtained from those of type (a-ii) by replacing
$p$ with $-p$. We prove only Theorem~\ref{thm, ya solves noneven ode}.

\begin{proof}
[Proof of Theorem~\ref{thm, ya solves noneven ode}]From
\eqref{Hermite noneven},
\begin{align*}
\dfrac{\,y_{\mathbf{a},c}^{\prime}(z)\,}{y_{\mathbf{a},c}(z)}  &  =
c+\sum_{i=1}^{n+1}\zeta(z-a_{i})-n\zeta(z)-\dfrac{1}{\,2\,}\big(\zeta
(z+p)+\zeta(z-p)\big),\\[4pt]
\left(  \dfrac{\,y_{\mathbf{a},c}^{\prime}(z)\,}{y_{\mathbf{a},c}(z)}\right)
^{\prime}  &  =n\wp(z)+\dfrac{1}{\,2\,}\big(\wp(z+p)+\wp(z-p)\big)-\sum
_{i=1}^{n+1}\wp(z-a_{i}).
\end{align*}

Define the elliptic function
\[
g_{\mathbf{a},c}(z;p,T):=\left(  \dfrac{\,y_{\mathbf{a},c}^{\prime}%
(z)\,}{y_{\mathbf{a},c}(z)}\right)  ^{\prime}+\left(  \dfrac{\,y_{\mathbf{a}%
,c}^{\prime}(z)\,}{y_{\mathbf{a},c}(z)}\right)  ^{2}-q_{n}^{(1)}(z;p,T).
\]

Then $y_{\mathbf{a},c}(z)$ solves GLE$_{n}^{(1)}$($p,T,\tau$) if and only if
$g_{\mathbf{a},c}(z)\equiv0$, equivalently, $g_{\mathbf{a},c}(z)$ has no poles
and vanishes at one point.

Observe that $g_{\mathbf{a},c}(z;p,T)$ has at most simple poles at $z=0,\pm
p,a_{1},\cdots,a_{n+1}$. Requiring the residues at $z=\pm p$ to vanish gives
\begin{equation}
c+T-\sum\limits_{i=1}^{n+1}\zeta(a_{i}-p) -\left(  n+1\right)  \zeta(p)=0,
\label{z=p}%
\end{equation}
and
\begin{equation}
c+T-\sum\limits_{i=1}^{n+1}\zeta(a_{i}+p) +\left(  n+1\right)  \zeta(p) =0.
\label{z=-p}%
\end{equation}

Subtracting \eqref{z=p} and \eqref{z=-p}, and using
\begin{equation}
\zeta(a+b)=\zeta(a)+\zeta(b)+\frac{1}{2}\frac{\wp^{\prime}(a)-\wp^{\prime}%
(b)}{\wp(a)-\wp(b)}, \label{add formula, zeta}%
\end{equation}
gives \eqref{ya is sol. condition2}.

At $z=0$, the vanishing of the residue gives
\begin{equation}
T-n\left(  c-\sum\limits_{i=1}^{n+1}\zeta(a_{i})\right)  =0. \label{z=0}%
\end{equation}
Combining \eqref{z=0} with \eqref{z=p}, we determine $c$ and $T$ uniquely as
in \eqref{c in terms of aj} and \eqref{T in terms of aj}.

The corresponding residue conditions at $z=a_{j}$, $1\le j\le n+1$, read
\begin{equation}
c-\sum\limits_{\substack{i=1\\i\neq j}}^{n+1}\zeta(a_{i}-a_{j})-n\zeta
(a_{j})\\[4pt]
+\dfrac{1}{2}\left(  \zeta(p-a_{j})-\zeta(p+a_{j})\right)  =0. \label{z=aj}%
\end{equation}

Substituting \eqref{c in terms of aj} into \eqref{z=aj} gives the system of
$n+1$ equations in \eqref{ya is sol. condition1}. Since the sum of residues of
an elliptic function must be zero, one of these $n+1$ conditions can be
reduced. Hence \eqref{ya is sol. condition1} holds for $1\le j\le n$.

Relation \eqref{B(T),noneven} follows from the constant term of the equation
$g_{\mathbf{a},c}(p;p,$ $T)=0$.
\end{proof}

By Theorem \ref{thm, ya solves noneven ode} and Theorem
\ref{thm, ya solves noneven ode 2}, the set $Y_{n,p}^{(1)}(\tau)$ can be
equivalently described as
\begin{equation}
Y_{n,p}^{(1)}(\tau)=\left\{  \mathbf{a}\left\vert
\begin{array}
[c]{l}%
\text{either (a-i), \eqref{ya is sol. condition2} and
\eqref{ya is sol. condition1}}_{j}\text{ hold }\forall1\leqslant j\leqslant
n\text{;}\\
\text{or (a-ii), \eqref{ya is sol. condition2-2}}_{p}\text{ and
\eqref{ya is sol. condition2-1}}_{j,p}\text{ hold }\forall1\leqslant
j\leqslant n-1\text{;}\\
\text{or (a-iii), \eqref{ya is sol. condition2-2}}_{-p}\text{ and
\eqref{ya is sol. condition2-1}}_{j,-p}\text{ hold }\forall1\leqslant
j\leqslant n-1
\end{array}
\right.  \right\}  . \label{Yn,p, apparent}%
\end{equation}
Set
\begin{gather}
\left(  x_{p},y_{p}\right)  : =\left(  \wp(p),\wp^{\prime}(p)\right)
,\nonumber\\
\left(  x_{i},y_{i}\right)  : =\left(  \wp(a_{i}),\wp^{\prime}(a_{i})\right)
,\text{ }1\leqslant i\leqslant n+1. \label{x,y}%
\end{gather}
By the classical differential equation satisfied by the Weierstrass $\wp
$-function, we have
\[
y_{i}^{2}=4x_{i}^{3}-g_{2}x_{i}-g_{3}\quad\forall1\leqslant i\leqslant
n+1\text{ and }i=p.
\]
Then (\ref{ya is sol. condition2}) becomes
\begin{equation}
\sum_{i=1}^{n+1}\frac{1}{\,x_{i}-x_{p}\,}=0. \label{ya is sol. condition2*}%
\end{equation}
Assume further that $\mathbf{a}$ $\in\overset{o}{X}_{n,p}(\tau)\subset
Y_{n,p}^{(1)}(\tau)$. Then $\mathbf{a}$ satisfies (a-i), and using the
addition formula:
\[
\zeta(a+b)=\zeta(a)+\zeta(b)+\frac{1}{2}\frac{\wp^{\prime}(a)-\wp^{\prime}%
(b)}{\wp(a)-\wp(b)},
\]
the system \eqref{ya is sol. condition1} becomes equivalent to:%

\begin{align}
&  \sum_{\substack{i=1\\i\neq j}}^{n+1}\left[  \dfrac{1}{\,x_{i}-x_{j}%
\,}-\dfrac{1}{\,n+1\,}\cdot\dfrac{1}{\,x_{i}-x_{p}\,}\right]  y_{i}%
\label{ya is sol. condition1*}\\
&  +\left[  \,\sum_{\substack{i=1\\i\neq j}}^{n}\left(  \dfrac{1}%
{\,x_{i}-x_{j}\,}\right)  +\dfrac{n}{\,n+1\,}\cdot\dfrac{1}{\,x_{j}-x_{p}%
\,}\right]  y_{j}=0,\text{ }\forall1\leqslant j\leqslant n.\nonumber
\end{align}

Identifying a point $\mathbf{a}=(a_{1},\cdots,a_{n+1})\in\mathrm{Sym}%
^{n+1}E_{\tau}$ with
\[
(\mathbf{x},\mathbf{y})=(x_{1},\cdots,x_{n+1},y_{1},\cdots,y_{n+1}%
)\in\mathbb{C}^{2n+2}%
\]
via (\ref{x,y}), the set $\overset{o}{X}_{n,p}(\tau)$ can be characterized as:%
\begin{equation}
\overset{o}{X}_{n,p}(\tau)=\left\{  \mathbf{a}\left\vert
\begin{array}
[c]{l}%
x_{i}\not =x_{p}\text{, }y_{i}\not =\infty,0,\text{ }\forall1\leqslant
i\leqslant n+1\\
x_{i}\neq x_{\ell}\text{ }\forall i\not =\ell,\text{ }1\leqslant
i,\ell\leqslant n+1\\
\text{such that \eqref{ya is sol. condition2*},
\eqref{ya is sol. condition1*}}_{j}\text{ hold }\forall1\leqslant j\leqslant
n\text{.}%
\end{array}
\right.  \right\}  , \label{Xnp}%
\end{equation}
which defines an affine algebraic subvariety of $\operatorname{Sym}%
^{n+1}E_{\tau}$.

Let $\mathbf{a}_{0}(p)\in Y_{n,p}^{(1)}(\tau)$ be the zero configuration
associated with
\begin{equation}
\text{GLE}_{n}^{(1)}(p,0,\tau)=\text{GLE}_{n}^{(0)}(p,A_{0},\tau)\text{,
}A_{0}=-\frac{\wp^{\prime\prime}(p)}{4\wp^{\prime}(p)}\text{.} \label{id1}%
\end{equation}
Since this equation is even, it is log-free at $z=0$. We first treat the case
\[
\mathbf{a}_{0}(p)\in\mathring{X}_{n,p}(\tau),
\]
for which a dimension argument gives log-freeness for every $T$. The
complementary set
\begin{equation}
\Theta(\tau):=\left\{  p\in E_{\tau}\setminus E_{\tau}[2]|\,\mathbf{a}%
_{0}(p)\in Y_{n,p}^{(1)}(\tau)\setminus\overset{o}{X}_{n,p}(\tau)\right\}  .
\label{theta tau's def}%
\end{equation}
is then shown to be finite and removed by approximation.

\begin{theorem}
\label{thm, Xn,p is smooth} Fix $\tau\in\mathbb{H}$ and let $p\in E_{\tau
}\setminus E_{\tau}[2]$. Suppose $\mathbf{a}_{0}(p)\in\overset{o}{X}%
_{n,p}(\tau)$, then the irreducible component of $\overset{o}{X}_{n,p}(\tau)$
containing $\mathbf{a}_{0}(p)$ is an algebraic curve.
\end{theorem}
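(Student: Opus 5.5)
We need dimension $\geq 1$ for the component through $\mathbf a_0(p)$, combined with \eqref{dim<=1}. The plan is to count equations against unknowns in the affine description \eqref{Xnp} and then invoke Krull's principal ideal theorem.

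\emph{Ambient space and equations.} Near $\mathbf a_0(p)$ the conditions $x_i\neq x_p$, $y_i\neq 0,\infty$, $x_i\neq x_\ell$ are open. So locally $\overset{o}{X}_{n,p}(\tau)$ is cut out of the open subset $U\subset E_\tau^{n+1}/S_{n+1}$ (a smooth variety of dimension $n+1$) by the rational equations \eqref{ya is sol. condition2*} and \eqref{ya is sol. condition1*}$_j$, $1\leqslant j\leqslant n$. Since all $a_i$ are distinct and $a_i\neq\pm a_j$, the quotient map is étale near the point. We may therefore work on $E_\tau^{n+1}$ with local coordinates $(a_1,\dots,a_{n+1})$, with $x_i=\wp(a_i)$, $y_i=\wp'(a_i)$ regular functions there.

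\emph{Reducing the equation count.} This naive count gives $n+1$ equations in $n+1$ unknowns, so a bare application of Krull only yields dimension $\ge 0$. To gain one dimension I must show one equation is redundant. In the proof of Theorem~\ref{thm, ya solves noneven ode}, the residue equations at $z=a_j$ for $j=1,\dots,n+1$ sum, together with the residues at $0,\pm p$, to zero. The $(n+1)$-st residue condition at $a_{n+1}$ was already dropped for this reason, leaving $n$ equations \eqref{ya is sol. condition1}$_j$ plus \eqref{ya is sol. condition2}.

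Instead, I will parametrize by $(\mathbf a,T)$, or equivalently $(\mathbf a,c,T)$. The unknowns are $a_1,\dots,a_{n+1},c,T$, which is $n+3$ in total. The equations are the residue conditions at $p,-p,0$ and $a_1,\dots,a_{n+1}$, which is $n+4$ in total. One of these is dependent by the residue theorem, leaving $n+3$. Finally, the vanishing $g_{\mathbf a,c}(p)=0$ is automatic once $B$ is chosen by \eqref{B(T),noneven}, since $B$ enters linearly and is eliminated by that formula.

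This still gives dimension $\geq 0$. The extra freedom comes from a further dependence: the subtraction of \eqref{z=p} and \eqref{z=-p} produced \eqref{ya is sol. condition2}, and I would check that \eqref{z=0} combined with the others is then implied. My intended route is to use the fact that $g_{\mathbf a,c}$ is elliptic with $\sum \operatorname{Res}=0$, and in addition that $g_{\mathbf a,c}$ is the difference of two potentials. In such a difference the $\wp$-coefficients, i.e. the double-pole parts, match automatically, and the coefficient of $\zeta(z)$ is determined by $T$.

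\emph{Main obstacle.} The real difficulty is exhibiting this second redundancy rigorously. The fallback I would try first is a transversality computation at $\mathbf a_0(p)$ itself. The even equation $\mathrm{GLE}^{(0)}_n(p,A,\tau)$ yields, via Lemma~\ref{lemma apparent,T1+T2=0}, a one-parameter family of Hermite--Halphen solutions for every $A$. By the involution $z\mapsto -z$ we also have $-\mathbf a_0(p)$. I would compute the Jacobian of the $n+1$ equations in $(a_1,\dots,a_{n+1})$ at $\mathbf a_0(p)$ and show it has rank exactly $n$. I expect it to vanish identically in the direction $T$ by the $T\mapsto -T$ symmetry of the equation under $z\mapsto -z$, which fixes $\mathbf a_0$ up to sign.

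Rank $n$ gives that the local dimension at $\mathbf a_0(p)$ is at least $n+1-n=1$, provided I can also argue that the ideal is locally generated by $n$ of the functions. That would follow if the determinantal relation coming from the residue sum, or the symmetry, expresses the last equation as a combination of the others. Combined with \eqref{dim<=1}, the component through $\mathbf a_0(p)$ then has dimension exactly one, i.e. it is an algebraic curve.
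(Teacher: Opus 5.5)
There is a genuine gap. You correctly note that on the \'etale chart around $\mathbf a_0(p)$ the naive count is $n+1$ equations, namely \eqref{ya is sol. condition2*} and \eqref{ya is sol. condition1*}$_{1\leqslant j\leqslant n}$, in $n+1$ unknowns. Krull therefore gives only dimension $\geqslant 0$ unless one more equation is shown to lie in the ideal of the others. None of your candidates supplies this redundancy:
\begin{itemize}
\item the residue-sum dependence was already used to drop the condition at $a_{n+1}$;
\item adjoining $c,T$ adds as many unknowns as equations;
\item the matching of double-pole parts is built into the ansatz through the exponents, so it removes no residue condition.
\end{itemize}
The missing idea is the rank deficiency of Lemma~\ref{lem, rank M<n}. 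Put $t_i=1/(x_i-x_p)$ and $h_i=\sum_j M_{ij}y_j$. The computation in that proof gives the identity
\[
\sum_{i=1}^{n}(t_i-t_{n+1})M_{ij}=\frac{n}{n+1}\,t_j\sum_{k=1}^{n+1}t_k
\quad\Longrightarrow\quad
\sum_{i=1}^{n}(t_i-t_{n+1})h_i=\frac{n}{n+1}\,k_1\sum_{j=1}^{n+1}t_jy_j .
\]
Since $t_n\neq t_{n+1}$ on $\overset{o}{X}_{n,p}(\tau)$, it follows that $h_n$ lies in the ideal generated by $h_1,\dots,h_{n-1},k_1$. This lets the paper cut $\overset{o}{X}_{n,p}(\tau)\cong V(F)$ out of $\mathbb C^{2n+2}$ by $2n+1$ equations. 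Krull's height theorem then gives every component dimension $\geqslant 1$, and \eqref{dim<=1} finishes the proof. With this identity inserted, your own chart argument would also work: $n$ generators on a smooth $(n+1)$-dimensional chart.

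Your fallback does not close the gap, because it runs in the wrong direction. A Jacobian of rank $n$ at $\mathbf a_0(p)$ only shows that the Zariski tangent space there is one-dimensional, i.e.\ local dimension $\leqslant 1$, which \eqref{dim<=1} already gives. It yields no lower bound: for instance, $\{x=0,\ x+y^2=0\}\subset\mathbb C^2$ has Jacobian rank $1$ at the origin yet is a point. The proviso you attach, that the ideal be locally generated by $n$ of the functions, is exactly the missing redundancy, so the argument is circular as it stands. The symmetry you invoke does not help either. The map $z\mapsto -z$ sends $\mathbf a_0(p)$ to $-\mathbf a_0(p)$, which is a different point, since $\mathbf a_0(p)\in\overset{o}{X}_{n,p}(\tau)$ forces $\mathbf a_0(p)\cap(-\mathbf a_0(p))=\emptyset$. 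So it constrains nothing at $\mathbf a_0(p)$. Also, $T$ is a function of $\mathbf a$ via \eqref{T in terms of aj}, not a coordinate in which that Jacobian could vanish.
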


The system \eqref{ya is sol. condition1*} is a homogeneous linear system
consisting of $n$ equations in the variables $y_{1},\cdot\cdot\cdot,y_{n+1}$.
This system is represented by an $n\times(n+1)$ matrix $M:=(M_{ij}%
)_{n\times(n+1)}$, whose entries are given by
\begin{equation}
M_{ij}:=%
\begin{cases}
~~\sum\limits_{\substack{k=1\\k\neq j}}^{n+1}\left(  \dfrac{1}{\,x_{k}%
-x_{j}\,}\right)  +\dfrac{n}{\,n+1\,}\cdot\dfrac{1}{\,x_{j}-x_{p}\,}, & \quad
i=j,\\[18pt]%
~~\dfrac{1}{\,x_{j}-x_{i}\,}-\dfrac{1}{\,n+1\,}\cdot\dfrac{1}{\,x_{j}-x_{p}%
\,}, & \quad i\neq j.
\end{cases}
\label{the matrix M from x,y}%
\end{equation}
Then%

\begin{equation}
\label{eq:M}\resizebox{1.1\textwidth}{!}{$ M= \begin{pmatrix} \displaystyle \sum_{k=2}^{n+1}\frac{1}{x_k-x_1} +\frac{n}{n+1}\frac{1}{x_1-x_p} & \displaystyle \frac{1}{x_2-x_1} -\frac{1}{n+1}\frac{1}{x_2-x_p} & \cdots & \displaystyle \frac{1}{x_{n+1}-x_1} -\frac{1}{n+1}\frac{1}{x_{n+1}-x_p} \\[2mm] \displaystyle \frac{1}{x_1-x_2} -\frac{1}{n+1}\frac{1}{x_1-x_p} & \displaystyle \sum_{\substack{k=1\\ k\ne 2}}^{n+1} \frac{1}{x_k-x_2} +\frac{n}{n+1}\frac{1}{x_2-x_p} & \cdots & \displaystyle \frac{1}{x_{n+1}-x_2} -\frac{1}{n+1}\frac{1}{x_{n+1}-x_p} \\ \vdots & \vdots & \ddots & \vdots \\ \displaystyle \frac{1}{x_1-x_n} -\frac{1}{n+1}\frac{1}{x_1-x_p} & \displaystyle \frac{1}{x_2-x_n} -\frac{1}{n+1}\frac{1}{x_2-x_p} & \cdots & \displaystyle \sum_{\substack{k=1\\ k\ne n}}^{n+1} \frac{1}{x_k-x_n} +\frac{n}{n+1}\frac{1}{x_n-x_p} \end{pmatrix}_{n\times(n+1)} $}.
\end{equation}

Under \eqref{ya is sol. condition2*}, we have
\[
\sum_{i=1}^{n+1}\frac{1}{\,x_{i}-x_{p}\,}=0.
\]

\begin{lemma}
\label{lem, rank M<n} Let $M=(M_{ij})_{n\times(n+1)}$ be the matrix defined by
(\ref{eq:M}). Assume that \eqref{ya is sol. condition2*} holds. Then
\[
\operatorname{rank}M\leqslant n-1.
\]

\end{lemma}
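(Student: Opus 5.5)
The plan is to embed $M$ in a square matrix and reduce the rank bound to finding two independent left null vectors. Let $\widetilde M=(M_{ij})_{(n+1)\times(n+1)}$ be given by the same formula \eqref{the matrix M from x,y}, now with $1\leqslant i\leqslant n+1$. Then $M$ consists of the first $n$ rows of $\widetilde M$.

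\emph{Step 1: the constant vector.} I first check that $\mathbf 1=(1,\dots,1)$ is a left null vector of $\widetilde M$, with no hypothesis needed. Summing column $j$, the diagonal entry contributes
\[
\sum_{k\neq j}\frac{1}{x_k-x_j}+\frac{n}{n+1}\cdot\frac{1}{x_j-x_p},
\]
and the $n$ off-diagonal entries contribute
\[
\sum_{i\neq j}\frac{1}{x_j-x_i}-\frac{n}{n+1}\cdot\frac{1}{x_j-x_p}.
\]
These cancel exactly. Hence the last row of $\widetilde M$ is minus the sum of the first $n$ rows, so $\operatorname{rank}M=\operatorname{rank}\widetilde M$. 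It therefore suffices to prove $\operatorname{rank}\widetilde M\leqslant n-1$. Equivalently, I must exhibit a second left null vector $w$ of $\widetilde M$ that is not proportional to $\mathbf 1$, and that is where \eqref{ya is sol. condition2*} must enter.

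\emph{Step 2: a second null vector.} I would write
\[
\widetilde M=A+D-\tfrac{1}{n+1}\,\mathbf 1\,u^{T},\qquad u_j=\frac{1}{x_j-x_p},
\]
where $A_{ij}=1/(x_j-x_i)$ for $i\neq j$ is the Cauchy-type antisymmetric part and $D$ is the diagonal correction. I would then look for $w$ among
\[
w_i=\alpha+\beta x_i+\gamma\,\frac{1}{x_i-x_p}+\delta\,\frac{1}{(x_i-x_p)^2},
\]
and compute $(w^T\widetilde M)_j$ using the elementary pair identities
\[
\frac{x_j}{x_i-x_j}+\frac{x_i}{x_j-x_i}=-1,\qquad \frac{u_j-u_i}{x_i-x_j}=u_iu_j .
\]
These collapse the double sums into power sums $\sum_i u_i^m$ and $\sum_i x_i$. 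For example, the test $w_i=x_i$ gives
\[
(w^T\widetilde M)_j=-n+\frac{x_j-\frac{1}{n+1}\sum_i x_i}{x_j-x_p},
\]
which is not zero, so a linear term alone fails. The terms in $u_i$ should produce $u_j$-dependent pieces whose coefficients involve $\sum_i u_i$. Setting that sum to zero by \eqref{ya is sol. condition2*} should let the coefficients be chosen so that $(w^T\widetilde M)_j$ vanishes identically in $j$. Alternatively, I would dualize and find a second right kernel vector $y$ of $\widetilde M$ (beyond the one forced by $\det\widetilde M=0$), using the ansatz $y_j=u_j$ or $y_j=x_ju_j$.

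\emph{Main obstacle.} The hard part is identifying the correct second null vector, since the naive choices leave residual terms. If the ansatz of Step 2 does not close, my fallback is a degree argument. The minors of $M$ are rational functions in the $x_i$. I would show that each $n\times n$ minor is divisible by $\sum_i 1/(x_i-x_p)$, by checking vanishing on a dense set, for instance by letting $x_{n+1}$ be determined by \eqref{ya is sol. condition2*} and expanding. This would give the rank bound directly.
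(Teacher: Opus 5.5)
Your Step 1 is correct and is a useful observation: the column sums of the square matrix $\widetilde M$ vanish identically, so $\operatorname{rank}M=\operatorname{rank}\widetilde M$. However, Step 1 alone only gives $\operatorname{rank}\widetilde M\leqslant n$, which is trivial for $M$. The proposal stops exactly where \eqref{ya is sol. condition2*} has to enter. Step 2 never exhibits the second null vector; ``should let the coefficients be chosen'' is not an argument. The test $w_i=x_i$ is a detour. The fallback, that every $n\times n$ minor is divisible by $\sum_i 1/(x_i-x_p)$ ``by checking vanishing on a dense set'', just restates the lemma without supplying a mechanism. As written, the rank bound is not proved.

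The missing step is short and already lies inside your ansatz: take $\alpha=\beta=\delta=0$, $\gamma=1$, that is, $w=u$. Your own pair identity gives
\[
(u^{T}\widetilde M)_j=\sum_{i\neq j}\frac{u_j-u_i}{x_i-x_j}+\frac{n}{n+1}u_j^2-\frac{u_j}{n+1}\sum_{i\neq j}u_i=\frac{n}{n+1}\,u_j\sum_{i=1}^{n+1}u_i ,
\]
which vanishes under \eqref{ya is sol. condition2*}. The $u_i$ are nonzero and sum to zero, so $u$ is not proportional to $\mathbf 1$. Hence the left kernel of $\widetilde M$ has dimension at least two, and $\operatorname{rank}M=\operatorname{rank}\widetilde M\leqslant n-1$. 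Note that $u-u_{n+1}\mathbf 1=(t_1-t_{n+1},\dots,t_n-t_{n+1},0)$ annihilates $M$ itself. This is exactly the row combination in the paper's proof: it rescales columns by $t_j=u_j$ and rows by $t_i-t_{n+1}$, and shows the resulting column sums equal $\frac{n}{n+1}\sum_i t_i$. With this step supplied, your route coincides with the paper's, and your Step 1 explains why that particular combination works.
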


\begin{proof}
Set
\[
t_{i}:=\dfrac{1}{\,x_{i}-x_{p}\,}\neq0,\quad1\leqslant i\leqslant n+1.
\]
Then condition \eqref{ya is sol. condition2*} is equivalent to
\begin{equation}
\sum_{i=1}^{n+1}t_{i}=0. \label{1027equ1}%
\end{equation}
Write $M=M^{\prime}D$ with
\[
D:=\operatorname{diag}\left(  t_{1},\ldots,t_{n+1}\right)  \in
\operatorname{GL}(n+1,\mathbb{C}),
\]
and
\[
M^{\prime}=
\begin{cases}
\sum\limits_{\substack{k=1\\k\neq j}}^{n+1}\left(  \dfrac{t_{k}}{\,t_{j}%
-t_{k}\,}\right)  +\dfrac{n}{\,n+1\,}, & \quad i=j,\\
~~\dfrac{t_{i}}{\,t_{i}-t_{j}\,}-\dfrac{1}{\,n+1\,}, & \quad i\neq j.
\end{cases}
\]

Since $D$ is invertible, we have
\[
\operatorname{rank}M =\operatorname{rank}(M^{\prime}D) =\operatorname{rank}%
M^{\prime}.
\]

Set the matrix $G=(G_{ij})$ by
\[
G:=\operatorname{diag}(t_{1}-t_{n+1},\ldots,t_{n}-t_{n+1})M^{\prime},
\]
so that
\begin{equation}
G_{ij}=
\begin{cases}
~~\sum\limits_{\substack{k=1\\k\neq j}}^{n+1}\left(  \dfrac{t_{k}\left(
t_{j}-t_{n+1}\right)  }{\,t_{j}-t_{k}\,}\right)  +\dfrac{n}{\,n+1\,}\left(
t_{j}-t_{n+1}\right)  , & \quad i=j,\\
~~\dfrac{t_{i}\left(  t_{i}-t_{n+1}\right)  }{\,t_{i}-t_{j}\,}-\dfrac
{1}{\,n+1\,}\left(  t_{i}-t_{n+1}\right)  , & \quad i\neq j.
\end{cases}
\label{the matrix G}%
\end{equation}
Thus
\[
\operatorname{rank}M^{\prime}=\operatorname{rank}G.
\]

Fix $j\in\{1,\ldots,n+1\}$. We claim that
\[
\sum_{i=1}^{n}G_{ij} =\frac{n}{n+1}\sum_{i=1}^{n+1}t_{i} .
\]
\noindent\textbf{Case 1.} ~\thinspace\ If $j\neq n+1$, then
\begin{align*}
\sum\limits_{i=1}^{n}G_{ij}  &  =\sum\limits_{\substack{k=1\\k\neq j}%
}^{n+1}\left(  \dfrac{t_{k}\left(  t_{j}-t_{n+1}\right)  }{\,t_{j}-t_{k}%
\,}\right)  +\dfrac{n}{\,n+1\,}\left(  t_{j}-t_{n+1}\right) \\
&  \qquad+\sum\limits_{\substack{i=1\\i\neq j}}^{n}\left(  \dfrac{t_{i}\left(
t_{i}-t_{n+1}\right)  }{\,t_{i}-t_{j}\,}-\dfrac{1}{\,n+1\,}\left(
t_{i}-t_{n+1}\right)  \right) \\
&  =\dfrac{t_{n+1}}{\,n+1\,}+\dfrac{n\,t_{j}}{\,n+1\,}+\dfrac{\,\left(
n-1\right)  t_{n+1}\,}{n+1}+\dfrac{n}{\,n+1\,}\sum
\limits_{\substack{i=1\\i\neq j}}^{n}t_{i}=\dfrac{n}{\,n+1\,}\sum_{i=1}%
^{n+1}t_{i}.
\end{align*}

\textbf{Case 2.} ~\thinspace\ If $j=n+1$, then
\[
\sum\limits_{i=1}^{n}G_{i,n+1}=\sum\limits_{i=1}^{n}\left(  \dfrac
{t_{i}\left(  t_{i}-t_{n+1}\right)  }{\,t_{i}-t_{n+1}\,}-\dfrac{1}%
{\,n+1\,}\left(  t_{i}-t_{n+1}\right)  \right)  =\dfrac{n}{n+1}\sum
_{i=1}^{\,n+1\,}t_{i}.
\]
Thus
\[
\left(
\begin{array}
[c]{c|c}%
Id_{n-1} & \mathbf{0}\\[2pt]\hline
1\cdots1 & 1
\end{array}
\right)  _{\!n\times n}G=\left(
\begin{array}
[c]{c}%
G^{\prime}\\[2pt]\hline
\frac{n}{n+1}\sum\limits_{i=1}^{n+1}t_{i}\cdots\frac{n}{n+1}\sum
\limits_{i=1}^{n+1}t_{i}%
\end{array}
\right)  _{\!n\times(n+1)}%
\]
for some $(n-1)\times(n+1)$ matrix $G^{\prime}$.

By \eqref{1027equ1}, the last row vanishes; hence
\[
\operatorname{rank}G=\operatorname{rank}G^{\prime}\leqslant n-1.
\]
\medskip
\end{proof}

By Lemma~\ref{lem, rank M<n}, one equation in system
\eqref{ya is sol. condition1*} is redundant under
\eqref{ya is sol. condition2*}. Omitting the n-th equation, we may write
$\overset{o}{X}_{n,p}(\tau)$ as:
\begin{equation}
\overset{o}{X}_{n,p}(\tau)=\left\{  \mathbf{a}\left\vert
\begin{array}
[c]{l}%
x_{i}\not =x_{p}\text{, }y_{i}\not =\infty,0,\text{ }\forall1\leqslant
i\leqslant n+1,\\
x_{i}\neq x_{\ell}\text{ }\forall i\not =\ell,\text{ }1\leqslant
i,\ell\leqslant n+1,\\
\text{such that \eqref{ya is sol. condition2*},
\eqref{ya is sol. condition1*}}_{j}\text{ hold }\forall1\leqslant j\leqslant
n-1\text{.}%
\end{array}
\right.  \right\}  . \label{Xnp,new}%
\end{equation}

Define
\[
F:\mathbb{C}^{2n+2}\longrightarrow\mathbb{C}^{2n+1}%
\]
\[
(\mathbf{x},\mathbf{y})=(x_{1},\cdots,x_{n+1},y_{1},\cdots,y_{n+1}%
)\longmapsto(f_{1},\cdots,f_{n+1},h_{1},\cdots,h_{n-1},k_{1}),
\]
where%
\[
f_{j}(\mathbf{x},\mathbf{y})=y_{j}^{2}-(4x_{j}^{3}-g_{2}x_{j}-g_{3}),
\]%
\begin{align*}
h_{j}(\mathbf{x},\mathbf{y})  &  =\sum_{\substack{i=1\\i\neq j}}^{n+1}\left[
\dfrac{1}{\,x_{i}-x_{j}\,}-\dfrac{1}{\,n+1\,}\cdot\dfrac{1}{\,x_{i}-x_{p}%
\,}\right]  y_{i}\\
&  +\left[  \,\sum_{\substack{i=1\\i\neq j}}^{n}\left(  \dfrac{1}%
{\,x_{i}-x_{j}\,}\right)  +\dfrac{n}{\,n+1\,}\cdot\dfrac{1}{\,x_{j}-x_{p}%
\,}\right]  y_{j},
\end{align*}%
\[
k_{1}(\mathbf{x},\mathbf{y})=\sum_{i=1}^{n+1}\frac{1}{\,x_{i}-x_{p}\,}.
\]
Via (\ref{x,y}), we obtain the isomorphism
\begin{equation}
\overset{o}{X}_{n,p}(\tau)\cong V(F):=\{(\mathbf{x},\mathbf{y})\in
\mathbb{C}^{2n+2}\mid F(\mathbf{x},\mathbf{y})=0\}. \label{Xo,np and F(x,y)=0}%
\end{equation}

\begin{proof}
[Proof of Theorem \ref{thm, Xn,p is smooth}]Let $(\mathbf{x}_{0}%
,\mathbf{y}_{0})$ denote the corresponding point of $\mathbf{a}_{0}(p)$. Since
$\mathbf{a}_{0}(p)\in\overset{o}{X}_{n,p}(\tau)$, we have $F(\mathbf{x}%
_{0},\mathbf{y}_{0})=0.$

By Krull's height theorem,  every irreducible component of
\(V(F)\) has codimension at most \(2n+1\). Hence the irreducible
component containing \((\mathbf{x}_0,\mathbf{y}_0)\) has dimension at
least
\[
2n+2-(2n+1)=1.
\]
Via the isomorphism \eqref{Xo,np and F(x,y)=0}, it follows that the irreducible
component of \(\mathring{X}_{n,p}(\tau)\) containing
\(\mathbf{a}_0(p)\), denoted by
\(\mathring{X}_{n,p}(\tau)_{\mathbf{a}_0(p)}\), satisfies
\begin{equation}
\dim_{\mathbb{C}}\overset{o}{X}_{n,p}(\tau)_{\mathbf{a}_{0}(p)}\geqslant1.
\label{dim, X0,np,>1}%
\end{equation}
Together with (\ref{dim<=1}), \eqref{dim, X0,np,>1} gives
\[
\dim_{\mathbb{C}}\overset{o}{X}_{n,p}(\tau)_{\mathbf{a}_{0}(p)}=1.
\]

Hence the irreducible component of $\mathring{X}_{n,p}(\tau)$ containing
$\mathbf{a}_{0}(p)$ is an algebraic curve.
\end{proof}

\begin{theorem}
\label{thm, noneven apparent for generic p} Assume the hypotheses of Theorem
\ref{thm, Xn,p is smooth} hold. Then the equation GLE$_{n}^{(1)}(p,T,\tau)$ is
log-free at $z=0$ for all $T$ $\in\mathbb{C}$.
\end{theorem}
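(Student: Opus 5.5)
The idea is to show that the set of $T$ for which $\mathrm{GLE}^{(1)}_n(p,T,\tau)$ is log-free is all of $\mathbb C$. We do this by showing that it contains a nonempty open subset of the line $V^{(1)}_{n,p}(\tau)\simeq\mathbb A^1_T$, and that it is Zariski closed.

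\textbf{Step 1 (closedness).} For fixed $p,\tau$, the Frobenius recurrence at $z=0$ has exponents $-n$ and $n+1$. Log-freeness is equivalent to the vanishing of a single polynomial $\Phi_n(T)$ in the accessory data. Here $B=B(T)$ is given by \eqref{B(T),noneven}, and the Laurent coefficients of $q^{(1)}_n$ at $0$ are polynomial in $T$. Hence the log-free locus $\{T:\Phi_n(T)=0\}$ is an algebraic subset of $\mathbb A^1_T$. It is therefore either finite or all of $\mathbb C$.

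\textbf{Step 2 (infinitely many log-free $T$).} By Theorem~\ref{thm, Xn,p is smooth}, the irreducible component $C:=\mathring X_{n,p}(\tau)_{\mathbf a_0(p)}$ is a curve through $\mathbf a_0(p)$. Every $\mathbf a\in C$ satisfies (a-i) together with \eqref{ya is sol. condition2*} and \eqref{ya is sol. condition1*}$_j$ for $1\le j\le n-1$. We need all $n$ equations \eqref{ya is sol. condition1} to hold, and Lemma~\ref{lem, rank M<n} gives the redundancy of the $n$-th equation of the linear system. So by Theorem~\ref{thm, ya solves noneven ode}, $y_{\mathbf a,c}$ with $c,T$ given by \eqref{c in terms of aj}--\eqref{T in terms of aj} solves $\mathrm{GLE}^{(1)}_n(p,T(\mathbf a),\tau)$. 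By \eqref{asymptotic}, this equation is then log-free at $0$.

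The regular function $T:C\to\mathbb A^1$ has fibres of cardinality at most $2$ (Lemma~\ref{lem, Mj not +-Id}). Since $C$ is one-dimensional, $T$ is nonconstant on $C$, and its image is infinite: a constructible set in $\mathbb A^1$ that is cofinite. So $\Phi_n$ has infinitely many zeros. By Step 1, $\Phi_n\equiv0$, which proves the theorem.

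\textbf{Main obstacle.} The delicate point is justifying the reduction from the $n$ equations \eqref{ya is sol. condition1*} to the $n-1$ equations used in \eqref{Xnp,new}. Lemma~\ref{lem, rank M<n} gives $\operatorname{rank}M\le n-1$, but dropping the $n$-th row requires that the first $n-1$ rows span the row space at points of $C$. Equivalently, the $n$-th row must be a combination of the others. The proof of the lemma supplies exactly this: the rows of $G$ sum to a multiple of $\sum t_i$, and $G$ is obtained from $M'$ by an invertible diagonal scaling (the $t_i$ are distinct since the $x_i$ are). So the $n$-th row of $M$ is a fixed linear combination of the first $n-1$ rows whenever \eqref{ya is sol. condition2*} holds, and the reduction is valid on all of $\mathring X_{n,p}(\tau)$.

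A secondary check is that $T$ is nonconstant on $C$. This follows from the fibre bound, since $T$ has finite fibres on an irreducible curve.
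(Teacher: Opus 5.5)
Your proposal is correct and follows the paper's own proof. The paper likewise encodes log-freeness at $z=0$ by the polynomial $f_p(T)$ coming from the Frobenius recurrence at $j=2n+1$, and forces $f_p\equiv0$ because $T$ has fibres of size at most two on the one-dimensional component $\mathring X_{n,p}(\tau)_{\mathbf a_0(p)}$, so $f_p$ would have infinitely many roots. Your justification for omitting the $n$-th equation, namely $\sum_{i=1}^n(t_i-t_{n+1})M'_{i\cdot}=0$ with $t_n\neq t_{n+1}$, correctly unpacks the proof of Lemma~\ref{lem, rank M<n}; the paper uses that step implicitly.
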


\begin{proof}

Consider the Laurent expansion of the potential $q_{n}^{(1)}(z;p,T)$ near
$z=0$:
\[
q_{n}^{(1)}(z;p,T)=\sum_{i=-2}^{\infty}q_{i,1}\,z^{i},\quad\text{near}\quad
z=0,
\]
where each $q_{i,1}\in\mathbb{Q}[\wp(p),\wp^{\prime}(p),e_{j}(\tau)][T]$.

Substituting the local solution
\[
y_{T}(z)=\sum_{m=0}^{\infty}c_{m}(T,p,\tau)z^{-n+m},\text{ with }%
c_{0}:=1\text{ and }c_{m}\in\mathbb{C},
\]
into the equation and expanding around $z=0$, we obtain the recurrence
relation:%
\begin{equation}
j\left(  j-\left(  2n+1\right)  \right)  c_{j}=\sum_{i=-1}^{j-2}%
q_{i,1}\,c_{j-2-i}. \label{re}%
\end{equation}
The coefficients $c_{1},\cdot\cdot\cdot,c_{2n}$ are thus determined
recursively from $c_{0}$ using this formula. Moreover, it follows from the
form of $q_{i,1}$ that
\[
c_{m}\in\mathbb{Q}[\wp(p),\wp^{\prime}(p),e_{j}(\tau)][T]\text{, }1\leqslant
m\leqslant2n.
\]

Define the polynomial $f_{p}(T)\in\mathbb{Q}[\wp(p),\wp^{\prime}(p),e_{j}%
(\tau)][T]$ by evaluating the right-hand side of the recurrence at $j=2n+1$:
\begin{equation}
f_{p}(T):=\sum_{i=-1}^{2n-1}q_{i,1}\,c_{2n-1-i}\text{.}\label{def, fp(T)}%
\end{equation}

Then for each $T\in\mathbb{C}$, the equation GLE$_{n}^{(1)}(p,T,\tau)$ is
log-free at $z=0$ if and only if $f_{p}(T)=0$.

We claim that
\[
f_{p}(T)\equiv0.
\]
Suppose, for contradiction, that $f_{p}(T)\not \equiv 0$. Then $f_{p}(T)$ has
at most $\deg_{T}f_{p}$ distinct roots, so the equation would be log-free only
for finitely many values of $T$.

However, since $\mathbf{a}_{0}(p)$ $\in$ $\overset{o}{X}_{n,p}(\tau)$
$\neq\emptyset$, Theorem \ref{thm, Xn,p is smooth} implies that the irreducible
component $\overset{o}{X}_{n,p}(\tau)_{\mathbf{a}_{0}(p)}$ is an algebraic
curve. For every $\mathbf{a}\in\overset{o}{X}_{n,p}(\tau)_{\mathbf{a}_{0}(p)}%
$, there exists a Hermite-Halphen type solution $y_{\mathbf{a},c}(z)$ solving
GLE$_{n}^{(1)}(p,T,\tau)$, with $T$ given by \eqref{T in terms of aj}.

The image of $T$ on $\mathring{X}_{n,p}(\tau)_{a_{0}(p)}$ is infinite.
Otherwise it would be finite, say $\{T_{1},\ldots,T_{d}\}.$ Since each fiber
contains at most two points, the component itself would then be finite,
contradicting the fact that it has dimension one.

Therefore, the equation is log-free for infinitely many distinct values of
$T$, implying that $f_{p}(T)$ has infinitely many roots---a contradiction.
Hence $f_{p}(T)\equiv0$, and GLE$_{n}^{(1)}(p,T,\tau)$ is automatically
log-free at $z=0$ for all $T\in\mathbb{C}$.
\end{proof}

\begin{theorem}
\label{thm, theta is finite}For any fixed $\tau\in\mathbb{H}$, the set
$\Theta(\tau)$ is finite.
\end{theorem}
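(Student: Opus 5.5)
We have to show that the set of $p\in E_\tau\setminus E_\tau[2]$ for which the zero configuration $\mathbf a_0(p)$ leaves $\mathring X_{n,p}(\tau)$ is finite. The strategy is to express each way of leaving $\mathring X_{n,p}(\tau)$ as the vanishing of a meromorphic function of $p$ on $E_\tau$ that is not identically zero. Such a function has only finitely many zeros on the compact curve $E_\tau$.

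First we track how $\mathbf a_0(p)$ depends on $p$. By \eqref{id1}, $\mathbf a_0(p)$ is the zero set of the Hermite--Halphen solution of the \emph{even} equation $\mathrm{GLE}^{(0)}_n(p,A_0(p),\tau)$ with $A_0(p)=-\wp''(p)/(4\wp'(p))$. For an even equation the zero configuration is determined up to the involution $\mathbf a\mapsto-\mathbf a$. By Lemma~\ref{lem, Mj not +-Id} the configuration is not a fixed point of that involution, so there are exactly two candidates, $\{\mathbf a_0(p),-\mathbf a_0(p)\}$. Both have the same $x$-coordinates $x_i=\wp(a_i)$. Consequently the elementary symmetric functions of $x_1,\dots,x_{n+1}$ are well-defined functions of $p$.

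Next we show these symmetric functions are algebraic in $(\wp(p),\wp'(p))$, i.e.\ meromorphic on a finite cover of $E_\tau\setminus E_\tau[2]$, with finitely many branch points. The Hermite--Halphen conditions \eqref{ya is sol. condition2*} and \eqref{ya is sol. condition1*}, together with $T=0$ in \eqref{T in terms of aj}, are polynomial in $(\mathbf x,\mathbf y,x_p,y_p)$. By \eqref{dim<=1} the solution set over each $p$ is finite. Hence the polynomial $\prod_i(X-x_i)$ has coefficients in a finite algebraic extension of $\mathbb C(\wp(p),\wp'(p))$.

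The conditions defining $\Theta(\tau)$ are, by \eqref{Xnp}:
\begin{itemize}
\item $x_i=x_p$ for some $i$;
\item $y_i\in\{0,\infty\}$;
\item $x_i=x_\ell$ for some $i\neq\ell$.
\end{itemize}
The first condition includes the cases (a-ii) and (a-iii). Each condition is the vanishing of a symmetric algebraic expression: respectively $\prod_i(x_i-x_p)$, $\prod_i(4x_i^3-g_2x_i-g_3)$ (with poles for $a_i=0$), and the discriminant $\prod_{i<\ell}(x_i-x_\ell)^2$. Each is therefore an algebraic function of $p$. An algebraic function that is not identically zero has finitely many zeros on the compact curve $E_\tau$. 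So $\Theta(\tau)$ is finite as soon as none of these three functions vanishes identically.

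The main obstacle is proving non-vanishing. For each condition it suffices to find one value of $p$ at which it fails. We expect to use the degeneration $p\to0$ for this, or a $q$-expansion in a nearby regime. As $p\to0$, the even equation tends to a Lam\'e equation of weight $n+1$ or $n-1$. This matches the multiplicity-one limits in the global degeneration formula, which is consistent with Theorem~\ref{thm, componentwise degeneration}. Generic Lam\'e equations have $n+1$ distinct zeros satisfying $\mathbf a\cap-\mathbf a=\emptyset$ and avoiding the half-periods, by the classical Hermite--Halphen theory.

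We must also show that near $p=0$ no $a_i$ is driven into $\pm p$. We would rule this out with the local expansion: $x_p\sim p^{-2}$ blows up, while the Lam\'e limit keeps the $x_i$ bounded.

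If this limiting argument is awkward, the fallback is a direct check at a specific point such as $p=\omega_1/4$ for the $n=1$ base case. We would then argue by induction on $n$ through the Painlev\'e-VI deformation, which by the references \cite{Chen-Kuo-Lin-Lame I} changes $p$ while preserving $T=0$. Non-vanishing at a single point already rules out identical vanishing, since each expression is analytic on a connected cover.
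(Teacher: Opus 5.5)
Your reduction agrees with the paper's in substance. The condition $x_i=x_p$ is exactly types (a-ii)/(a-iii), which is the paper's $\Theta_1$. The remaining conditions say $\mathbf a_0(p)\cap(-\mathbf a_0(p))\neq\emptyset$ in type (a-i), which is $\Theta_2$. So finiteness follows once two functions of $p$ are shown not to vanish identically.

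The paper gets these as honest elliptic functions of $p$, namely $d_{n,p}(A_0(p))$ and $Q^{(0)}_{n,p}(A_0(p))$. They are read off from $\Phi^{(0)}_{e,p}(z;A_0(p))$, whose numerator in $\wp(z)$ has the $x_i$ as roots. Your finite-cover argument is looser: clearing denominators in \eqref{ya is sol. condition1*} introduces spurious solutions, and those equations only describe type (a-i). Also, Lemma~\ref{lem, Mj not +-Id} does not give $\mathbf a_0(p)\neq-\mathbf a_0(p)$; in the non-completely reducible case the configuration is symmetric. This is harmless, since only the $x_i$ matter.

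The genuine gap is the non-vanishing, which is the whole content of the theorem, and the mechanism you propose for it is false. At fixed $\tau$, the family $\mathrm{GLE}^{(1)}_n(p,0,\tau)=\mathrm{GLE}^{(0)}_n(p,A_0(p),\tau)$ does not converge to a Lam\'e equation of weight $n\pm1$. Since $A_0(p)=\frac{3}{4p}+O(p)$, formula \eqref{B(T),noneven} with $T=0$ gives $B(p)=-n(n+1)p^{-2}+O(p^2)\to\infty$. By Lemma~\ref{lem, q converges iff c converges} there is then no limit equation on $E_\tau$ at all. By Proposition~\ref{prop, B and a}(ii), all zeros $a_i(p)$ collapse into $0$. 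So the $x_i$ are not bounded: they are of the same order $p^{-2}$ as $x_p$, and your argument that no $a_i$ approaches $\pm p$ fails. The weights $n\pm1$ appear only along Painlev\'e~VI families, where $\tau$ varies and $A(\tau)\neq A_0(p(\tau))$.

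What is needed is a blow-up at scale $p$:
\begin{itemize}
\item Set $F_p(x)=p^{2n}\Phi_p(px)$ and pass to the limit $F(x)=x^{-2n}+\sum_j\beta_jx^{-2(n-j)}+\delta_n\frac{x^2}{1-x^2}$. This limit solves the third-order equation with $q_*(x)=n(n+1)(x^{-2}-1)+3(x^2-1)^{-2}$.
\item Show $c=F(\infty)\neq0$. This gives $p^{4n+2}Q^{(0)}_{n,p}(A_0(p))\to n(n+1)c^2\neq0$.
\item Show $\delta_n\neq0$ by the paper's rationality argument. If $\delta_n=0$, then $F=P(x^{-2})$ with $P\in\mathbb Q[t]$ monic and $F(1)=0$. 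Evaluating the invariant at $x=1$ then forces $(P'(1)/P(0))^2=-n(n+1)$, which is impossible. Hence $d_{n,p}(A_0(p))\sim\delta_np^{-2n-2}\neq0$.
\end{itemize}
Because these limits do not depend on $\tau$, the argument works for every fixed $\tau$.

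Your fallback does not repair the gap. A check at a single point $(p,\tau)$ cannot exclude identical vanishing in $p$ for other fixed $\tau$. Painlev\'e~VI varies $\tau$ and keeps $n$ fixed, so it is not an induction on $n$. Nor does it preserve $T=0$: a $\tau$-family staying at $T\equiv0$ would contradict Theorem~\ref{thm, no isomonodromic deformation}.
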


We use the following facts from the even spectral theory of $\mathrm{GLE}%
_{n}^{(0)}$~\cite{Chen-Kuo-Lin-Lame I}. Let $\Phi(z):=y_{1}(z)y_{2}(z)$ be the
product of any two solutions of GLE$_{n}^{(0)}$($p,A$,$\tau$). Then $\Phi(z)$
satisfies its second symmetric product equation:
\begin{equation}
\Phi^{\prime\prime\prime}(z)-4q_{n}^{(0)}(z;p,A)\,\Phi^{\prime}(z)-2q_{n}%
^{(0)\prime}(z;p,A)\,\Phi(z)=0\text{ on }E_{\tau}\text{.}
\label{3rd ode, even}%
\end{equation}


\begin{lemma}
\label{Lemma of Takemura} \cite{Chen-Kuo-Lin-Lame I} Up to a nonzero constant
multiple, there exists a unique nontrivial even elliptic solution $\Phi
_{e,p}^{(0)}(z;A)$ of the third-order equation \eqref{3rd ode, even} of the
form
\begin{equation}
\Phi_{e,p}^{(0)}(z;A) = \sum_{j=0}^{n}b_{j,p}(A)\wp(z)^{n-j} + \frac
{d_{n,p}(A)}{\wp(z)-\wp(p)},\label{5}%
\end{equation}
where $b_{j,p}(A)$ and $d_{n,p}(A)$ are polynomials in $A$ whose coefficients
are rational functions of $\wp(p)$, $\wp^{\prime}(p)$, and $e_{k}(\tau)$.
These polynomials have no nonconstant common divisor, and the leading
coefficient of $b_{n,p}(A)$ is $\frac12$. Moreover,
\[
2n = \deg_{A} b_{n,p}(A) = \max\left\{  \deg_{A} b_{j,p}(A),\deg_{A}
d_{n,p}(A) \right\} .
\]
Consequently,
\[
\deg_{A}\Phi_{e,p}^{(0)}(z;A)=2n.
\]

\end{lemma}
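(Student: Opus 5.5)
The plan is to construct $\Phi^{(0)}_{e,p}$ directly from the ansatz \eqref{5}. This reduces \eqref{3rd ode, even} to a triangular linear system in the coefficients $b_{j,p},d_{n,p}$, and the degree statements are then read off by induction along the triangle.

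\textbf{Step 1: the ansatz is forced.} Since $q_n^{(0)}(z;p,A)$ is even in $z$, the operator $\Phi\mapsto\Phi'''-4q_n^{(0)}\Phi'-2q_n^{(0)\prime}\Phi$ sends even elliptic functions to odd ones. The local exponents of $\mathrm{GLE}^{(0)}_n$ are $-n,n+1$ at $0$ and $-\tfrac12,\tfrac32$ at $\pm p$. Hence the exponents of the symmetric-square equation are $\{-2n,1,2n+2\}$ at $0$ and $\{-1,1,3\}$ at $\pm p$, and every singularity is log-free by Lemma~\ref{lemma apparent,T1+T2=0} and Lemma~\ref{apparent,necessary condition}. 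An even elliptic solution therefore has a pole of order at most $2n$ at $0$, at most simple poles at $\pm p$, and no other poles. Every even elliptic function with this pole data has the form \eqref{5}, since $1/(\wp(z)-\wp(p))$ is the unique even elliptic function with simple poles exactly at $\pm p$, modulo polynomials in $\wp$.

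\textbf{Step 2: a triangular system.} Substitute \eqref{5} into \eqref{3rd ode, even}. Rewrite $\zeta(z+p)-\zeta(z-p)$ and $\wp(z\pm p)$ via the addition formulas in terms of $\wp(z),\wp'(z)$ and $\wp(p),\wp'(p)$. The left side then becomes $\wp'(z)$ times a rational function of $\wp(z)$ with poles only at $\wp(z)=\wp(p)$. Equating coefficients of $\wp(z)^{n-j}$ for $j=0,\dots,n$, and of the pole parts at $\wp(p)$, gives a linear system. Its diagonal pivots come from the indicial polynomial at $0$ evaluated at $-2(n-j)$. This polynomial is nonzero for $j<n$, because $-2(n-j)\notin\{-2n,1,2n+2\}$ except when $j=0$, where $b_{0,p}$ is free. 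So $b_{1,p},\dots,b_{n,p}$ and $d_{n,p}$ are determined recursively from $b_{0,p}$.

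The expected \textbf{main obstacle} is the overdetermination: there are more coefficient equations than unknowns, and one must show that the extra conditions, namely the residue and pole-part matching at $\pm p$ and the constant term, hold automatically. I would derive this from log-freeness at $\pm p$. The local solution space of \eqref{3rd ode, even} spanned by the products $y_iy_j$ is free of logarithms, so the exponent-$(-1)$ datum at $\pm p$ imposes no extra condition. Failing a clean argument, I would fall back on the existence theorem for an even elliptic product $y_1y_2$ in the abelian case, where $M_1M_2=M_2M_1$, and then argue by Zariski density in $A$. Uniqueness up to scalar follows from the triangularity once existence is known.

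\textbf{Step 3: degrees in $A$.} The parameter $A$ enters only through $A(\zeta(z+p)-\zeta(z-p))$ and through $B$, which is quadratic in $A$ by \eqref{B(A),even}. Each step of the recursion therefore raises the $A$-degree by at most $2$, but only through the $4B\Phi'$ term does it reach degree exactly $2$. Normalize $b_{0,p}$ to be a suitable constant and track the top $A$-coefficient through the induction. This gives $\deg_A b_{j,p}\le 2j$ and $\deg_A d_{n,p}\le 2n$, with equality for $b_{n,p}$ and leading coefficient $\tfrac12$ after the normalization. Finally, with $b_{0,p}$ a nonzero constant, the polynomials have no nonconstant common divisor. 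The degree count gives $\deg_A\Phi^{(0)}_{e,p}=2n$.
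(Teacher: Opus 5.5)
\textbf{Overall.} The paper gives no proof of this lemma; it quotes it from \cite{Chen-Kuo-Lin-Lame I}. For the non-even analogue (Theorem~\ref{thm, 3rd's elliptic solution} and Lemma~\ref{lem, elliptic sol is 1 dim}), it takes existence and uniqueness from the abelian monodromy and uses the coefficient recursion only for the degrees. That is your fallback route, and you should make it your main argument, because your primary argument for the ``main obstacle'' does not close.

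\textbf{The leftover condition at $\pm p$.} Write $\Phi=\sum_{k\ge 0}c_{2k}z^{2k-2n}$ and $U=\Phi'''-4q^{(0)}_n\Phi'-2q^{(0)\prime}_n\Phi$. The polar coefficients of $U$ at $0$ determine $b_{1,p},\dots,b_{n,p},d_{n,p}$ from $b_{0,p}$; the last pivot is the indicial value at exponent $2$, which is nonzero. After this step, $U$ is odd, elliptic, and has poles only at $\pm p$.
\begin{itemize}
\item The $(z-p)^{-4}$ coefficient vanishes because $-1$ is an exponent.
\item The residues vanish by oddness and the residue theorem.
\item Log-freeness, i.e.\ the resonance at exponent $1=-1+2$, only makes the $(z-p)^{-2}$ coefficient a fixed multiple of the $(z-p)^{-3}$ coefficient.
\end{itemize}
The $(z-p)^{-3}$ coefficient is a non-resonant Frobenius step. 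It forces the constant term of $\Phi$ at $p$ to be a prescribed multiple of the residue $d_{n,p}/\wp'(p)$. No local property at $p$ implies this, so exactly one genuine linear condition remains, and only global existence removes it.

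\textbf{How to get existence.} Existence holds for every $A$, with no Zariski-density step:
\begin{itemize}
\item $N_0=I$ and $N_{\pm p}=-I$ give $M_1M_2=M_2M_1$, hence an elliptic solution, $y_1y_2$ or $y_1^2$.
\item In the $y_1^2$ case evenness is not automatic. Take $\Phi(z)+\Phi(-z)$ instead. It is nonzero because an odd elliptic solution is holomorphic at $0$ (its only odd exponent there is $1$) and has equal residues at $\pm p$, so it is zero.
\item Triangularity of the recursion at $0$ then gives uniqueness up to scalar and $b_{0,p}\neq 0$.
\end{itemize}

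\textbf{Degree of $d_{n,p}$.} Step~3 as written gives only $\deg_A d_{n,p}\le 2n+2$, since $d_{n,p}$ is fixed at the $(n+1)$-st step through $c_{2n+2}$. The missing observation concerns the constant Laurent coefficient $q_0$ of $q^{(0)}_n$ at $0$.
\begin{itemize}
\item $q_0$ is the only coefficient quadratic in $A$, through $B$.
\item It enters the equation for $c_{2k}$ only via $-4q_0\Phi'$, as $-8(k-1-n)\,q_0\,c_{2k-2}$. This vanishes at $k=n+1$, because the constant term of $\Phi$ has zero derivative.
\item The other Laurent coefficients of $q^{(0)}_n$ at $0$ are at most linear in $A$; they come from $A(\zeta(z+p)-\zeta(z-p))$.
\end{itemize}
Hence $\deg_A c_{2n+2}\le 2n-1$. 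The unitriangular passage from $(c_{2k})$ to $(b_{j,p},d_{n,p})$ then gives $\deg_A d_{n,p}\le 2n-1$. The same factor $8(k-1-n)$ is nonzero for $k\le n$, which gives $\deg_A b_{n,p}=2n$ exactly. Finally, normalize the constant $b_{0,p}$ so that the leading coefficient of $b_{n,p}$ is $\tfrac12$. This yields primitivity and $\max=2n$.
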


\begin{remark}
\label{remark, weight}  Assign the weights
\[
\operatorname{wt}A=1,\quad\operatorname{wt}\wp=\operatorname{wt}e_{k} =2,
\quad\operatorname{wt}\wp^{\prime}=3.
\]
Then $b_{j,p}(A)$ and $d_{n,p}(A)$ are weighted homogeneous of weights
\[
\operatorname{wt} b_{j,p}(A)=2j, \qquad\operatorname{wt} d_{n,p}(A)=2n+2,
\]
respectively. Hence every term in \eqref{5} has weight $2n$, and therefore
$\Phi_{e,p}^{(0)}(z;A)$ is weighted homogeneous of weight $2n$.

In particular,
\begin{equation}
p^{2j} b_{j,p}(A_{0}(p))\to\beta_{j},\ p^{2n+2}d_{n,p}(A_{0}(p))\to\delta
_{n}\quad\text{as }p\to0.\label{bj,p's behavior at 0}%
\end{equation}

\end{remark}

Define
\begin{equation}
\begin{aligned} Q^{(0)}_{n,p}(A):=&\dfrac{1}{2}\Phi_{e,p}^{(0)}(z;A)\,\Phi_{e,p}^{(0)\prime\prime }(z;A)-\dfrac{1}{4}\Phi_{e,p}^{(0)\prime}(z;A)^{2}\\ &-q_{n}^{(0)}(z;p,A)\Phi_{e,p}^{(0)}(z;A)^2 ,\end{aligned} \label{spectral polynomial, even's def}%
\end{equation}
which is independent of $z$ by \eqref{3rd ode, even}. Indeed, $Q_{n,p}%
^{(0)}(A)$ is a polynomial in $A$, with coefficients rational in $\wp(p)$,
$\wp^{\prime}(p)$, $e_{k}(\tau)$, and $\deg_{A}Q_{n,p}^{(0)}(A)=4n+2$. The
polynomial $Q_{n,p}^{(0)}(A)$ is referred to as the spectral polynomial
associated with the even-symmetry generalized Lam\'{e} equation GLE$_{n}%
^{(0)}$($p,A,\tau$). The identities
\[
q_{n}^{(0)}(z;p,A)=q_{n}^{(0)}(z;-p,-A),\ Q_{n,p}^{(0)}(A)=Q_{n,-p}^{(0)}(-A)
\]
do not eliminate the dependence on $\wp^{\prime}(p)$. For $n=1$, for
instance,
\begin{equation}
Q_{1,p}^{(0)}(A)=-Y_{1}(A)Y_{2}(A), \label{spectral poly for even n=1}%
\end{equation}
where
\begin{align*}
Y_{1}(A)  &  =A^{3}-\frac{5\wp^{\prime\prime}(p)}{4\wp^{\prime}(p)}%
A^{2}+3\left(  \wp(p)+\frac{1}{16}\frac{\wp^{\prime\prime}(p)^{2}}{\wp
^{\prime}(p)^{2}}\right)  A\\
&  \quad\quad+\frac{\wp^{\prime}(p)}{2}-\frac{9\wp^{\prime\prime}(p)}%
{4\wp^{\prime}(p)}\wp(p)+\frac{9}{64}\frac{\wp^{\prime\prime}(p)^{3}}%
{\wp^{\prime}(p)^{3}},\\
Y_{2}(A)  &  =A^{3}-\frac{\wp^{\prime\prime}(p)}{4\wp^{\prime}(p)}A^{2}%
-\frac{5}{16}\frac{\wp^{\prime\prime}(p)^{2}}{\wp^{\prime}(p)^{2}}%
A+2\wp^{\prime}(p)-\frac{3}{64}\frac{\wp^{\prime\prime}(p)^{3}}{\wp^{\prime
}(p)^{3}}.
\end{align*}

For $A_{0}(p)$ in (\ref{id1}), since GLE$_{n}^{(0)}$($p,$ $A_{0},\tau$) is an
even elliptic equation, $y_{\mathbf{a}_{0}(p)\mathbf{,}c}(-z)$ is also a
solution. Furthermore, $y_{-\mathbf{a}_{0}(p),-c}(z)=d\,y_{\mathbf{a}%
_{0}(p),c}(-z)$ for some $d\in\mathbb{C}^{\ast}$ and hence $y_{-\mathbf{a}%
_{0}(p),-c}(z)$ is also a solution of $\mathrm{GLE}_{n}^{(0)}(p,A_{0},\tau)$.

Define%
\begin{equation}
\Theta_{1}(\tau):=\{p\in E_{\tau}\setminus E_{\tau}[2]|\mathbf{a}_{0}(p)\text{
is either of type (a-ii) or (a-iii)}\}, \label{theta1, def}%
\end{equation}
and%
\begin{equation}
\Theta_{2}(\tau):=\left\{  p\in E_{\tau}\setminus E_{\tau}[2]\left\vert
\begin{array}
[c]{l}%
\mathbf{a}_{0}(p)\text{ is of type (a-i) and }\mathbf{a}_{0}(p)\cap
-\mathbf{a}_{0}(p)\neq\emptyset
\end{array}
\right.  \right\}  . \label{theta2,def}%
\end{equation}
By definition,
\begin{equation}
\Theta(\tau)=\Theta_{1}(\tau)\cup\Theta_{2}(\tau). \label{theta=theta1+theta2}%
\end{equation}

\begin{lemma}
\label{lem, theta2} Let $\tau\in\mathbb{H}$. Then
\begin{equation}
\Theta_{1}(\tau) = \left\{  p\in E_{\tau}\setminus E_{\tau}[2] \;\middle|\;
d_{n,p}(A_{0}(p))=0 \right\} \label{Theta1 given by dnp}%
\end{equation}
and
\begin{equation}
\Theta_{2}(\tau)=\left\{  p\in E_{\tau}\setminus E_{\tau}[2]|\,Q_{n,p}%
^{(0)}(A_{0}(p))=0\right\}  . \label{Theta2 given by Q}%
\end{equation}

\end{lemma}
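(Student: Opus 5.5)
Both identities will come from the standard link between Hermite--Halphen solutions and the even elliptic solution $\Phi^{(0)}_{e,p}$. For $A=A_0(p)$ the equation $\mathrm{GLE}^{(0)}_n(p,A_0,\tau)$ has the two Hermite--Halphen solutions $y_{\mathbf a_0(p),c}(z)$ and $y_{-\mathbf a_0(p),-c}(z)$, as noted just before the lemma. Their product is even and elliptic and solves \eqref{3rd ode, even}, so by the uniqueness in Lemma~\ref{Lemma of Takemura}
\[
y_{\mathbf a_0,c}(z)\,y_{-\mathbf a_0,-c}(z)=C\,\Phi^{(0)}_{e,p}(z;A_0),\qquad C\neq 0 .
\]
From \eqref{Hermite noneven} the left side equals, up to a constant,
\[
\frac{\prod_{i=1}^{n+1}\sigma(z-a_i)\sigma(z+a_i)}{\sigma(z)^{2n}\sigma(z+p)\sigma(z-p)} .
\]
This is a constant multiple of
\[
\prod_{i=1}^{n+1}\bigl(\wp(z)-\wp(a_i)\bigr)\Big/\bigl(\wp(z)-\wp(p)\bigr),
\]
because $\sigma(z-a)\sigma(z+a)/\sigma(z)^2\sigma(a)^2=-(\wp(z)-\wp(a))$. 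So I will use the identity
\[
\Phi^{(0)}_{e,p}(z;A_0)=\text{const}\cdot\frac{\prod_{i=1}^{n+1}(\wp(z)-x_i)}{\wp(z)-\wp(p)}
\]
and read both sets off it.

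\textbf{Step 1 ($\Theta_1$).} By \eqref{5} the residue-type coefficient $d_{n,p}(A_0)$ equals $\text{const}\cdot\prod_{i}(x_p-x_i)$, up to the normalization $b_{0,p}$. Hence $d_{n,p}(A_0)=0$ exactly when some $x_i=\wp(p)$, that is, some $a_i=\pm p$. By the trichotomy (a-i)/(a-ii)/(a-iii), any $a_i$ equal to $\pm p$ forces type (a-ii) or (a-iii), which gives \eqref{Theta1 given by dnp}. I will check that in the nonvanishing case the product form has a genuine simple pole at $\wp(p)$, so that vanishing of $d_{n,p}$ is equivalent to cancellation. I also need $b_{0,p}(A_0)\neq0$, or else a short treatment of the leading coefficient; the weight homogeneity in Remark~\ref{remark, weight} should help here.

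\textbf{Step 2 ($\Theta_2$).} Substitute the product form into the definition \eqref{spectral polynomial, even's def} and evaluate the $z$-independent quantity $Q^{(0)}_{n,p}(A_0)$ at a zero $z=a_j$ of $\Phi$. There $Q=-\tfrac14\Phi'(a_j)^2$. Now $\Phi'(a_j)=0$ exactly when $a_j$ is a multiple zero of $\Phi$, which happens when $x_j=x_k$ for some $k\ne j$, or when $\wp'(a_j)=0$, i.e.\ $a_j\in E_\tau[2]$. Each of these is equivalent to $\mathbf a_0\cap-\mathbf a_0\neq\emptyset$ for type (a-i). If $\Phi'(a_j)=0$ fails for every $j$, then $Q\neq0$. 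This gives \eqref{Theta2 given by Q}, provided I make sure the type (a-ii)/(a-iii) cases, where a zero of $\Phi$ may cancel against the pole, are either excluded or consistent with the definition of $\Theta_2$.

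\textbf{Main obstacle.} I expect the main difficulty to be the boundary bookkeeping. First, I must justify that $\mathbf a_0(p)$ is well defined; this needs $M_j\notin\{\pm I\}$ (Lemma~\ref{lem, Mj not +-Id}), so that a Hermite--Halphen solution exists and the product with its reflection is not identically degenerate. Second, I must handle the case where both solutions coincide up to scalar, i.e.\ $\mathbf a_0=-\mathbf a_0$. In that case $\Phi$ is a square and $Q=0$, which is consistent with $\Theta_2$. I would treat this degenerate case separately, using the monodromy description.
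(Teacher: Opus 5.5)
Your treatment of $\Theta_1$ is the paper's argument made explicit. The partial-fraction identity $d_{n,p}(A_0)\propto\prod_i(\wp(p)-x_i)$ says exactly that the product of the two Hermite--Halphen solutions has a simple pole at $p$ in case (a-i) and is holomorphic there in cases (a-ii)/(a-iii). You do not need $b_{0,p}(A_0)\neq0$ separately: the product identity with a nonzero constant already gives it.

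For $\Theta_2$ your route differs from the paper's. The paper quotes Chen--Kuo--Lin (Proposition~2.4) for the equivalence $Q^{(0)}_{n,p}(A_0)=0\iff\mathbf a_0\cap(-\mathbf a_0)\neq\emptyset$. You derive it directly by evaluating the constant $Q$ at a zero $a_j$, where $Q=-\frac14\Phi'(a_j)^2$. For type (a-i) this is correct and self-contained.

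\textbf{The gap.} The step you leave open is precisely the one that needs an argument. To get equality of sets you must show $Q^{(0)}_{n,p}(A_0(p))\neq0$ whenever $\mathbf a_0(p)$ is of type (a-ii) or (a-iii), because such $p$ are excluded from $\Theta_2$ by definition. Being ``consistent with the definition'' is not an available outcome; if $Q$ vanished there, the lemma would be false.

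\textbf{How the paper closes it.} It argues by contradiction. A common zero of $y_{\mathbf a_0,c}(z)$ and $y_{\mathbf a_0,c}(-z)$ kills their constant Wronskian, so the two solutions are proportional and $\mathbf a_0=-\mathbf a_0$. This is impossible, because $p$ occurs in $\mathbf a_0$ (twice) while $-p$ does not.

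\textbf{How to close it in your framework.} It takes one line. In case (a-ii),
\[
\Phi\propto(\wp(z)-\wp(p))\prod_{i\le n-1}(\wp(z)-x_i)
\]
has a simple zero at $z=p$, with $\Phi'(p)\propto\wp'(p)\prod_{i\le n-1}(\wp(p)-x_i)\neq0$. Near $p$ the potential is $q^{(0)}_n=\frac34(z-p)^{-2}+O((z-p)^{-1})$. Letting $z\to p$ in the definition of $Q^{(0)}_{n,p}$ gives
\[
Q=-\tfrac14\Phi'(p)^2-\tfrac34\Phi'(p)^2=-\Phi'(p)^2\neq0 .
\]
Equivalently, $Q=-\frac14 W^2$ with $W\neq0$. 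Case (a-iii) is identical at $z=-p$.

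With this added, the degenerate case $\mathbf a_0=-\mathbf a_0$ you worried about needs no separate monodromy treatment. It can only occur in type (a-i), where every zero of $\Phi$ is double and your evaluation already gives $Q=0$.
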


\begin{proof}
By the classification \emph{(a-i)}--\emph{(a-iii)}, the product
\[
y_{\mathbf{a}_{0}(p),c(p)}(z) y_{\mathbf{a}_{0}(p),c(p)}(-z)
\]
has a simple pole at $z=p$ in case \emph{(a-i)}, whereas it is holomorphic
there in cases \emph{(a-ii)} and \emph{(a-iii)}. Since
\[
\Phi_{e,p}^{(0)}(z;A_{0}(p)) = \sum_{j=0}^{n} b_{j,p}(A_{0}(p))\wp(z)^{n-j} +
\frac{d_{n,p}(A_{0}(p))} {\wp(z)-\wp(p)}
\]
and $\wp^{\prime}(p)\neq0$ for $p\notin E_{\tau}[2]$, identity
\eqref{Theta1 given by dnp} follows.

For \eqref{Theta2 given by Q}, it is known from \cite[Proposition~2.4]%
{Chen-Kuo-Lin-Lame I} that
\[
\mathbf{a}_{0}(p)\cap(-\mathbf{a}_{0}(p))\neq\emptyset
\]
if and only if
\[
Q_{n,p}^{(0)}(A_{0}(p))=0.
\]

Suppose that $\mathbf{a}_{0}(p)$ is of type \emph{(a-ii)} or \emph{(a-iii)}
and, contrary to the claim,
\[
\mathbf{a}_{0}(p)\cap(-\mathbf{a}_{0}(p))\neq\emptyset.
\]
Then the Wronskian of $y_{\mathbf{a}_{0}(p),c}(z)$ and $y_{\mathbf{a}%
_{0}(p),c}(-z)$ vanishes. Hence these two solutions are linearly dependent,
and there exists $d\in\mathbb{C}^{\times}$ such that
\[
y_{\mathbf{a}_{0}(p),c}(z) = d\,y_{\mathbf{a}_{0}(p),c}(-z) =
d\,y_{-\mathbf{a}_{0}(p),-c}(z).
\]
Thus
\[
\mathbf{a}_{0}(p)=-\mathbf{a}_{0}(p).
\]
Since $\mathbf{a}_{0}(p)$ is of type \emph{(a-ii)} or \emph{(a-iii)}, this
forces
\[
p=-p,
\]
and hence $p\in E_{\tau}[2]$, contradicting $p\in E_{\tau}\setminus E_{\tau
}[2]$.

Hence every zero of $Q_{n,p}^{(0)}(A_{0}(p))$ lies in case \emph{(a-i)},
proving \eqref{Theta2 given by Q}.
\end{proof}

\begin{proposition}
\label{prop, theta1 theta2 finite} For any fixed $\tau\in\mathbb{H}$, both
$\Theta_{1}(\tau)$ and $\Theta_{2}(\tau)$ are finite.
\end{proposition}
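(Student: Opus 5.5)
By Lemma~\ref{lem, theta2}, $\Theta_1(\tau)$ and $\Theta_2(\tau)$ are the zero sets in $E_\tau\setminus E_\tau[2]$ of
\[
p\mapsto d_{n,p}(A_0(p))
\qquad\text{and}\qquad
p\mapsto Q^{(0)}_{n,p}(A_0(p)),
\]
where $A_0(p)=-\wp''(p)/(4\wp'(p))$. I will show that each of these functions is an elliptic function of $p$ (meromorphic on $E_\tau$) that does not vanish identically. Its zero set is then finite, which gives the claim.

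\textbf{Step 1 (meromorphy).} By Lemma~\ref{Lemma of Takemura}, $d_{n,p}(A)$ is a polynomial in $A$ with coefficients rational in $\wp(p)$, $\wp'(p)$ and $e_k(\tau)$. The same holds for $Q^{(0)}_{n,p}(A)$, by its definition \eqref{spectral polynomial, even's def}. Substituting $A=A_0(p)$, which is rational in $\wp(p)$ and $\wp'(p)$, gives rational functions of $(\wp(p),\wp'(p))$. These are elliptic functions of $p$. So each is either identically zero or has only finitely many zeros on $E_\tau$.

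\textbf{Step 2 (non-vanishing via $p\to0$).} I use the weighted homogeneity of Remark~\ref{remark, weight} together with \eqref{bj,p's behavior at 0}. As $p\to0$,
\[
p^{2n+2}d_{n,p}(A_0(p))\to\delta_n ,
\]
and the analogous rescaling should hold for $Q^{(0)}_{n,p}$: it has weight $4n+2$ if $Q$ is weighted homogeneous. Along $p\to0$ we have
\[
\wp(p)\sim p^{-2},\qquad \wp'(p)\sim -2p^{-3},\qquad A_0(p)\sim p^{-1}.
\]
So the leading behaviour is governed by the scaling limit $\lambda\to\infty$ of $(x_0,y_0,A)\mapsto(\lambda^2x_0,\lambda^3y_0,\lambda A)$, in which $e_k$ drop out. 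In this limit the problem reduces to a "degenerate" model: the equation with $g_2=g_3=0$, i.e.\ a rational potential in $z$ with singular points at $0$ and $\pm p$. After rescaling, the quantities $\delta_n$ and the leading coefficient of $Q$ are explicit numbers depending only on $n$. It therefore suffices to check that $\delta_n\neq0$ and that the rescaled limit of $Q^{(0)}_{n,p}(A_0(p))$ is nonzero.

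For $\delta_n$, I would argue as follows. In the rational model the Hermite--Halphen solution for the even configuration is explicit. Alternatively, if $\delta_n=0$, the zero configuration would collide at $\pm p$ for all small $p$. That would contradict the known degeneration to the Lam\'e equations of weights $n\pm1$, whose solutions have no zeros forced at the colliding points. For $Q$, I would use the identity $Q^{(0)}_{n,p}(A)=Q^{(0)}_{n,-p}(-A)$ and the limit: as $p\to0$, GLE$^{(0)}_n(p,A_0,\tau)$ tends to a Lam\'e equation. By Proposition~2.4 of \cite{Chen-Kuo-Lin-Lame I}, $Q$ vanishes exactly when $\mathbf a_0\cap-\mathbf a_0\ne\emptyset$. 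For the limiting configuration at a generic accessory value this does not happen, so the leading term is nonzero.

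\textbf{Main obstacle.} The hard part is Step 2: proving rigorously that the rescaled limits $\delta_n$ and $\lim p^{4n+2}Q^{(0)}_{n,p}(A_0(p))$ are nonzero. If the limit argument proves awkward, my fallback is to evaluate at a single convenient point instead of taking $p\to0$. For instance, take $p\to\omega_k/2$, where $\wp'(p)\to0$ and $A_0$ blows up, so the leading power of $A$ dominates. By Lemma~\ref{Lemma of Takemura} the leading coefficient of $b_{n,p}$ is $1/2$, and $Q^{(0)}_{n,p}$ has degree $4n+2$ in $A$, presumably with a nonzero constant leading coefficient. Then the top-degree term in $A_0(p)$ dominates, so the expression is not identically zero. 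For $d_{n,p}$ I would similarly compare leading degrees in $A$, using that $\deg_A d_{n,p}\le 2n$ with a computable top coefficient.
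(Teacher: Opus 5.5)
Your overall reduction is the same as the paper's. Both functions are elliptic in $p$, and by Remark~\ref{remark, weight} it suffices to prove $\delta_n\neq0$ and $\lim_{p\to0}p^{4n+2}Q^{(0)}_{n,p}(A_0(p))\neq0$. But that nonvanishing is the whole content of the proposition, and the arguments you sketch for it do not work.

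First, $\mathrm{GLE}^{(0)}_n(p,A_0(p),\tau)$ does not tend to any Lam\'e equation as $p\to0$ with $\tau$ fixed. Since $A_0(p)=\frac{3}{4p}+O(p)$, \eqref{B(A),even} gives $B=-n(n+1)p^{-2}+O(1)\to\infty$; equivalently, $T=0$ violates \eqref{X(p)'s behavior at 0}. Convergence with fixed $\tau$ requires $pA\to-n-\frac14$ or $pA\to n+\frac34$, which is where the weights $n\pm1$ arise; otherwise one needs a Painlev\'e~VI deformation with $\tau$ varying. But $pA_0(p)\to\frac34$. Only the rescaled equation in $x=z/p$ has a limit, namely $y''=q_*(x)y$ with $q_*(x)=n(n+1)(x^{-2}-1)+3(x^2-1)^{-2}$, and this is not a Lam\'e equation. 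Your claim that $\delta_n=0$ would force the zeros to collide at $\pm p$ for all small $p$ is a non sequitur: $\delta_n=0$ only removes the leading term of $d_{n,p}(A_0(p))$ at $p=0$. Genericity of the accessory parameter is also irrelevant, because the question is precisely whether $Q^{(0)}_{n,p}$ vanishes identically along the special curve $A=A_0(p)$.

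Your fallback at $p\to\omega_k/2$ fails for a structural reason. The coefficients of $Q^{(0)}_{n,p}(A)$ and $d_{n,p}(A)$ contain $u=\wp''(p)/\wp'(p)$, which blows up at the same rate as $A_0=-u/4$. By weighted homogeneity, no power of $A$ dominates. Already for $n=1$, inserting $A_0$ into \eqref{spectral poly for even n=1} cancels every $u^3$ term: $Y_1(A_0)=\frac12\wp'(p)-3\wp(p)u$ and $Y_2(A_0)=2\wp'(p)$. Hence $Q^{(0)}_{1,p}(A_0(p))=6\wp(p)\wp''(p)-\wp'(p)^2$, which stays bounded at the half-periods although $A_0^6\to\infty$.

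What is missing, and what the paper supplies, is an analysis of the limiting equation $F'''-4q_*F'-2q_*'F=0$. Here $F=\lim p^{2n}\Phi_p(px)$, with $\Phi_p$ normalized to have leading term $\wp(z)^n$.

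\begin{itemize}
\item Since $q_*\to-n(n+1)$ at $x=\infty$, a decaying even $F\sim a_mx^{-2m}$ would leave the uncancelled term $-8mn(n+1)a_mx^{-2m-1}$. Hence $c=F(\infty)\neq0$.
\item Evaluating the conserved quantity $\frac12FF''-\frac14F'^2-q_*F^2$ at $\infty$ gives $n(n+1)c^2\neq0$. This is exactly the rescaled limit of $Q$.
\item If $\delta_n=0$, then $F=P(x^{-2})$ with $P$ monic and with rational coefficients forced by the recursion. Then $F$ is holomorphic at the apparent point $x=1$ and must vanish there.
\item The same conserved quantity evaluated at $x=1$ is minus a real square. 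It therefore cannot equal $n(n+1)P(0)^2>0$, which gives the contradiction.
\end{itemize}
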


\begin{proof}
Recall \eqref{id1} and let
\[
\Phi_{p}(z):=\Phi_{e,p}^{(0)}(z;A_{0}(p)).
\]

Normalize $\Phi_{p}$ as follows. The local exponents of the third-order
equation at $z=0$ are $-2n,1$ and $2n+2.$ Since $\Phi_{p}$ is even, the
exponent $1$ cannot occur. The exponent $2n+2$ is also impossible: otherwise
$\Phi_{p}$ would have a zero of order $2n+2$ at $z=0$, whereas, by
Lemma~\ref{Lemma of Takemura}, its only possible poles away from $0$ are
simple poles at $\pm p$, contradicting the divisor relation for a nonzero
elliptic function. Hence $\Phi_{p}$ has a pole of order $2n$ at $0$.

After rescaling, normalize
\[
\Phi_{p}(z) = \wp(z)^{n} + \sum_{j=1}^{n} b_{j,p}(A_{0}(p))\wp(z)^{n-j} +
\frac{d_{n,p}(A_{0}(p))} {\wp(z)-\wp(p)}.
\]
We keep the notation $b_{j,p}$, $d_{n,p}$, and $Q_{n,p}^{(0)}$ after this
normalization. Such a rescaling changes $d_{n,p}$ by a nonzero factor and
$Q_{n,p}^{(0)}(A_{0}(p))$ by its square, without changing their zero sets.

We analyze simultaneously $d_{n,p}(A_{0}(p))$ and $Q_{n,p}^{(0)}(A_{0}(p))$ as
$p\to0$. Set
\[
z=px, \qquad F_{p}(x):=p^{2n}\Phi_{p}(px).
\]
From \eqref{id1},
\[
A_{0}(p) = \frac{3}{4p}+O(p).
\]
By Remark~\ref{remark, weight}, we obtain
\[
F_{p}(x) \longrightarrow F(x)
\]
locally uniformly on $\mathbb{C}\setminus\{0,\pm1\}$, where
\begin{equation}
\label{eq, common limiting F}F(x) = x^{-2n} + \sum_{j=1}^{n}\beta
_{j}x^{-2(n-j)} + \delta_{n}\frac{x^{2}}{1-x^{2}}.
\end{equation}

Moreover,
\[
p^{2}q_{n}^{(0)}(px;p,A_{0}(p)) \longrightarrow q_{*}(x)
\]
locally uniformly on $\mathbb{C}\setminus\{0,\pm1\}$, where
\[
q_{*}(x) = n(n+1)\left( \frac1{x^{2}}-1\right)  + \frac{3}{(x^{2}-1)^{2}}.
\]
Since $F_{p}$ satisfies the rescaled third-order equation, passing to the
limit gives
\begin{equation}
\label{eq, common limiting third order}F^{\prime\prime\prime}-4q_{*}F^{\prime
}-2q_{*}^{\prime}F=0.
\end{equation}

Since $F$ is rational and even, the limit
\[
c:=\lim_{x\to\infty}F(x)
\]
exists. We claim that $c\neq0$. Otherwise, for some $m\ge1$,
\[
F(x) = a_{m}x^{-2m}+O(x^{-2m-2}), \qquad a_{m}\neq0.
\]
Since
\[
q_{*}(x)=-n(n+1)+O(x^{-2}) \qquad(x\to\infty),
\]
the leading term in the left-hand side of
\eqref{eq, common limiting third order} would be
\[
-8mn(n+1)a_{m}x^{-2m-1},
\]
a contradiction. Hence
\begin{equation}
\label{eq, c nonzero}c\neq0.
\end{equation}

Define
\[
\mathcal{Q}_{*}[F] := \frac12FF^{\prime\prime}-\frac14F^{\prime2 }-q_{*}%
F^{2},
\]
which is a constant. By \eqref{eq, c nonzero},
\begin{equation}
\label{eq, limiting Q nonzero}\mathcal{Q}_{*}[F] = \lim_{x\to\infty
}\mathcal{Q}_{*}[F] = n(n+1)c^{2} \neq0.
\end{equation}
The scaling $z=px$ gives
\[
p^{4n+2}Q_{n,p}^{(0)}(A_{0}(p)) = \frac12F_{p}F_{p}^{\prime\prime}%
-\frac14F_{p}^{\prime2 }- p^{2}q_{n}^{(0)}(px;p,A_{0}(p))F_{p}^{2}.
\]
Passing to the limit and using \eqref{eq, limiting Q nonzero}, we obtain
\begin{equation}
\label{eq, asymptotic Q}\lim_{p\to0} p^{4n+2}Q_{n,p}^{(0)}(A_{0}(p)) =
n(n+1)c^{2} \neq0.
\end{equation}
In particular,
\[
Q_{n,p}^{(0)}(A_{0}(p))\not \equiv 0.
\]

It remains to show that $\delta_{n}\neq0$. Suppose, to the contrary, that
$\delta_{n}=0$. Then \eqref{eq, common limiting F} reduces to
\[
F(x)=P(x^{-2}),
\]
where $P(t)$ is a monic polynomial in $t$ of degree $n$.

Set
\[
\widetilde{q}(t) := n(n+1)(t-1)+\frac{3t^{2}}{(1-t)^{2}}.
\]
Then $P(t)$ satisfies
\begin{equation}
2t^{3}P^{\prime\prime\prime2}P^{\prime\prime}+ \bigl(6t-2\widetilde{q}%
(t)\bigr)P^{\prime}- \widetilde{q}^{\prime}(t)P = 0.\label{P's ode}%
\end{equation}
After clearing denominators, we obtain
\begin{equation}
\begin{aligned} {}& 2t^3(t-1)^3P''' +9t^2(t-1)^3P''\\ &\quad -2(t-1) \Bigl[ n(n+1)(t-1)^3-3t(t^2-3t+1) \Bigr]P'\\ &\quad -\Bigl[ n(n+1)(t-1)^3-6t \Bigr]P =0. \end{aligned}\label{P's ode}%
\end{equation}
Write
\begin{equation}
P(t)=t^{n}+\sum_{j=0}^{n-1}a_{j}t^{j}.\label{P(t) is poly}%
\end{equation}
Comparing coefficients in \eqref{P's ode} yields a recursive system for
$a_{0},\ldots,a_{n-1}$ with rational coefficients. Since $P$ is monic, this
recursion implies $a_{j}\in\mathbb{Q}, 0\le j\le n-1$. Therefore,
\[
P(t)\in\mathbb{Q}[t].
\]

Under the assumption $\delta_{n}=0$, $F$ is holomorphic at $x=1$. Since
\[
q_{*}(x) = \frac{3}{4(x-1)^{2}} - \frac{3}{4(x-1)} + O(1),
\]
the coefficient of $(x-1)^{-3}$ in \eqref{eq, common limiting third order}
gives
\[
F(1)=0.
\]
Hence
\[
P(1)=0, \qquad F^{\prime}(1)=-2P^{\prime}(1).
\]
Evaluating $\mathcal{Q}_{*}[F]$ at $x=1$ gives
\[
\mathcal{Q}_{*}[F] = -\frac{1}{4}F^{\prime2 }= -P^{\prime2}.
\]
Together with \eqref{eq, limiting Q nonzero}, and noting that $c=P(0)$ in the
present case, we obtain
\[
\left(  \frac{P^{\prime}(1)}{P(0)} \right) ^{2} = -n(n+1).
\]
This is impossible, since the left-hand side is a square in $\mathbb{Q}$,
whereas the right-hand side is negative. Thus $\delta_{n}\neq0.$

By Remark~\ref{remark, weight},
\begin{equation}
\label{eq, asymptotic d final}d_{n,p}(A_{0}(p)) = \delta_{n}p^{-2n-2}(1+o(1)),
\quad\text{as }p\to0.
\end{equation}
In particular,
\[
d_{n,p}(A_{0}(p))\not \equiv 0.
\]

By the classification \emph{(a-i)}--\emph{(a-iii)}, the product $y_{a_{0}%
(p),c(p)}(z)y_{a_{0}(p),c(p)}(-z)$ has a simple pole at $z=p$ in case
\emph{(a-i)}, whereas it is holomorphic there in cases \emph{(a-ii)} and
\emph{(a-iii)}. Since $\wp^{\prime}(p)\neq0$ for $p\notin E_{\tau}[2]$, it
follows that
\[
\Theta_{1}(\tau) = \left\{  p\in E_{\tau}\setminus E_{\tau}[2] \;\middle|\;
d_{n,p}(A_{0}(p))=0 \right\} .
\]
By \eqref{eq, asymptotic d final}, $d_{n,p}(A_{0}(p))$ is a nonzero elliptic
function of $p$, and hence $\Theta_{1}(\tau)$ is finite.

Likewise, Lemma~\ref{lem, theta2} gives
\[
\Theta_{2}(\tau) = \left\{  p\in E_{\tau}\setminus E_{\tau}[2] \;\middle|\;
Q_{n,p}^{(0)}(A_{0}(p))=0 \right\} .
\]
By \eqref{eq, asymptotic Q}, $Q_{n,p}^{(0)}(A_{0}(p))$ is a nonzero elliptic
function of $p$. Hence $\Theta_{2}(\tau)$ is finite as well.
\end{proof}

\begin{proof}
[Proof of Theorem \ref{thm, theta is finite}]The result follows from
Proposition~\ref{prop, theta1 theta2 finite}.
\end{proof}

\begin{proof}
[Proof of Theorem~\ref{theorem, noneven's apparent}]Since $\Theta(\tau)$ is
finite, for any $p\in\Theta(\tau)$ we may choose a sequence
\[
p_{\ell}\in E_{\tau}\setminus\bigl(E_{\tau}[2]\cup\Theta(\tau)\bigr), \qquad
p_{\ell}\to p.
\]
For each $\ell$, we have
\[
\mathbf{a}_{0}(p_{\ell})\in\overset{o}{X}_{n,p_{\ell}}(\tau).
\]
Hence, by Theorem~\ref{thm, noneven apparent for generic p}, $\mathrm{GLE}%
_{n}^{(1)}(p_{\ell},T,\tau)$ is log-free for every $T\in\mathbb{C}$.
Equivalently, the polynomial $f_{p_{\ell}}(T)$ defined in \eqref{def, fp(T)}
satisfies
\[
f_{p_{\ell}}(T)\equiv0.
\]

Since the coefficients of $f_{p}(T)$ belong to $\mathbb{Q}\bigl[\wp
(p),\wp^{\prime}(p),e_{j}(\tau)\bigr],$ they depend continuously on $p$ away
from $E_{\tau}[2]$. Letting $\ell\to\infty$, we obtain
\[
f_{p}(T)=\lim_{\ell\to\infty}f_{p_{\ell}}(T)\equiv0.
\]
Thus $\mathrm{GLE}_{n}^{(1)}(p,T,\tau)$ is log-free for every $T\in\mathbb{C}$.
\end{proof}

\section{Spectral Geometry of the Non-Even Component}

\label{spectral theory non-even}

Fix $\tau\in\mathbb{H}$ and $p\in E_{\tau}\setminus E_{\tau}[2]$, and let
$\mathbf{T}$ $\mathbf{=}$ $\left(  T_{1},T_{2}\right)  $ $\mathbf{\in}$
$V_{n,p}(\tau)$. The corresponding monodromy representation is a group
homomorphism
\[
\rho_{\tau,p}(\mathbf{T}):\pi_{1}(E_{\tau}\setminus\{0,\pm p\})\rightarrow
SL(2,\mathbb{C}).
\]
The local exponents at $z=0$ are $-n$ and $n+1$, while those at $z=\pm p$ are
$-1/2$ and $3/2$. Consequently, the corresponding local monodromy matrices are
$N_{0}$ $=$ $Id_{2\times2}$ and $N_{\pm p}$ $=$ $-Id_{2\times2}.$ Let
$z_{0}\not \in $ $\ \{0,\pm p\}+\Lambda_{\tau}$ be a base point such that
$\{0,\pm p\}+\Lambda_{\tau}$ $\not \in $ $z_{0}+\mathbb{R}$ and $z_{0}%
+\tau\cdot\mathbb{R}$. Let $Y(z;z_{0})$ be a fundamental system of solutions
of equation (\ref{GLEn}) in a neighborhood of $z_{0}$.

For $i=1,2$, let
\[
\ell_{i}:z\rightarrow z+\omega_{i},
\]
denote the standard generators of $\pi_{1}(E_{\tau})$ where $\omega_{1}=1$ and
$\omega_{2}=\tau$. The monodromy matrices%
\begin{equation}
M_{i}(\mathbf{T};p):=\rho_{\tau,p}(\mathbf{T})(\ell_{i})\in SL(2,\mathbb{C})
\label{M}%
\end{equation}
are determined by analytic continuation via%
\begin{equation}
Y(z+\omega_{i})=Y(z)M_{i}(\mathbf{T};p). \label{MM}%
\end{equation}
These matrices satisfy the monodromy relation
\begin{equation}
M_{1}M_{2}M_{1}^{-1}M_{2}^{-1}=N_{p}N_{0}N_{-p}=Id_{2\times2}, \label{abelian}%
\end{equation}
which implies that the representation $\rho_{\tau,p}(\mathbf{T})$ is
\textbf{abelian}. There are two cases.\textit{\medskip}

\noindent(i) \textbf{Completely reducible case}:\textit{\ }$M_{i}%
(\mathbf{T};p),$ $i$ $=$ $1,2,$ are simultaneously diagonalizable. More
precisely,
\begin{equation}
M_{1}(\mathbf{T};p)=\left(
\begin{array}
[c]{cc}%
e^{-2\pi is} & 0\\
0 & e^{2\pi is}%
\end{array}
\right)  ,\text{ }M_{2}(\mathbf{T};p)=\left(
\begin{matrix}
e^{2\pi ir} & 0\\
0 & e^{-2\pi ir}%
\end{matrix}
\right)  , \label{m1}%
\end{equation}
for some
\[
\left(  r,s\right)  \in\mathbb{C}^{2}/\mathbb{Z}^{2}.
\]
The pair $\left(  r,s\right)  $ is referred to as the monodromy data in this
case.\textit{\medskip}

\noindent(ii) \textbf{Non-completely reducible case}: $M_{i}(\mathbf{T};p),$
$i$ $=$ $1,2,$ are not simultaneously diagonalizable. Instead, after
conjugation they admit the normalized form:
\begin{equation}
M_{1}(\mathbf{T};p)=\varepsilon_{1}\left(
\begin{array}
[c]{cc}%
1 & 0\\
1 & 1
\end{array}
\right)  \text{, }M_{2}(\mathbf{T};p)=\varepsilon_{2}\left(
\begin{matrix}
1 & 0\\
\mathcal{D} & 1
\end{matrix}
\right)  ,\text{ } \label{m2}%
\end{equation}
where%
\[
\mathcal{D}\in\mathbb{C\cup\{\infty\}},
\]
and the pair
\[
\left(  \varepsilon_{1},\varepsilon_{2}\right)  \in
\{(+,+),(+,-),(-,+),(-,-)\}.
\]
These four sign choices correspond to the four half-periods
\[
r+s\tau=\frac{\omega_{k}}{2}\text{, }k=0,1,2,3,
\]
or equivalently,
\[
(r,s)\in\{(0,0),(1/2,0),(0,1/2),(1/2,1/2)\}.
\]

When $\mathcal{D=\infty}$, the normalized form is understood as
\begin{equation}
M_{1}(\mathbf{T};p)=\varepsilon_{1}\left(
\begin{array}
[c]{cc}%
1 & 0\\
0 & 1
\end{array}
\right)  \text{, }M_{2}(\mathbf{T};p)=\varepsilon_{2}\left(
\begin{matrix}
1 & 0\\
1 & 1
\end{matrix}
\right)  . \label{m3}%
\end{equation}
\textit{\medskip}Here, $\mathcal{D}$ serves as the monodromy data for this
case.\textit{\medskip}

We now construct the spectral curve of the non-even component, parallel to the
even theory developed in \cite{Chen-Kuo-Lin-Lame I}.

The associated third symmetric product equation is
\begin{equation}
\Phi^{\prime\prime\prime}(z)-4q_{n}^{(1)}(z;p,T)\,\Phi^{\prime}(z)-2q_{n}%
^{(1)\prime}(z;p,T)\,\Phi(z)=0,\text{ on }E_{\tau}\text{.}
\label{3rd ode, noneven}%
\end{equation}
A direct computation shows that if $y_{1}(z)$ and $y_{2}(z)$ are two solutions
of GLE$_{n}^{(1)}(p,T,\tau)$, then their product $\Phi(z)=y_{1}(z)y_{2}(z)$
satisfies the above third-order equation \eqref{3rd ode, noneven}.

By Theorem \ref{apparent,main thm}, the equation GLE$_{n}^{(1)}(p,T,\tau)$ is
log-free at all its singularities. Hence its monodromy representation is
abelian, so there exists a common Bloch solution $y_{1}(z;T)$ satisfying
\begin{equation}
y_{1}(z+\omega_{j};T)=\lambda_{j}(T)y_{1}(z;T), \label{y1 and eigenvalue}%
\end{equation}
where $\lambda_{j}(T)$ $\in\mathbb{C}$ is the eigenvalue of the monodromy
matrix $M_{j}(T)$, for $j=1,2$. We define the associated monodromy data
\[
(r(T),s(T))\in\mathbb{C}^{2}/\mathbb{Z}^{2}%
\]
by
\begin{equation}
\lambda_{1}(T)=e^{-2\pi is(T)},\quad\lambda_{2}(T)=e^{2\pi ir(T)}.
\label{r(T),s(T),def}%
\end{equation}


In the completely reducible case, it follows from (\ref{m1}) that there exists
another solution $y_{2}(z;T)$, linearly independent of $y_{1}(z;T)$,
satisfying
\begin{equation}
y_{2}(z+\omega_{j};T)=\lambda_{j}^{-1}(T)y_{2}(z;T),\quad j=1,2.
\label{y2 and eigenvalue}%
\end{equation}
Consequently, their product $y_{1}(z;T)y_{2}(z;T)$ is an elliptic solution of
equation \eqref{3rd ode, noneven}.

In the non-completely reducible case, it follows from (\ref{m2}) that
$y_{1}(z;T)$ may be normalized so that%

\begin{equation}
y_{1}(z+\omega_{j};T)=\lambda_{j}(T)y_{1}(z;T)=\varepsilon_{j}y_{1}%
(z;T),\quad\varepsilon_{j}\in\{\pm1\}. \label{y1,non-com reduc}%
\end{equation}
Consequently, $y_{1}^{2}(z;T)$ is an elliptic solution of \eqref{3rd ode, noneven}.

\begin{theorem}
\label{thm, 3rd's elliptic solution} Up to a nonzero scalar multiple, there
exists a unique nontrivial elliptic solution to \eqref{3rd ode, noneven},
denoted by $\Phi_{e,p}^{(1)}(z;T)$. Moreover, $\Phi_{e,p}^{(1)}(z;T)$ depends
polynomially on $T$ and admits the explicit expansion
\begin{align}
\Phi_{e,p}^{(1)}(z;T)  &  =c_{0}\wp^{n}(z)+\sum_{k=1}^{n-1}c_{2k}\wp
^{n-k}(z)+\sum_{k=1}^{n-1}c_{2k-1}\wp^{n-k-1}(z)\cdot\wp^{\prime
}(z)\label{Phi's form}\\
&  +c_{2n-1,+}(\zeta(z+p)-\zeta(z))+c_{2n-1,-}(\zeta(z-p)-\zeta(z))+c_{2n}%
.\nonumber
\end{align}
The coefficients satisfy
\begin{equation}
c_{k} = c_{k}(T) \in\mathbb{C}\bigl(\wp(p;\tau),\wp^{\prime}(p;\tau
),e_{j}(\tau)\bigr)[T], \label{cj's coeff is rational funs}%
\end{equation}
for $k\in\{0,1,2,\cdots,2n-2\}\cup\{2n-1,\pm\}$, and
\begin{equation}
c_{2n}=c_{2n}(T)\in\mathbb{C}\bigl(\wp(p;\tau),\wp^{\prime}(p;\tau
),\zeta(p),e_{j}(\tau)\bigr)[T], \label{c2n's coeff is rational funs}%
\end{equation}
which are primitive as polynomials in $T$ (i.e., they share no nontrivial
common divisor), and $c_{2n}(T)$ is monic.

Furthermore,
\begin{equation}
\deg_{T} c_{0}=0, \label{deg c0}%
\end{equation}
\begin{equation}
\deg_{T} c_{j}\leqslant j,\ 1\leqslant j\leqslant2n-2,\quad\deg_{T}
c_{2n-1,\pm}\leqslant2n-1, \label{deg cj}%
\end{equation}
and
\begin{equation}
\deg_{T} c_{2n}=2n. \label{deg c2n}%
\end{equation}

Consequently,
\[
\deg_{T} \Phi_{e,p}^{(1)}(z;T)=2n.
\]

\end{theorem}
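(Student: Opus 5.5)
Existence comes first and is essentially already done in the text above. Since $\mathrm{GLE}^{(1)}_n(p,T,\tau)$ is log-free everywhere (Theorem~\ref{theorem, noneven's apparent}), the monodromy is abelian. In the completely reducible case the product $y_1y_2$ of the two Bloch solutions \eqref{y1 and eigenvalue}--\eqref{y2 and eigenvalue} is an elliptic solution of \eqref{3rd ode, noneven}; in the non-completely reducible case $y_1^2$ is one. For uniqueness, suppose two independent elliptic solutions $\Phi_1,\Phi_2$ exist. The solution space of \eqref{3rd ode, noneven} is $\mathrm{Sym}^2$ of that of the GLE, and the period monodromies act on it through $\mathrm{Sym}^2M_j$. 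A two-dimensional subspace fixed pointwise by $\mathrm{Sym}^2M_1$ and $\mathrm{Sym}^2M_2$ forces $M_j\in\{\pm I\}$. Concretely: in the diagonal case the fixed space of $\mathrm{Sym}^2\operatorname{diag}(\lambda,\lambda^{-1})$ has dimension $\ge2$ only when $\lambda^2=1$, and in the unipotent case the fixed space is one-dimensional unless the matrix is scalar. This contradicts Lemma~\ref{lem, Mj not +-Id}, so the elliptic solution is unique up to scalars.

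Next I would determine the shape \eqref{Phi's form}. The local exponents of \eqref{3rd ode, noneven} are $-2n,1,2n+2$ at $z=0$ and $-1,1,3$ at $z=\pm p$. An elliptic solution therefore has at most a pole of order $2n$ at $0$ and at most simple poles at $\pm p$. A basis of such elliptic functions is $\wp^{n-k}$, $\wp^{n-k-1}\wp'$, $\zeta(z\pm p)-\zeta(z)$ and $1$, which matches \eqref{Phi's form}. Unlike the even case, no parity restriction applies, so the $\wp'$-terms survive.

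Then I would substitute this ansatz into \eqref{3rd ode, noneven}, using the Laurent expansion of $q^{(1)}_n$ at $0$ (coefficients in $\mathbb Q[\wp(p),\wp'(p),e_j][T]$) and at $\pm p$ (where $\zeta(p)$ enters through $B$ in \eqref{B(T),noneven}). Matching principal parts at $z=0$ from the top order down gives a triangular recursion: the coefficient of $z^{-2n-3+m}$ determines $c_m$ linearly in terms of $c_0,\dots,c_{m-1}$ and $q_{i,1}$. Here the factor $(2n-m)$-type indicial coefficients are nonzero for $m<2n$, while the indicial root $2n+2-(-2n)$ is resonant only beyond the relevant range. Since $q_{-1,1}$ is linear in $T$ and $q_{i,1}$ has $T$-weight at most $i+2$, induction gives $\deg_Tc_j\le j$. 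The residue conditions at $\pm p$, i.e.\ the exponent $1$ being excluded or allowed, fix $c_{2n-1,\pm}$ and $c_{2n}$. This also explains why $\zeta(p)$ appears only in $c_{2n}$, since it enters only via the constant term $B$. Normalizing $c_0$ to a $T$-independent constant and clearing common factors produces primitive polynomials.

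The main obstacle is the exact degree $\deg_Tc_{2n}=2n$ with monic leading term, together with primitivity. For this I would use a weighted-homogeneity or large-$T$ scaling argument analogous to Remark~\ref{remark, weight}: set $z=x/T$ and let $T\to\infty$, so that $T^{-2}q^{(1)}_n$ tends to a constant-coefficient (in the limit) potential dominated by $B\sim T^2$. The elliptic solution then tends to a nonzero constant multiple of $T^{2n}$ in the $c_{2n}$ slot. Equivalently, I would compare the top-degree parts of the recursion, which satisfy the same recursion as the classical Lam\'e $\Phi$ with $\widetilde B=T^2$, whose constant coefficient has degree $n$ in $\widetilde B$. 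This yields $\deg_Tc_{2n}=2n$, and rescaling makes $c_{2n}$ monic. The bounds \eqref{deg cj} then give $\deg_T\Phi^{(1)}_{e,p}=2n$.
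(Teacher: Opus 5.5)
Your existence argument, your uniqueness argument and the exponent count behind \eqref{Phi's form} are sound and follow the paper's route. For uniqueness: for $M\in SL(2,\mathbb C)$ the fixed space of $\mathrm{Sym}^2M$ has dimension $1$ or $3$. Two independent elliptic solutions would therefore force $M_1,M_2\in\{\pm I\}$, contradicting Lemma~\ref{lem, Mj not +-Id}, whose proof uses only log-freeness. The paper uses Lemma~\ref{lem,complete iff lambda neq pm1} instead at this step. The weight bound from the recursion at $z=0$ is also fine for $c_0,\dots,c_{2n-2}$ and for $c_{2n-1,+}+c_{2n-1,-}$.

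\textbf{The gap.} It lies in the step you flag as the main obstacle, $\deg_Tc_{2n}=2n$; the paper itself only refers to the even-case recursion here.

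Your ``equivalent'' comparison with the Lam\'e recursion for $\widetilde B=T^2$ is false. At $z=0$ the term $T(\zeta(z+p)+\zeta(z-p)-2\zeta(z))$ gives $q_{-1,1}=-2T$, whose $T$-degree equals its weight. So the top-weight part of the potential is $n(n+1)z^{-2}-2Tz^{-1}+T^2$, not $n(n+1)z^{-2}+T^2$.

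For the same reason the rescaled potential is not constant: $T^{-2}q_n^{(1)}(x/T;p,T)\to q_*(x)=n(n+1)x^{-2}-2x^{-1}+1$. Normalizing $c_0=1$, one gets $T^{-2n}\Phi^{(1)}_{e,p}(x/T;T)\to F(x)=\sum_{j=0}^{2n}\alpha_jx^{j-2n}$. Here $\alpha_j$ is the $T^j$-coefficient of the $j$-th Laurent coefficient $\phi_j$ of $\Phi^{(1)}_{e,p}$ at $0$, and $F$ keeps its $x^{-2n}$ term. Nothing in your sketch shows $\alpha_{2n}\neq0$, and that is the entire content of the claim.

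The missing step is the argument from the proof of Proposition~\ref{prop, theta1 theta2 finite}. $F$ solves $F'''-4q_*F'-2q_*'F=0$. If $\alpha_{2n}=F(\infty)=0$, then $F=a_mx^{-m}+O(x^{-m-1})$ with $m\geqslant1$ and $a_m\neq0$. Since $q_*\to1$, the left side has leading term $4ma_mx^{-m-1}$ at $\infty$, a contradiction.

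\textbf{Two further points are needed before this closes.}

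First, the conditions at $\pm p$ (the constant term there equals $-2q_{-1,\pm p}$ times the residue) do not simply fix $c_{2n-1,\pm}$. Put $\sigma=c_{2n-1,+}+c_{2n-1,-}$, which is fixed at $0$, and $\delta=c_{2n-1,+}-c_{2n-1,-}$. The conditions yield only
$2T\delta=2\sum_{k=1}^{n-1}c_{2k-1}\wp^{n-k-1}(p)\wp'(p)+\frac{\wp''(p)}{\wp'(p)}\sigma$.
To see that $\delta$ is a polynomial of degree $\leqslant 2n-2$, you need the parity $q_n^{(1)}(z;p,-T)=q_n^{(1)}(-z;p,T)$, which makes the right side odd in $T$. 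This bound is also what guarantees two things:
\begin{itemize}
\item $c_{2n}=\phi_{2n}-\delta\zeta(p)-K$ has the same $T^{2n}$-coefficient $\alpha_{2n}$, where $K$ is the constant term at $0$ of the $\wp$-monomials;
\item the scaling limit above is exactly $F$.
\end{itemize}

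Second, $\deg_Tc_0=0$ in the primitive normalization requires that the elliptic solution never loses its pole of order $2n$ at $0$. Exponent $2n+2$ is excluded by counting zeros against at most two simple poles. Exponent $1$ is excluded by the $\Psi_p^2$ argument in Lemma~\ref{lem,complete iff lambda neq pm1}.

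A smaller correction: $\zeta(p)$ enters $c_{2n}$ through the constant terms $\pm\zeta(p)$ of $\zeta(z\pm p)-\zeta(z)$ at $0$, not through $B$. Its contribution to $B$ cancels, giving $q_{0,1}=T^2-(n(n+1)-3)\wp(p)$.
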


The degree estimates \eqref{cj's coeff is rational funs}--\eqref{deg c2n}
follow the same  coefficient recursion as in the even case. Uniqueness
requires a different argument, since $q_{n}^{(1)}(z;p,$ $T)$ is generally non-even.

\begin{lemma}
\label{lem, elliptic sol is 1 dim} The elliptic solution of
\eqref{3rd ode, noneven} is unique up to a nonzero constant multiple.
\end{lemma}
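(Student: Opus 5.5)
We argue by contradiction. Suppose the space of elliptic solutions of \eqref{3rd ode, noneven} has dimension at least two. The solution space of \eqref{3rd ode, noneven} is spanned by the products $y_iy_j$ of solutions of $\mathrm{GLE}^{(1)}_n(p,T,\tau)$; it is the symmetric square of the two-dimensional solution space. Period monodromy acts on it through $\mathrm{Sym}^2$ of the monodromy representation. Elliptic solutions are exactly the vectors fixed by $\mathrm{Sym}^2 M_1$ and $\mathrm{Sym}^2 M_2$. The local monodromies $N_0=I$ and $N_{\pm p}=-I$ act trivially on $\mathrm{Sym}^2$, so the global singularities cause no difficulty. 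By Theorem~\ref{apparent,main thm} and \eqref{abelian}, $M_1$ and $M_2$ commute. We then split into the two cases (i) and (ii) recorded above.

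In the completely reducible case we use \eqref{m1}. The matrices $\mathrm{Sym}^2M_j$ are diagonal on the basis $y_1^2,\,y_1y_2,\,y_2^2$, with eigenvalues $\lambda_j^2,1,\lambda_j^{-2}$. Thus $y_1y_2$ is always invariant. A second invariant vector exists only if $\lambda_1^2=\lambda_2^2=1$, that is, $M_1,M_2\in\{\pm I\}$, and this is excluded by Lemma~\ref{lem, Mj not +-Id}. Care is needed on one point: Lemma~\ref{lem, Mj not +-Id} is stated for $T\in\mathcal V^{(1)}_{n,p}(\tau)$. Its proof, however, only uses log-freeness at $0,\pm p$ together with Riemann--Hurwitz. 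By Theorem~\ref{theorem, noneven's apparent}, log-freeness now holds for every $T$, so the lemma applies to all $T\in\mathbb C$.

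In the non-completely reducible case we use \eqref{m2}. If $\mathcal D\neq\infty$, then $M_1=\varepsilon_1(I+N)$ with $N$ nilpotent and nonzero. On $\mathrm{Sym}^2$, the matrix $\varepsilon_1^{-2}\mathrm{Sym}^2M_1=\mathrm{Sym}^2(I+N)$ is a single unipotent Jordan block of size $3$, so its fixed space is one-dimensional and spanned by $y_1^2$. If $\mathcal D=\infty$, the same argument applies with $M_2$ in place of $M_1$. In either case the invariant space is at most one-dimensional, and $y_1^2$ lies in it.

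Combining the two cases, the space of elliptic solutions is exactly one-dimensional for every $T$. The main obstacle is the scalar-monodromy exclusion. Lemma~\ref{lem, Mj not +-Id} requires that the ratio $f=y_1/y_2$ extend meromorphically with ramification of orders exactly $2n+1$ and $2$ at the singular points. This uses log-freeness at all three singularities, which Theorem~\ref{theorem, noneven's apparent} supplies, so the argument closes. We expect no evenness of $q^{(1)}_n$ to be needed anywhere, which is why this approach replaces the parity argument used in the even case.
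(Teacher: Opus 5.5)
Your proof is correct and follows the paper's argument: you expand an elliptic solution in the basis $y_1^2,\,y_1y_2,\,y_2^2$ and treat the completely reducible and non-completely reducible cases separately through the action of $\mathrm{Sym}^2 M_j$. The differences are minor. To exclude scalar period monodromy in the completely reducible case you invoke the Riemann--Hurwitz Lemma~\ref{lem, Mj not +-Id}, correctly noting that it now applies to every $T$, whereas the paper invokes Lemma~\ref{lem,complete iff lambda neq pm1}. You also treat $\mathcal D=\infty$ explicitly by switching to $M_2$; the paper's normalization $y_1(z+1)=\varepsilon_1 y_1$, $y_2(z+1)=\varepsilon_1(y_1+y_2)$ tacitly omits that case.
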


Set
\begin{equation}
\Psi_{p}(z):=\frac{\sigma(z)}{\sqrt{\sigma(z-p)\sigma(z+p)}}. \label{Psi,p}%
\end{equation}
By the transformation law of the Weierstrass-$\sigma$ function:
\begin{equation}
\sigma(z+\omega_{j};\tau)=-e^{\eta_{j}\left(  z+\frac{\omega_{j}}{2}\right)
}\sigma(z;\tau),\quad\,j=1,2, \label{sigma's trans law}%
\end{equation}
it was shown in \cite[Lemma 2.2]{Chen-Kuo-Lin-Lame I} that $\Psi_{p}(z)$
satisfies
\begin{equation}
\Psi_{p}(z+\omega_{j})=\Psi_{p}(z),\quad j=1,2, \label{Psi is elliptic}%
\end{equation}
although $\Psi_{p}(z)$ has branch points at $\pm p$. Consequently, $\Psi
_{p}(z)^{2}$ is an elliptic function.

We use the following criterion from \cite[Proposition~2.3]{Chen-Kuo-Lin-Lame
I}.

\begin{lemma}
\cite{Chen-Kuo-Lin-Lame I} \label{lem,complete iff lambda neq pm1} The
equation GLE$_{n}^{(1)}(p,T,\tau)$ is completely reducible if and only if
\begin{equation}
(\lambda_{1}(T),\lambda_{2}(T))\notin\{\pm(1,1),\pm(1,-1)\}. \label{lamda}%
\end{equation}
Equivalently, the corresponding monodromy data $(r(T),s(T))$ satisfy
\begin{equation}
(r(T),s(T))\notin\frac{1}{2}\mathbb{Z}^{2}. \label{rs}%
\end{equation}

\end{lemma}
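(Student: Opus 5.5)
The plan is to use only two facts: the monodromy of $\mathrm{GLE}^{(1)}_n(p,T,\tau)$ is abelian, and it cannot be scalar. By Theorem~\ref{apparent,main thm} and Theorem~\ref{theorem, noneven's apparent}, the equation is log-free at $0,\pm p$ for every $T$. So \eqref{abelian} gives $M_1(T)M_2(T)=M_2(T)M_1(T)$ in $SL(2,\mathbb C)$. The eigenvalues of $M_j(T)$ are $\lambda_j(T)$ and $\lambda_j(T)^{-1}$. Note that $\{\pm(1,1),\pm(1,-1)\}$ is exactly the set of all four sign pairs $(\pm1,\pm1)$. Also, by \eqref{r(T),s(T),def}, $\lambda_1=\pm1$ iff $s\in\frac12\mathbb Z$ and $\lambda_2=\pm1$ iff $r\in\frac12\mathbb Z$. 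Hence \eqref{lamda} and \eqref{rs} are the same condition, and it remains to prove the equivalence with \eqref{lamda}.

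\emph{Sufficiency of \eqref{lamda}.} Suppose some $\lambda_j(T)\neq\pm1$, say $j=1$. Then $\lambda_1\neq\lambda_1^{-1}$, so $M_1$ has two distinct eigenvalues. Hence $M_1$ is diagonalizable, with one-dimensional eigenspaces. Since $M_2$ commutes with $M_1$, it preserves each eigenspace of $M_1$. Therefore $M_2$ is diagonal in the same basis, which is the completely reducible normal form \eqref{m1}. The case $j=2$ is symmetric.

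\emph{Necessity of \eqref{lamda}.} Suppose the equation is completely reducible and both $\lambda_j(T)\in\{\pm1\}$. Then each $M_j$ is diagonalizable with eigenvalues $\lambda_j=\lambda_j^{-1}$, so $M_j=\lambda_j I\in\{\pm I\}$ for $j=1,2$. This is excluded by Lemma~\ref{lem, Mj not +-Id}.

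The one point needing care is that Lemma~\ref{lem, Mj not +-Id} is stated for $T\in\mathcal V^{(1)}_{n,p}(\tau)$, whereas here $T$ is arbitrary. Its proof, however, only uses log-freeness at $0,\pm p$ together with the local exponent differences $2n+1$ and $2$. The argument runs as follows: the ratio $f=y_1/y_2$ becomes single-valued, Riemann--Hurwitz forces $\deg f=n+1$, and this contradicts the ramification index $2n+1$ at $0$. Log-freeness now holds for all $T$ by Theorem~\ref{theorem, noneven's apparent}, so the same argument applies verbatim to every $T\in\mathbb C$. Thus the equation is completely reducible precisely when \eqref{lamda} holds.
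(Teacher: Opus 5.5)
Your proof is correct. The sufficiency direction and the translation between \eqref{lamda} and \eqref{rs} match the paper; the necessity direction takes a different route.

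\textbf{Your route.} You first note that complete reducibility with $\lambda_j(T)=\lambda_j(T)^{-1}=\pm1$ forces $M_j(T)=\pm I$. You then invoke Lemma~\ref{lem, Mj not +-Id}, whose argument runs through the developing map $f=y_1/y_2$ and Riemann--Hurwitz, giving $\deg f=n+1<2n+1$. You correctly observe that its hypothesis $T\in\mathcal V^{(1)}_{n,p}(\tau)$ is used only through log-freeness at $0,\pm p$. That log-freeness holds for every $T$ by Theorem~\ref{theorem, noneven's apparent}. Alternatively, Proposition~\ref{lemma, existence of Hermite Ansatz} then gives $\mathcal V^{(1)}_{n,p}(\tau)=V^{(1)}_{n,p}(\tau)$, so the lemma applies verbatim. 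There is no circularity, since both results precede this lemma and do not depend on it.

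\textbf{The paper's route.} The paper does not reuse Lemma~\ref{lem, Mj not +-Id}. It argues that multipliers $\pm1$ on a common eigenbasis make $y_1^2,\,y_1y_2,\,y_2^2$ single-valued, so every solution of \eqref{3rd ode, noneven} is elliptic. It applies this to the product of the Frobenius solutions $z^{-n}(1+O(z))$ and $z^{n+1}(1+O(z))$ at $0$. Dividing by the elliptic function $\Psi_p^2$ removes the half-order behaviour at $\pm p$ and leaves an elliptic function whose only pole is a simple pole at $0$, with residue $-\sigma(p)^2$. That is impossible.

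\textbf{Comparison.} Your version is more economical: it recycles an existing global ramification count and needs no new computation. The cost is the small bookkeeping step of extending the earlier lemma's hypothesis. The paper's version is a self-contained local residue argument. It stays inside the symmetric-product and $\Psi_p$ machinery used throughout Section~3, and it avoids revisiting the domain of the earlier lemma.
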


\begin{proof}
Suppose first that \eqref{lamda} holds. Then one period monodromy matrix has
two distinct eigenvalues. Since the commutativity \eqref{M1M2=M2M1}, there are
two linearly independent common eigenfunctions of $M_{j}(T),j=1,2$. Hence the
monodromy representation is completely reducible.

For the converse, assume that the equation is completely reducible but that
\[
(\lambda_{1}(T),\lambda_{2}(T)) \in\{\pm(1,1),\pm(1,-1)\}.
\]
Then all period multipliers are $\pm1$. Thus, for a common eigenbasis
$y_{1},y_{2}$, the symmetric products
\[
y_{1}^{2},\qquad y_{1}y_{2},\qquad y_{2}^{2}
\]
have trivial period monodromy. Since these three functions span the solution
space of the associated third-order equation \eqref{3rd ode, noneven}, every
solution of that equation is elliptic.

Now choose a Frobenius basis at $z=0$,
\[
\tilde y_{1}(z)=z^{-n}(1+O(z)), \qquad\tilde y_{2}(z)=z^{n+1}(1+O(z)).
\]
Their product is therefore an elliptic solution of \eqref{3rd ode, noneven}.
Dividing by the elliptic function $\Psi_{p}^{2}$ removes the half-order
singularities at $z=\pm p$, so that
\[
H(z):=\frac{\tilde y_{1}(z)\tilde y_{2}(z)}{\Psi_{p}(z)^{2}}
\]
is elliptic and holomorphic away from $z=0$. On the other hand, the local
behavior at $z=0$ gives
\[
H(z)=-\frac{\sigma(p)^{2}}{z}+O(1).
\]
Hence $H$ would have a unique simple pole, which is impossible for an elliptic
function. The contradiction proves the necessity.
\end{proof}

By Lemma~\ref{lem,complete iff lambda neq pm1}, if the monodromy data
$(r(T),s(T))$, defined by (\ref{r(T),s(T),def}), belong to $\frac{1}%
{2}\mathbb{Z}^{2}$, then the equation GLE$_{n}^{(1)}(p,T,\tau)$ is
non-completely reducible. Consequently, its monodromy matrices are of the form \eqref{m2}.

\begin{proof}
[Proof of Lemma~3.2]Let $y_{1}(z;T)$ and $y_{2}(z;T)$ be a fundamental system
of solutions of $\mathrm{GLE}_{n}^{(1)}(p,T,\tau)$, normalized according to
either \eqref{m1} or \eqref{m2}. Then
\[
y_{1}^{2}(z;T),\qquad y_{1}(z;T)y_{2}(z;T),\qquad y_{2}^{2}(z;T)
\]
form a basis of the solution space of the third-order equation
\eqref{3rd ode, noneven}. Consider the two cases.

\medskip\noindent\emph{Case 1. The monodromy representation is completely
reducible.} By Lemma~\ref{lem,complete iff lambda neq pm1}, there exists
$j\in\{1,2\}$ such that $\lambda_{j}^{2}(T)\neq1.$ By
\eqref{y1 and eigenvalue} and \eqref{y2 and eigenvalue}, translation by
$\omega_{j}$ acts on the above basis as
\[
\begin{gathered}
y_1^2(z+\omega_j;T)
=\lambda_j^2(T)y_1^2(z;T),\\
y_1(z+\omega_j;T)y_2(z+\omega_j;T)
=y_1(z;T)y_2(z;T),\\
y_2^2(z+\omega_j;T)
=\lambda_j^{-2}(T)y_2^2(z;T).
\end{gathered}
\]

Let $\Phi(z)$ be an elliptic solution of \eqref{3rd ode, noneven} and write
\[
\Phi(z) = a\,y_{1}^{2}(z;T) +b\,y_{1}(z;T)y_{2}(z;T) +c\,y_{2}^{2}(z;T).
\]
Since $\Phi(z+\omega_{j})=\Phi(z)$, comparison of coefficients gives
\[
\bigl(\lambda_{j}^{2}(T)-1\bigr)a=0, \qquad\bigl(\lambda_{j}^{-2}%
(T)-1\bigr)c=0.
\]
Hence $a=c=0$, and therefore
\[
\Phi(z)=b\,y_{1}(z;T)y_{2}(z;T).
\]
Thus the space of elliptic solutions is one-dimensional in the completely
reducible case.

\medskip\noindent\emph{Case 2. The monodromy representation is non-completely
reducible.} By \eqref{m2}, we may normalize the fundamental system so that
\[
y_{1}(z+1;T)=\varepsilon_{1}y_{1}(z;T), \qquad y_{2}(z+1;T) = \varepsilon
_{1}\bigl(y_{1}(z;T)+y_{2}(z;T)\bigr),
\]
where $\varepsilon_{1}\in\{\pm1\}$. Since $\varepsilon_{1}^{2}=1$, we obtain
\[
\begin{gathered}
y_1^2(z+1;T)
=y_1^2(z;T),\\
y_1(z+1;T)y_2(z+1;T)
=y_1^2(z;T)+y_1(z;T)y_2(z;T),\\
y_2^2(z+1;T)
=y_1^2(z;T)
+2y_1(z;T)y_2(z;T)
+y_2^2(z;T).
\end{gathered}
\]

Let
\[
\Phi(z) = a\,y_{1}^{2}(z;T) +b\,y_{1}(z;T)y_{2}(z;T) +c\,y_{2}^{2}(z;T)
\]
be an elliptic solution of \eqref{3rd ode, noneven}. The identity
$\Phi(z+1)=\Phi(z)$ gives
\[
b+c=0, \qquad2c=0.
\]
Hence $b=c=0$, and consequently
\[
\Phi(z)=a\,y_{1}^{2}(z;T).
\]

Thus in both cases the space of elliptic solutions of \eqref{3rd ode, noneven}
is one-dimensional.
\end{proof}

We introduce the spectral polynomial $Q_{n,p}^{(1)}(T)$ and the associated
spectral curve $\Gamma_{n,p}^{(1)}(\tau)$ of $\mathrm{GLE}_{n}^{(1)}%
(p,T,\tau)$.

\begin{lemma}
\label{lem, Phi inde of P'} The elliptic solution $\Phi_{e,p}^{(1)}(z;T)$ of
\eqref{3rd ode, noneven} is polynomial in $T$. Moreover, its coefficients
belong to
\[
\mathbb{C}(\wp(z),\wp^{\prime}(z),\wp(p),e_{j}(\tau)).
\]

\end{lemma}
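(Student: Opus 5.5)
The plan is to get both assertions from a Galois-descent argument along the involution $p\mapsto-p$. Polynomiality comes from Theorem~\ref{thm, 3rd's elliptic solution} after a suitable normalization. Uniqueness (Lemma~\ref{lem, elliptic sol is 1 dim}) then forces the normalized solution to be fixed by the automorphism $\wp'(p)\mapsto-\wp'(p)$. I expect the second half to be the real content of the lemma: the coefficients contain no $\wp'(p)$ and no $\zeta(p)$.

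\emph{Step 1: rewrite the potential.} Apply the addition formula \eqref{add formula, zeta} to $\zeta(z\pm p)-\zeta(z)$ in \eqref{potential, noneven}. The $T$-term becomes
\[
T\,\frac{\wp'(z)}{\wp(z)-\wp(p)}.
\]
The term $-\frac{\wp''(p)}{4\wp'(p)}(\zeta(z+p)-\zeta(z-p))$ produces $-\frac{\wp''(p)}{2\wp'(p)}\zeta(p)$ plus a rational expression in $\wp(z),\wp(p),\wp'(p)$. This $\zeta(p)$ contribution cancels exactly against the term $\frac{\wp''(p)}{2\wp'(p)}\zeta(p)$ of $B$ in \eqref{B(T),noneven}. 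Since $\wp''(p)=6\wp(p)^2-g_2/2$, it follows that
\[
q_n^{(1)}(z;p,T)\in K(\wp(z),\wp'(z))[T],\qquad K:=\mathbb{C}\bigl(\wp(p),\wp'(p),e_j(\tau)\bigr).
\]
A direct check shows this expression is invariant under $\wp'(p)\mapsto-\wp'(p)$ with $\wp(p)$ fixed. This is the identity $q_n^{(1)}(z;p,T)=q_n^{(1)}(z;-p,T)$: both $\zeta(z+p)+\zeta(z-p)$ and the product $\frac{\wp''(p)}{\wp'(p)}(\zeta(z+p)-\zeta(z-p))$ are even in $p$.

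\emph{Step 2: normalize and expand.} Rewrite \eqref{Phi's form} in the same way, using
\[
\zeta(z\pm p)-\zeta(z)=\pm\zeta(p)+\tfrac12\,\frac{\wp'(z)\mp\wp'(p)}{\wp(z)-\wp(p)}.
\]
This absorbs all $\zeta(p)$-dependence into a single constant term $\tilde c_{2n}:=c_{2n}+(c_{2n-1,+}-c_{2n-1,-})\zeta(p)$. Normalize $\Phi^{(1)}_{e,p}$ so that its leading Laurent coefficient at $z=0$ equals $1$; this means dividing by $c_0$. By \eqref{deg c0}, $c_0$ is a nonzero constant in $T$, so polynomiality in $T$ from Theorem~\ref{thm, 3rd's elliptic solution} survives the normalization. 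Next substitute this ansatz into \eqref{3rd ode, noneven} and expand in the elliptic basis
\[
\wp^k(z),\quad \wp^{k}(z)\wp'(z),\quad (\wp(z)-\wp(p))^{-1},\quad \wp'(z)(\wp(z)-\wp(p))^{-1}.
\]
This gives a linear system for the normalized coefficients whose entries lie in $K[T]$ by Step~1. By Lemma~\ref{lem, elliptic sol is 1 dim} the system has a one-dimensional solution space for every $T$. With the normalization it therefore has a unique solution, and that solution lies in $K[T]$.

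\emph{Step 3: descent.} The field $K$ is a quadratic extension of $K_0:=\mathbb{C}(\wp(p),e_j(\tau))$, via $\wp'(p)^2=4\wp(p)^3-g_2\wp(p)-g_3$. Let $\theta$ be the nontrivial automorphism of $K$ over $K_0$, namely $\wp'(p)\mapsto-\wp'(p)$. Apply $\theta$ to the coefficients of the normalized $\Phi$. By Step~1 the linear system is $\theta$-invariant, and the normalization is also preserved by $\theta$. Uniqueness then forces every coefficient to be $\theta$-fixed, so every coefficient lies in $K_0[T]$. As a consistency check, the combination $\tilde c_{2n}$ is forced into $K_0[T]$ even though the individual $c_{2n-1,\pm}$ and $c_{2n}$ carry $\zeta(p)$.

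\emph{Main obstacle.} The delicate point is the uniqueness step. I must make sure the $\theta$-conjugate of a solution is again an \emph{elliptic} solution of the \emph{same} equation for every $T$. This is not a formal-field statement alone, since $\theta$ is not induced by a single $\tau$-fixed analytic substitution in $z$. I would handle it by interpreting the conjugate as the elliptic solution of the equation at the parameter $-p$, and then invoking $q_n^{(1)}(z;-p,T)=q_n^{(1)}(z;p,T)$ together with Lemma~\ref{lem, elliptic sol is 1 dim} applied at $-p$.
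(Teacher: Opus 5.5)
Your proof is correct, but it reaches the field of definition by a different mechanism than the paper.

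\textbf{The paper's route.} The paper argues analytically in $p$. Since $q^{(1)}_n$ is invariant under $p\mapsto -p$ and $p\mapsto p+\omega_k$, uniqueness of the normalized elliptic solution (normalized by making $c_{2n}(T)$ monic) gives $\Phi^{(1)}_{e,p}=\Phi^{(1)}_{e,-p}=\Phi^{(1)}_{e,p+\omega_k}$. Hence each $T$-coefficient is an even elliptic function of $p$. It is the lattice invariance in $p$ that disposes of $\zeta(p)$.

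\textbf{Your route.} You never use periodicity in $p$. You remove $\zeta(p)$ by the explicit cancellation in Step~1 and then read the coefficients off a linear system. That computation gives more than you use:
\[
\begin{aligned}
q^{(1)}_n(z;p,T)={}&n(n+1)\wp(z)+\tfrac34\bigl(\wp(z+p)+\wp(z-p)\bigr)+\frac{T\wp'(z)}{\wp(z)-\wp(p)}\\
&+\frac{\wp''(p)}{4(\wp(z)-\wp(p))}+T^2-\tfrac12(2n^2+2n-3)\wp(p).
\end{aligned}
\]
This contains no $\wp'(p)$ at all. Your ansatz basis is also defined over $K_0$, so the linear system has entries in $K_0[T]$. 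Cramer's rule then puts the normalized solution in $K_0(T)$ directly, and Step~3 is superfluous.

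For the same reason your ``main obstacle'' is not one. Once uniqueness is known over the field $K(T)$, applying $\theta$ to a system with $\theta$-fixed entries is purely formal. Your fallback via $-p$ is exactly the paper's step.

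\textbf{What each buys.} Your route gives an explicit justification of what the paper leaves implicit: that the $p$-dependence, including the $\zeta(p)$ inside $c_{2n}$, really lands in $\mathbb{C}(\wp(z),\wp'(z),\wp(p),e_j(\tau))$. The paper's route is shorter and needs no basis bookkeeping.

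\textbf{Points to make explicit in the write-up.}
\begin{enumerate}
\item State the generic-rank step. A one-dimensional kernel at every numerical specialization forces corank one over $K_0(T)$, which is what makes the normalized solution well defined there. Polynomiality then follows from Theorem~\ref{thm, 3rd's elliptic solution}, since $K_0(T)\cap L[T]=K_0[T]$ for any field $L\supset K_0$ (here $L$ contains $\zeta(p)$).
\item Expand the equation in $K_0(\wp(z))\oplus K_0(\wp(z))\wp'(z)$, not in your ansatz basis. The left-hand side $\Phi'''-4q\Phi'-2q'\Phi$ has higher-order poles at $\pm p$.
\item Your normalization $c_0=1$ differs from the paper's (monic $c_{2n}$) by the factor $1/c_0$. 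This factor is the $T^{2n}$-coefficient of your normalized solution, hence lies in $K_0^{\times}$. So the lemma also holds for the paper's $\Phi^{(1)}_{e,p}$.
\end{enumerate}
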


\begin{proof}
Write
\[
\Phi_{e,p}^{(1)}(z;T)=\sum_{k=0}^{2n}\Phi_{k}(z,p)T^{k}.
\]
Each coefficient $\Phi_{k}(z,p)$ is elliptic in $z$.

The identities
\begin{equation}
q_{n}^{(1)}(z;-p,T)=q_{n}^{(1)}(z;p,T),\quad q_{n}^{(1)}(z;p+{\omega_{k}%
},T)=q_{n}^{(1)}(z;p,T), \label{q(z;p) is elliptic in p}%
\end{equation}
imply the third-order equation \eqref{3rd ode, noneven} is invariant under
\[
p\rightarrow-p\text{ and }p\rightarrow p+\omega_{k}.
\]
Since the elliptic solution is unique up to a nonzero constant multiple by
Theorem~\ref{thm, 3rd's elliptic solution} and is normalized by the leading
coefficient $c_{2n}(T)$, it is uniquely determined. Therefore
\begin{equation}
\Phi_{e,p}^{(1)}(z;T)=\Phi_{e,-p}^{(1)}(z;T)=\Phi_{e,p+{\omega_{k}}}%
^{(1)}(z;T). \label{Phi(z,-p)=Phi(z,p)}%
\end{equation}

Comparing coefficients of $T$ shows that each $\Phi_{k}(z,p)$ is even and
elliptic in $p$, so
\[
\Phi_{k}(z,p)\in\mathbb{C}(\wp(z),\wp^{\prime}(z),\wp(p),e_{j}(\tau)).
\]

\end{proof}

\begin{example}
When $n=1$,
\[
\begin{aligned} &\Phi_{e,p}^{(1)}(z;T)=T^2-\frac{1}{2}\left(\zeta(z+p)+\zeta(z-p)-2\zeta(z)\right)T\\ &+\frac{1}{2}\wp(z)-\frac{1}{4}\frac{\wp^{\prime\prime}(p)}{\wp^{\prime}(p)}\left(\zeta(z+p)-\zeta(z-p)\right)+\frac{1}{2}\frac{\wp^{\prime\prime}(p)}{\wp^{\prime}(p)}\zeta(p)-\frac{1}{2}\wp(p). \end{aligned}
\]

\end{example}

Define
\begin{equation}
\begin{aligned} Q_{n,p}^{(1)}(z;T):=&\frac{1}{2}\Phi_{e,p}^{(1)}(z;T)\Phi_{e,p}^{(1)\prime\prime}(z;T)-\frac{1}{4}(\Phi_{e,p}^{(1)\prime}(z;T))^{2}\\ &-q_{n}^{(1)}(z;p,T)\Phi_{e,p}^{(1)2}(z;T). \end{aligned} \label{spectral poly's def}%
\end{equation}

Since $\Phi_{e,p}^{(1)}(z;T)$ satisfies \eqref{3rd ode, noneven},
\[
\frac{d}{dz}Q_{n,p}^{(1)}(z;T)\equiv0.
\]
We therefore write it simply as $Q_{n,p}^{(1)}(T)$.

\begin{proposition}
$Q_{n,p}^{(1)}(T)$ is a polynomial in $T$ of degree $4n+2$ whose coefficients
are rational functions of $\wp(p)$ and $e_{k}(\tau)$, and are independent of
$\wp^{\prime}(p)$ and $\zeta(p)$.
\end{proposition}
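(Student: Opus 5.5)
Since $Q_{n,p}^{(1)}(z;T)$ is independent of $z$, I will compute it in two ways: the leading behaviour in $T$ by expanding at a convenient point, and the coefficient dependence by a symmetry argument.

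\emph{Polynomiality.} Theorem~\ref{thm, 3rd's elliptic solution} gives that $\Phi_{e,p}^{(1)}(z;T)$ is polynomial in $T$ of degree $2n$, and $q_n^{(1)}(z;p,T)$ is polynomial in $T$ with $T$-degree $2$ through $B$ in \eqref{B(T),noneven}. Hence $Q_{n,p}^{(1)}(T)$ is a polynomial in $T$ of degree at most $2\cdot 2n+2=4n+2$.

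\emph{Exact degree.} By Lemma~\ref{lem, Phi inde of P'}, the top coefficient of $\Phi_{e,p}^{(1)}$ in $T$ is $\Phi_{2n}(z,p)$. By \eqref{deg c0}--\eqref{deg c2n} only $c_{2n}$ reaches degree $2n$, and it is monic, so $\Phi_{2n}\equiv 1$. In $\frac12\Phi\Phi''-\frac14\Phi'^2$ the derivatives kill this constant top term, so that part has $T$-degree at most $4n+1$. In $-q_n^{(1)}\Phi^2$, on the other hand, the coefficient of $T^{4n+2}$ is $-1\cdot 1^2=-1$, coming from $B=T^2+\cdots$. Thus $\deg_T Q_{n,p}^{(1)}=4n+2$, with leading coefficient $-1$.

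\emph{Coefficients.} A priori the coefficients lie in $\mathbb{C}(\wp(p),\wp'(p),\zeta(p),e_k)$, with $\zeta(p)$ entering only through $B$ and $c_{2n}$. To see that $Q$ does not depend on $\zeta(p)$, I use the definition of $Q$ together with the fact that both $\Phi$ and $q_n^{(1)}$ are elliptic in $z$; an identity in $z$ cannot involve $\zeta(p)$ as a transcendental parameter beyond what the elliptic relations allow.

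The cleaner route is the one used in Lemma~\ref{lem, Phi inde of P'}. By \eqref{q(z;p) is elliptic in p} and \eqref{Phi(z,-p)=Phi(z,p)}, both $q_n^{(1)}(z;p,T)$ and $\Phi_{e,p}^{(1)}(z;T)$ are invariant under $p\mapsto -p$ and $p\mapsto p+\omega_k$. Lemma~\ref{lem, Phi inde of P'} already places the coefficients of $\Phi$ in $\mathbb{C}(\wp(z),\wp'(z),\wp(p),e_j)$. For $q_n^{(1)}$ itself I will check directly that its $T$-coefficients are even elliptic in $p$:
\begin{itemize}
\item $\zeta(z+p)+\zeta(z-p)-2\zeta(z)$ is even in $p$ and elliptic in $p$;
\item $\frac{\wp''(p)}{\wp'(p)}(\zeta(z+p)-\zeta(z-p))$ is a product of two odd functions of $p$, and the $\eta$-shifts under $p\mapsto p+\omega_k$ cancel;
\item the combination $\frac{\wp''(p)}{2\wp'(p)}\zeta(p)$ in $B$, together with the corresponding $\zeta(p)$ contributions hidden in $c_{2n}$, must cancel in $Q$.
\end{itemize}
Once that is in place, each coefficient $Q_k(p)$ of $T^k$ is an even elliptic function of $p$, hence lies in $\mathbb{C}(\wp(p),e_k)$.

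\emph{Main obstacle.} The hard step is justifying that $Q_k(p)$ is genuinely elliptic in $p$ despite the explicit $\zeta(p)$ in $B$. The uniqueness argument gives invariance of $\Phi$ and $q$ as functions, so $Q_{n,p}^{(1)}(T)=Q_{n,-p}^{(1)}(T)=Q_{n,p+\omega_k}^{(1)}(T)$ identically. I would therefore first verify carefully that $q_n^{(1)}$ is invariant under $p\mapsto p+\omega_k$ including the constant $B$: the shift $\zeta(p+\omega_k)=\zeta(p)+\eta_k$ in $B$ must be offset by the $\eta_k$-shift of $\frac{\wp''}{4\wp'}(\zeta(z+p)-\zeta(z-p))$. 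Once this invariance holds, each $Q_k$ is an even meromorphic elliptic function of $p$, and evaluating $Q$ at $z$ near $0$, where the expressions are rational in $\wp(p),\wp'(p),\zeta(p)$, shows that $Q_k$ is meromorphic, completing the proof.
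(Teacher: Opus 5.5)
Your argument is correct and follows the paper's route: the degree $4n+2$ comes from $c_{2n}$ being the only coefficient of $\Phi^{(1)}_{e,p}$ of $T$-degree $2n$ (and it is monic), together with $B=T^2+\cdots$; the coefficient claim comes from the invariance of $q^{(1)}_n$ in \eqref{q(z;p) is elliptic in p} and, by uniqueness, of $\Phi^{(1)}_{e,p}$ (Lemma~\ref{lem, Phi inde of P'}) under $p\mapsto -p$ and $p\mapsto p+\omega_k$, which makes every $T$-coefficient of $Q^{(1)}_{n,p}$ an even elliptic function of $p$. Your second and third bullets are wrong as written: $\frac{\wp''(p)}{4\wp'(p)}\bigl(\zeta(z+p)-\zeta(z-p)\bigr)$ is not invariant on its own under $p\mapsto p+\omega_k$ (it shifts by $\frac{\wp''(p)}{2\wp'(p)}\eta_k$), and this shift is cancelled by the term $\frac{\wp''(p)}{2\wp'(p)}\zeta(p)$ of $B$ inside $q^{(1)}_n$ itself, exactly as your final paragraph says, so there is no separate $\zeta(p)$-cancellation through $c_{2n}$ to track in $Q^{(1)}_{n,p}$.
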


We call $Q_{n,p}^{(1)}(T)$ the \textit{spectral polynomial}. Its independence
of $\wp^{\prime}(p)$ contrasts with the even spectral polynomial; see~\eqref{spectral poly for even n=1}.

\begin{example}
For $n=1$, the spectral polynomial $Q_{1,p}^{(1)}(T)$ is given by
\[
Q_{1,p}^{(1)}(T)=\bigl(T^{2}-2\wp(p)-e_{1}\bigr)\bigl(T^{2}-2\wp
(p)-e_{2}\bigr)\bigl(T^{2}-2\wp(p)-e_{3}\bigr).
\]

\end{example}

The associated \textit{spectral curve} is defined by
\begin{equation}
\Gamma_{n,p}^{(1)}(\tau):=\{(T,C)\in\mathbb{C}^{2}|\,C^{2}=Q_{n,p}^{(1)}(T)\}.
\label{spectral curve,def}%
\end{equation}
The curve admits a natural hyperelliptic involution%
\[
P=(T,C)\rightarrow P^{\ast}:=(T,-C).
\]
The branch points of $\Gamma_{n,p}^{(1)}(\tau)$ correspond to the zeros of the
spectral polynomial, $Q_{n,p}^{(1)}(T)=0.$ Equivalently, these are the points
of the spectral curve where $C=0$.

For $P=(T,C)\in\Gamma_{n,p}^{(1)}(\tau)$, set
\begin{equation}
\phi(P;z):=\frac{iC+\frac{1}{2}\Phi_{e,p}^{(1)\prime}(z;T)}{\Phi_{e,p}%
^{(1)}(z;T)},\quad z\in\mathbb{C}. \label{phi's,def}%
\end{equation}
It satisfies the Riccati equation
\begin{equation}
\phi^{\prime}(P;z)=q_{n}^{(1)}(z;p,T)-\phi^{2}(P;z). \label{phi,Riccati equ}%
\end{equation}

Fix a base point $z_{0}\in\mathbb{C}\setminus\{0,\pm p\}$. For each
$P\in\Gamma_{n,p}^{(1)}(\tau)$, we define the Baker--Akhiezer function by
\begin{equation}
\psi(P;z,z_{0}):=\exp\left(  \int_{z_{0}}^{z}\phi(P;\xi)d\xi\right)  ,\quad
z\in\mathbb{C}, \label{BA function,def}%
\end{equation}
where the integration path is chosen to avoid the singularities of the
meromorphic function $\phi(P;\xi)$.

By the Riccati equation \eqref{phi,Riccati equ}, it is easy to verify that
$\psi(P;z,z_{0})$ solves GLE$_{n}^{(1)}(p,T(P),\tau)$, where $T(P)$ is the
$T$-coordinate of the point $P=(T(P),$ $C(P))$ $\in\Gamma_{n,p}^{(1)}(\tau)$.
Thus every point $P\in$ $\Gamma_{n,p}^{(1)}(\tau)$ corresponds to a log-free
equation GLE$_{n}^{(1)}(p,T(P),\tau)$. Since $T(P^{\ast})=T(P)$, both
$\psi(P;z,z_{0})$ and $\psi(P^{\ast};z,$ $z_{0})$ are solutions of the same
GLE$_{n}^{(1)}(p,T(P),\tau)$. Consequently, the point $P$ and its dual
$P^{\ast}$ represent the same GLE$_{n}^{(1)}(p,T(P),\tau)$.

As in the even-symmetry case, $\psi(P;z,z_{0})$ and $\psi(P^{\ast};z,z_{0})$
satisfy
\begin{equation}
\psi(P;z,z_{0})\psi(P^{\ast};z,z_{0})=\frac{\Phi_{e,p}^{(1)}(z;T(P))}%
{\Phi_{e,p}^{(1)}(z_{0};T(P))}, \label{two psi's product}%
\end{equation}
and
\begin{equation}
W(\psi(P;z,z_{0}),\psi(P^{\ast};z,z_{0}))=\frac{2iC(P)}{\Phi_{e,p}^{(1)}%
(z_{0};T(P))}. \label{wronskian of psi}%
\end{equation}
See \cite{Kuo}. Since changing $z_{0}$ only rescales $\psi$, we henceforth
suppress the base point.

Equation \eqref{two psi's product} gives the following symmetries.

\begin{proposition}
\label{prop, sym} (i) $q_{n}^{(1)}(z;p,-T)=q_{n}^{(1)}(-z;p,T).\medskip$

(ii) $\Phi_{e,p}^{(1)}(z;-T)=\Phi_{e,p}^{(1)}(-z;T).\medskip$

(iii) $Q^{(1)}_{n,p}(T)=Q^{(1)}_{n,p}(-T).\medskip$

Consequently, $Q^{(1)}_{n,p}(T)$ is an even polynomial in $T$ with
coefficients that are rational functions of $\wp(p)$ and $e_{k}(\tau)$. Hence
there exists a polynomial%
\[
\widehat{Q}_{n,p}^{(1)}(X)\in\mathbb{C}(\wp(p),e_{k}(\tau))[X]
\]
such that
\begin{equation}
Q^{(1)}_{n,p}(T)=\widehat{Q}_{n,p}^{(1)}(T^{2}). \label{tilde Q is even in T}%
\end{equation}

\end{proposition}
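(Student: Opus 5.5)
The plan is to derive all three identities from the parity of the Weierstrass functions, the uniqueness of the elliptic solution (Lemma~\ref{lem, elliptic sol is 1 dim}), and the $z$-independence of $Q^{(1)}_{n,p}(z;T)$. Throughout, write $\kappa_p:=\frac{\wp''(p)}{4\wp'(p)}$.

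\emph{Step (i).} This is a direct computation from \eqref{potential, noneven}. Since $\wp$ is even and $\zeta$ is odd, we have $\zeta(-z+p)=-\zeta(z-p)$, $\zeta(-z-p)=-\zeta(z+p)$ and $\zeta(-z)=-\zeta(z)$. Substituting $-z$ for $z$ therefore gives the following. The $\wp$-terms are unchanged. The $T$-term becomes $-T\bigl(\zeta(z+p)+\zeta(z-p)-2\zeta(z)\bigr)$. The $\kappa_p$-term $-\kappa_p(\zeta(z+p)-\zeta(z-p))$ is unchanged, because $\zeta(-z+p)-\zeta(-z-p)=\zeta(z+p)-\zeta(z-p)$. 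Finally, $B$ in \eqref{B(T),noneven} depends on $T$ only through $T^2$. Hence $q_n^{(1)}(-z;p,T)=q_n^{(1)}(z;p,-T)$.

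\emph{Step (ii).} Put $\widetilde\Phi(z):=\Phi^{(1)}_{e,p}(-z;T)$. Under $z\mapsto -z$ the third derivative, the first derivative and $\frac{d}{dz}q$ each change sign. Hence $\widetilde\Phi$ solves \eqref{3rd ode, noneven} with potential $q_n^{(1)}(-z;p,T)$, which equals $q_n^{(1)}(z;p,-T)$ by (i). It is also elliptic. By Lemma~\ref{lem, elliptic sol is 1 dim} applied at the parameter $-T$, we get $\widetilde\Phi(z)=\mu(T)\,\Phi^{(1)}_{e,p}(z;-T)$ for some scalar $\mu(T)$.

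To show $\mu\equiv1$, I will compare the expansions \eqref{Phi's form} of the two sides. Under $z\mapsto-z$, the $\wp^{n-k}$ terms are unchanged, the $\wp^{n-k-1}\wp'$ terms change sign, and the two terms $\zeta(z\pm p)-\zeta(z)$ are swapped with a sign. The constant term $c_{2n}(T)$ is unchanged. So the constant term of $\widetilde\Phi$ is $c_{2n}(T)$, while that of $\Phi^{(1)}_{e,p}(z;-T)$ is $c_{2n}(-T)$. Both are polynomials of degree exactly $2n$ with leading coefficient $\pm1$ (since $2n$ is even, both leading coefficients are $1$).

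Comparing all coefficients, $\mu(T)$ is a rational function of $T$ that carries one primitive family of polynomial coefficients onto another. Primitivity from Theorem~\ref{thm, 3rd's elliptic solution} forces $\mu$ to be constant, and monicity of $c_{2n}$ then gives $\mu=1$. This normalization argument is the only delicate point: one must check that the coefficient family of $\Phi(-z;T)$ is again primitive, which holds because it is obtained from the original family by permutations and sign changes.

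\emph{Step (iii).} Evaluate \eqref{spectral poly's def} at $-T$ and use (ii). We have $\Phi(z;-T)=\Phi(-z;T)$, which gives $\Phi'(z;-T)=-\Phi'(-z;T)$ and $\Phi''(z;-T)=\Phi''(-z;T)$; together with (i) this shows $Q^{(1)}_{n,p}(z;-T)=Q^{(1)}_{n,p}(-z;T)$. The sign of $\Phi'$ disappears because $\Phi'$ enters squared. Since $Q^{(1)}_{n,p}(z;T)$ is independent of $z$, it follows that $Q^{(1)}_{n,p}(-T)=Q^{(1)}_{n,p}(T)$.

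\emph{Consequence.} An even polynomial contains only even powers of $T$, so $Q^{(1)}_{n,p}(T)=\widehat Q^{(1)}_{n,p}(T^2)$. The preceding proposition says the coefficients of $Q^{(1)}_{n,p}$ lie in $\mathbb{C}(\wp(p),e_k(\tau))$, so $\widehat Q^{(1)}_{n,p}$ has coefficients in the same field.
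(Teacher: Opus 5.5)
Your proposal is correct and follows the paper's route: (i) by parity of $\wp$ and $\zeta$, (ii) by showing that $\Phi^{(1)}_{e,p}(-z;T)$ and $\Phi^{(1)}_{e,p}(z;-T)$ are elliptic solutions of the same third-order equation and invoking uniqueness together with the monic/primitive normalization, and (iii) from the definition of $Q^{(1)}_{n,p}$ and its $z$-independence. The only differences are cosmetic: the paper recognizes $\Phi^{(1)}_{e,p}(-z;T)$ as a solution through the product $\psi(P;-z)\psi(P^{\ast};-z)$ rather than by your direct chain-rule check, and your argument that $\mu\equiv 1$ spells out the normalization step the paper compresses into ``since they are monic polynomials in $T$.''
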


\begin{proof}
Assertion (i) is immediate from (\ref{potential, noneven}). Let $P=(T,C)\in
\Gamma_{n,p}^{(1)}(\tau)$. By \eqref{two psi's product}, we have
\begin{equation}
\psi(P;-z)\,\psi(P^{\ast};-z)=\frac{\Phi_{e,p}^{(1)}(-z;T)}{\Phi_{e,p}%
^{(1)}(z_{0};T)}, \label{two psi product -z}%
\end{equation}
is an elliptic solution to \eqref{3rd ode, noneven} associated with potential
$q_{n}^{(1)}(z;p,-T)$. Hence both $\Phi_{e,p}^{(1)}(z;-T)$ and $\Phi
_{e,p}^{(1)}(-z;T)$ are elliptic solutions of \eqref{3rd ode, noneven}
corresponding to the same potential $q_{n}^{(1)}(z;p,-T)$. Since they are
monic polynomials in $T$, uniqueness in
Theorem~\ref{thm, 3rd's elliptic solution} yields
\[
\Phi_{e,p}^{(1)}(z;-T)=\Phi_{e,p}^{(1)}(-z;T).
\]
This proves (ii).

Finally, (iii) follows immediately from (i), (ii), and
(\ref{spectral poly's def}).
\end{proof}

\begin{theorem}
\label{thm, com red iff BA are indep} Let $P=(T,C)\in\Gamma_{n,p}^{(1)}(\tau
)$. The Baker--Akhiezer functions $\psi(P;z)$ and $\psi(P^{\ast};z)$ are
linearly independent if and only if
\[
Q_{n,p}^{(1)}(T)\neq0.
\]
Equivalently, they are linearly independent if and only if $P$ is not a branch
point of the spectral curve $\Gamma_{n,p}^{(1)}(\tau)$.
\end{theorem}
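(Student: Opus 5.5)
The natural route is the Wronskian identity \eqref{wronskian of psi}:
\[
W\bigl(\psi(P;z),\psi(P^{\ast};z)\bigr)=\frac{2iC(P)}{\Phi_{e,p}^{(1)}(z_{0};T(P))}.
\]
Since $C(P)^2=Q^{(1)}_{n,p}(T)$, the right-hand side vanishes exactly when $Q^{(1)}_{n,p}(T)=0$. Both $\psi(P;\cdot)$ and $\psi(P^\ast;\cdot)$ solve the same second-order equation $\mathrm{GLE}^{(1)}_n(p,T,\tau)$, so they are linearly independent if and only if their Wronskian is nonzero. This gives both directions at once, provided the objects are well defined.

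The main care lies in justifying \eqref{wronskian of psi} and the definition of $\psi$ itself. I would first fix $z_0\notin\{0,\pm p\}+\Lambda_\tau$ with $\Phi_{e,p}^{(1)}(z_0;T)\neq0$. This is possible because $\Phi^{(1)}_{e,p}(\cdot;T)$ is a nontrivial elliptic function by Theorem~\ref{thm, 3rd's elliptic solution}, so its zeros are isolated.

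Next I would verify the Wronskian formula directly. From \eqref{phi's,def},
\[
\phi(P;z)-\phi(P^\ast;z)=\frac{2iC}{\Phi^{(1)}_{e,p}(z;T)}.
\]
Using \eqref{two psi's product},
\[
W=\psi(P)\psi(P^\ast)\bigl(\phi(P^\ast)-\phi(P)\bigr)=\pm\frac{2iC}{\Phi^{(1)}_{e,p}(z_0;T)},
\]
where the sign depends only on the orientation convention and is irrelevant here. One subtlety is that $\psi$ is defined by integrating a meromorphic function whose poles (zeros of $\Phi$) could a priori produce non-integer residues. However, the residues of $\phi$ at zeros of $\Phi$ are $\tfrac12\cdot$(order) plus a contribution from $iC/\Phi$. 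The Riccati equation \eqref{phi,Riccati equ} forces every pole of $\phi$ away from $0,\pm p$ to be simple with residue $1$ (if $\phi\sim r/(z-z_1)$ then $-r=-r^2$, so $r\in\{0,1\}$). Hence $\psi$ is single-valued near such points, and locally it is a genuine solution.

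With this in hand the conclusion follows. If $Q^{(1)}_{n,p}(T)\neq0$, then $C\neq0$, so $W\neq0$ and the two Baker--Akhiezer functions are independent. If $Q^{(1)}_{n,p}(T)=0$, then $C=0$, so $P=P^\ast$ and the two functions coincide, hence are trivially dependent.

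The equivalence with branch points is immediate from the definition of $\Gamma^{(1)}_{n,p}(\tau)$. The only step I expect to require real care is the residue analysis ensuring $\psi$ is a well-defined solution; everything else is formal.
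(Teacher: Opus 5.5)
Your proposal is correct and is essentially the paper's argument: the paper gives no separate proof but reads the statement off the Wronskian identity \eqref{wronskian of psi} (quoted from \cite{Kuo}). That Wronskian is nonzero exactly when $C\neq0$, i.e.\ when $Q^{(1)}_{n,p}(T)\neq0$, while at a branch point $P=P^{\ast}$ and the two functions coincide. Your direct derivation of \eqref{wronskian of psi} from \eqref{phi's,def} and \eqref{two psi's product}, and your Riccati residue check, simply supply details the paper leaves to the reference; the sign you leave as $\pm$ is in fact $-$ for the convention $W(f,g)=fg'-f'g$, which is immaterial.
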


\begin{theorem}
\label{thm,Q(T) neq 0 iff com.red.} For $P=(T,C)\in\Gamma_{n,p}^{(1)}(\tau)$,
$\mathrm{GLE}_{n}^{(1)}(p,T,\tau)$ is completely reducible if and only if
$Q_{n,p}^{(1)}(T)\neq0$. Hence the branch points of the spectral curve
$\Gamma_{n,p}^{(1)}(\tau)$ correspond exactly to the values of $T$ for which
the monodromy representation is non-completely reducible.
\end{theorem}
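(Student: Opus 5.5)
The plan is to derive the statement from Theorem~\ref{thm, com red iff BA are indep}, so that the only task is to show that complete reducibility of $\mathrm{GLE}_{n}^{(1)}(p,T,\tau)$ is equivalent to linear independence of $\psi(P;z)$ and $\psi(P^{\ast};z)$. Both Baker--Akhiezer functions are Bloch solutions. By \eqref{phi's,def}, $\phi(P;z)$ is elliptic in $z$, so $\psi(P;z+\omega_{j})=\mu_{j}(P)\psi(P;z)$ for some multipliers $\mu_j(P)$. Moreover \eqref{two psi's product} gives $\mu_{j}(P)\mu_{j}(P^{\ast})=1$, because $\Phi_{e,p}^{(1)}$ is elliptic.

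For the first direction, assume $Q_{n,p}^{(1)}(T)\neq0$. Then $C\neq0$, and by \eqref{wronskian of psi} (equivalently by Theorem~\ref{thm, com red iff BA are indep}) the functions $\psi(P)$ and $\psi(P^{\ast})$ form a fundamental system. Each is a common eigenvector of $M_{1}$ and $M_{2}$, so both monodromy matrices are diagonal in this basis, and the representation is completely reducible.

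For the converse, assume the equation is completely reducible and suppose, for contradiction, that $Q_{n,p}^{(1)}(T)=0$. Let $y_1,y_2$ be a common eigenbasis as in \eqref{m1}. Then $y_1y_2$ is an elliptic solution of \eqref{3rd ode, noneven}. By Lemma~\ref{lem,complete iff lambda neq pm1}, some $\lambda_j^2\neq1$, and then Lemma~\ref{lem, elliptic sol is 1 dim} (Case 1) gives $\Phi_{e,p}^{(1)}(z;T)=b\,y_{1}y_{2}$ with $b\neq0$. Substitute this into \eqref{spectral poly's def}. Since $y_i''=q\,y_i$, a direct computation gives
\[
\tfrac12\Phi\Phi''-\tfrac14\Phi'^{2}-q\Phi^{2}=-\tfrac14 b^{2}\,W(y_{1},y_{2})^{2},
\]
which is the standard identity for products of two solutions. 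Therefore $Q_{n,p}^{(1)}(T)=-\tfrac14 b^{2}W^{2}\neq0$, a contradiction. This direction is self-contained and does not need the Baker--Akhiezer functions.

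At a branch point, \eqref{phi's,def} gives $\phi(P)=\phi(P^{\ast})=\Phi'/(2\Phi)$. Hence $\psi(P)^{2}\propto\Phi$ has period multipliers $\pm1$, and since $\psi(P)$ is the unique Bloch line, the monodromy is of the form \eqref{m2}. This is consistent with the first direction. The final assertion about branch points then follows because the branch points are exactly the points where $C=0$, that is, where $Q_{n,p}^{(1)}(T)=0$.

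The main obstacle is the Wronskian identity above, which has to be checked carefully. It is a short computation: differentiate $\Phi=y_{1}y_{2}$ twice, use $y_i''=qy_i$, and observe that the expression collapses to $-\tfrac14(y_1'y_2-y_1y_2')^2$. Once that identity is in hand, the rest is bookkeeping with the earlier lemmas.
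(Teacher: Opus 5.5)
Your proof is correct. The sufficiency half matches the paper: both of you read it off from Theorem~\ref{thm, com red iff BA are indep}, since for $C\neq0$ the Bloch functions $\psi(P)$ and $\psi(P^{\ast})$ form a common eigenbasis.

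Your necessity argument takes a genuinely different route.

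\textbf{The paper's route.} It argues by contradiction through the Baker--Akhiezer functions. If $Q_{n,p}^{(1)}(T)=0$, then $\psi(P)\propto\psi(P^{\ast})$, so the two share a zero $z_1$. Writing $y_1y_2=\Phi_{e,p}^{(1)}=\psi(P)\psi(P^{\ast})$ then forces $W(y_1,y_2)(z_1)=0$.

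\textbf{Your route.} You use the quadratic identity $\tfrac12\Phi\Phi''-\tfrac14\Phi'^2-q\Phi^2=-\tfrac14W(y_1,y_2)^2$ for $\Phi=y_1y_2$; I checked it. You combine it with $\Phi_{e,p}^{(1)}=b\,y_1y_2$, where $b\neq0$ because $\Phi_{e,p}^{(1)}(\cdot;T)\not\equiv0$ for every $T$ (its coefficients are primitive in $T$). This gives $Q_{n,p}^{(1)}(T)=-\tfrac14 b^2W^2\neq0$ outright.

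\textbf{What each route buys.}
\begin{enumerate}
\item Your route never has to locate a common zero at a regular point. The paper's common zero could a priori sit at a singular point $\pm p$ (types (a-ii)/(a-iii), e.g.\ for $n=1$), where the Wronskian step at $z_1$ would need separate treatment.
\item Your identity gives more than you use. Apply it to Case~2 of Lemma~\ref{lem, elliptic sol is 1 dim}, where $\Phi_{e,p}^{(1)}=a\,y_1^2$: it yields $Q_{n,p}^{(1)}(T)=-\tfrac14a^2W(y_1,y_1)^2=0$. So the identity alone proves both directions, with no Baker--Akhiezer functions at all.
\item The paper's route keeps the statement tied to the spectral-curve picture: coincidence of $\psi(P)$ and $\psi(P^{\ast})$, equivalently criterion (iv) of Theorem~\ref{thm,criterion for}.
\end{enumerate}

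\textbf{A minor point.} Your closing paragraph on branch points is only a consistency check. Its claim that $\psi(P)$ spans the unique Bloch line is not justified as written. Nothing depends on it, though, since the contrapositive of your necessity argument already gives non-complete reducibility wherever $Q_{n,p}^{(1)}(T)=0$.
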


\begin{proof}
The sufficiency follows directly from Theorem
\ref{thm, com red iff BA are indep}. It remains to prove the necessity. Assume
that GLE$_{n}^{(1)}(p,T,\tau)$ is completely reducible with $Q_{n,p}^{(1)}(T)$
$=0$. By Theorem \ref{thm, com red iff BA are indep}, two Baker--Akhiezer
functions $\psi(P;z)$ and $\psi(P^{\ast};z)$ are linearly dependent; hence
they share a common zero $z_{1}$.

Since GLE$_{n}^{(1)}(p,T,\tau)$ is completely reducible, there exists a basis
of solutions $y_{1}(z;T)$ and $y_{2}(z;T)$ such that the product
$y_{1}(z;T)\cdot y_{2}(z;T)$ is an elliptic solution of
\eqref{3rd ode, noneven}. By Theorem \ref{thm, 3rd's elliptic solution} and
\eqref{two psi's product}, we obtain
\begin{equation}
y_{1}(z;T)y_{2}(z;T)=\Phi_{e,p}^{(1)}(z;T)=\psi(P;z)\psi(P^{\ast};z)
\label{y1y2=product of two psi}%
\end{equation}
up to a nonzero multiple. Since
\[
\psi(P;z_{1})=\psi(P^{\ast};z_{1})=0,
\]
it follows from \eqref{y1y2=product of two psi} that  either one of $y_{1}$
and $y_{2}$ has a zero of order at least two at $z_{1}$, or both vanish at
$z_{1}$. In either case,
\[
W(y_{1},y_{2})(z_{1})=0,
\]
contradicting the linear independence of $y_{1}$ and $y_{2}$.

This contradiction gives
\[
Q_{n,p}^{(1)}(T)\neq0,
\]
proving necessity.
\end{proof}

In either monodromy type, the common monodromy eigenfunction may be normalized
as
\begin{equation}
\psi(P;z)=y_{1}(z;T). \label{1111}%
\end{equation}
Indeed, when $Q_{n,p}^{(1)}(T)\neq0$, the pair $\psi(P)$ and $\psi(P^{*})$
gives the two common eigenfunctions; when $Q_{n,p}^{(1)}(T)=0$, they coincide.

For each $P=(T,C)\in\Gamma_{n,p}^{(1)}(\tau)$, define $\lambda_{j}%
(P)\in\mathbb{C}$ by
\[
\lambda_{j}(P):=\lambda_{j}(T),
\]
where $\lambda_{j}(T),j=1,2$, is the eigenvalue of $M_{j}(T)$ corresponding to
$y_{1}(z;T)$, satisfying \eqref{y1 and eigenvalue}. Then (\ref{1111}) implies
that
\begin{equation}
\psi(P;z+\omega_{j})=\lambda_{j}(P)\psi(P;z),\ j=1,2. \label{BA is 2nd}%
\end{equation}
Moreover,
\begin{equation}
\lambda_{j}(P^{\ast})=\lambda_{j}(P)^{-1}. \label{0120equ1}%
\end{equation}

For notational consistency, write $(r(P),s(P))$ instead of $(r(T),s(T))$.
Namely,
\begin{equation}
\lambda_{1}(P)=e^{-2\pi is(P)}\quad\text{and}\quad\lambda_{2}(P)=e^{2\pi
ir(P)}. \label{def,r(P),s(P)}%
\end{equation}
It follows from \eqref{0120equ1} that
\begin{equation}
r(P^{\ast})=-r(P)\quad\text{and}\quad s(P^{\ast})=-s(P). \label{0120equ2}%
\end{equation}

Relation \eqref{BA is 2nd} shows that $\psi(P;z)$ is an elliptic function of
the second kind. Consequently, up to a nonzero constant, it admits a
Hermite-Halphen type representation of the form%
\begin{equation}
\psi(P;z)=y_{\mathbf{a}(P),c(P)}(z)=\dfrac{e^{c(P)z}\,\prod\limits_{i=1}%
^{n+1}\sigma(z-a_{i}(P))}{~\sigma^{n}(z)\sqrt{\sigma(z+p)\,\sigma(z-p)}\,},
\label{Ba is given by a(P) and c}%
\end{equation}
where $y_{\mathbf{a}(P),c(P)}(z)$ is given by \eqref{Hermite noneven}, with
$c(P)\in\mathbb{C}$ and the zero divisor
\[
\mathbf{a}(P)=\{a_{1}(P),\cdots,a_{n+1}(P)\}\in\operatorname{Sym}^{n+1}%
E_{\tau}^{\times}.
\]

Since the product $\psi(P;z)\psi(P^{\ast};z)$ is elliptic by
\eqref{two psi's product}, it follows that
\[
c(P^{\ast})=-c(P).
\]
Moreover, if the $C$-coordinate of the point $P=(T,$ $C)$ satisfies $C\neq0$,
then \eqref{wronskian of psi} implies that $\psi(P;z)$ and $\psi(P^{\ast};z)$
are linearly independent, and hence
\begin{equation}
\mathbf{a}(P)\cap\mathbf{a}(P^{\ast})=\emptyset.
\label{two a's intersection is empty}%
\end{equation}
Conversely, \eqref{two a's intersection is empty} implies that the Wronskian
\[
W(\psi(P;z,z_{0}),\psi(P^{\ast};z,z_{0}))\neq0,
\]
which in turn yields the independence of $\psi(P;z)$ and $\psi(P^{\ast};z)$.

\begin{theorem}
\label{thm,criterion for} Let $P=(T,C)\in\Gamma_{n,p}^{(1)}(\tau)$. The
following statements are equivalent:

(i) GLE$_{n}^{(1)}(p,T,\tau)$ is completely reducible.\medskip

(ii) $Q_{n,p}^{(1)}(T)\neq0$.\medskip

(iii) $(r(P),s(P))$ $\notin\frac{1}{2}\mathbb{Z}^{2}$.\medskip

(iv) $\mathbf{a}(P)\cap\mathbf{a}(P^{\ast})=\emptyset.$
\end{theorem}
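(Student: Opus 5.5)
The plan is to assemble the four equivalences from results already established, proving (i)$\Leftrightarrow$(ii), (i)$\Leftrightarrow$(iii), and (ii)$\Leftrightarrow$(iv) separately.

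\emph{(i)$\Leftrightarrow$(ii).} This is exactly Theorem~\ref{thm,Q(T) neq 0 iff com.red.}.

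\emph{(i)$\Leftrightarrow$(iii).} This is Lemma~\ref{lem,complete iff lambda neq pm1}, transported to $P$. By \eqref{1111} and \eqref{BA is 2nd}, the multipliers $\lambda_j(P)$ are the eigenvalues $\lambda_j(T)$ of the common eigenfunction $y_1(z;T)$. Hence, by \eqref{def,r(P),s(P)}, $(r(P),s(P))\equiv(r(T),s(T))$ modulo $\mathbb Z^2$. One point needs care: replacing $P$ by $P^*$ sends $(r,s)$ to $(-r,-s)$ by \eqref{0120equ2}. Membership in $\frac12\mathbb Z^2$ is invariant under this sign change, so the condition does not depend on which sheet is used.

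\emph{(ii)$\Leftrightarrow$(iv).} I would use the Wronskian identity \eqref{wronskian of psi}. Since $C^2=Q_{n,p}^{(1)}(T)$, condition (ii) is equivalent to $C\neq0$.

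If $C\neq0$, then $W(\psi(P),\psi(P^*))\neq0$, so the two Baker--Akhiezer functions are independent. A common zero $z_1\notin\{0,\pm p\}$ would force the Wronskian to vanish at $z_1$. By the Hermite--Halphen representation \eqref{Ba is given by a(P) and c}, the zeros of $\psi(P;\cdot)$ are exactly the points of $\mathbf a(P)$, and these avoid $0$. Hence $\mathbf a(P)\cap\mathbf a(P^*)=\emptyset$; this is the argument already recorded before \eqref{two a's intersection is empty}.

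Conversely, suppose $C=0$. Then $P=P^*$, so $\psi(P)=\psi(P^*)$ and $\mathbf a(P)=\mathbf a(P^*)\neq\emptyset$, which violates (iv). Equivalently, by Theorem~\ref{thm, com red iff BA are indep} the two functions are dependent and share all their zeros.

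\emph{Main obstacle.} The delicate step is (iv)$\Rightarrow$(ii) when some $a_i(P)$ equals $\pm p$, as in configurations (a-ii) and (a-iii). There, a common zero located at $\pm p$ does not immediately give $W=0$ at an ordinary point. This case is nevertheless harmless for the direction above: I argue via $C=0\Rightarrow P=P^*$, which gives identical divisors and so avoids any Wronskian evaluation at $\pm p$. For the direction (ii)$\Rightarrow$(iv), a shared point $\pm p$ in $\mathbf a(P)$ and $\mathbf a(P^*)$ would make the product $\psi(P)\psi(P^*)$ vanish to order at least $2\cdot\frac32-1=2$ at $\pm p$. I would rule this out using the local exponents $-\tfrac12,\tfrac32$: two solutions both of exponent $\tfrac32$ at $\pm p$ are proportional, so their Wronskian vanishes, contradicting $C\neq0$.

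Combining the three equivalences with (i)$\Leftrightarrow$(ii) completes the proof.
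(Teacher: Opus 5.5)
Your proposal is correct and follows the paper's route: you assemble Theorem~\ref{thm,Q(T) neq 0 iff com.red.} for (i)$\Leftrightarrow$(ii), Lemma~\ref{lem,complete iff lambda neq pm1} for (i)$\Leftrightarrow$(iii), and the Wronskian identity \eqref{wronskian of psi} for (ii)$\Leftrightarrow$(iv), which is exactly the material the paper records just before the theorem. Your treatment of a shared zero at $\pm p$ fills in a step the paper passes over: such a point forces both Baker--Akhiezer functions to be the exponent-$\tfrac32$ solution there, which is unique up to scaling, so they are proportional. The order count $2\cdot\tfrac32-1=2$ you quote along the way is garbled, but that argument does not use it.
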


We now define the addition map $\sigma_{n,p}^{(1)}$ of $\mathrm{GLE}_{n}%
^{(1)}(p,T,\tau)$. Recall that $Y_{n,p}^{(1)}(\tau)$ is defined in
\eqref{Yn's def} and characterized by \eqref{Yn,p, apparent}. For each
$P\in\Gamma_{n,p}^{(1)}(\tau)$, there exists a unique $\mathbf{a}%
(P)\in\operatorname{Sym}^{n+1}E_{\tau}^{\times}$ such that the Baker--Akhiezer
function $\psi(P;z)$ of GLE$_{n}^{(1)}(p,T,\tau)$ satisfies
\eqref{Ba is given by a(P) and c}. Namely, $\mathbf{a}(P)$ is the zero divisor
of $\psi(P;z)$, that is,
\begin{equation}
\mathbf{a}(P)\in Y_{n,p}^{(1)}(\tau). \label{a(P) in Yn,p}%
\end{equation}

Define a map
\[
i_{n,p}:\Gamma_{n,p}^{(1)}(\tau)\mapsto\operatorname{Sym}^{n+1}E_{\tau}%
\]
by
\begin{equation}
i_{n,p}(P):=\mathbf{a}(P)=\{a_{1}(P),\cdots,a_{n+1}(P)\}\in\operatorname{Sym}%
^{n+1}E_{\tau}.
\end{equation}

The above discussion implies that $i_{n,p}$ is well-defined.

\begin{proposition}
\label{prop, inp is an embedding} The map $i_{n,p}$ is an embedding from
$\Gamma_{n,p}^{(1)}(\tau)$ into $\operatorname{Sym^{n+1}}E_{\tau}$. Moreover,
\begin{equation}
i_{n,p}(\Gamma_{n,p}^{(1)}(\tau))=Y_{n,p}^{(1)}(\tau).
\label{image of Gamma n,p is Yn,p}%
\end{equation}

\end{proposition}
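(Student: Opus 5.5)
We have to show three things: $i_{n,p}$ is injective, its image is exactly $Y^{(1)}_{n,p}(\tau)$, and it is an embedding of algebraic curves (injective with injective differential, or an isomorphism onto its image). We take them in that order, using the Baker--Akhiezer representation \eqref{Ba is given by a(P) and c} and the uniqueness in Theorem~\ref{thm, 3rd's elliptic solution}.

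\emph{Injectivity.} Suppose $i_{n,p}(P_1)=i_{n,p}(P_2)=\mathbf a$, with $P_k=(T_k,C_k)$.
\begin{enumerate}[(1)]
\item By Theorems~\ref{thm, ya solves noneven ode} and~\ref{thm, ya solves noneven ode 2}, the zero divisor $\mathbf a$ determines $c$ and $T$ through \eqref{c in terms of aj}--\eqref{T in terms of aj}, or their (a-ii)/(a-iii) analogues. Hence $T_1=T_2=:T$ and $\psi(P_1;z)=\psi(P_2;z)$ up to a constant.
\item If $P_2=P_1^\ast$ with $C\neq0$, this contradicts \eqref{two a's intersection is empty}, since then $\mathbf a(P)\cap\mathbf a(P^\ast)=\emptyset$ while $\mathbf a$ is nonempty.
\item Otherwise $C_1=C_2$, so $P_1=P_2$, because $P$ is determined by $T$ and the sign of $C$.
\end{enumerate}

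\emph{Image.} The inclusion $i_{n,p}(\Gamma)\subset Y^{(1)}_{n,p}$ is \eqref{a(P) in Yn,p}. For the reverse inclusion, take $\mathbf a\in Y^{(1)}_{n,p}$, so that $y_{\mathbf a,c}$ solves $\mathrm{GLE}^{(1)}_n(p,T,\tau)$ with $T=T(\mathbf a)$.
\begin{enumerate}[(1)]
\item The function $y_{\mathbf a,c}$ is of the second kind, so it is a common eigenfunction of $M_1,M_2$.
\item Pick $C$ with $C^2=Q^{(1)}_{n,p}(T)$. By the discussion around \eqref{1111}, the common eigenfunctions are exactly $\psi(P)$ and $\psi(P^\ast)$.
\item In the completely reducible case there are two eigenlines. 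Since $y_{\mathbf a,c}$ is an eigenfunction, it is proportional to $\psi(P)$ or to $\psi(P^\ast)$, hence $\mathbf a=\mathbf a(P)$ or $\mathbf a=\mathbf a(P^\ast)$.
\item In the non-completely reducible case the eigenline is unique and $C=0$, so $\mathbf a=\mathbf a(P)$.
\end{enumerate}

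\emph{Embedding.} I will exhibit an algebraic inverse on the image. The point $P=(T,C)$ is recovered from $\mathbf a$ as follows.
\begin{enumerate}[(1)]
\item $T$ is a rational function of $(x_i,y_i)$ by \eqref{T in terms of aj}.
\item For $C$, evaluate \eqref{phi's,def} at a suitable point: $\psi'/\psi=\phi(P;z)$, so
\[
iC=\Phi^{(1)}_{e,p}(z;T)\Bigl(c+\sum_i\zeta(z-a_i)-n\zeta(z)-\tfrac12\bigl(\zeta(z+p)+\zeta(z-p)\bigr)\Bigr)-\tfrac12\Phi^{(1)\prime}_{e,p}(z;T).
\]
\item The right-hand side is independent of $z$. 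Comparing Laurent coefficients at $z=0$, where everything is expressed through $c$ and symmetric functions of the $\zeta(a_i)$, gives $C$ as a regular function of $\mathbf a$ on $\operatorname{Sym}^{n+1}E_\tau^\times$.
\end{enumerate}
So $i_{n,p}$ has a regular left inverse $\mathbf a\mapsto(T,C)$ on the image, and is therefore a closed embedding onto $Y^{(1)}_{n,p}$.

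Conversely, $i_{n,p}$ is holomorphic. Off the branch points the zeros of $\psi(P;z)$ vary holomorphically, since they are the zeros of $iC\cdot\Phi^{-1}+\dots$, or equivalently can be read off from the Hermite--Halphen form. At the branch points we use the local parameter $C$ together with the symmetric functions of $\mathbf a$.

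\emph{Main obstacle.} The delicate part is the regularity statements at the non-generic points: branch points, where $\mathbf a(P)=\mathbf a(P^\ast)$, and divisors of types (a-ii)/(a-iii), where two of the $a_i$ collide at $\pm p$. There the formula for $T$ changes to \eqref{T in terms of aj,2}. My first attempt will be to avoid these explicit formulas altogether: I will use the $z$-independent expression for $(T,C)$ above, which is uniform in $\mathbf a$ because it involves only $c(\mathbf a)$ and the symmetric sums $\sum_i\zeta(z-a_i)$. Injectivity of the differential then follows from the existence of this regular inverse.
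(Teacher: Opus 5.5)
Your proof of the image equality is the paper's own argument. Your injectivity argument is also correct: the divisor fixes $c$ and $T$, and $\mathbf a(P)\cap\mathbf a(P^\ast)=\emptyset$ when $C\neq0$. The paper does not prove embeddedness at all; it quotes Proposition~3.7 of Chen--Kuo--Lin (Lam\'e~I). Your attempt to build a regular inverse therefore goes beyond the paper, but as written it has a gap exactly where you suspect it.

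\textbf{The gap: divisors of type (a-ii)/(a-iii).} Your inverse uses $T(\mathbf a)$ from \eqref{T in terms of aj} and $c(\mathbf a)$ from \eqref{c in terms of aj}. Both have poles along $a_i=\pm p$, so the claim that $C$ is regular on all of $\operatorname{Sym}^{n+1}E_\tau^\times$ is false.

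Your proposed uniform remedy is circular. The $z$-independent expression for $iC$ takes as input precisely $c(\mathbf a)$ and, through $\Phi^{(1)}_{e,p}(z;T)$, the value $T$. It gives no formula for $T$ itself.

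Near a point of $Y^{(1)}_{n,p}(\tau)$ with $a_n=a_{n+1}=p$, the nearby points of $Y$ are of type (a-i). There the two terms $\wp'(a_i)/(\wp(a_i)-\wp(p))$, $i=n,n+1$, blow up separately. Boundedness of $T$ along $Y$ gives only a continuous extension. Ambient holomorphy would require smoothness or normality of $Y$ at that point, which is what you are trying to prove. So the immersion property at these points is not established.

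There is a smaller issue as well. At a singular point of $\Gamma^{(1)}_{n,p}(\tau)$, that is, at a multiple root of $Q^{(1)}_{n,p}$, $C$ is not a local parameter. Holomorphy of $i_{n,p}$ is better obtained uniformly from a residue formula for $\sum_i f(a_i(P))$, with $f$ elliptic, using $\phi(P;z)=(iC+\frac12\Phi')/\Phi$, which is holomorphic in $(T,C)$.

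\textbf{How to repair it.}
\begin{enumerate}
\item The residue relation \eqref{z=0}, $T=n\bigl(c-\sum_i\zeta(a_i)\bigr)$, holds for all three types. It is the residue condition at $z=0$, and no $a_i$ equals $0$. For (a-ii) one can check it directly against \eqref{c in terms of aj,2}--\eqref{T in terms of aj,2} using \eqref{ya is sol. condition2-2}.
\item Expanding $iC=\Phi\,\psi'/\psi-\frac12\Phi'$ at $z=0$ then writes $C$ as a polynomial in $T$ and the power sums $\sum_i\wp^{(k)}(a_i)$. These power sums are holomorphic on $\operatorname{Sym}^{n+1}E_\tau^\times$, so it suffices to extend $T$ holomorphically to an ambient neighbourhood of each point of $Y$.
\item The relation \eqref{ya is sol. condition2*} holds on the (a-i) part of $Y$. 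The (a-ii) points are isolated on the curve, so after clearing denominators it holds on the closure of the (a-i) part.
\item Near an (a-ii) point, subtract $\wp'(p)$ times \eqref{ya is sol. condition2*} from \eqref{T in terms of aj}:
\[
T=\frac{n}{2(n+1)}\sum_{i=1}^{n+1}\frac{\wp'(a_i)-\wp'(p)}{\wp(a_i)-\wp(p)}
=\frac{n}{n+1}\sum_{i=1}^{n+1}\bigl(\zeta(a_i+p)-\zeta(a_i)-\zeta(p)\bigr).
\]
This is holomorphic near $a_i=p$. At $a_n=a_{n+1}=p$ it reproduces \eqref{T in terms of aj,2}, via \eqref{ya is sol. condition2-2}.
\item Near an (a-iii) point, use $+\wp'(p)$ instead.
\end{enumerate}
With these ambient extensions, $(T,C)$ gives a holomorphic left inverse near every point of $Y$, and your conclusion follows.
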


\begin{proof}

The embeddedness follows from \cite[Proposition~3.7]{Chen-Kuo-Lin-Lame I}. It
suffices to prove \eqref{image of Gamma n,p is Yn,p}.

By \eqref{a(P) in Yn,p},
\[
i_{n,p}\bigl(\Gamma_{n,p}^{(1)}(\tau)\bigr)\subset Y_{n,p}^{(1)}(\tau).
\]

Conversely, let $\mathbf{a}\in Y_{n,p}^{(1)}(\tau)$. As discussed in
Section~\ref{Log-free variety}, $\mathbf{a}$ satisfies one of the three
conditions (a-i)--(a-iii). Without loss of generality, assume that
$\mathbf{a}$ satisfies (a-i). Then by Theorem~\ref{thm, ya solves noneven ode}%
, there exists $c\in\mathbb{C}$ such that $y_{\mathbf{a},c}(z)$ is a solution
of the associated GLE$_{n}^{(1)}(p,T,\tau)$, where $c$ and $T$ are uniquely
determined by \eqref{c in terms of aj} and \eqref{T in terms of aj},
respectively. Since $y_{\mathbf{a},c}(z)$ is elliptic of second kind, it must
coincide with $\psi(P;z)$ or $\psi(P^{\ast};z)$, for $P=(T,C)\in\Gamma
_{n,p}^{(1)}(\tau)$. Hence $\mathbf{a=a}(P)$ or $\mathbf{a=a}(P^{\ast})$, and
therefore
\[
i_{n,p}\bigl(\Gamma_{n,p}^{(1)}(\tau)\bigr)\supseteq Y_{n,p}^{(1)}(\tau).
\]

Consequently,
\[
i_{n,p}\bigl(\Gamma_{n,p}^{(1)}(\tau)\bigr)=Y_{n,p}^{(1)}(\tau).
\]

\end{proof}

Since the spectral polynomial $Q_{n,p}^{(1)}(T)$ has even degree $\deg
_{T}Q_{n,p}^{(1)}(T)=4n+2$, the spectral curve $\Gamma_{n,p}^{(1)}(\tau)$
admits two points at infinity $\infty_{\pm}^{(1)}$. Its compactification
\begin{equation}
\overline{\Gamma_{n,p}^{(1)}(\tau)}=\Gamma_{n,p}^{(1)}(\tau)\cup\{\infty_{\pm
}^{(1)}\} \label{closure of Gamma n,p}%
\end{equation}
is an algebraic curve.

To extend $i_{n,p}$ to the compactification, we study the closure
$\overline{Y_{n,p}^{(1)}(\tau)}$. The following is proved as in \cite[Theorem
4.1]{Chen-Kuo-Lin-Lame I}.

\begin{lemma}
\label{lem, closure of Yn,p} Let $n\in\mathbb{Z}_{>0}$ and fix $p\in E_{\tau
}\setminus E_{\tau}[2]$. Then
\[
\overline{Y_{n,p}^{(1)}(\tau)}=Y_{n,p}^{(1)}(\tau)\cup\{\infty_{\pm}(p)\},
\]
where
\begin{align}
\infty_{+}(p)  &  :=(\overbrace{0,\cdots,0}^{n-1},p,-p), \label{infty point 1}%
\\
\infty_{-}(p)  &  :=(\overbrace{0,\cdots,0}^{n+1}). \label{infty point 2}%
\end{align}
Moreover, $\overline{Y_{n,p}^{(1)}(\tau)}$ is smooth at $\infty_{\pm}(p)$.
\end{lemma}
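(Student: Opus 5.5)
We study the limit points of $Y^{(1)}_{n,p}(\tau)$ as $T\to\infty$ along the spectral curve, following the strategy of \cite[Theorem 4.1]{Chen-Kuo-Lin-Lame I}. By Proposition~\ref{prop, inp is an embedding}, $Y^{(1)}_{n,p}(\tau)=i_{n,p}(\Gamma^{(1)}_{n,p}(\tau))$. The curve $\Gamma^{(1)}_{n,p}(\tau)$ is affine and has exactly two punctures $\infty^{(1)}_\pm$, and $\operatorname{Sym}^{n+1}E_\tau$ is compact. Hence the closure adds only the limits of $\mathbf a(P)$ as $P\to\infty^{(1)}_\pm$. If each such limit exists, the closure adds at most two points. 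The limits exist by properness of the extension of a holomorphic map from a punctured disc into a compact projective variety.

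\textbf{Identifying the limits.} We use the Hermite--Halphen representation \eqref{Ba is given by a(P) and c} together with the relations derived in the proof of Theorem~\ref{thm, ya solves noneven ode}.

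For type (a-i), \eqref{z=p} and \eqref{z=-p} give
\[
c+T=\sum_i\zeta(a_i-p)+(n+1)\zeta(p),
\]
and \eqref{z=0} gives
\[
T=n\Bigl(c-\sum_i\zeta(a_i)\Bigr).
\]
As $T\to\infty$, some of the $a_i$ must approach the poles of these $\zeta$-sums, namely $0$ or $\pm p$. To see which, we use asymptotics of $\phi(P;z)$ from \eqref{phi's,def}. Since $C\sim \pm T^{2n+1}$ and $\Phi^{(1)}_{e,p}\sim T^{2n}$ by Theorem~\ref{thm, 3rd's elliptic solution}, we get
\[
\phi(P;z)=\pm iT+O(1)
\]
locally uniformly away from the singular points. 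In particular $\psi$ behaves like $e^{\pm iTz}$, so $c(P)\sim\pm iT$.

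\textbf{Splitting the branches.} Zeros of $\psi$ are poles of $\phi=\psi'/\psi$ with residue $1$. On compact sets avoiding $\{0,\pm p\}$, the function $\phi$ has no poles for large $|T|$, so all zeros $a_i(P)$ accumulate in $\{0,\pm p\}$.

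To decide the multiplicities, we compare residues of $\phi$ on small circles around $0$, $p$ and $-p$. Equivalently, we compare the local exponent structure: each zero absorbed at $0$ raises the local order from $-n$ toward $n+1$, and each zero absorbed at $\pm p$ moves the exponent from $-1/2$ to $3/2$. The exponents force the counts: at $0$ either $0$ or $2n+1$ zeros may be absorbed, and at $\pm p$ either $0$ or $2$.

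Since there are $n+1$ zeros in total, only two distributions survive:
\begin{itemize}
\item all $n+1$ zeros tend to $0$;
\item $n-1$ zeros tend to $0$, while one zero tends to $p$ and one to $-p$.
\end{itemize}
A cleaner route to the same conclusion is to use $\sum_i a_i=\sigma^{(1)}_{n,p}(P)$ together with the residue equations \eqref{z=p} and \eqref{z=-p}, rescaled by $1/T$. This should give the limits of the leading-order equations: in one sign $c-T\approx 0$ and in the other $c+T\approx 0$. These correspond to the two configurations $\infty_-(p)$ and $\infty_+(p)$, which are interchanged by $P\mapsto P^*$ since $c(P^*)=-c(P)$.

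\textbf{Main obstacle: smoothness.} The hard step is showing that $\overline{Y}$ is smooth at $\infty_\pm(p)$. The plan is to produce a local holomorphic parameter $t=1/T$ and show that the elementary symmetric functions of the clustered points are holomorphic in $t$, with some coordinate having nonzero derivative.

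Near $\infty_-(p)$, we write the zeros as $a_i\approx \alpha_i t$, where the $\alpha_i$ are the roots of a fixed polynomial coming from the leading asymptotics. This polynomial is the rescaled limit equation, obtained from the $z=px$ blow-up analogous to the proof of Proposition~\ref{prop, theta1 theta2 finite}. The symmetric functions of the $a_i$ then give local coordinates on $\operatorname{Sym}^{n+1}E_\tau$ near $(0,\dots,0)$, and the first one is linear in $t$, which yields smoothness.

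Near $\infty_+(p)$, we do the same for the $n-1$ points clustering at $0$, and use $a_n-p$ and $a_{n+1}+p$, each of order $t$, as the remaining coordinates. The delicate point is that the limit polynomial has distinct roots. We expect to borrow the corresponding computation from \cite{Chen-Kuo-Lin-Lame I}, which handles the analogous clustering, and adapt it to the non-even potential.
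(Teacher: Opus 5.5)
Your reduction to the two punctures is fine, and so is your observation that $\Phi^{(1)}_{e,p}(z;T)=T^{2n}(1+O(T^{-1}))$ on compacta avoiding $\{0,\pm p\}$, which forces every $a_i(P)$ to accumulate in $\{0,\pm p\}$. The paper itself only defers to \cite[Theorem~4.1]{Chen-Kuo-Lin-Lame I}, so your outline must stand on its own, and its two substantive steps do not.

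Two small corrections first. The limits exist because $i_{n,p}$ is algebraic; a holomorphic map from a punctured disc to a compact variety need not extend. Also, since $q^{(1)}_n=T^2+O(T)$ off the singular set, \eqref{phi,Riccati equ} forces $\phi=\pm T+O(1)$. So $c(P)\sim\pm T$, not $\pm iT$, which is what your later ``$c\mp T\approx 0$'' requires.

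\textbf{First gap: the multiplicity count.} The exponents at $0$ and $\pm p$ are the same for every $P\in\Gamma^{(1)}_{n,p}(\tau)$, and the potential has no limit as $T\to\infty$, so exponents constrain nothing here. In fact your rule ``$0$ or $2n+1$ zeros at $0$'' contradicts both configurations you then assert ($n+1$ or $n-1$ zeros at $0$). Hence ``only two distributions survive'' does not follow, and your alternative route is only announced.

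A working count: rescale $z=w/T$. Since $c_{2n}$ is monic of degree $2n$, $\deg c_j\leqslant j$ and $c_0\neq0$, the function $T^{-2n}\Phi^{(1)}_{e,p}(w/T;T)$ tends to $w^{-2n}$ times a degree-$2n$ polynomial with nonzero constant term. So exactly $2n$ of the $2n+2$ zeros of $\Phi^{(1)}_{e,p}=\mathrm{const}\cdot\psi(P)\psi(P^*)$ lie near $0$, and the poles at $\pm p$ leave one zero near each of $\pm p$. As $\mathbf{a}(P)$ is a sub-multiset of this divisor, its only possible limits are $\infty_\pm(p)$, $\{0^{n},p\}$ and $\{0^{n},-p\}$. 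The last two are excluded by \eqref{ya is sol. condition2}, since exactly one summand would blow up.

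\textbf{Second gap: smoothness.} You reduce it to an unproved assertion. If $a_i=\alpha_i t+o(t)$ with $t=1/T$, the elementary symmetric functions satisfy $s_k=O(t^k)$ for $k\geqslant 2$. So near $\infty_-(p)$ the only coordinate that can be linear in $t$ is $s_1=\sigma^{(1)}_{n,p}(P)$, whose linear coefficient is $\sum_i\alpha_i$. What you must prove is therefore $\sum_i\alpha_i\neq 0$, i.e.\ that $\sigma^{(1)}_{n,p}$ has a simple zero at $\infty^{(1)}_\pm$. Since the addition map is holomorphic on $\operatorname{Sym}^{n+1}E_\tau$, this makes $i_{n,p}$ an immersion there, and smoothness follows from injectivity plus distinctness of the two images. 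Distinctness of the $\alpha_i$, which you single out as the delicate point, is not what is needed.

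The right blow-up is $z=w/T$ at fixed $p$, not the $z=px$ rescaling of Proposition~\ref{prop, theta1 theta2 finite}, which concerns $p\to 0$. As $T\to\infty$ it turns $\mathrm{GLE}^{(1)}_n(p,T,\tau)$ near $0$ into
\[
y_{ww}=\Bigl(\frac{n(n+1)}{w^{2}}-\frac{2}{w}+1\Bigr)y ,
\]
with basis $e^{-w}w^{-n}\pi_{n+1}(w)$, $e^{w}w^{-n}\pi_{n-1}(w)$, where $\pi_k$ is a polynomial of degree $k$. This also gives the counts $n+1$ and $n-1$ directly. Writing $\pi_{n+1}=\sum_k b_kw^k$, one finds $k(k-2n-1)b_k=2(k-n-2)b_{k-1}$. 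Hence $\sum_i\alpha_i=-b_n/b_{n+1}=-n(n+1)/2\neq 0$, consistent with \eqref{P(sigma hat)'s asym}.

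The puncture $\infty^{(1)}_+$ then follows from $\sigma^{(1)}_{n,p}(-P^*)=\sigma^{(1)}_{n,p}(P)$ (Proposition~\ref{prop, data P and -P}), since $P\mapsto -P^*$ interchanges $\infty^{(1)}_\pm$.
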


By Lemma~\ref{lem, closure of Yn,p}, the embedding $i_{n,p}$ has a natural
extension (still denoted by $i_{n,p}$) to
\[
i_{n,p}: \overline{\Gamma_{n,p}^{(1)}(\tau)}\to\overline{Y_{n,p}^{(1)}(\tau)}
\]
by setting
\begin{equation}
i_{n,p}(\infty_{+}^{(1)})=\infty_{+}(p),\text{ and }i_{n,p}(\infty_{-}%
^{(1)})=\infty_{-}(p).
\end{equation}

Define the addition map
\[
\sigma_{n,p}^{(1)}:\overline{\Gamma_{n,p}^{(1)}(\tau)}\to E_{\tau}%
\]
defined by%
\begin{equation}
\sigma_{n,p}^{(1)}(P):=%
\begin{cases}
\sum_{i=1}^{n+1}a_{i}(P),\  & P\in\Gamma_{n,p}^{(1)}(\tau),\\
0, & P\in\{\infty_{\pm}^{(1)}\},
\end{cases}
\label{def, add map}%
\end{equation}
which is the composition of $i_{n,p}$ and the natural addition map
\[
\{a_{1},\cdots,a_{n+1}\}\to\sum_{i=1}^{n+1}a_{i}.
\]

It follows that the addition map $\sigma_{n,p}^{(1)}$ is a finite morphism
from $\overline{\Gamma_{n,p}^{(1)}(\tau)}$ to $E_{\tau}$ satisfying
\[
\sigma_{n,p}^{(1)}(\infty_{\pm}^{(1)})=0.
\]

Hence the degree of $\sigma_{n,p}^{(1)}$
\[
\deg\sigma_{n,p}^{(1)}=\#\,\sigma_{n,p}^{(1)-1}(z), \qquad z\in E_{\tau},
\]
is well-defined.

\begin{theorem}
\label{thm, sigma's degree} Let $n\in\mathbb{Z}_{>0}$ and $p\in E_{\tau
}\setminus E_{\tau}[2]$. Then
\begin{equation}
\deg\sigma_{n,p}^{(1)}=n(n+1). \label{degree of sigma}%
\end{equation}

\end{theorem}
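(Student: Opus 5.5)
We compute the degree by counting preimages of the base point $0\in E_\tau$ with multiplicity, using the behaviour of $\sigma^{(1)}_{n,p}$ near the two points at infinity. Equivalently, we compute the order of the pole of a suitable pulled-back function. The standard device, as in the classical Lam\'e case and in \cite{Chen-Kuo-Lin-Lame I}, is to compare two expressions for $\zeta(\sigma^{(1)}_{n,p}(P))$ on the spectral curve. From the residue relations \eqref{z=p}, \eqref{z=0} and the definition of $c(P)$, together with the addition formula \eqref{add formula, zeta}, we expect an identity
\[
\zeta\Bigl(\sum_{i=1}^{n+1}a_i(P)\Bigr)-\sum_{i=1}^{n+1}\zeta(a_i(P))=R(P),
\]
where $R$ is a rational function of the symmetric functions of the $(x_i,y_i)$. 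We also expect $c(P)$ and $\sum\zeta(a_i(P))$ to be expressible through $T$, $C$ and $\Phi^{(1)}_{e,p}$. Concretely, expanding $\phi(P;z)=\psi'/\psi$ from \eqref{phi's,def} at $z=0$ and comparing with the logarithmic derivative of \eqref{Ba is given by a(P) and c} should give
\[
c(P)-\sum_i\zeta(a_i(P)) = \frac{\mathcal C(P)}{n}, \qquad \mathcal C(P) := T=\text{(affine in }T\text{)},
\]
with $\mathcal C$ rational in $(T,C)$. This suggests introducing the meromorphic function
\[
\mathbf z_{n,p}(P):=\zeta\bigl(\sigma^{(1)}_{n,p}(P)\bigr)-\eta_1\sigma^{(1)}_{n,p}(P)\cdot(\text{linear correction}),
\]
or, more cleanly, working with $\wp(\sigma^{(1)}_{n,p}(P))$. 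The latter is a rational function on $\overline{\Gamma^{(1)}_{n,p}(\tau)}$ whose poles lie exactly over $\sigma^{(1)}_{n,p}=0$.

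The key steps are as follows.

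\textbf{Step 1.} Show that $\sigma^{(1)}_{n,p}(P)=0$ only at $P=\infty^{(1)}_\pm$. On the affine part this requires excluding $\sum a_i=0$. We would argue as in Lemma~\ref{lem, Mj not +-Id}: if $\sum a_i=0$, then by the transformation law \eqref{sigma's trans law} together with $\Psi_p$ being elliptic, the Bloch function $\psi(P;z)e^{-c(P)z}$ would be elliptic up to sign. The residue conditions at $0$ and $\pm p$ then force a contradiction with the local exponents $2n+1$ at $0$.

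\textbf{Step 2.} Compute the ramification (local multiplicity) of $\sigma^{(1)}_{n,p}$ at $\infty^{(1)}_\pm$. Here we use the local parameter $1/T$. By Lemma~\ref{lem, closure of Yn,p}, the divisor $\mathbf a(P)$ tends to $(0^{n-1},p,-p)$ or to $(0^{n+1})$, and the closure is smooth at these points. Expanding $\psi(P;z)$ asymptotically in $T\to\infty$ via the Riccati equation \eqref{phi,Riccati equ} gives $\phi\sim \pm\sqrt{q}$ plus a WKB series. From this we read off the expansions $a_i(P)=\alpha_i T^{-1}+\dots$ for those $a_i\to0$, and similarly near $\pm p$. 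Summing, $\sigma^{(1)}_{n,p}(P)\sim \kappa_\pm T^{-m_\pm}$, and $\deg\sigma^{(1)}_{n,p}=m_++m_-$. The target is $m_++m_-=n(n+1)$; by the symmetry $T\mapsto -T$ of Proposition~\ref{prop, sym} (which exchanges the two sheets via $z\mapsto -z$ and $\mathbf a\mapsto-\mathbf a$), we expect $m_+=m_-=n(n+1)/2$.

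\textbf{Step 3 (main obstacle).} Extract the exact vanishing order in Step 2. The leading terms of $\sum a_i$ cancel to high order, since the $a_i$ near $0$ behave like roots of a Hermite-type polynomial and the power sums of these roots vanish up to degree $\sim n$. We would handle this by recasting it as a pole-order count. Writing $\wp(\sigma)$ via the addition theorem in terms of the symmetric functions of the $x_i,y_i$ turns the count into the pole order of the rational function $\wp(\sigma^{(1)}_{n,p})$ on the curve, which is $2\deg\sigma^{(1)}_{n,p}$. This pole order can then be computed from the degrees in $T$ in Theorem~\ref{thm, 3rd's elliptic solution} ($\deg_T\Phi^{(1)}_{e,p}=2n$, $\deg_T Q^{(1)}_{n,p}=4n+2$) and the weight homogeneity as in Remark~\ref{remark, weight}.

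As a cross-check, we would verify $n=1$, where the degree should be $2$, against the explicit $\Phi^{(1)}_{e,p}$ and $Q^{(1)}_{1,p}$. We would also confirm consistency with Theorem~\ref{Main Thm 0}: the total degree is $2n(n+1)+1$, and the even component has degree $n(n+1)+1$.
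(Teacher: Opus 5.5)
Your Step~1 is false for $n\geqslant 2$, and Steps~2--3 depend on it. The condition $\sigma^{(1)}_{n,p}(P)=0$ only says $r(P)+s(P)\tau\in\Lambda_\tau$. Because the monodromy data are complex, this forces neither $(r,s)\in\mathbb Z^2$ nor $c(P)=r\eta_1+s\eta_2=0$.

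That $e^{-c(P)z}\psi(P;z)$ is then elliptic up to sign is automatic from \eqref{sigma's trans law} applied to \eqref{Hermite noneven}, and it gives no contradiction. This function is not a solution. Even when $(r,s)=(0,0)$ you get only one solution with scalar multipliers (Jordan monodromy), whereas Lemma~\ref{lem, Mj not +-Id} excludes only the case where both $M_1$ and $M_2$ are $\pm I$.

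In fact $\sigma^{(1)}_{n,p}$ has multiplicity exactly one at each $\infty^{(1)}_\pm$. Since $\deg_TQ^{(1)}_{n,p}=4n+2$ is even, $1/T$ is a local parameter there, and \eqref{P(sigma hat)'s asym} gives $\wp(\sigma^{(1)}_{n,p}(P))\sim\frac{4}{n^2(n+1)^2}T^2$. Your cancellation heuristic also fails. With $w=Tz$, one has $q^{(1)}_n\approx T^2\bigl(n(n+1)w^{-2}-2w^{-1}+1\bigr)$ near $z=0$. The model solution $e^{-w}w^{-n}p_{n+1}(w)$ has a polynomial factor whose root sum is $-n(n+1)/2\neq 0$, so $\sigma^{(1)}_{n,p}(P)\approx-\frac{n(n+1)}{2T}$ near $\infty^{(1)}_-$.

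Hence $m_+=m_-=1$. The remaining $n(n+1)-2$ preimages of $0$ (with multiplicity) lie on the affine curve, over the roots of $\hat P_2(X,p;\tau)$ in \eqref{P(hat sigma(P(p)))}. This already happens classically: $\tilde\sigma_2$ vanishes at the branch points $\widetilde B=\pm\sqrt{3g_2}$, where $\wp(z)-\widetilde B/6$ solves $\mathrm L_2(\widetilde B,\tau)$ with zero divisor $\{a,-a\}$. Your scheme is consistent only for $n=1$, where $\hat P_2$ is constant.

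What is missing is a global count of these affine zeros, that is, of $\deg_X\hat P_1=\deg_X\hat P_2+1$. By \eqref{deg sigma is deg P1} this equals $\deg\hat\sigma^{(1)}_{n,p}=\frac12\deg\sigma^{(1)}_{n,p}$. Your fallback in Step~3 does not supply it. The degree statements of Theorem~\ref{thm, 3rd's elliptic solution} and weighted homogeneity do not determine the $T$-degree of the numerator of $\wp(\sigma^{(1)}_{n,p})$, since weight in $T$ can be traded against $\wp(p)$, $\wp'(p)$ and $e_k$.

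The paper obtains this count by deformation and degeneration:
\begin{enumerate}[(i)]
\item it shows that $\deg_X\hat P_1$ is independent of $(p,\tau)$ (Proposition~\ref{prop, degree of hat sigma});
\item it lets $p\to0$ and shows that every zero $X^0_i(p)$ and every pole $X^\infty_j(p)$ of $\wp(\hat\sigma^{(1)}_{n,p})$ has the form $\frac{n(n+1)}{p^2}+\widetilde B+o(1)$, the divergent alternatives being excluded in Lemma~\ref{lem, Xi infty's asy}; these poles are exactly the affine zeros of $\hat\sigma^{(1)}_{n,p}$ that your Step~1 discards;
\item it concludes that $\wp(\hat\sigma^{(1)}_{n,p})$ converges, in the variable $\widetilde B$, to the classical $\wp(\tilde\sigma_n)$ of \eqref{lame's P(sigma n)}, whose degree is $n(n+1)/2$.
\end{enumerate}
Your closing cross-check, subtracting the even degree $n(n+1)+1$ from the C--W--W total $2n(n+1)+1$, is itself a complete proof if you accept both inputs, as the paper remarks. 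The intrinsic argument you actually propose would not reach $n(n+1)$.
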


Theorem~\ref{thm, sigma's degree} also follows immediately from the global
degree formula of~\cite{Chou-Wang-Wu-II} and the even-component
degree~\cite{Chen-Kuo-Lin-Lame I}:
\[
2n(n+1)+1 = \bigl(n(n+1)+1\bigr)
+ \deg\sigma_{n,p}^{(1)}.
\]
Section~~\ref{Section, Degree of the Non-Even Addition Map} gives an
independent componentwise proof, without using the global degree formula.

\section{Degree of the Non-Even Addition Map}

\label{Section, Degree of the Non-Even Addition Map}


Since the projection $\pi:\overline{\Gamma_{n,p}^{(1)}(\tau)}\mapsto
\overline{\hat{\Gamma}_{n,p}^{(1)}(\tau)}$ is degree two, the associated
addition maps satisfy
\begin{equation}
\deg\sigma_{n,p}^{(1)}=2\deg\hat{\sigma}_{n,p}^{(1)}.\label{deg1=2deg hat}%
\end{equation}
By \eqref{deg1=2deg hat}, Theorem~\ref{thm, sigma's degree} is equivalent to:

\begin{theorem}
\label{thm, degree of hat sigma in appendix} Let $n\in\mathbb{Z}_{>0}$ and
$p\in E_{\tau}\setminus E_{\tau}[2]$. Then
\[
\deg\hat{\sigma}_{n,p}^{(1)}=\frac{n(n+1)}{2}.
\]

\end{theorem}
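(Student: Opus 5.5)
Since $\hat\sigma^{(1)}_{n,p}$ is a finite morphism from the compactified quotient curve $\overline{\hat\Gamma^{(1)}_{n,p}(\tau)}$ to $E_\tau$, its degree is the number of preimages of a generic point, counted with multiplicity. The most convenient point to count over is $0\in E_\tau$. We will therefore compute the order of vanishing of $\hat\sigma^{(1)}_{n,p}$ at each of its preimages of $0$ and add these orders. By \eqref{deg1=2deg hat}, it is equivalent, and somewhat more concrete, to work on $\overline{\Gamma^{(1)}_{n,p}(\tau)}$ and show that the total multiplicity of $(\sigma^{(1)}_{n,p})^{-1}(0)$ equals $n(n+1)$.

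\textbf{Step 1: the fiber over $0$.} The fiber is $\{\infty_\pm^{(1)}\}$ together with the finite points $P$ with $\sum_i a_i(P)=0$. We will exclude finite points from this fiber as follows. By the residue computation in the proof of Theorem~\ref{thm, ya solves noneven ode}, the Bloch multipliers are determined by $\sum a_i$ and $c$ through the $\sigma$ transformation law \eqref{sigma's trans law}. Hence $\sigma^{(1)}_{n,p}(P)=0$ forces a Legendre-type relation: writing $\sigma^{(1)}_{n,p}(P)=r\omega_1+s\omega_2$, the data $(r(P),s(P))$ are fixed by $\sum a_i$ and $c(P)$. At a finite point with $\sum a_i=0$ we get $(r,s)\in\mathbb Z^2$, so by Theorem~\ref{thm,criterion for} the equation is not completely reducible. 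In that case $Q^{(1)}_{n,p}(T)=0$, $\mathbf a(P)=\mathbf a(P^*)=-\mathbf a(P)$ and $c=0$, and $\psi$ is elliptic up to sign. We then rule this out by the Riemann--Hurwitz argument of Lemma~\ref{lem, Mj not +-Id}, applied to the second solution $y_2=\psi\int\psi^{-2}$ of the non-completely-reducible pair. This argument needs $\mathcal D$ to be finite and the monodromy to be trivial up to sign, which it is here. The conclusion is that the fiber over $0$ consists only of $\infty_\pm^{(1)}$.

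\textbf{Step 2: local orders at the points at infinity.} By Lemma~\ref{lem, closure of Yn,p}, the closure $\overline{Y^{(1)}_{n,p}(\tau)}$ is smooth at $\infty_\pm(p)$. We compute the order of $\sigma^{(1)}_{n,p}$ there using a local parameter $t=T^{-1}$.

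At $\infty_-(p)=(0,\dots,0)$, we solve the Hermite--Halphen system \eqref{ya is sol. condition1*}, \eqref{ya is sol. condition2*} asymptotically. The roots $a_i$ behave like $t\,\xi_i$, where the $\xi_i$ solve a Stieltjes/Hermite-type polynomial problem coming from the local Frobenius recursion \eqref{re} at $z=0$ as $T\to\infty$. Then $\sum a_i$ vanishes to an order determined by the first non-vanishing power sum $\sum\xi_i^k$. Equivalently, we expand $\psi(P;z)=e^{cz}(\cdots)$ with $c\sim \pm T$ and read off $\sum a_i$ from the $z$-expansion of $\log\psi$ and the potential's Laurent coefficients $q_{i,1}$, using the weights of Remark~\ref{remark, weight} to track orders in $t$.

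At $\infty_+(p)$, two of the points tend to $\pm p$ and $n-1$ tend to $0$. We treat this point the same way, combining local expansions at $0$ and at $\pm p$, where Lemma~\ref{apparent,necessary condition} supplies the data at $\pm p$.

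\textbf{Step 3: conclusion.} The orders at $\infty_+^{(1)}$ and $\infty_-^{(1)}$ are interchanged by the symmetry $z\mapsto -z$, $T\mapsto -T$ of Proposition~\ref{prop, sym}, so they are equal. Each should equal $\tfrac{n(n+1)}{2}$. Dividing the total $n(n+1)$ by $2$ via \eqref{deg1=2deg hat} then gives the claim for $\hat\sigma^{(1)}_{n,p}$.

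\textbf{Expected obstacle and fallback.} The main obstacle is Step 2: showing that the vanishing order at $\infty_\pm$ is exactly $\tfrac{n(n+1)}{2}$. The expected mechanism is that the $\xi_i$ are roots of a Bessel-type polynomial whose power sums vanish up to a certain order. If this direct route is too hard, we will fall back on a deformation argument. We vary $p$ continuously; the degree is locally constant, since the family $\overline{\Gamma^{(1)}_{n,p}}$ is flat in $p$ because its defining polynomials have coefficients rational in $\wp(p)$. We then let $p\to0$ and compare with the classical Lam\'e addition map of weight $n$, whose degree $\tfrac{n(n+1)}{2}$ is known.
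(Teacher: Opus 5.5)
Your main route depends on two claims that are false for $n\geqslant 2$.

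\emph{Step 1 fails.} The fiber of $\sigma^{(1)}_{n,p}$ over $0$ is not just $\{\infty^{(1)}_\pm\}$. The Riemann--Hurwitz argument of Lemma~\ref{lem, Mj not +-Id} needs $M_1,M_2\in\{\pm I\}$, so that $y_1/y_2$ is single-valued on $E_\tau$. At a non-completely reducible point the multipliers are $\pm1$, but the period monodromy is a nontrivial Jordan block. Hence $y_2/\psi$ has additive periods $b\chi_j(T_0)$, not both zero (Lemma~\ref{rev:lem:jordan-periods}), and no contradiction arises. The steps leading up to this are also unjustified. First, $\sum a_i=0$ alone does not force $(r,s)\in\mathbb Z^2$, because $c$ still determines where $(r,s)$ lies on $r+s\tau\in\Lambda_\tau$. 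Second, $\mathbf a(P^*)=-\mathbf a(P)$ is the Lam\'e symmetry; for the non-even equation, Proposition~\ref{prop, data P and -P} gives $\mathbf a(-P)=-\mathbf a(P)$ instead.

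Finite preimages of $0$ genuinely occur. In \eqref{P(hat sigma(P(p)))}, the $m-1$ zeros $X^\infty_j(p)$ of $\hat P_2$ are such points. Concretely, already at weight $2$ the function $\wp(z)-b$ with $b^2=g_2/12$ solves $\mathrm L_2(\pm\sqrt{3g_2},\tau)$ with trivial multipliers. Its zero divisor $\{z_b,-z_b\}$ sums to $0$. This gives finite points over $0$ at $X=6\wp(p)\pm\sqrt{3g_2}$.

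\emph{Step 2 has the wrong target.} By \eqref{P(sigma hat)'s asym}, $\wp(\hat\sigma^{(1)}_{n,p})\sim\frac{4}{n^2(n+1)^2}X$, and $1/T$ is a local parameter at $\infty^{(1)}_\pm$. So $\sigma^{(1)}_{n,p}$ vanishes to order exactly one at each point at infinity, not $\frac{n(n+1)}{2}$. Thus $\deg\hat\sigma^{(1)}_{n,p}=1+\deg_X\hat P_2$. The expansion at infinity contributes only the $1$. The whole content of the theorem is the global count $\deg_X\hat P_2=\frac{n(n+1)}{2}-1$ of finite points over $0$, which no local analysis at $\infty_\pm$ can detect.

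\emph{The fallback is the paper's route, but without its arguments.}

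First, flatness of $C^2=\widehat Q^{(1)}_{n,p}(X)$ says nothing about the map $\hat\sigma^{(1)}_{n,p}$. The paper proves that $\deg_X\hat P_1$ is constant (Proposition~\ref{prop, degree of hat sigma}) by showing its leading coefficient never vanishes. The reason is that preimages of a fixed $\sigma_0\neq0$ cannot escape to $X=\infty$, where $\hat\sigma^{(1)}_{n,p}\to0$.

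Second, and more seriously, the curves have no limit in the coordinate $X$ as $p\to0$: the relevant $X$-values run off like $n(n+1)/p^2$. Comparing with Lam\'e therefore requires three ingredients.
\begin{enumerate}
\item The renormalization $X(p)=\frac{n(n+1)}{p^2}+\widetilde B+o(1)$ of Lemma~\ref{lem, gle converges to lame}.
\item The statement that a nonzero limit of $\hat\sigma^{(1)}_{n,p}$ forces convergence to some $\mathrm L_n(\widetilde B,\tau)$ (Theorem~\ref{thm, sigma neq 0 implies convergence}, which rests on Proposition~\ref{prop, B and a}).
\item The proof that no zero or pole of $\wp(\hat\sigma^{(1)}_{n,p})$ diverges in the renormalized variable (Lemma~\ref{lem, Xi infty's asy}).
\end{enumerate}
Only with these can the limit of \eqref{P(hat sigma(P(p)))} be matched with \eqref{lame's P(sigma n)} and $m=\frac{n(n+1)}{2}$ read off. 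Without them, preimages of a generic $\sigma_0$ could be lost in the limit, and no equality of degrees follows.
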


Let $P=(T,C)\in\Gamma_{n,p}^{(1)}(\tau)$. Recall $(r(P),s(P))$ defined in
\eqref{def,r(P),s(P)}, and the Hermite-Halphen ansatz $y_{\mathbf{a}%
(P),c(P)}(z)$ given by \eqref{Hermite noneven}, where
\[
\mathbf{a}(P)=\{a_{1}(P),\ldots,a_{n+1}(P)\}
\]
and $c(P)$ is given by \eqref{c in terms of aj} or \eqref{c in terms of aj,2}.

\begin{lemma}
\label{lem, relation bet r(P),s(P) and aj} Suppose $y_{\mathbf{a}(P),c(P)}(z)$
given by \eqref{Hermite noneven} is the Hermite-Halphen solution of the
equation $\mathrm{GLE}_{n}^{(1)}(p,T,\tau)$. Then one has
\begin{equation}%
\begin{cases}
r(P)+s(P)\tau=\sum_{j=1}^{n+1}a_{j}(P)=\sigma_{n,p}^{(1)}(P),\\[5pt]%
\\
r(P)\,\eta_{1}(\tau)+s(P)\,\eta_{2}(\tau)=c(P).
\end{cases}
\label{algebraic relations between ai,r(P),s(P),c(P)}%
\end{equation}

\end{lemma}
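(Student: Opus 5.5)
We compare the transformation law of the Hermite--Halphen expression \eqref{Hermite noneven} under $z\mapsto z+\omega_j$ with the multiplier relation \eqref{BA is 2nd}. By \eqref{Ba is given by a(P) and c} we have $\psi(P;z)=y_{\mathbf{a}(P),c(P)}(z)$ up to a constant, so it suffices to compute $y_{\mathbf{a},c}(z+\omega_j)/y_{\mathbf{a},c}(z)$ explicitly and set it equal to $\lambda_j(P)$.

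First, apply the transformation law \eqref{sigma's trans law} to each factor. The numerator $\prod_{i=1}^{n+1}\sigma(z-a_i)$ acquires the factor
\[
(-1)^{n+1}\exp\Bigl(\eta_j\bigl((n+1)z+(n+1)\tfrac{\omega_j}{2}-\textstyle\sum_i a_i\bigr)\Bigr).
\]
The denominator $\sigma^n(z)\sqrt{\sigma(z+p)\sigma(z-p)}$ acquires the factor $(-1)^{n}\exp\bigl(\eta_j(nz+n\frac{\omega_j}{2})\bigr)$ times the square root of $\exp\bigl(\eta_j(2z+\omega_j)\bigr)$. That square root equals $\pm\exp\bigl(\eta_j(z+\frac{\omega_j}{2})\bigr)$, with the sign fixed by continuation. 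It is cleanest to use the ellipticity of $\Psi_p$ from \eqref{Psi is elliptic}: writing $y_{\mathbf{a},c}=e^{cz}\prod\sigma(z-a_i)\,\sigma(z)^{-(n+1)}\Psi_p(z)$ isolates the square-root factor into an elliptic function. The $z$-dependent exponentials then cancel, as do the $\omega_j/2$ terms, and together with $e^{c\omega_j}$ we get
\[
\lambda_j(P)=\exp\Bigl(c(P)\,\omega_j-\eta_j\textstyle\sum_{i=1}^{n+1}a_i(P)\Bigr),\qquad j=1,2.
\]
This requires a fixed lift of $a_i(P)$ to $\mathbb C$, with $\sum a_i$ meaning that lift.

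Next, combine with \eqref{def,r(P),s(P)}. Writing $\lambda_1=e^{-2\pi i s}$ and $\lambda_2=e^{2\pi i r}$ gives the two congruences
\[
c\cdot 1-\eta_1\textstyle\sum a_i\equiv -2\pi i s,\qquad c\,\tau-\eta_2\textstyle\sum a_i\equiv 2\pi i r \pmod{2\pi i\mathbb Z}.
\]
This is a linear system for $(\sum a_i,c)$. Its determinant is $\eta_1\tau-\eta_2=2\pi i$ by the Legendre relation, which is nonzero. Solving yields $\sum a_i = r+s\tau$ and $c=r\eta_1+s\eta_2$, modulo the lattice ambiguities. Finally, $\sum a_i=\sigma^{(1)}_{n,p}(P)$ holds by the definition \eqref{def, add map}.

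The main obstacle is the bookkeeping of representatives. Changing the lifts of $a_i$ by lattice vectors, or changing $(r,s)$ by $\mathbb Z^2$, shifts the two sides of the congruences consistently, since $\eta_j$ is the quasi-period of $\zeta$. We therefore state the identities for compatible choices of $(r,s)\in\mathbb C^2$ and the lifts of $\mathbf a(P)$. The first identity then holds in $E_\tau$, and the second holds exactly once $(r,s)$ is chosen to realize the chosen lift of $\sum a_i$. We also need to check the sign of the square-root branch via $\Psi_p$, so that no stray $(-1)$ survives: the $(-1)^{n+1}$ from the numerator cancels against $(-1)^{n}$ and the $-1$ from $\sigma(z)^{-(n+1)}$ versus $\sigma^n$. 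This is routine once $\Psi_p$ is used.
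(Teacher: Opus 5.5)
Your proposal is correct and follows the same route as the paper, whose proof simply cites the transformation law \eqref{sigma's trans law} applied to the ansatz \eqref{Hermite noneven}. You supply the details the paper omits: the multiplier $\lambda_j(P)=\exp\bigl(c(P)\omega_j-\eta_j\sum_i a_i(P)\bigr)$ with the square-root sign fixed by the ellipticity of $\Psi_p$, the inversion of the resulting linear system via the Legendre relation, and the compatible choice of lifts of $(r,s)$ and $\mathbf a(P)$.
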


\begin{proof}
This follows directly from the transformation law \eqref{sigma's trans law} of
the Weierstrass sigma function applied to the ansatz \eqref{Hermite noneven}.
\end{proof}

Define
\[
\kappa:\Gamma_{n,p}^{(1)}(\tau)\rightarrow\mathbb{C}\cup\{\infty\},
\]%
\begin{equation}
\kappa(P):=\zeta\left(  \sigma_{n,p}^{(1)}(P)\right)  -c(P),
\label{kappa's def}%
\end{equation}
where $c(P)$ is defined in \eqref{c in terms of aj} or \eqref{c in terms of aj,2}.

For $\left(  r,s\right)  $ $\in$\textit{ }$\mathbb{C}^{2}\setminus\frac{1}%
{2}\mathbb{Z}^{2}$, set
\begin{equation}
Z(r,s,\tau):=\zeta(r+s\tau)-r\eta_{1}(\tau)-s\eta_{2}(\tau), \label{zrs}%
\end{equation}
This is the classical Hecke premodular function \cite{Heck}.

\begin{proposition}
\label{prop, kappa is Zrs} With the above notations, we have
\begin{equation}
\kappa(P)=Z(r(P),s(P),\tau). \label{kappa(P)'s express}%
\end{equation}

\end{proposition}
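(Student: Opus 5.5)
Substituting Lemma~\ref{lem, relation bet r(P),s(P) and aj} into the definition \eqref{kappa's def} gives the result directly. That lemma provides two identities for each $P\in\Gamma_{n,p}^{(1)}(\tau)$:
\[
\sigma_{n,p}^{(1)}(P)=r(P)+s(P)\tau,
\qquad
c(P)=r(P)\eta_1(\tau)+s(P)\eta_2(\tau).
\]
Insert them into $\kappa(P)=\zeta(\sigma_{n,p}^{(1)}(P))-c(P)$ to obtain
\[
\kappa(P)=\zeta\bigl(r(P)+s(P)\tau\bigr)-r(P)\eta_1(\tau)-s(P)\eta_2(\tau)=Z(r(P),s(P),\tau),
\]
which is \eqref{zrs} evaluated at $(r,s)=(r(P),s(P))$.

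The main check is that the two sides are defined consistently. The pair $(r(P),s(P))$ is only determined modulo $\mathbb{Z}^2$ through \eqref{def,r(P),s(P)}, and $\sigma_{n,p}^{(1)}(P)$ is only a point of $E_\tau$. The Legendre relation, together with the transformation law \eqref{sigma's trans law}, shows that $\zeta(u)-r\eta_1-s\eta_2$ with $u=r+s\tau$ is invariant under $(r,s)\mapsto(r+m,s+k)$. Indeed, $\zeta(u+m+k\tau)=\zeta(u)+m\eta_1+k\eta_2$, and this is cancelled exactly by the shift in $r\eta_1+s\eta_2$. So $Z(r,s,\tau)$ is a well-defined function on $\mathbb{C}^2/\mathbb{Z}^2$.

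On the left-hand side, $c(P)$ is a genuine complex number, since the exponential factor of the Hermite--Halphen ansatz \eqref{Ba is given by a(P) and c} fixes it once a lift of $\mathbf{a}(P)$ to $\mathbb{C}^{n+1}$ is chosen. Changing that lift changes $c(P)$ by $m\eta_1+k\eta_2$ and $\sum a_j$ by $m+k\tau$, and $\zeta(\sum a_j)-c$ is unchanged. Thus $\kappa$ is also well defined, and Lemma~\ref{lem, relation bet r(P),s(P) and aj} should be applied with the lifts chosen compatibly.

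Two cases need separate care. At points where $(r(P),s(P))\in\frac12\mathbb{Z}^2$, which are the branch points by Theorem~\ref{thm,criterion for}, $Z$ is not defined by \eqref{zrs}. There the identity is read as the equality $\zeta(\sigma)-c$, which may take the value $\infty$ when $\sigma_{n,p}^{(1)}(P)=0$, or more simply the identity is restricted to $(r,s)\notin\frac12\mathbb{Z}^2$ as the definition of $Z$ requires. For type (a-ii)/(a-iii) configurations, the same lemma applies with $c(P)$ taken from \eqref{c in terms of aj,2}.
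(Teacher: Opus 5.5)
Your proof is correct and is exactly the paper's argument: substitute the two identities of Lemma~\ref{lem, relation bet r(P),s(P) and aj} into the definition \eqref{kappa's def} of $\kappa$. Your extra well-definedness check is sound, though the invariance of $Z$ under $(r,s)\mapsto(r+m,s+k)$ needs only the quasi-periodicity $\zeta(u+m+k\tau)=\zeta(u)+m\eta_1+k\eta_2$, not the Legendre relation.
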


\begin{proof}
By Lemma~\ref{lem, relation bet r(P),s(P) and aj}, we obtain
\begin{align*}
\kappa(P)  &  =\zeta(\sigma_{n,p}^{(1)}(P))-c(P)\\
&  =\zeta(r(P)+s(P)\tau)-(r(P)\eta_{1}(\tau)+s(P)\eta_{2}(\tau))\\
&  =Z(r(P),s(P),\tau).
\end{align*}

\end{proof}

\begin{theorem}
\label{thm, P and P' rational in T} Let $P=(T,C)\in\Gamma_{n,p}^{(1)}(\tau)$.
Then there exist coprime polynomials $P_{2j-1}(T,p;\tau)$ and$\,P_{2j}%
(T,p;\tau)$ in the variable $T$, whose coefficients are polynomials in
$\wp(p),$ $\wp^{\prime}(p)$ and $e_{k}(\tau)$, for $j=1,2,3$; that is,%
\[
P_{2j-1}(T,p;\tau),\,P_{2j}(T,p;\tau)\in\mathbb{C}[\wp(p),\wp^{\prime
}(p),e_{k}(\tau)][T],
\]
such that%
\begin{equation}
\wp(\sigma_{n,p}^{(1)}(P))=\frac{P_{1}(T,p;\tau)}{P_{2}(T,p;\tau)},
\label{P(sigma) in T}%
\end{equation}

\begin{equation}
\wp^{\prime}(\sigma_{n,p}^{(1)}(P))=\frac{P_{3}(T,p;\tau)}{P_{4}(T,p;\tau
)}\sqrt{-Q_{n,p}^{(1)}(T)}, \label{P'(sigma) in T}%
\end{equation}
and%
\begin{equation}
\kappa(P)=\frac{P_{5}(T,p;\tau)}{P_{6}(T,p;\tau)}\sqrt{-Q_{n,p}^{(1)}(T)}.
\label{kappa rational in T}%
\end{equation}

\end{theorem}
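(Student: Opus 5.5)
The plan follows the even-symmetry argument of \cite{Chen-Kuo-Lin-Lame I}: express $\wp(\sigma^{(1)}_{n,p}(P))$, $\wp'(\sigma^{(1)}_{n,p}(P))$ and $\kappa(P)$ as symmetric functions of the data $(x_i,y_i)=(\wp(a_i(P)),\wp'(a_i(P)))$ and $c(P)$, and then rewrite these symmetric functions through the coefficients of $\Phi^{(1)}_{e,p}(z;T)$ and through $C$.

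\textbf{Step 1 (symmetric data from $\Phi^{(1)}_{e,p}$ and $C$).} By \eqref{two psi's product}, $\Phi^{(1)}_{e,p}(z;T)$ is, up to the factor $\Phi^{(1)}_{e,p}(z_0;T)$, the product $\psi(P;z)\psi(P^*;z)$. Its zero divisor is therefore $\mathbf a(P)+\mathbf a(P^*)=\mathbf a(P)-\mathbf a(P)$, up to the common factor coming from $\Psi_p^2$. Hence the elementary symmetric functions of the pairs $\{x_i\}$ are rational in the coefficients $c_k(T)$ of \eqref{Phi's form}. To separate $\mathbf a(P)$ from $-\mathbf a(P)$ we use $\phi(P;z)=\bigl(iC+\tfrac12\Phi'\bigr)/\Phi$ from \eqref{phi's,def}. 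At a zero $a_i$ of $\psi(P;\cdot)$ the function $\phi$ has residue $1$, while at a zero of $\psi(P^*;\cdot)$ it has none. This gives $iC=-\tfrac12\Phi^{(1)\prime}_{e,p}(a_i;T)$, and so $y_i$ is linear in $C$ with coefficients rational in $T$ and the $x_j$.

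\textbf{Step 2 (addition formulas).} Apply the iterated addition theorem: $\wp(\sum a_i)$ is a rational symmetric function of $(x_i,y_i)$, and $\wp'(\sum a_i)$ is one as well. Under $P\mapsto P^*$, which acts by $a_i\mapsto -a_i$ and $y_i\mapsto -y_i$, the first is even and the second is odd. Since $\sigma^{(1)}_{n,p}(P^*)=-\sigma^{(1)}_{n,p}(P)$ and $C\mapsto -C$, we get $\wp(\sigma)\in\mathbb C(\wp(p),\wp'(p),e_k)(T)$ and $\wp'(\sigma)=C\cdot R(T)$. Similarly, $\zeta(\sum a_i)-\sum\zeta(a_i)$ is a rational odd function of the $(x_i,y_i)$. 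Combined with \eqref{c in terms of aj}, this writes $\kappa(P)$ as a rational function of $(x_i,y_i)$. It is odd under $*$ because $\kappa(P^*)=-\kappa(P)$, which follows from Proposition~\ref{prop, kappa is Zrs}, \eqref{0120equ2}, and the oddness of $Z$. Hence $\kappa=C\cdot R'(T)$. Writing $C=\pm\sqrt{Q^{(1)}_{n,p}(T)}$ and absorbing $i$ into the constants gives the $\sqrt{-Q^{(1)}_{n,p}}$ normalization. Clearing denominators and cancelling common factors yields coprime pairs $P_{2j-1},P_{2j}$.

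\textbf{Step 3 (coefficients polynomial and extension).} The rationality from Steps 1--2 holds on the open set (a-i) with distinct $x_i$. Both sides are meromorphic on $\overline{\Gamma^{(1)}_{n,p}(\tau)}$, so the identities extend to all of it. The coefficients of the $c_k(T)$ lie in $\mathbb C(\wp(p),\wp'(p),e_k)$, and we clear those denominators too, which makes the coefficients polynomial in $\wp(p),\wp'(p),e_k$. The term $\zeta(p)$ appearing in $c_{2n}$ cancels in $\wp(\sigma)$, $\wp'(\sigma)$ and $\kappa$, because these quantities are elliptic in $p$.

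\textbf{Main obstacle.} The hardest step will be the symmetric-function bookkeeping in Step 2 for $\kappa$, in particular confirming that the odd part has no residual irrationality beyond $C$. I will first try a cleaner route. Rather than expanding addition formulas, observe that $\wp(\sigma)$ is a $*$-invariant meromorphic function on the hyperelliptic curve $\overline{\Gamma^{(1)}_{n,p}(\tau)}$, since $\wp$ is even. Any meromorphic function on $y^2=Q(T)$ has the form $R_1(T)+CR_2(T)$, and $*$-invariance forces $R_2=0$. Likewise $\wp'(\sigma)$ and $\kappa$ are anti-invariant, so they equal $C$ times a rational function of $T$. This only requires that $\sigma^{(1)}_{n,p}$ and $c(P)$ be algebraic (regular) on the curve, which follows from the embedding of Proposition~\ref{prop, inp is an embedding}. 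The field of definition of the coefficients would then follow from Galois invariance over $\mathbb C(\wp(p),\wp'(p),e_k)$, since the curve and the map $i_{n,p}$ are defined over that field.
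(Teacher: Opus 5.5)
Your fallback ``cleaner route'' is essentially the paper's proof. The paper notes that $\wp(\sigma^{(1)}_{n,p}(P))$, $\wp'(\sigma^{(1)}_{n,p}(P))$ and $\kappa(P)$ are rational on $\Gamma^{(1)}_{n,p}(\tau)$, with the first invariant and the other two anti-invariant under $P\mapsto P^*$, and concludes that $\wp(\sigma)$, $\wp'(\sigma)/C$ and $\kappa/C$ lie in $\mathbb C(T)$. That argument is sound, and you should lead with it.

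Your Steps 1--2, however, import an even-case identity that is false here. The involution $P\mapsto P^*$ does not act by $a_i\mapsto -a_i$; in general $\mathbf a(P^*)\neq-\mathbf a(P)$.
\begin{itemize}
\item The potential $q^{(1)}_n(z;p,T)$ is not even for $T\neq 0$. The actual symmetry is $q^{(1)}_n(z;p,-T)=q^{(1)}_n(-z;p,T)$, which gives $\psi(P;-z)=\psi(-P;z)$ with $-P=(-T,-C)$ (Proposition~\ref{prop, data P and -P}).
\item Hence $-\mathbf a(P)=\mathbf a(-P)$. Since $i_{n,p}$ is an embedding, this differs from $\mathbf a(P^*)$ unless $T=0$.
\item Consequently $\Phi^{(1)}_{e,p}(z;T)$ is not even in $z$; it contains $\wp^{n-k-1}\wp'$ and $\zeta(z\pm p)-\zeta(z)$ terms. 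Its zero set $\mathbf a(P)\cup\mathbf a(P^*)$ is not of the form $\{\pm a_i\}$.
\item So the even-case device of reading off the $x_i$ as roots of a polynomial in $\wp(z)$ built from the $c_k(T)$ does not apply directly, and Step~1 fails as written.
\item Also, the residue condition at a zero of $\psi(P;\cdot)$ gives $iC=+\tfrac12\Phi^{(1)\prime}_{e,p}(a_i;T)$, not $-\tfrac12\Phi^{(1)\prime}_{e,p}(a_i;T)$.
\end{itemize}

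This matters for the route you keep, because you justified its one parity input, $\sigma^{(1)}_{n,p}(P^*)=-\sigma^{(1)}_{n,p}(P)$, only through the false $a_i\mapsto -a_i$. Justify it instead as follows. By \eqref{two psi's product}, $\psi(P;z)\psi(P^*;z)$ is elliptic, with zero divisor $\mathbf a(P)+\mathbf a(P^*)$ and pole divisor $2n[0]+[p]+[-p]$. Abel's theorem then gives $\sum_i a_i(P)+\sum_i a_i(P^*)\equiv 0$. Equivalently, combine \eqref{0120equ2} with Lemma~\ref{lem, relation bet r(P),s(P) and aj}. With that substitution the anti-invariance of $\wp'(\sigma)$ follows. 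Your argument for $\kappa$, via Proposition~\ref{prop, kappa is Zrs}, \eqref{0120equ2} and the oddness of $Z$, is already correct.
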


\begin{proof}
The functions
\[
\wp(\sum_{i=1}^{n+1}a_{i}),\quad\wp^{\prime}(\sum_{i=1}^{n+1}a_{i})
\]
are rational on $\Gamma_{n,p}^{(1)}(\tau)$. Since
\begin{equation}
\sigma_{n,p}^{(1)}(P^{*}) = -\sigma_{n,p}^{(1)}(P),\label{sigma P* and P}%
\end{equation}
the first is invariant under $P\mapsto P^{*}$, whereas the second is
anti-invariant. Hence
\[
\wp\bigl(\sigma_{n,p}^{(1)}(P)\bigr),\wp^{\prime}(\sigma_{n,p}^{(1)}%
(P))/C\in\mathbb{C}(T),
\]
giving \eqref{P(sigma) in T}--\eqref{P'(sigma) in T}.

Likewise, $\kappa(P^{*})=-\kappa(P)$, so ${\kappa}/{C}\in\mathbb{C}(T),$
giving \eqref{kappa rational in T}.
\end{proof}

\begin{remark}
\label{remark, P2,4,6's common zeros} Fix $\tau\in\mathbb{H}$ and $p\in
E_{\tau}\setminus E_{\tau}[2]$. The rational functions $\wp(\sigma_{n,p}%
^{(1)}(P))$, $\wp^{\prime}(\sigma_{n,p}^{(1)}(P))$ and $\kappa(P)$ have poles
only at those points $P$ for which $\sigma_{n,p}^{(1)}(P)=0$. Consequently,
the associated polynomials $P_{j}(T,p;\tau)$, $j=2,4,6,$ have the same set of
zeros in the variable $T$ with possibly different multiplicities.
\end{remark}

\begin{lemma}
\label{lem, Pj is even in T} The polynomials $P_{j}(T,p;\tau),$ $j=1,2,\cdots
,6$, introduced in Theorem~\ref{thm, P and P' rational in T}, are even
functions in the variable $T$. Moreover, their coefficients are independent of
$\wp^{\prime}(p)$. Equivalently, there exist polynomials
\begin{equation}
\hat{P}_{j}(X,p;\tau)\in\mathbb{C}[\wp(p),e_{k}(\tau)][X]
\label{hat Pj in ring of poly}%
\end{equation}
such that
\begin{equation}
P_{j}(T,p;\tau)=\hat{P}_{j}(T^{2},p;\tau),\ j=1,\cdots,6. \label{Pj is even}%
\end{equation}

\end{lemma}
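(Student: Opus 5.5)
We will derive both assertions from two symmetries of the family $\mathrm{GLE}^{(1)}_n(p,T,\tau)$: the reflection $z\mapsto -z$, which Proposition~\ref{prop, sym} identifies with $T\mapsto -T$, and the parameter change $p\mapsto -p$ (together with $p\mapsto p+\omega_k$), which fixes $q_n^{(1)}$ by \eqref{q(z;p) is elliptic in p}.

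\textbf{Evenness in $T$.} By Proposition~\ref{prop, sym}(i), $q_n^{(1)}(z;p,-T)=q_n^{(1)}(-z;p,T)$. Hence, if $\psi(P;z)$ is the Baker--Akhiezer function at $P=(T,C)$, then $\psi(P;-z)$ is a common monodromy eigenfunction of $\mathrm{GLE}^{(1)}_n(p,-T,\tau)$. It therefore coincides, up to a scalar, with $\psi(\tilde P;z)$ or $\psi(\tilde P^*;z)$ for a point $\tilde P$ over $-T$. Since $Q^{(1)}_{n,p}$ is even, there is a lift $\tilde P=(-T,\pm C)$. Reflecting the Hermite--Halphen form \eqref{Ba is given by a(P) and c} gives zero divisor $-\mathbf a(P)$ and exponent $-c(P)$. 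Consequently the reflected point satisfies
\[
\sigma^{(1)}_{n,p}=-\sigma^{(1)}_{n,p}(P),\qquad c=-c(P),\qquad \kappa=-\kappa(P).
\]
Now $\wp$ is even, so $\wp(\sigma^{(1)}_{n,p})$ takes the same value at $T$ and $-T$. A rational function of $T$ with this property is even, so after coprime normalization (fixing the leading coefficient of $P_2$, say) $P_1$ and $P_2$ are even. For $\wp'$ and $\kappa$, which are odd, the sign flip is absorbed by the choice of branch, $C\mapsto \pm C$. We will fix the sign by working with the $P^*$-invariant quotients $\wp'(\sigma)/C$ and $\kappa/C$, noting that $C$ transforms compatibly because $\psi(P;-z)$ is matched with the sheet carrying the same Wronskian sign in \eqref{wronskian of psi}. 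This shows that $P_3/P_4$ and $P_5/P_6$ are even rational functions.

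\textbf{Independence of $\wp'(p)$.} The replacement $p\mapsto -p$ leaves $q_n^{(1)}$, $\Phi^{(1)}_{e,p}$ (Lemma~\ref{lem, Phi inde of P'}) and $Q^{(1)}_{n,p}$ unchanged, so $\Gamma^{(1)}_{n,p}=\Gamma^{(1)}_{n,-p}$ pointwise. Hence $\psi(P;z)$, its divisor, $c(P)$, $\sigma^{(1)}_{n,p}(P)$ and $\kappa(P)$ are the same for $p$ and $-p$. Therefore each rational function of $T$ above is invariant under $\wp'(p)\mapsto-\wp'(p)$ with $\wp(p)$ fixed. Its coefficients lie in $\mathbb C(\wp(p),\wp'(p),e_k)$ and are even in $\wp'(p)$, so they lie in $\mathbb C(\wp(p),e_k)$, because $\wp'(p)^2$ is a polynomial in $\wp(p)$. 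Clearing denominators while keeping coprimality then gives $\hat P_j\in\mathbb C[\wp(p),e_k][X]$ with $P_j(T)=\hat P_j(T^2)$.

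\textbf{Main obstacle.} The delicate step is the sign bookkeeping for the odd quantities $\wp'(\sigma)$ and $\kappa$. We must check that the identification $\psi(P;-z)\leftrightarrow\psi(\tilde P;z)$ sends the branch $C$ to the branch fixed by \eqref{P'(sigma) in T}, so that the quotients by $C$ come out even rather than odd in $T$. We will settle this using the Wronskian relation \eqref{wronskian of psi}: reflection $z\mapsto -z$ changes the sign of the Wronskian. Alongside this, we must confirm that evenness survives the coprime normalization, which requires only a consistent choice of scalar, for instance normalizing leading coefficients.
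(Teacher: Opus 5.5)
Your proposal is correct and is essentially the paper's argument. Evenness in $T$ comes from the reflection symmetry of Proposition~\ref{prop, sym}. The paper phrases it as invariance of $\sigma^{(1)}_{n,p}$, $\wp'(\sigma^{(1)}_{n,p})$ and $\kappa$ under $P\mapsto -P^{*}=(-T,C)$, which is your reflected point $\tilde P=(-T,-C)$ composed with the hyperelliptic involution. Independence of $\wp'(p)$ comes, as in the paper, from the $p\mapsto -p$ invariance of $\Phi^{(1)}_{e,p}$, the spectral curve and the Baker--Akhiezer zero divisor.

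The sign of $C$ that you flag as the main obstacle needs no Wronskian argument. It follows directly from \eqref{phi's,def} and Proposition~\ref{prop, sym}(ii), which give $-\phi(P;-z)=\phi\bigl((-T,-C);z\bigr)$, exactly as in the proof of Proposition~\ref{prop, data P and -P}.
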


The key symmetry is the invariance of the monodromy data under $P\to-P^{*}$.

\begin{proposition}
\label{prop, data P and -P} Let $P\in\Gamma_{n,p}^{(1)}(\tau)$. Then
\begin{equation}
(r(-P^{*}),s(-P^{*}))=(r(P),s(P)). \label{data -P and data P}%
\end{equation}

\end{proposition}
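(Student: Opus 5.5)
The plan is to compare Baker--Akhiezer functions at $T$ and $-T$ using the reflection $z\mapsto -z$. I read $-P$ as the point whose Baker--Akhiezer function is $\psi(P;-z)$, that is, whose zero divisor is $-\mathbf a(P)$; the first step checks that this is $(-T,-C)$. Then $-P^{*}=(-T,C)$, and \eqref{data -P and data P} follows from the behaviour of the data under $z\mapsto -z$ combined with \eqref{0120equ2}.

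\emph{Step 1: identify $-P$.} Let $P=(T,C)\in\Gamma^{(1)}_{n,p}(\tau)$. By Proposition~\ref{prop, sym}(iii), $Q^{(1)}_{n,p}(-T)=Q^{(1)}_{n,p}(T)=C^2$, so both $(-T,\pm C)$ lie on $\Gamma^{(1)}_{n,p}(\tau)$. By Proposition~\ref{prop, sym}(i), $z\mapsto\psi(P;-z)$ solves $\mathrm{GLE}^{(1)}_n(p,-T,\tau)$. To decide which of the two Baker--Akhiezer functions at $-T$ it is, I compute its logarithmic derivative from \eqref{phi's,def}:
\[
\frac{d}{dz}\log\psi(P;-z)=-\phi(P;-z)
=-\frac{iC+\frac12\Phi^{(1)\prime}_{e,p}(-z;T)}{\Phi^{(1)}_{e,p}(-z;T)}.
\]
Set $\widetilde\Phi(z):=\Phi^{(1)}_{e,p}(z;-T)$. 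Proposition~\ref{prop, sym}(ii) gives $\widetilde\Phi(z)=\Phi^{(1)}_{e,p}(-z;T)$, hence $\widetilde\Phi'(z)=-\Phi^{(1)\prime}_{e,p}(-z;T)$. The right-hand side therefore equals
\[
\frac{i(-C)+\frac12\widetilde\Phi'(z)}{\widetilde\Phi(z)}=\phi\bigl((-T,-C);z\bigr).
\]
Consequently $\psi(P;-z)=\mathrm{const}\cdot\psi((-T,-C);z)$. This justifies $-P=(-T,-C)$ and $\mathbf a(-P)=-\mathbf a(P)$, which is consistent with \eqref{Ba is given by a(P) and c}.

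\emph{Step 2: data of $-P$.} From \eqref{BA is 2nd}, $\psi(P;-z-\omega_j)=\lambda_j(P)^{-1}\psi(P;-z)$. Hence $\lambda_j(-P)=\lambda_j(P)^{-1}$, and by \eqref{def,r(P),s(P)} we get $(r(-P),s(-P))=-(r(P),s(P))$ in $\mathbb C^2/\mathbb Z^2$. As a cross-check, the same conclusion follows from Lemma~\ref{lem, relation bet r(P),s(P) and aj}: $\sigma^{(1)}_{n,p}(-P)=-\sigma^{(1)}_{n,p}(P)$, and $c(-P)=-c(P)$ because $e^{cz}$ becomes $e^{-cz}$ under the reflection.

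\emph{Step 3: apply the involution.} Applying \eqref{0120equ2} to the point $-P$ gives
\[
(r(-P^*),s(-P^*))=-(r(-P),s(-P))=(r(P),s(P)).
\]

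The main obstacle is Step~1. The sign bookkeeping must be done carefully, and the normalization \eqref{1111} of $\psi$ as the common eigenfunction $y_1$ has to be respected. At branch points ($C=0$), $P=P^*$ and $\psi(P)=\psi(P^*)$, so Step~1 holds trivially with $-P=-P^*$. At points where $\Phi^{(1)}_{e,p}(z_0;\pm T)=0$, the identity should be read up to the base-point rescaling of $\psi$. Since the multipliers do not depend on that rescaling, the argument is unaffected.
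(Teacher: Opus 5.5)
Your proposal is correct and follows the paper's proof. The paper likewise derives $\psi(P;-z)=\mathrm{const}\cdot\psi(-P;z)$ with $-P=(-T,-C)$ from Proposition~\ref{prop, sym}(ii) and its $z$-derivative, then combines the inversion of multipliers under $z\mapsto -z$ with $\lambda_j(P^{*})=\lambda_j(P)^{-1}$; the only cosmetic difference is that it applies the reflection to $P^{*}$ (giving $\lambda_j(-P^{*})=\lambda_j(P^{*})^{-1}=\lambda_j(P)$ directly) instead of applying the involution $*$ to $-P$ as in your Step~3.
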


\begin{proof}
From \eqref{phi's,def} and \eqref{BA function,def}, we have
\begin{align*}
\psi(P;z)=  &  \exp\left(  \int_{z_{0}}^{z}\phi(P;\xi)d\xi\right) \\
=  &  \exp\left(  \int_{z_{0}}^{z}\frac{iC(P)+\frac{1}{2}\Phi_{e,p}%
^{(1)\prime}(\xi;T)}{\Phi_{e,p}^{(1)}(\xi;T)}d\xi\right)  .
\end{align*}
Then
\begin{align}
\psi(P;-z)=  &  \exp\left(  \int_{z_{0}}^{-z}\frac{iC(P)+\frac{1}{2}\Phi
_{e,p}^{(1)\prime}(\xi;T)}{\Phi_{e,p}^{(1)}(\xi;T)}d\xi\right) \nonumber\\
=  &  \exp\left(  -\int_{-z_{0}}^{z}\frac{iC(P)+\frac{1}{2}\Phi_{e,p}%
^{(1)\prime}(-\xi;T)}{\Phi_{e,p}^{(1)}(-\xi;T)}d\xi\right)  .
\label{psi(P;-z)}%
\end{align}

By Proposition~\ref{prop, sym}-(ii),
\[
\Phi_{e,p}(z;-T)=\Phi_{e,p}(-z;T).
\]
Differentiating with respect to $z$ yields
\begin{equation}
\Phi_{e,p}^{\prime}(z;-T)=-\Phi_{e,p}^{\prime}(-z;T).\label{Phi is symmetric}%
\end{equation}
Combining \eqref{psi(P;-z)} and \eqref{Phi is symmetric} gives
\begin{equation}
\psi(P;-z)=\exp\left(  \int_{-z_{0}}^{z}\frac{-iC(P)+\frac{1}{2}\Phi
_{e,p}^{\prime}(\xi;-T)}{\Phi_{e,p}(\xi;-T)}\right)  =\psi(-P;z)
\label{psi(P;-z) equals psi(-P;z)}%
\end{equation}
up to a nonzero multiple.

Replacing $P$ by $P^{*}$, we obtain from \eqref{psi(P;-z) equals psi(-P;z)}
that
\[
\begin{aligned} \psi(-P^*;z+\omega_j)&=\psi(P^*;-z-\omega_j)=\lambda^{-1}_j(P^*)\psi(P^*;-z)\\ &=\lambda_j(P)\psi(-P^*;z), \end{aligned}
\]
which implies
\[
\lambda_{j}(-P^{*})=\lambda_{j}(P),\ j=1,2.
\]
Then \eqref{data -P and data P} follows from \eqref{def,r(P),s(P)}.
\end{proof}

\begin{proof}
[Proof of Lemma~\ref{lem, Pj is even in T}]By
Lemma~\ref{lem, relation bet r(P),s(P) and aj} and
Proposition~\ref{prop, data P and -P},
\begin{equation}
\sigma_{n,p}^{(1)}(-P)=r(-P)+s(-P)\tau=-(r(P)+s(P)\tau)=-\sigma_{n,p}%
^{(1)}(P).
\end{equation}
Together with \eqref{sigma P* and P}, this yields
\begin{equation}
\sigma_{n,p}^{(1)}(-P^{\ast})=\sigma_{n,p}^{(1)}(P), \label{sigma P=sigma -P*}%
\end{equation}
and hence
\begin{equation}
\wp(\sigma_{n,p}^{(1)}(P))=\wp(\sigma_{n,p}^{(1)}(-P^{\ast})).
\end{equation}
Since $P\mapsto-P^{\ast}$ corresponds to $T\mapsto-T$, it follows that
$\wp(\sigma_{n,p}(P))$ is invariant under $T\mapsto-T$. Consequently, there
exist coprime polynomials
\[
\hat{P}_{j}(X,p;\tau)\in\mathbb{C}[\wp(p),\wp^{\prime}(p),e_{k}(\tau
)][X],\ j=1,2,
\]
such that
\[
P_{j}(T,p;\tau)=\hat{P}_{j}(T^{2},p;\tau),\ j=1,2.
\]

The same argument applies to $\wp^{\prime}(\sigma_{n,p}^{(1)}(P))$ and
$\kappa(P)$, yielding \eqref{Pj is even} for $j=3,4,5,6$.

It remains to show that the coefficients of $\hat{P}_{j}(X,p;\tau
),j=1,\cdots,6$ are independent of $\wp^{\prime}(p)$. By
Proposition~\ref{prop, sym}, the spectral polynomial satisfies
\[
Q_{n,p}^{(1)}(T)\in\mathbb{C}(\wp(p),e_{k}(\tau))[T].
\]
Hence the spectral curve $\Gamma_{n,p}^{(1)}(\tau)$ is invariant under
$p\mapsto-p$, that is,
\begin{equation}
\Gamma_{n,p}^{(1)}(\tau)=\Gamma_{n,-p}^{(1)}(\tau),
\label{spectral curve is inde of wp'}%
\end{equation}
and therefore does not depend on $\wp^{\prime}(p)$.

Let%
\[
P=(T(P),C(P))\in\Gamma_{n,p}^{(1)}(\tau).
\]
Write
\[
\psi(P;z)=\psi(P;p,z),\text{ }\mathbf{a}(P)=\mathbf{a}(P;p)=\{a_{1}%
(P;p),\cdots,a_{n+1}(P;p)\}
\]
to emphasize the dependence on $p$. By
Theorem~\ref{thm, ya solves noneven ode}, the zero divisor $\mathbf{a}(P;p)$
is characterized by the conditions \eqref{ya is sol. condition1} and
\eqref{ya is sol. condition2}, both of which depend only on $\wp(p)$ and are
hence invariant under $p\rightarrow-p$. Thus
\[
\mathbf{a}(P;-p)\in Y_{n,-p}^{(1)}(\tau)=Y_{n,p}^{(1)}(\tau).
\]

By \eqref{Phi(z,-p)=Phi(z,p)} and \eqref{BA function,def}, we have
\begin{align}
\psi(P;p,z)  &  =\exp\left(  \int_{z_{0}}^{z}\frac{iC(P)+\frac{1}{2}\Phi
_{e,p}^{\prime}(z;T(P))}{\Phi_{e,p}(z;T(P))}d\xi\right) \nonumber\\
&  =\exp\left(  \int_{z_{0}}^{z}\frac{iC(P)+\frac{1}{2}\Phi_{e,-p}^{\prime
}(z;T(P))}{\Phi_{e,-p}(z;T(P))}d\xi\right)  =\psi(P;-p,z).\nonumber
\end{align}
Thus
\[
\mathbf{a}(P;p)=\mathbf{a}(P;-p)
\]
and hence
\begin{equation}
\sigma_{n,p}^{(1)}(P)=\sum_{i=1}^{n+1}a_{i}(P;p)=\sum_{i=1}^{n+1}%
a_{i}(P;-p)=\sigma_{n,-p}^{(1)}(P). \label{add map is inde of wp'}%
\end{equation}
Therefore, $\wp(\sigma_{n,p}^{(1)}(P))$, $\wp^{\prime}(\sigma_{n,p}^{(1)}(P))$
and $\kappa(P)$ are all invariant under $p\to-p$, so their coefficients are
independent of $\wp^{\prime}(p)$.
\end{proof}

Let $\hat{P}=(X,C)=\pi(P)\in\hat{\Gamma}_{n,p}^{(1)}(\tau)$. By
Lemma~\ref{lem, Pj is even in T}, $\wp(\hat{\sigma}_{n,p}^{(1)}(\hat P))$
depends only on $X=T^{2}$. Hence it can be written in the form
\begin{equation}
\wp(\hat{\sigma}_{n,p}^{(1)}(\hat{P}))=\frac{\hat{P}_{1}(X,p;\tau)}{\hat
{P}_{2}(X,p;\tau)}.
\end{equation}

Moreover, Lemma~\ref{lem, closure of Yn,p} implies that
\begin{equation}
\hat{\sigma}_{n,p}^{(1)}(\hat{P})\to0\ \text{as }X\to\infty.
\label{hat sigma at infty}%
\end{equation}
Hence,
\begin{equation}
\deg_{X} \hat{P}_{1}(X,p;\tau)>\deg_{X} \hat{P}_{2}(X,p;\tau).
\label{deg P1>P2}%
\end{equation}
Applying the argument \cite[Proposition~3.2]{Takemura4}, we conclude that
\begin{equation}
\wp\bigl(\hat{\sigma}_{n,p}^{(1)}(\hat{P})\bigr)\sim\frac{4}{n^{2}(n+1)^{2}%
}X\quad\text{as }X\to\infty. \label{P(sigma hat)'s asym}%
\end{equation}

Consequently,
\begin{equation}
\deg_{X} \hat{P}_{1}(X,p;\tau)=\deg_{X} \hat{P}_{2}(X,p;\tau)+1.
\label{deg hat P1 and P2}%
\end{equation}

\medskip

The next proposition characterizes several basic properties of $\hat{\sigma
}_{n,p}^{(1)}$. Follow the idea established in \cite[Lemma~6.2]%
{Chen-Kuo-Lin-Lame I}, the degree $\deg\hat{\sigma}_{n,p}^{(1)}$ is
characterized by
\begin{equation}
\deg\hat\sigma_{n,p}^{(1)} = \deg_{X}\hat P_{1}(X,p;\tau
).\label{deg sigma is deg P1}%
\end{equation}

\begin{proposition}
\label{prop, degree of hat sigma} Let $\tau\in\mathbb{H}$ and $p\in E_{\tau
}\setminus E_{\tau}[2]$. Then $\deg\hat{\sigma}_{n,p}^{(1)}$ is independent of
$p$ and $\tau$.
\end{proposition}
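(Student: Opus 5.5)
We show that $\deg\hat\sigma_{n,p}^{(1)}$ is locally constant on the connected parameter space
\[
\mathcal P:=\{(\tau,p)\mid \tau\in\mathbb H,\ p\in E_\tau\setminus E_\tau[2]\},
\]
which is connected because it is a family over $\mathbb H$ whose fibres $E_\tau\setminus E_\tau[2]$ are connected. By \eqref{deg sigma is deg P1}, $\deg\hat\sigma_{n,p}^{(1)}=\deg_X\hat P_1(X,p;\tau)$. So it suffices to show that this integer cannot jump as $(\tau,p)$ varies in $\mathcal P$.

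The first step is to set up $\hat P_1,\hat P_2$ uniformly in the parameters. Their coefficients lie in $\mathbb C[\wp(p),e_k(\tau)]$ by Lemma~\ref{lem, Pj is even in T}, so they vary holomorphically in $(\tau,p)$. To avoid normalization ambiguities, we would define them directly: $\hat P_2$ is the monic polynomial whose roots, counted with multiplicity, are the $X=T^2$ values with $\sigma^{(1)}_{n,p}(P)=0$ at finite points (Remark~\ref{remark, P2,4,6's common zeros}). We then take $\hat P_1:=\wp(\hat\sigma)\hat P_2$.

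The key input is the asymptotics \eqref{P(sigma hat)'s asym}, whose leading constant $4/(n^2(n+1)^2)$ is independent of $(\tau,p)$. Together with \eqref{deg hat P1 and P2}, this gives
\[
\deg\hat P_1=\deg\hat P_2+1,
\]
with leading coefficient of $\hat P_1$ equal to $4/(n^2(n+1)^2)$ times that of $\hat P_2$. Hence the degree can drop only if roots of $\hat P_2$ escape to $X=\infty$, or if common factors of $\hat P_1$ and $\hat P_2$ appear and cancel.

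The second step is to control both effects by counting fibres geometrically. Fix a generic $z_0\in E_\tau\setminus\{0\}$; then $\deg\hat\sigma=\#\hat\sigma^{-1}(z_0)$. The branch of $\overline{\hat\Gamma}$ over $X=\infty$ always maps to $0$: by Lemma~\ref{lem, closure of Yn,p}, $\infty_\pm(p)$ have sum $0$, and this holds for every $(\tau,p)$. Consequently preimages of $z_0\neq0$ cannot escape to infinity as parameters move. The family $\bigcup_{(\tau,p)}\overline{\hat\Gamma}^{(1)}_{n,p}(\tau)$, realized inside $\mathrm{Sym}^{n+1}E_\tau$ through the closure $\overline{Y^{(1)}_{n,p}}$, is therefore proper over $\mathcal P$ near the fibre over $z_0$. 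It is also flat, since it is cut out by the single equation $C^2=Q^{(1)}_{n,p}(T)$, whose leading coefficient in $T$ is constant. Thus $\hat\sigma$ is a finite map whose degree is computed by intersection with the section $\{z_0\}$, and such a degree is locally constant in flat proper families. Equivalently, in the algebraic picture: $\hat P_1(X)-\wp(z_0)\hat P_2(X)$ has leading coefficient that never vanishes, so its number of roots counted with multiplicity is constant, and the roots stay bounded.

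The main obstacle is excluding spurious cancellation between $\hat P_1$ and $\hat P_2$ at special parameters, for instance on the finite set $\Theta(\tau)$ or on singular spectral curves. There the reduced fraction could have lower degree even though the unreduced numerator does not. I would handle this by working with the unreduced pair, defined by the divisor of $\sigma=0$ over the finite $X$-line, and by noting that $\deg\hat\sigma$ counts preimages on the curve with multiplicity. That count equals the number of roots of $\hat P_1-\wp(z_0)\hat P_2$ in $X$, given the identity $\deg\hat\sigma=\deg_X\hat P_1$ from \eqref{deg sigma is deg P1}, valid at every parameter. The count is therefore locally constant, hence constant on the connected space $\mathcal P$.
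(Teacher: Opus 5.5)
Your proof is correct in substance, but it is organized differently from the paper's.

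The paper stays with the polynomials. The coefficients of \(\hat P_1,\hat P_2\) lie in \(\mathbb C[\wp(p),e_k(\tau)]\) and vary holomorphically, so a drop of \(\deg_X\hat P_1\) at \((p_0,\tau_0)\) can only come from the top coefficient vanishing. After dividing out a factor vanishing at \((p_0,\tau_0)\), a sequence argument shows what would happen otherwise: some root of \(\hat P_1-\wp(\sigma_0)\hat P_2\) runs to \(X=\infty\) while keeping \(\hat\sigma^{(1)}_{n,p}=\sigma_0\neq0\). This contradicts \(\hat\sigma^{(1)}_{n,p}\to0\) at infinity.

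You use the same non-escape fact but turn it into conservation of number. The set \((\hat\sigma^{(1)}_{n,p})^{-1}(z_0)\) is cut out by one equation in a flat family and stays away from infinity. It is therefore finite and flat over the connected parameter space, and its length is locally constant.

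What each route buys:
\begin{itemize}
\item Your route handles multiplicities and possible cancellation between \(\hat P_1\) and \(\hat P_2\) at special parameters automatically. The paper's division step only ensures the coefficients do not all vanish, and then applies \(\deg\hat\sigma^{(1)}_{n,p}=\deg_X\hat P_1\) to the specialized pair.
\item The paper's route is more elementary and avoids any flat/proper formalism.
\end{itemize}

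Three points to tighten:
\begin{itemize}
\item Your ``algebraic picture'' is not an independent argument. A monic \(\hat P_2\) defined through its roots is not a holomorphic family if its degree might drop, so ``the leading coefficient never vanishes'' is circular there. The correct algebraic version is precisely the paper's use of holomorphic coefficients.
\item The non-escape step requires \(\hat\sigma^{(1)}_{n,p}\to0\) as \(X\to\infty\) locally uniformly in \((p,\tau)\), not merely \(\hat\sigma^{(1)}_{n,p}(\widehat\infty^{(1)})=0\) on each fibre. The paper uses the same uniformity implicitly.
\item Properness is cleaner from the weighted projective closure of \(C^2=\widehat Q^{(1)}_{n,p}(X)\), whose leading coefficient is constant. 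Taking it from \(\mathrm{Sym}^{n+1}E_\tau\) is awkward, because there it is \(\Gamma^{(1)}_{n,p}(\tau)\), not its quotient, that is embedded.
\end{itemize}
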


\begin{proof}
By \eqref{deg sigma is deg P1}, it suffices to prove that $\deg_{X}\hat
P_{1}(X,p;\tau)$ is independent of $(p,\tau)$. Write
\[
\hat P_{1}(X,p;\tau)=\sum_{j=0}^{m}a_{j}(p;\tau)X^{j}, \qquad\hat
P_{2}(X,p;\tau)=\sum_{j=0}^{m-1}b_{j}(p;\tau)X^{j},
\]
where $m$ is the generic degree of $\hat P_{1}$.

We claim that whenever the coefficients of $\hat P_{1}$ and $\hat P_{2}$ do
not all vanish at $(p_{0},\tau_{0})$, one has $a_{m}(p_{0};\tau_{0})\neq0$.
Otherwise, choose generic $\sigma_{0}\in E_{\tau_{0}}\setminus\{0\}$ such
that
\[
P_{0}(X):= \hat P_{1}(X,p_{0};\tau_{0}) -\wp(\sigma_{0};\tau_{0})\hat
P_{2}(X,p_{0};\tau_{0}) \not \equiv 0.
\]
Then $\deg_{X}P_{0}\leqslant m-1$. For $(p_{\ell},\tau_{\ell})\to(p_{0},\tau_{0})$
with $a_{m}(p_{\ell};\tau_{\ell})\neq0$, set
\[
P_{\ell}(X):= \hat P_{1}(X,p_{\ell};\tau_{\ell}) -\wp(\sigma_{0};\tau_{\ell
})\hat P_{2}(X,p_{\ell};\tau_{\ell}).
\]
Since $\deg_{X}P_{\ell}=m$ and $P_{\ell}\to P_{0}$, some zero $X_{\ell}$ of
$P_{\ell}$ tends to $\infty$. If $\hat P_{\ell}$ lies above $X_{\ell}$, then
\[
\hat\sigma_{n,p_{\ell}}^{(1)}(\hat P_{\ell})=\sigma_{0},
\]
whereas \eqref{hat sigma at infty} yields $\hat\sigma_{n,p_{\ell}}^{(1)}(\hat
P_{\ell})\to0$, a contradiction. Thus the claim holds.

Now fix $(p_{0},\tau_{0})$. After dividing $\hat P_{1}$ and $\hat P_{2}$ by
their maximal common vanishing factor in $p-p_{0}$, and, if necessary, in
$\tau-\tau_{0}$, their coefficients are not all zero at $(p_{0},\tau_{0})$.
The claim therefore gives
\[
\deg_{X}\hat P_{1}(X,p_{0};\tau_{0})=m.
\]
Hence
\[
\deg\hat\sigma_{n,p}^{(1)} = \deg_{X}\hat P_{1}(X,p;\tau) = m
\]
for all $\tau\in\mathbb{H}$ and $p\in E_{\tau}\setminus E_{\tau}[2]$.
\end{proof}

For $p\in E_{\tau}\setminus E_{\tau}[2]$, let
\[
P(p)=(T(p),C(p))\in\Gamma_{n,p}^{(1)}(\tau),\ \hat{P}(p)=(X(p),C(p))\in
\hat{\Gamma}_{n,p}^{(1)}(\tau),
\]
and denote
\begin{equation}
i_{n,p}(P(p))=\mathbf{a}(p):=\{a_{1}(p),\cdots,a_{n+1}(p)\}\in Y_{n,p}%
^{(1)}(\tau), \label{in,p=a(p)}%
\end{equation}
which is the zero divisor of Baker--Akhiezer function $\psi(P(p);z)$
associated with $\mathrm{GLE}_{n}^{(1)}(p,T(p),\tau)$.

Let $m=\deg\hat{\sigma}_{n,p}^{(1)}$. Due to
Proposition~\ref{prop, degree of hat sigma}, we consider a sequence $p\to0$,
then $\wp\bigl(\hat{\sigma}_{n,p}^{(1)}(\hat{P}(p))\bigr)$ admits the
factorization
\begin{equation}
\wp\bigl(\hat{\sigma}_{n,p}^{(1)}(\hat{P}(p))\bigr)=\dfrac{4}{n^{2}(n+1)^{2}%
}\dfrac{\prod_{i=1}^{m}(X(p)-X_{i}^{0}(p))}{\prod_{j=1}^{m-1}(X(p)-X_{j}%
^{\infty}(p))}, \label{P(hat sigma(P(p)))}%
\end{equation}
where $X_{i}^{0}(p)$'s and $X_{j}^{\infty}(p)$'s are the zeros of $\hat{P}%
_{1}(X(p),p;\tau)$ and $\hat{P}_{2}(X(p),p;\tau)$ respectively.

We analyze \eqref{P(hat sigma(P(p)))} as $p\to0$, beginning with the
convergence of GLE$_{n}^{(1)}(p,$ $T(p),\tau)$. Since $\mathbf{a}(p)\in
Y_{n,p}^{(1)}(\tau)$, we may assume that
\begin{equation}
\mathbf{a}(p)\to\mathbf{a}^{0}=\{a_{1}^{0},\cdots,a_{n+1}^{0}\}\in
\operatorname{Sym}^{n+1} E_{\tau}\text{ as }p\to0. \label{a(p) as p to 0}%
\end{equation}

The argument of \cite[Proposition~4.2]{Chen-Kuo-Lin-Lame I} gives:

\begin{lemma}
\cite{Chen-Kuo-Lin-Lame I} \label{lem, q converges iff c converges} Let $p\in
E_{\tau}\setminus E_{\tau}[2]$ and assume that $p\to0\in E_{\tau}$. Suppose
that $(T,B)=(T(p),B(p))$ is chosen so that $\mathrm{GLE}_{n}^{(1)}%
(p,T(p),\tau)$ is log-free. Then the family $\mathrm{GLE}_{n}^{(1)}%
(p,T(p),\tau)$ converges if and only if the corresponding parameter $c(p)$ in
the Hermite--Halphen solution $y_{\mathbf{a}(p),c(p)}(z)$ converges, if and
only if $B(p)$ converges.
\end{lemma}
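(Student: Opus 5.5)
The plan is to link the three quantities through the explicit Hermite--Halphen data. I will use the residue relations from the proof of Theorem~\ref{thm, ya solves noneven ode} together with the formula \eqref{B(T),noneven} for $B$.

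\emph{Step 1: $B(p)$ and $c(p)$ versus convergence of the equation.} By \eqref{potential, noneven}, as $p\to0$ the potential $q_n^{(1)}(z;p,T(p))$ tends, for $z$ in compacts of $\mathbb C\setminus\Lambda_\tau$, to
\[
n(n+1)\wp(z)+\tfrac32\wp(z)+T(p)\bigl(\zeta(z+p)+\zeta(z-p)-2\zeta(z)\bigr)-\tfrac{\wp''(p)}{4\wp'(p)}\bigl(\zeta(z+p)-\zeta(z-p)\bigr)+B(p).
\]
Here $\zeta(z+p)+\zeta(z-p)-2\zeta(z)=O(p^2)$ uniformly on compacts away from $\Lambda_\tau$. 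Also $\frac{\wp''(p)}{4\wp'(p)}\sim-\frac{3}{2p}$ while $\zeta(z+p)-\zeta(z-p)=-2p\wp(z)+O(p^3)$, so that term tends to $3\wp(z)$. Hence the only parameter whose behavior is not forced is $B(p)$, together with the $T(p)\cdot O(p^2)$ term. Using \eqref{B(T),noneven}, $B=T^2+O(1)$ once the singular pieces $\frac{\wp''(p)}{2\wp'(p)}\zeta(p)$ and $-\frac12(2n^2+2n-3)\wp(p)$ are expanded. I will check that these combine to $p^{-2}$ multiples that are absorbed in the limit of the potential; this is the same bookkeeping as in \cite[Proposition~4.2]{Chen-Kuo-Lin-Lame I}. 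Thus convergence of the family (meaning locally uniform convergence of $q$ after subtracting the fixed singular part) is equivalent to convergence of $T(p)^2$, hence of $B(p)$.

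\emph{Step 2: $c(p)$ versus $T(p)$.} From the residue conditions \eqref{z=p} and \eqref{z=0} (or their (a-ii)/(a-iii) analogues \eqref{c in terms of aj,2} and \eqref{T in terms of aj,2}), $c$ is an affine function of $T$ up to bounded terms:
\[
c(p)=\sum_i\zeta(a_i(p))-(n+1)\zeta(p)+\sum_i\bigl(\zeta(a_i(p)-p)-\zeta(a_i(p))\bigr)-T(p),
\]
and $T=n(c-\sum_i\zeta(a_i))$. Since $\mathbf a(p)\to\mathbf a^0$ by \eqref{a(p) as p to 0}, I will combine these two relations to eliminate the divergent $\zeta(p)$ and the $\zeta$ terms of points $a_i$ colliding with $0$ or $\pm p$. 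The aim is to show $c(p)=\frac{1}{n}T(p)+O(1)$, or equivalently that $c$ is bounded exactly when $T$ is bounded.

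\emph{Main obstacle.} Points $a_i(p)$ may tend to $0$ together with $p$ (cf.\ $\infty_\pm(p)$ in Lemma~\ref{lem, closure of Yn,p}), which makes individual $\zeta(a_i)$ terms unbounded. To handle this, I will rescale $a_i=p\,x_i$ for the colliding points and use \eqref{ya is sol. condition2*} and \eqref{ya is sol. condition1*} in the limit, as in the proof of Proposition~\ref{prop, theta1 theta2 finite}. The point to establish is that the divergent parts cancel in the combination giving $c-T/n$; I would verify this by the weighted-homogeneity scaling of Remark~\ref{remark, weight}.
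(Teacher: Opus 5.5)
There is a genuine gap. Both steps rest on incorrect asymptotics, and the equivalence you aim for in Step~2 contradicts the statement you are proving.

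\textbf{Step 1.} Here $\frac{\wp''(p)}{4\wp'(p)}=-\frac{3}{4p}+O(p^{3})$, so $-\frac{\wp''(p)}{4\wp'(p)}\bigl(\zeta(z+p)-\zeta(z-p)\bigr)\to-\frac32\wp(z)$. This cancels the $\frac32\wp(z)$ coming from $\frac34(\wp(z+p)+\wp(z-p))$. More seriously, \eqref{B(T),noneven} gives $B(p)=T(p)^2-\frac{n(n+1)}{p^{2}}+O(p^{2})$, which is \eqref{0415equ1}, not $T^2+O(1)$. The $p^{-2}$ term is balanced only by $T^2$ and is not absorbed anywhere else in the potential. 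So ``the family converges iff $T(p)^2$ converges'' is false: convergence forces $T(p)^2=n(n+1)p^{-2}+\widetilde B+o(1)\to\infty$.

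The correct version is what the paper does in Lemma~\ref{lem, gle converges to lame}. Split $q^{(1)}_n(z;p,T(p))$ into its odd part $T(p)g_p(z)$, with $g_p(z)=\zeta(z+p)+\zeta(z-p)-2\zeta(z)=-p^{2}\wp'(z)+O(p^{4})$, and its even part, which is $B(p)$ plus terms tending to $n(n+1)\wp(z)$. If $q$ converges, its even part does, hence $B$ does. If $B$ converges, then $T(p)=O(p^{-1})$, so $T(p)g_p\to0$ and $q\to n(n+1)\wp+\lim B$.

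\textbf{Step 2.} This cannot work as stated. By \eqref{0415equ1}, $B$ converges only if $T(p)^2\sim n(n+1)p^{-2}$. So the lemma itself forces $T$ to be unbounded whenever $c$ converges, which is the opposite of ``$c$ is bounded exactly when $T$ is bounded''. The reason is visible in \eqref{z=0}: $c-T/n=\sum_i\zeta(a_i(p))$ identically. In the convergent case, Proposition~\ref{prop, B and a}(i) sends exactly one zero $a_{n+1}(p)\to0$ and the others to nonzero points, so this sum diverges like $1/a_{n+1}(p)$. There is no cancellation for a rescaling $a_i=px_i$ to uncover.

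The missing idea is to compare $c$ with the potential rather than with $T$. For the Hermite--Halphen solution write $y'/y=c+h_p$, where $h_p(z)=\sum_i\zeta(z-a_i(p))-n\zeta(z)-\frac12\bigl(\zeta(z+p)+\zeta(z-p)\bigr)$. By compactness of $\mathrm{Sym}^{n+1}E_\tau$ (with convergent lifts of the $a_i(p)$), along subsequences $h_p\to h_0=\sum_i\zeta(z-a_i^0)-(n+1)\zeta(z)$ locally uniformly away from $\mathbf a^0\cup\{0\}$. The divergent numbers $\zeta(a_i(p))$ never appear.

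Since $q=h_p'+h_p^2+2c\,h_p+c^2$, evaluating at two points where $h_0$ takes different values shows that $q$ converges exactly when $c$ does. The only exception is $\mathbf a^0=\{0,\ldots,0\}$, where $h_0\equiv0$. There the identity $c(c+2h_p)=q-h_p'-h_p^2$ forces any limit of $q$ to be constant, which the corrected Step~1 rules out, so this case occurs on neither side of the equivalence.

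Together with the corrected Step~1, this is the argument of \cite[Proposition~4.2]{Chen-Kuo-Lin-Lame I} that the paper invokes.
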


\begin{lemma}
\label{lem, gle converges to lame} As $p\to0$, $B(p)$ converges to $\tilde
{B}\in\mathbb{C}$ if and only if $X(p)=T(p)^{2}$ satisfies
\begin{equation}
X(p)=\frac{n(n+1)}{p^{2}}+\widetilde{B}+O(p). \label{X(p)'s behavior at 0}%
\end{equation}
In such case, $\mathrm{GLE}_{n}^{(1)}(p,T(p),\tau)$ converges to
$\mathrm{L}_{n}(\widetilde{B},\tau)$ uniformly on any compact subsets of
$E_{\tau}\setminus\{0\}$.
\end{lemma}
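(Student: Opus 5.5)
The plan is to expand the potential $q_n^{(1)}(z;p,T(p))$ of \eqref{potential, noneven}, with $B(p)$ given by \eqref{B(T),noneven}, on compact subsets $K\subset E_\tau\setminus\{0\}$ as $p\to0$, and to isolate the one term that can diverge. On $K$ we have locally uniformly
\[
\tfrac34\bigl(\wp(z+p)+\wp(z-p)\bigr)\to\tfrac32\wp(z).
\]
We also have
\[
\zeta(z+p)+\zeta(z-p)-2\zeta(z)=-\wp'(z)p^{2}+O(p^{4})
\]
and
\[
\zeta(z+p)-\zeta(z-p)=-2\wp(z)p+O(p^{3}),
\]
uniformly on $K$. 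For the $p$-dependent coefficients, the Laurent expansion at $p=0$ gives $\frac{\wp''(p)}{4\wp'(p)}=-\frac{3}{4p}+O(p^{3})$ and $\zeta(p)=\frac1p+O(p^{3})$. Hence $\frac{\wp''(p)}{2\wp'(p)}\zeta(p)=-\frac{3}{2p^{2}}+O(p^{2})$, and $-\frac12(2n^2+2n-3)\wp(p)=-\frac{n(n+1)}{p^{2}}+\frac{3}{2p^{2}}+O(p^{2})$. The key cancellation is that the two $\frac{3}{2p^2}$ terms cancel, so that
\[
B(p)=X(p)-\frac{n(n+1)}{p^{2}}+O(p^{2}),\qquad X(p)=T(p)^{2}.
\]
This already gives the equivalence: $B(p)\to\widetilde B$ if and only if $X(p)=\frac{n(n+1)}{p^2}+\widetilde B+o(1)$. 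The statement asks for the remainder $O(p)$, so I will state the convergence in the quantitative form used later in the paper, namely $B(p)=\widetilde B+O(p)$. If only plain convergence is assumed, I will record the error as $X(p)-\frac{n(n+1)}{p^2}-\widetilde B=(B(p)-\widetilde B)+O(p^2)$, which is the precise content.

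For the convergence of the equation, I first bound $T(p)$. From the expansion of $X(p)$ we get $T(p)=O(1/p)$, so $T(p)\bigl(\zeta(z+p)+\zeta(z-p)-2\zeta(z)\bigr)=O(p)\to0$ uniformly on $K$. This is the main point to check: the term linear in $T$ is suppressed because the second difference of $\zeta$ is of order $p^2$. Next,
\[
-\frac{\wp''(p)}{4\wp'(p)}\bigl(\zeta(z+p)-\zeta(z-p)\bigr)=\frac{3}{4p}\cdot\bigl(-2\wp(z)p\bigr)+O(p)=-\tfrac32\wp(z)+O(p).
\]
This exactly cancels the $\tfrac32\wp(z)$ coming from the $\tfrac34\wp(z\pm p)$ terms. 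Therefore
\[
q_n^{(1)}(z;p,T(p))\to n(n+1)\wp(z)+\widetilde B
\]
uniformly on $K$. Since the coefficients converge uniformly on compacta away from $0$, solutions converge locally uniformly after fixing initial data at a base point, and $\mathrm{GLE}^{(1)}_n(p,T(p),\tau)\to\mathrm L_n(\widetilde B,\tau)$.

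The converse direction needs no separate argument: if $X(p)$ has the stated form, the displayed identity for $B(p)$ gives $B(p)\to\widetilde B$ directly. I also note that Lemma~\ref{lem, q converges iff c converges} is consistent with this: convergence of the family forces convergence of $B(p)$, and that matches the constant term of the limit potential.

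The step I expect to be the main obstacle is the bookkeeping of the Laurent coefficients. Specifically, I need the exact constant $-\frac34$ in $\wp''/(4\wp')$, and I need the vanishing of the $O(1)$ term, which holds because $\wp''/\wp'$ is odd with no constant term. These are what make both the $p^{-2}$ cancellation in $B(p)$ and the $\wp(z)$ cancellation in $q_n^{(1)}$ exact. I would verify them from $\wp(p)=p^{-2}+\frac{g_2}{20}p^2+O(p^4)$.
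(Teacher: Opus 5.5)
Your proposal is correct and follows the paper's proof: the same Laurent expansion $B(p)=T(p)^2-\frac{n(n+1)}{p^2}+O(p^2)$, obtained from the cancellation of the $\frac{3}{2p^2}$ terms, and the same expansion of $q_n^{(1)}$, in which the $\frac32\wp(z)$ contributions cancel and $T(p)\,O(p^2)=O(p)$. Your remark that mere convergence $B(p)\to\widetilde B$ only yields an $o(1)$ remainder rather than $O(p)$ is a fair sharpening of the paper's wording.
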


\begin{proof}
From \eqref{B(T),noneven},
\begin{equation}
\begin{aligned} B(p)&=T^2(p)+\frac{\wp^{\prime\prime}(p)}{2\wp^{\prime}(p)}\zeta(p)-\frac{1}{2}(2n^2+2n-3)\wp(p)\\ &= T^2(p)-\frac{n(n+1)}{p^2}+O(p^2). \end{aligned} \label{0415equ1}%
\end{equation}
Hence $B(p)$ converges to $\widetilde{B}\in\mathbb{C}$ if and only if $T(p)$
satisfies \eqref{X(p)'s behavior at 0}.

As $p\to0$, we have the following expansions
\begin{equation}
\wp(z+p)+\wp(z-p)=2\wp(z)+O(p^{2}),
\end{equation}
\begin{equation}
\zeta(z+p)+\zeta(z-p)-2\zeta(z)=O(p^{2}),
\end{equation}
\begin{equation}
\zeta(z+p)-\zeta(z-p)=-2\wp(z)p+O(p^{3}). \label{0415equ2}%
\end{equation}

Using \eqref{0415equ1}--\eqref{0415equ2} yields
\begin{equation}
\begin{aligned} q_{n}^{(1)}(z;p,T(p))&=\left[ \begin{array} [c]{l}n(n+1)\wp(z)+\frac{3}{4}(\wp(z+p)+\wp(z-p))\\ +T(p)(\zeta(z+p)+\zeta(z-p)-2\zeta(z))\\ -\frac{\wp^{\prime\prime}(p)}{4\wp^{\prime}(p)}(\zeta(z+p)-\zeta(z-p))+B(p) \end{array} \right]\\ &=n(n+1)\wp(z)+T^2(p)-\frac{n(n+1)}{p^2}+T(p)\,O(p^2)+O(p). \end{aligned}
\end{equation}

Therefore, $q_{n}^{(1)}(z;p,T(p))$ converges as $p\to0$ if and only if
\eqref{X(p)'s behavior at 0} holds. In such case,
\[
q_{n}^{(1)}(z;p,T(p))\to n(n+1)\wp(z)+\widetilde{B},
\]
which implies that $\mathrm{GLE}_{n}^{(1)}(p,T(p),\tau)$ converges uniformly
on every compact subset of $E_{\tau}\setminus\{0\}$ to $\mathrm{L}%
_{n}(\widetilde{B},\tau)$.
\end{proof}

By Lemma~\ref{lem, gle converges to lame}, as $p\to0$, under the constraint
\eqref{X(p)'s behavior at 0}, $\mathrm{GLE}_{n}^{(1)}(p,T(p),\tau)$ uniformly
converges to the classical Lam\'{e} equation $\mathrm{L}_{n}(\widetilde{B}%
,\tau)$. We recall the classical Lam\'e facts needed below
\cite{CLW,LW-AnnMath,LW}.

\medskip

\noindent(1) The classical Lam\'{e} potential $n(n+1)\wp(z)$ is a stationary
elliptic KdV potential. Let $\widetilde{Q}_{n}(\widetilde{B};\tau)$ denote the
associated spectral polynomial, which has degree $2n+1$ in $\widetilde{B}$.
The corresponding spectral curve is defined by
\begin{equation}
\widetilde{\Gamma}_{n}(\tau):=\{(\widetilde{B},\widetilde{C})\in\mathbb{C}%
^{2}\mid\widetilde{C}^{2}=\widetilde{Q}_{n}(\widetilde{B};\tau)\}.
\label{lame's spectral curve}%
\end{equation}
Its natural compactification is given by
\[
\overline{ \widetilde{\Gamma}_{n}(\tau)}= \widetilde{\Gamma}_{n}(\tau
)\cup\widetilde{\infty},
\]
where $\widetilde{\infty}$ denotes the unique point at infinity of
$\widetilde{\Gamma}_{n}(\tau)$.

\medskip

\noindent(2) For each $\widetilde{P}=(\widetilde{B},\widetilde{C}%
)\in\widetilde{\Gamma}_{n}(\tau)$, there exists a Baker--Akhiezer function
$\psi(\widetilde{P};z)$ associated with $\mathrm{L}_{n}(\widetilde{B},\tau)$,
which admits a Hermite-Halphen representation. Denote its zero divisor by
\begin{equation}
\tilde{\mathbf{a}}(\widetilde{P})=\{\tilde{a}_{1}(\widetilde{P}),\cdots
,\tilde{a}_{n}(\widetilde{P})\}\in\operatorname{Sym}^{n}E_{\tau}^{*}.
\label{lame's zero set}%
\end{equation}

\medskip

\noindent(3) Define the addition map
\begin{equation}
\tilde\sigma_{n}: \overline{\widetilde{\Gamma}_{n}(\tau)} \to E_{\tau},
\label{add map of lame}%
\end{equation}
by
\[
\tilde\sigma_{n}(\widetilde{P}) = \sum_{j=1}^{n} \tilde a_{j}(\widetilde{P}%
),\quad\tilde{\sigma}_{n}(\widetilde{\infty})=0.
\]
Then $\tilde\sigma_{n}$ is a finite morphism and
\[
\deg\tilde{\sigma}_{n}=\frac{n(n+1)}{2}.
\]

Furthermore, $\wp(\tilde{\sigma}_{n}(\widetilde{P}))$ is a rational function
on $\widetilde{\Gamma}_{n}(\tau)$ and admits the expression
\begin{equation}
\wp(\tilde\sigma_{n}(\widetilde{B},\widetilde{C}))=\dfrac{4}{n^{2}(n+1)^{2}%
}\dfrac{\prod_{i=1}^{\frac{n(n+1)}{2}}(\widetilde{B}-B_{i}^{0})}{\prod
_{j=1}^{\frac{n(n+1)}{2}-1}(\widetilde{B}-B_{j}^{\infty})},
\label{lame's P(sigma n)}%
\end{equation}
where $\{B_{i}^{0}\}_{i=1}^{\frac{n(n+1)}{2}}$ and $\{B_{j}^{\infty}%
\}_{j=1}^{\frac{n(n+1)}{2}-1}$ denote the $\widetilde{B}$--coordinates of the
zeros and poles of $\wp(\tilde\sigma_{n}(\widetilde{B},\widetilde{C}))$, respectively.

\medskip

Comparing \eqref{P(hat sigma(P(p)))} and \eqref{lame's P(sigma n)}, it remains
to prove $m=\frac{n(n+1)}{2}.$ The following proposition is a consequence of
\cite[Propositions~5.3 and~5.5]{Chen-Kuo-Lin-Lame I}; its proof is therefore omitted.

\begin{proposition}
\label{prop, B and a} \noindent\textup{(i)} Suppose that $B(p)\to
\widetilde{B}_{0}\in\mathbb{C}$ as $p\to0$. Then
\begin{equation}
\mathbf{a}^{0}=\{a_{1}^{0},\ldots,a_{n}^{0},0\},
\label{a0 lies in lame's zero}%
\end{equation}
where $\{a_{1}^{0},\ldots,a_{n}^{0}\}\in\operatorname{Sym}^{n}E_{\tau}^{*}$ is
the zero divisor of the Baker--Akhiezer function $\psi(\widetilde{P}_{0};z)$
associated with $\mathrm{L}_{n}(\widetilde{B}_{0},\tau)$, and $\widetilde{P}%
_{0}=(\widetilde{B}_{0},\widetilde{C}_{0}) \in\widetilde{\Gamma}_{n}(\tau)$.

\noindent\textup{(ii)} Suppose that $B(p)\to\infty$ as $p\to0$. Then
\begin{equation}
\mathbf{a}^{0}\in\{\infty_{+}(p),\infty_{-}(p)\}, \label{mathbf a=0}%
\end{equation}
where $\infty_{+}(p)$ and $\infty_{-}(p)$ are given by \eqref{infty point 1}
and \eqref{infty point 2}, respectively.
\end{proposition}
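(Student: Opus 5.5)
The statement has two parts: (i) a limit equation of Lamé type with the extra zero at the origin, and (ii) escape of the divisor to one of the two points at infinity. In both parts I would work with the normalized Hermite--Halphen solution $y_{\mathbf a(p),c(p)}(z)$ of $\mathrm{GLE}^{(1)}_n(p,T(p),\tau)$ and pass to the limit $p\to0$.

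\emph{Part (i).} Assume $B(p)\to\widetilde B_0$. By Lemma~\ref{lem, gle converges to lame}, the potentials $q^{(1)}_n(z;p,T(p))$ converge to $n(n+1)\wp(z)+\widetilde B_0$ locally uniformly on $E_\tau\setminus\{0\}$. By Lemma~\ref{lem, q converges iff c converges}, $c(p)\to c^0$. After passing to a subsequence, $\mathbf a(p)\to\mathbf a^0$ as in \eqref{a(p) as p to 0}. In \eqref{Hermite noneven} the factor $\sigma(z)^n\sqrt{\sigma(z+p)\sigma(z-p)}$ tends to $\sigma(z)^{n+1}$. Hence
\[
y_{\mathbf a(p),c(p)}(z)\to \frac{e^{c^0z}\prod_{i=1}^{n+1}\sigma(z-a_i^0)}{\sigma(z)^{n+1}},
\]
locally uniformly away from $0$ and the $a_i^0$. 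The limit is a nontrivial solution of $\mathrm L_n(\widetilde B_0,\tau)$.

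Its local exponents at $0$ are $-n$ and $n+1$, but the limit has pole order $n+1-\#\{i:a_i^0=0\}$ there. The order must be $\le n$, since the only admissible exponents are $-n$ and $n+1$, which gives pole order $n$ or a zero of order $n+1$. So at least one $a_i^0=0$. Cancelling that factor produces an elliptic function of the second kind with exactly $n$ zeros and a pole of order $n$ at $0$. This is precisely the Hermite--Halphen form of a Baker--Akhiezer function of $\mathrm L_n$.

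I would rule out a second zero at the origin as follows. If two $a_i^0$ vanished, the pole order would drop to $n-1$. Then exponent $n+1$ would force all remaining behaviour to be a zero of order $n+1$, i.e.\ total zero count $n+1$ against pole count $n-1$, which is impossible. By the Lamé theory, (1)--(2) above, the remaining $n$ points form $\tilde{\mathbf a}(\widetilde P_0)$ for some $\widetilde P_0$ above $\widetilde B_0$.

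\emph{Part (ii).} This is the harder step. Assume $B(p)\to\infty$, so $c(p)\to\infty$ by Lemma~\ref{lem, q converges iff c converges}. Here I would rescale instead of taking the naive limit. Set $z=px$. By Lemma~\ref{lem, gle converges to lame}, $X(p)-n(n+1)p^{-2}$ is unbounded, so I split according to the size of $T(p)$ relative to $p^{-1}$.

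In the generic regime $|T(p)|\gg p^{-1}$, the equation on the scale $|z|\gg |p|$ looks like $w''=(T^2+\text{lower})w$. The WKB solution $e^{\pm Tz}$ then forces $c(p)\sim\pm T(p)$, up to the $n$-dependent correction used in Theorem~\ref{thm, ya solves noneven ode}. Formula \eqref{c in terms of aj} together with \eqref{T in terms of aj} then requires
\[
\sum_i \zeta(a_i)+\frac{1}{2(n+1)}\sum_i\frac{\wp'(a_i)}{\wp(a_i)-\wp(p)}
\]
to blow up, which is only possible if every $a_i(p)$ collapses to $0$ or $\pm p$.

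To decide between the two configurations, I would use the residue relations \eqref{z=p} and \eqref{z=-p}. These relate $c+T$ to $\sum\zeta(a_i\mp p)$, and the sign of $c/T$ (equivalently the choice of sheet $P$ or $P^*$) then determines the limit. One sign gives all $n+1$ points tending to $0$, which is $\infty_-(p)$. The other sign gives two points attached to $\pm p$ and $n-1$ at $0$, which is $\infty_+(p)$; this is the configuration coming from types (a-ii) and (a-iii).

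The main obstacle is the intermediate regime $T(p)\asymp p^{-1}$ with $X(p)-n(n+1)p^{-2}$ unbounded. I would handle it by the blow-up limit: $p^2q^{(1)}_n(px)$ converges to a rational potential on $\mathbb P^1_x$ with poles at $0$ and $\pm1$. The rescaled divisor $\mathbf a(p)/p$ then converges to a finite configuration in $\mathbb C$, which still yields $\mathbf a^0\subset\{0\}$, i.e.\ $\infty_\pm(p)$. Finally, Lemma~\ref{lem, closure of Yn,p}, which says the closure adds only $\infty_\pm(p)$, identifies $\mathbf a^0$ with one of these two points.
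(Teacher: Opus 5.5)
Your part (i) is correct and is the natural argument; the paper gives no proof of its own and defers to \cite[Propositions~5.3 and~5.5]{Chen-Kuo-Lin-Lame I}. The limit solves $\mathrm{L}_n(\widetilde B_0,\tau)$. Its order at $0$ is $k-(n+1)$, where $k=\#\{i:a_i^0=0\}\leqslant n+1$, and this order must equal $-n$ or $n+1$, so $k=1$. The remaining Hermite--Halphen factor is then $\psi(\widetilde P_0;z)$ up to a constant.

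Part (ii) has a genuine gap. First, note what actually has to be shown. Both $\infty_+(p)$ and $\infty_-(p)$ tend to the same point $\{0,\ldots,0\}\in\operatorname{Sym}^{n+1}E_\tau$ as $p\to0$, so the content of (ii) is just $\mathbf a^0=\{0,\ldots,0\}$. Your sign analysis to choose between the two sheets is therefore unnecessary. Lemma~\ref{lem, closure of Yn,p} also cannot identify $\mathbf a^0$, because it describes the closure of $Y^{(1)}_{n,p}(\tau)$ for one fixed $p$, not limits along $p\to0$.

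More seriously, none of your steps forces \emph{every} zero into the origin.
\begin{itemize}
\item Regime $|T|\gg p^{-1}$: blow-up of $\sum_i\zeta(a_i)+\frac{1}{2(n+1)}\sum_i\frac{\wp'(a_i)}{\wp(a_i)-\wp(p)}$ only shows that \emph{some} $a_i(p)$ approaches $0$ or $\pm p$. In addition, the WKB claim $c\sim\pm T$ is not proved.
\item Regime $|T|\ll p^{-1}$ (e.g.\ $T$ bounded, where $p^2B\to-n(n+1)$): this case is not treated at all.
\item Regime $T\asymp p^{-1}$: convergence of $\mathbf a(p)/p$ to a finite configuration is exactly what needs proof, and it is false when $p^2X(p)\to n(n+1)$ but $X(p)-n(n+1)p^{-2}\to\infty$. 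Take for instance $X(p)=n(n+1)p^{-2}+\beta p^{-1}$. The rescaled potential then satisfies $q_*(x)=n(n+1)x^{-2}+O(x^{-3})$ at $x=\infty$. A limit $\prod_{i\leqslant k}(x-\alpha_i)/(x^n\sqrt{x^2-1})\sim x^{k-n-1}$ must therefore have exponent $-n$ there, which forces $k=1$. The other $n$ zeros sit at the Bessel scale $|B|^{-1/2}\sim|p|^{1/2}$.
\end{itemize}

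The missing idea is elementary and needs no multi-scale analysis. Suppose, along a subsequence, that $J=\{j:a_j^0=a^*\}\neq\emptyset$ for some $a^*\neq0$, with lifts of the $a_i(p)$ converging in $\mathbb C$. For small $p$ each $a_j$ with $j\in J$ is a zero of a nontrivial solution at an ordinary point, hence simple. Since $y''(a_j)=q(a_j)y(a_j)=0$, the constant term of $y'/y$ at $a_j$ vanishes:
\[
c+\sum_{i\neq j}\zeta(a_j-a_i)-n\zeta(a_j)-\tfrac12\bigl(\zeta(a_j+p)+\zeta(a_j-p)\bigr)=0 .
\]
This is \eqref{z=aj}, and it holds in all three types (a-i)--(a-iii). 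Sum over $j\in J$. The terms with $i,j\in J$ cancel because $\zeta$ is odd, and every remaining term stays bounded, so $c(p)$ is bounded. By Lemma~\ref{lem, q converges iff c converges}, $B(p)$ then converges along a further subsequence, contradicting $B(p)\to\infty$. Hence all $a_i(p)\to0$, i.e.\ $\mathbf a^0=\{0,\ldots,0\}=\lim_{p\to0}\infty_\pm(p)$. This is all that is used afterwards, e.g.\ in Theorem~\ref{thm, sigma neq 0 implies convergence}.
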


The next theorem shows that any nonzero limit of $\hat{\sigma}_{n,p}%
^{(1)}(\hat P(p))$ induces the convergence of $\mathrm{GLE}_{n}^{(1)}(p,$
$T(p),\tau)$ to the Lam\'e equation $\mathrm{L}_{n}(\widetilde{B},\tau)$.

\begin{theorem}
\label{thm, sigma neq 0 implies convergence} Let $\hat{P}(p)=(X(p),C(p))\in
\hat{\Gamma}_{n,p}^{(1)}(\tau)$ with $X(p)=T(p)^{2}$. Assume that $\hat
{\sigma}_{n,p}^{(1)}(\hat{P}(p))\to\sigma_{0}\neq0$ as $p\to0$. Then,
$\mathrm{GLE}_{n}^{(1)}(p,T(p),\tau)$ converges to $\mathrm{L}_{n}(\tilde
{B},\tau)$, where $\widetilde{B}$ satisfies
\begin{equation}
\tilde{\sigma}_{n}(\widetilde{B},\widetilde{C})=\sigma_{0}.
\label{lame's sigma is sigma0}%
\end{equation}
with $(\widetilde{B},\widetilde{C})\in\widetilde{\Gamma}_{n}(\tau)$.
\end{theorem}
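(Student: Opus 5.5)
We argue by contradiction using the dichotomy of Proposition~\ref{prop, B and a}. Pass to a subsequence $p_\ell\to0$ along which the zero divisor $\mathbf a(p_\ell)=i_{n,p_\ell}(P(p_\ell))$ converges to some $\mathbf a^0\in\operatorname{Sym}^{n+1}E_\tau$; this is possible by compactness of $\operatorname{Sym}^{n+1}E_\tau$, and we may choose either lift $P(p)$ of $\hat P(p)$. Since the addition map is continuous on $\operatorname{Sym}^{n+1}E_\tau$, the sum of the entries of $\mathbf a^0$ equals $\lim\hat\sigma_{n,p_\ell}^{(1)}(\hat P(p_\ell))=\pm\sigma_0$; the sign depends on the lift and is harmless, since $\sigma^{(1)}_{n,p}(P^*)=-\sigma^{(1)}_{n,p}(P)$. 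By Lemma~\ref{lem, q converges iff c converges}, either $B(p_\ell)$ stays bounded along a further subsequence, or $B(p_\ell)\to\infty$.

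First we exclude $B(p_\ell)\to\infty$. In that case Proposition~\ref{prop, B and a}(ii) forces $\mathbf a^0\in\{\infty_+(p),\infty_-(p)\}$. Read in the limit $p\to0$, both of these configurations have all points at $0$: $\infty_-=(0,\dots,0)$, and $\infty_+(p)=(0,\dots,0,p,-p)\to(0,\dots,0)$. In particular the sum of the entries of $\mathbf a^0$ is $0$, contradicting $\sigma_0\neq0$. Hence $B(p_\ell)$ is bounded. Along a further subsequence, $B(p_\ell)\to\widetilde B$, and Lemma~\ref{lem, gle converges to lame} shows that $X(p_\ell)=n(n+1)p_\ell^{-2}+\widetilde B+O(p_\ell)$ and that $\mathrm{GLE}^{(1)}_n(p_\ell,T(p_\ell),\tau)\to\mathrm L_n(\widetilde B,\tau)$ locally uniformly on $E_\tau\setminus\{0\}$.

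Next we identify the limit divisor. Proposition~\ref{prop, B and a}(i) gives $\mathbf a^0=\{a^0_1,\dots,a^0_n,0\}$, where $\{a^0_j\}$ is the zero divisor of the Baker--Akhiezer function $\psi(\widetilde P_0;z)$ of $\mathrm L_n(\widetilde B,\tau)$ for some point $\widetilde P_0=(\widetilde B,\widetilde C_0)$ lying over $\widetilde B$. Summing the entries gives $\tilde\sigma_n(\widetilde P_0)=\pm\sigma_0$. If the sign is $-$, we replace $\widetilde P_0$ by $\widetilde P_0^*$, using $\tilde\sigma_n(\widetilde P^*)=-\tilde\sigma_n(\widetilde P)$. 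This yields a point $(\widetilde B,\widetilde C)$ satisfying \eqref{lame's sigma is sigma0}.

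The remaining step, which we expect to be the main obstacle, is upgrading subsequential convergence to convergence of the whole family. Different subsequences could a priori give different limits $\widetilde B$, each satisfying $\tilde\sigma_n=\sigma_0$ at some point over it. Since $\tilde\sigma_n$ is finite of degree $n(n+1)/2$, the set of admissible $\widetilde B$ is finite. The set of subsequential limits of the bounded family $B(p)$ is connected, because $p\mapsto B(p)$ is continuous along the approach $p\to0$. A connected subset of a finite set is a single point, so all subsequential limits agree and $B(p)\to\widetilde B$. If the paper intends the statement only up to subsequences, this step is unnecessary; otherwise the connectedness argument is what we would try first. A further check is needed: in the first step, Proposition~\ref{prop, B and a}(ii) must be read so that its limits $\infty_\pm(p)$ are evaluated as $p\to0$, so that both collapse to the zero divisor.
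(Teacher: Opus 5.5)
Your argument is essentially the paper's proof. You rule out $B(p)\to\infty$ via Proposition~\ref{prop, B and a}(ii), since both $\infty_\pm(p)$ have entry sum $0$ and this contradicts $\sigma_0\neq0$. You then pass to a subsequence with $B(p)\to\widetilde B$, invoke Proposition~\ref{prop, B and a}(i), and fix the sign by replacing $\widetilde C_0$ with $-\widetilde C_0$. Incidentally, the two lifts $P$ and $-P^{*}$ of $\hat P$ have the same addition value by \eqref{sigma P=sigma -P*}, so the only sign ambiguity lies over $\widetilde B$.

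The paper stops at subsequential convergence. Your upgrade step rests on continuity of $p\mapsto\hat P(p)$, which is not in the hypothesis. A discontinuous selection among the finitely many points of a fiber of $\hat\sigma^{(1)}_{n,p}$ can give different limits $\widetilde B$ along different sequences, so that step holds only under this added assumption.
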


\begin{proof}
We claim that $B(p)$ is uniformly bounded as $p\to0$. If not, we may assume
$B(p)\to\infty$. Then Proposition~\ref{prop, B and a}~(ii) implies that
$\mathbf{a}^{0}$ satisfies \eqref{mathbf a=0}. Consequently,
\[
\sigma_{0} = \lim_{p\to0} \hat\sigma_{n,p}^{(1)}(\hat P(p)) = 0,
\]
which contradicts the assumption $\sigma_{0}\neq0$. Therefore, $B(p)$ is
bounded near $p=0$. Passing to a subsequence if necessary, we may assume that
$B(p)\to\widetilde{B}\in\mathbb{C}$.

By Proposition~\ref{prop, B and a}~(i), it follows that
\eqref{a0 lies in lame's zero} holds and hence
\[
\sigma_{0} = \lim_{p\to0} \hat\sigma_{n,p}^{(1)}(\hat P(p)) = \sum_{j=1}^{n}
a_{j}^{0} = \pm\tilde\sigma_{n}(\widetilde{B}_{0},\widetilde{C}_{0}).
\]
Replacing $\widetilde{C}_{0}$ by $-\widetilde{C}_{0}$ if necessary, we obtain \eqref{lame's sigma is sigma0}.
\end{proof}

We determine the zeros and poles in \eqref{P(hat sigma(P(p)))}. For the zeros,
Theorem~\ref{thm, sigma neq 0 implies convergence} gives:

\begin{lemma}
\label{lem, zero of P(sigma hat)} As $p\to0$, for each $i=1,2,\cdots,m$,
\begin{equation}
X_{i}^{0}(p)=\dfrac{n(n+1)}{p^{2}}+\widetilde{B}_{i}^{0}+o(1),
\label{X_i^0's behavior}%
\end{equation}
with $\widetilde{B}_{i}^{0}\in\{B_{1}^{0},\cdots,B_{\frac{n(n+1)}{2}}^{0}\}$.
\end{lemma}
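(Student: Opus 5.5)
The zeros $X_i^0(p)$ of $\hat P_1(X,p;\tau)$ are exactly the $X$-coordinates of points $\hat P\in\hat\Gamma^{(1)}_{n,p}(\tau)$ with $\wp(\hat\sigma^{(1)}_{n,p}(\hat P))=0$. Hence $\hat\sigma^{(1)}_{n,p}(\hat P)\in\{\pm q_0\}$, where $q_0\in E_\tau$ is a fixed point with $\wp(q_0)=0$. If $\wp$ vanishes at a $2$-torsion point, we first perturb $\tau$ slightly or argue with the same sign convention. The essential feature is that $q_0\ne0$ is independent of $p$. So I would fix $i$, follow the zero $X_i^0(p)$ along a sequence $p\to0$, and choose points $\hat P(p)$ above it. By \eqref{P(hat sigma(P(p)))} these satisfy $\hat\sigma^{(1)}_{n,p}(\hat P(p))=\pm q_0$. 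Passing to a subsequence, the sign is constant, so $\hat\sigma^{(1)}_{n,p}(\hat P(p))\to\sigma_0:=\pm q_0\neq0$.

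Now Theorem~\ref{thm, sigma neq 0 implies convergence} applies. Along the subsequence, $B(p)\to\widetilde B$, and $\mathrm{GLE}^{(1)}_n(p,T(p),\tau)\to\mathrm L_n(\widetilde B,\tau)$ with $\tilde\sigma_n(\widetilde B,\widetilde C)=\sigma_0$ for some $\widetilde C$. Hence $\wp(\tilde\sigma_n(\widetilde B,\widetilde C))=\wp(q_0)=0$. By \eqref{lame's P(sigma n)}, $\widetilde B$ is one of the $B^0_j$. I would check that it is not simultaneously a pole value $B^\infty_j$: cancellation in \eqref{lame's P(sigma n)} is excluded by coprimality, and at a genuine point $\widetilde P$ with $\tilde\sigma_n(\widetilde P)=\sigma_0\ne0$ the function $\wp\circ\tilde\sigma_n$ is finite and equal to $0$. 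Lemma~\ref{lem, gle converges to lame}, which gives $X(p)=n(n+1)p^{-2}+\widetilde B+O(p)$ once $B(p)\to\widetilde B$, then yields \eqref{X_i^0's behavior} along the subsequence.

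To upgrade from subsequences to the full limit, note that the set of admissible limits $\{B^0_1,\dots,B^0_{n(n+1)/2}\}$ is finite. Every subsequence has a further subsequence along which $X_i^0(p)-n(n+1)p^{-2}$ converges into this set. After labeling the roots continuously in $p$ for small $p\neq0$, continuity of $X_i^0(p)-n(n+1)p^{-2}$ forces a single limit. Labeling near $p=0$ is possible on a punctured disc up to monodromy permutation of roots, which suffices for the statement as a set. This gives $o(1)$ rather than $O(p)$, matching the statement.

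The main obstacle is the first step: ensuring that \emph{every} zero $X_i^0(p)$ stays at finite distance from $n(n+1)p^{-2}$, that is, that $B(p)$ does not blow up. This is handled by Theorem~\ref{thm, sigma neq 0 implies convergence} precisely because $\sigma_0=\pm q_0$ is bounded away from $0$ uniformly in $p$. The only remaining care is to rule out a zero of $\hat P_1$ corresponding to a point where $\hat\sigma^{(1)}_{n,p}=0$, for instance through a common factor with $\hat P_2$. This is excluded by the coprimality in Theorem~\ref{thm, P and P' rational in T}, together with Remark~\ref{remark, P2,4,6's common zeros}, which places all poles exactly on the fiber over $0$.
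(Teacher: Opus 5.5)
Your proposal is correct and follows the paper's route. The zeros of $\hat P_1$ lie over $\hat\sigma^{(1)}_{n,p}=\pm q_0$ with $q_0\neq0$, so Theorem~\ref{thm, sigma neq 0 implies convergence} and Lemma~\ref{lem, gle converges to lame} give the asymptotics with $\widetilde B$ a zero value of $\wp\circ\tilde\sigma_n$; this is exactly what the paper invokes in its one-line justification, and your subsequence-to-full-limit step fills in a detail the paper leaves implicit. Your $2$-torsion caveat is unnecessary, since $\wp(0)=\infty$ already forces $q_0\neq0$, and perturbing $\tau$ would not be legitimate for this fixed-$\tau$ statement anyway.
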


For the poles, let
\[
\hat{P}_{j}^{\infty}(p):=(X_{j}^{\infty}(p),C(\hat{P}_{j}^{\infty}(p)))\in
\hat{\Gamma}_{n,p}^{(1)}(\tau)
\]
and $T_{j}^{\infty}(p)\in\mathbb{C}$ with $T_{j}^{\infty}(p)^{2}=X_{j}%
^{\infty}(p)$.

Since
\begin{equation}
\wp\bigl(\hat{\sigma}_{n,p}^{(1)}(\hat{P}_{j}^{\infty}(p))\bigr)=\infty,
\end{equation}
we have $\hat{\sigma}_{n,p}^{(1)}(\hat{P}_{j}^{\infty}(p))=0$. By
Theorem~\ref{thm, sigma neq 0 implies convergence} and
Proposition~\ref{prop, B and a}--(ii), the associated equation $\mathrm{GLE}%
_{n}^{(1)}(p,T_{j}^{\infty}(p),\tau)$ may fail to converge as $p\to0$. There
are three possibilities:

\begin{itemize}
\item[(1)] $X_{j}^{\infty}(p)=X_{j}^{\infty,c}(p)$, in which case
$\mathrm{GLE}_{n}^{(1)}(p,T_{j}^{\infty}(p),\tau)$ converges as $p\to0$. By
Lemma~\ref{lem, gle converges to lame},
\begin{equation}
X_{j}^{\infty,c}(p)=\dfrac{n(n+1)}{p^{2}}+\widetilde{B}_{j}^{\infty,c}+o(1)
\label{0421equ2}%
\end{equation}
for some $\widetilde{B}_{j}^{\infty,c}\in\mathbb{C}$.

\item[(2)] $\mathrm{GLE}_{n}^{(1)}(p,T_{j}^{\infty}(p),\tau)$ diverges as
$p\to0$, which can be further divided into two cases:

\item[(D-i)] $X_{j}^{\infty}(p)=X_{j}^{\infty_{1}}(p)$, where
\begin{equation}
X_{j}^{\infty_{1}}(p)=\dfrac{n(n+1)}{p^{2}}+\dfrac{\alpha_{j}^{\infty_{1}}}%
{p}+O(1), \label{X infty1's asy}%
\end{equation}
with $\alpha_{j}^{\infty_{1}}\neq0$.

\item[(D-ii)] $X_{j}^{\infty}(p)=X_{j}^{\infty_{2}}(p)$, where
\begin{equation}
p^{2}X_{j}^{\infty_{2}}(p)\to\alpha_{j}^{\infty_{2}}\in\mathbb{C}\cup
\{\infty\}\setminus\{n(n+1)\}. \label{X infty2's asy}%
\end{equation}

\end{itemize}

Then for $\hat{P}(p)=(X(p),C(p))\in\hat{\Gamma}_{n,p}^{(1)}(\tau)$, we have
\begin{equation}
\begin{aligned} &\wp\bigl(\hat{\sigma}_{n,p}^{(1)}(\hat{P}(p))\bigr)\\ =&\dfrac{4}{n^2(n+1)^2}\dfrac{\prod_{i=1}^{m}(X(p)-X_{i}^{0}(p))}{\prod_{j=1}^{\hat{m}}(X(p)-X_{j}^{\infty,c}(p))\prod_{l=1}^{2}\prod_{j=1}^{k_{l}}(X(p)-X_{j}^{\infty_l}(p))}. \end{aligned} \label{P(sigma hat(p)) 1}%
\end{equation}

\begin{lemma}
\label{lem, Xi infty's asy} As $p\to0$, for each $j=1,2,\cdots,m-1$,
\begin{equation}
X_{j}^{\infty}(p)=\dfrac{n(n+1)}{p^{2}}+\widetilde{B}_{j}^{\infty}+o(1),
\end{equation}
with $\widetilde{B}_{j}^{\infty}\in\{B_{1}^{\infty},\cdots,B_{\frac{n(n+1)}%
{2}-1}^{\infty}\}$.
\end{lemma}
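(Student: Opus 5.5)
The plan is to show that neither divergent alternative (D-i) nor (D-ii) can occur, i.e. $k_{1}=k_{2}=0$ in \eqref{P(sigma hat(p)) 1}, so that every pole is of convergent type (1). The asymptotics then come from \eqref{0421equ2} and the identification of the limit values with the Lam\'e poles $B_{j}^{\infty}$. The tool throughout is the exact asymptotic \eqref{P(sigma hat)'s asym}, tested along families $X(p)$ for which the left-hand side of \eqref{P(sigma hat(p)) 1} has a known limit.

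\emph{Step 1 (test along convergent families).} I fix $\widetilde B\in\mathbb C$ generic: not a Lam\'e branch point, not among the $B_i^0$ or $B_j^\infty$. I take $X(p)=n(n+1)p^{-2}+\widetilde B$. By Lemma~\ref{lem, gle converges to lame} the equations converge to $\mathrm L_n(\widetilde B,\tau)$. By Proposition~\ref{prop, B and a}(i) the zero divisors converge to $\tilde{\mathbf a}(\widetilde P)\cup\{0\}$, so $\wp(\hat\sigma^{(1)}_{n,p}(\hat P(p)))\to\wp(\tilde\sigma_n(\widetilde P))$, which is finite and given by \eqref{lame's P(sigma n)}.

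\emph{Step 2 (evaluate the right side of \eqref{P(sigma hat(p)) 1}).} I plug the same family into the product formula and read off the $p\to0$ behaviour factor by factor.
\begin{itemize}
\item[(a)] By Lemma~\ref{lem, zero of P(sigma hat)}, each zero factor tends to $\widetilde B-\widetilde B_i^0$.
\item[(b)] Each convergent pole factor tends to $\widetilde B-\widetilde B^{\infty,c}_j$.
\item[(c)] A (D-i) factor behaves like $-\alpha_j^{\infty_1}/p$.
\item[(d)] A (D-ii) factor behaves like $(n(n+1)-\alpha_j^{\infty_2})p^{-2}$, or faster if $\alpha_j^{\infty_2}=\infty$.
\end{itemize}
Hence the right side is of order $p^{k_1+2k_2}$ times a finite nonzero quantity. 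Since the left side has a nonzero finite limit for generic $\widetilde B$, this forces $k_1+2k_2=0$, so $k_1=k_2=0$. (If $\alpha_j^{\infty_2}=\infty$ the factor only grows faster, which strengthens the conclusion.) Therefore $\hat m=m-1$, and every pole has the form \eqref{0421equ2}.

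\emph{Step 3 (identify the limits).} Cancelling the $4/(n^2(n+1)^2)$ prefactors, I get for all generic $\widetilde B$ the identity
\[
\frac{\prod_{i=1}^{m}(\widetilde B-\widetilde B_i^0)}{\prod_{j=1}^{m-1}(\widetilde B-\widetilde B^{\infty,c}_j)}=\frac{\prod_{i=1}^{n(n+1)/2}(\widetilde B-B_i^0)}{\prod_{j=1}^{n(n+1)/2-1}(\widetilde B-B_j^\infty)},
\]
as rational functions of $\widetilde B$, where I must be careful with the sign choice $\widetilde C$ versus $-\widetilde C$; $\wp$ is even, so this is harmless. The right side is in lowest terms, being the function \eqref{lame's P(sigma n)} on the Lam\'e curve. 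The numerator-minus-denominator degree is $1$ on both sides, which is consistent. Any cancellation on the left leaves the remaining poles inside $\{B_j^\infty\}$ with matching multiplicities. I also need to rule out cancellation, that is, some $\widetilde B_j^{\infty,c}$ coinciding with a $\widetilde B_i^0$. For this I would use that $\hat P_1,\hat P_2$ are coprime, together with the continuity argument of Proposition~\ref{prop, degree of hat sigma}. If cancellation still occurred, I would argue via degree that it would give $m<n(n+1)/2$, contradicting the lower bound obtained by counting preimages of a generic $\sigma_0$ via Theorem~\ref{thm, sigma neq 0 implies convergence}. Either way each $\widetilde B_j^\infty$ lies in $\{B_1^\infty,\dots,B^\infty_{n(n+1)/2-1}\}$, and the $o(1)$ error comes from \eqref{0421equ2}.

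The main obstacle is Step 2: I must make sure the generic choice of $\widetilde B$ keeps every factor nondegenerate and that the asymptotics of the (D-ii) case with $\alpha=\infty$ are controlled. A secondary difficulty is the possible cancellation in Step 3, which I would handle by the preimage count.
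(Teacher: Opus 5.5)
Your Steps 1--2 are sound and are essentially the paper's proof. The paper also shows $k_1=k_2=0$ by producing a family along which the left side of \eqref{P(sigma hat(p)) 1} has a finite nonzero limit, while the right side is of order $p^{k_1+2k_2}$. The only difference is the test family. The paper takes a preimage $\hat P(p)\in\hat{\sigma}_{n,p}^{(1)-1}(\sigma_p)$ with $\sigma_p\to\sigma_0$ and $\wp(\sigma_0)\notin\{0,\infty\}$, and gets its convergence from Theorem~\ref{thm, sigma neq 0 implies convergence}. You instead prescribe $X(p)=n(n+1)p^{-2}+\widetilde B$ and obtain the limit of $\hat\sigma^{(1)}_{n,p}$ from Proposition~\ref{prop, B and a}(i). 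Your choice makes the nondegeneracy of every factor a matter of choosing $\widetilde B$. For that, your genericity condition must also exclude the finitely many values $\widetilde B_j^{\infty,c}$. Excluding only the $B_j^\infty$ would presuppose the membership you are trying to prove.

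The genuine gap is in Step 3, the membership $\widetilde B_j^\infty\in\{B_1^\infty,\dots,B^\infty_{n(n+1)/2-1}\}$. Via the rational-function identity, you must exclude that some $\widetilde B_j^{\infty,c}$ equals some $\widetilde B_i^0$. Such a pole would cancel, and it would lie in $\{B_i^0\}$, which is disjoint from $\{B_j^\infty\}$. Neither remedy you propose works:
\begin{itemize}
\item Coprimality of $\hat P_1(\cdot,p;\tau)$ and $\hat P_2(\cdot,p;\tau)$ only gives $X_i^0(p)\neq X_j^\infty(p)$ for each fixed $p$. It says nothing about whether the limits of $X-n(n+1)p^{-2}$ coincide.
\item The degree count runs the wrong way. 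If $c$ pairs cancel, matching with the lowest-terms right side gives $m-c=n(n+1)/2$, so $m>n(n+1)/2$. No lower bound on $m$ can contradict this; you would need $m\le n(n+1)/2$. Theorem~\ref{thm, sigma neq 0 implies convergence} only places the limits of the $m$ preimages of $\sigma_0$ in the $n(n+1)/2$-point Lam\'e fiber, with no injectivity.
\end{itemize}

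The fix is to bypass the comparison. Once Step 2 shows that each pole family converges, apply Proposition~\ref{prop, B and a}(i) to that family itself. Since $\hat\sigma^{(1)}_{n,p}(\hat P_j^\infty(p))=0$ for every $p$, the limit gives $\tilde\sigma_n(\widetilde B_j^\infty,\pm\widetilde C)=0$. Hence $\widetilde B_j^\infty$ is a finite pole of $\wp\circ\tilde\sigma_n$, that is, one of the $B_j^\infty$. Combined with $\tilde\sigma_n(\widetilde P^*)=-\tilde\sigma_n(\widetilde P)$, the same argument shows no cancellation can occur, since a common limit would be both a zero and a pole of $\wp\circ\tilde\sigma_n$. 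This mirrors how Lemma~\ref{lem, zero of P(sigma hat)} is obtained for the zeros. The paper's own proof stops at $k_1=k_2=0$ and leaves this step implicit.
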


\begin{proof}
Assume that among $\{X_{j}^{\infty}(p)\}$, there are $k_{1}$ terms satisfying
\textrm{(D-i)} and $k_{2}$ terms satisfying \textrm{(D-ii)}. It suffices to
show that $k_{1}=k_{2}=0$.

Fix $\sigma_{0}\in E_{\tau}\setminus E_{\tau}[2]$ with $\wp(\sigma_{0}%
)\notin\{0,\infty\}$. Let $\sigma_{p}\in E_{\tau}\setminus E_{\tau}[2]$ such
that $\lim_{p\to0}\sigma_{p}=\sigma_{0}$. By
Theorem~\ref{thm, sigma neq 0 implies convergence},
\begin{equation}
\mathrm{GLE}_{n}^{(1)}(p,T(p),\tau)\ \text{converges to }\mathrm{L}%
_{n}(\widetilde{B},\tau)\ \text{as }p\to0, \label{GLE converges to Lame}%
\end{equation}
where $T(p)^{2}=X(p)$, $(X(p),C(p))\in\hat{\sigma}_{n,p}^{(1)-1}(\sigma_{p})$.

Consequently,
\begin{equation}
\wp\bigl(\sigma_{p}\bigr)=\dfrac{4}{n^{2}(n+1)^{2}}\dfrac{\prod_{i=1}%
^{m}(X(p)-X_{i}^{0}(p))}{\prod_{j=1}^{\hat{m}}(X(p)-X_{j}^{\infty,c}%
(p))\prod_{l=1}^{2}\prod_{j=1}^{k_{l}}(X(p)-X_{j}^{\infty_{l}}(p))},
\label{P(sigma p) 0}%
\end{equation}
where $\hat{m}=m-1-k_{1}-k_{2}$.

From Lemma~\ref{lem, gle converges to lame}, \eqref{GLE converges to Lame}
implies that
\[
X(p)=\frac{n(n+1)}{p^{2}}+\widetilde{B}+o(1),
\]
where
\begin{equation}
\widetilde{B}\notin\{\widetilde{B}_{i}^{0},\widetilde{B}_{j}^{\infty,c}\}
\label{range of tilde B}%
\end{equation}
due to $\wp(\sigma_{0})\notin\{0,\infty\}$.

Observe that
\begin{gather}
X(p)-X_{i}^{0}(p)=\widetilde{B}-\widetilde{B}_{i}^{0}+o(1),\label{0421equ3}\\
X(p)-X_{j}^{\infty,c}(p)=\widetilde{B}-\widetilde{B}_{j}^{\infty
,c}+o(1),\label{0422equ1}\\
X(p)-X_{j}^{\infty_{1}}(p)=-\frac{\alpha_{j}^{\infty_{1}}}{p}+o(1),\\
p^{2}(X(p)-X_{j}^{\infty_{2}}(p))=n(n+1)-\alpha_{j}^{\infty_{2}}+o(1).
\label{0421equ4}%
\end{gather}
Substituting \eqref{0421equ3}--\eqref{0421equ4} into \eqref{P(sigma p) 0}
gives
\begin{equation}
\wp(\sigma_{p})=\dfrac{(-1)^{k_{1}}}{n^{2}(n+1)^{2}}\dfrac{4p^{k_{1}+2k_{2}%
}\prod_{i=1}^{m}(\widetilde{B}-\widetilde{B}_{i}^{0})}{\prod_{j=1}^{\hat{m}%
}(\tilde{B}-\widetilde{B}_{j}^{\infty,c})\prod_{j=1}^{k_{1}}\alpha_{j}%
^{\infty_{1}}\prod_{j=1}^{k_{2}}(n(n+1)-\alpha_{j}^{\infty_{2}})}+o(1),
\label{P(sigma p)}%
\end{equation}
where \eqref{X infty1's asy}, \eqref{X infty2's asy} and
\eqref{range of tilde B} imply that
\[
\dfrac{\prod_{i=1}^{m}(\widetilde{B}-\widetilde{B}_{i}^{0})}{\prod_{j=1}%
^{\hat{m}}(\widetilde{B}-\widetilde{B}_{j}^{\infty,c})\prod_{j=1}^{k_{1}%
}\alpha_{j}^{\infty_{1}}\prod_{j=1}^{k_{2}}(n(n+1)-\alpha_{j}^{\infty_{2}}%
)}\neq0.
\]
Since $\wp(\sigma_{p})\to\wp(\sigma_{0})\neq0,\infty$, it follows from
\eqref{P(sigma p)} that
\[
k_{1}+2k_{2}=0,
\]
which forces $k_{1}=k_{2}=0$.
\end{proof}

\begin{proof}
[Proof of Theorem~\ref{thm, degree of hat sigma in appendix}]Fix
$\widetilde{B}\in\mathbb{C}\setminus\{B_{i}^{0},B_{j}^{\infty}\}$ and let
$X(p)=T(p)^{2}$ satisfy $\mathrm{GLE}_{n}^{(1)}$ $(p,T(p),\tau)$ converges to
$\mathrm{L}_{n}(\widetilde{B},\tau)$ as $p\to0$. Then
Lemma~\ref{lem, gle converges to lame} implies that $\widetilde{B}$ and $X(p)$
satisfy
\begin{equation}
X(p)=\frac{n(n+1)}{p^{2}}+\widetilde{B}+o(1)\ \text{near }p=0,
\label{0422equ2}%
\end{equation}
and $B(p)\to\widetilde{B}$ as $p\to0$. Proposition~\ref{prop, B and a}--(i)
then yields
\[
\mathbf{a}(p)\to\mathbf{a}^{0}=\{a_{1}^{0},\cdots,a_{n}^{0},0\}\in
\widetilde{Y}_{n}(\tau).
\]
Hence,
\[
\lim_{p\to0}\hat{\sigma}_{n,p}(\hat{P}(p))=\lim_{p\to0}\sum_{j=1}^{n+1}%
a_{j}(p)=\sum_{j=1}^{n}a_{j}^{0}=\tilde{\sigma}_{n}(\widetilde{B}%
,\widetilde{C}),
\]
where $(\widetilde{B},\widetilde{C})\in\widetilde{\Gamma}_{n}(\tau)$.

Consequently,
\begin{equation}
\lim_{p\to0}\wp\bigl(\hat{\sigma}_{n,p}^{(1)}(X(p),C(p))\bigr)=\wp
(\tilde{\sigma}_{n}(\widetilde{B},\widetilde{C})) . \label{two P as p to 0}%
\end{equation}

Substituting \eqref{0422equ2} into \eqref{P(hat sigma(P(p)))}, and using
Lemmas~\ref{lem, zero of P(sigma hat)}--~\ref{lem, Xi infty's asy}, the limit
\eqref{two P as p to 0} gives
\[
\wp(\sigma_{n}(\widetilde{B},\widetilde{C}))=\dfrac{4}{n^{2}(n+1)^{2}}%
\dfrac{\prod_{i=1}^{m}(\widetilde{B}-\widetilde{B}_{i}^{0})}{\prod_{j=1}%
^{m-1}(\tilde{B}-\widetilde{B}_{j}^{\infty})}.
\]
Comparing with \eqref{lame's P(sigma n)} yields
\[
\deg\hat{\sigma}_{n,p}^{(1)}=m=\deg\tilde{\sigma}_{n}=\frac{n(n+1)}{2}.
\]

\end{proof}

\begin{remark}
The proof of Theorem~\ref{thm, degree of hat sigma in appendix} also shows
that
\[
\{\widetilde{B}_{1}^{0},\cdots,\widetilde{B}_{\frac{n(n+1)}{2}}^{0}%
\}=\{B_{1}^{0},\cdots,B_{\frac{n(n+1)}{2}}^{0}\}
\]
and
\[
\{\widetilde{B}_{1}^{\infty},\cdots,\widetilde{B}_{\frac{n(n+1)}{2}-1}%
^{\infty}\}=\{B_{1}^{\infty},\cdots,B_{\frac{n(n+1)}{2}-1}^{\infty}\}.
\]

\end{remark}

\section{Monodromy Rigidity and $p$-Independence}

\label{Monodromy Theory}

Quotienting ${\Gamma}_{n,p}^{(1)}(\tau)$ by the involution $P\mapsto-P^{\ast}%
$:
\[
\hat{\Gamma}_{n,p}^{(1)}(\tau):=\Gamma_{n,p}^{(1)}(\tau)/\sim,
\]
which, on the $T$-coordinate, is given by $T\sim-T$. Setting
\[
X=T^{2},
\]
we obtain
\begin{equation}
\hat{\Gamma}_{n,p}^{(1)}(\tau)=\{(X,C)\in\mathbb{C}^{2}\mid C^{2}%
=\widehat{Q}_{n,p}^{(1)}(X)\}. \label{quotient curve's express}%
\end{equation}
Since $\deg_{X}\widehat{Q}_{n,p}^{(1)}(X)=2n+1$, $\hat{\Gamma}_{n,p}%
^{(1)}(\tau)$ is a hyperelliptic curve with one point at infinity, denoted by
$\hat\infty^{(1)}$. Therefore, its compactification is given by
\begin{equation}
\overline{\hat\Gamma_{n,p}^{(1)}(\tau)}=\hat\Gamma_{n,p}^{(1)}(\tau
)\cup\{\widehat{\infty}^{(1)}\}.
\end{equation}

Let
\[
\pi:\Gamma_{n,p}^{(1)}(\tau)\to\ \hat{\Gamma}_{n,p}^{(1)}(\tau)
\]
be the projection map given by
\begin{equation}
\pi(P)=\pi(-P^{\ast})=\hat{P}=(X,C).
\end{equation}

Moreover, since the two points at infinity $\infty_{\pm}^{(1)}$ of
$\Gamma_{n,p}^{(1)}(\tau)$ satisfy
\[
-\infty_{+}^{(1)\ast}=\infty_{-}^{(1)},
\]
they are identified under the projection. Hence the map $\pi$ extends to a
holomorphic map of degree two (still denoted by $\pi$)
\[
\pi: \overline{\Gamma_{n,p}^{(1)}(\tau)} \to\overline{\hat{\Gamma}_{n,p}%
^{(1)}(\tau)},
\]
satisfying
\[
\pi(\infty_{+}^{(1)})=\pi(\infty_{-}^{(1)})=\widehat{\infty}^{(1)}.
\]

By \eqref{sigma P=sigma -P*}, the addition map $\sigma_{n,p}^{(1)}%
:\overline{\Gamma_{n,p}^{(1)}(\tau)}\mapsto E_{\tau}$ is invariant under the
involution $P\rightarrow-P^{\ast}$. Hence it descends to a well-defined map
\begin{equation}
\hat{\sigma}_{n,p}^{(1)}:\overline{\hat{\Gamma}_{n,p}^{(1)}(\tau)}\mapsto
E_{\tau}, \label{hat sigma's def}%
\end{equation}
defined by
\begin{equation}
\hat{\sigma}_{n,p}^{(1)}(\hat{P}):=\sigma_{n,p}^{(1)}(P),
\label{add map's def on quotient}%
\end{equation}
where $\hat{P}=\pi(P)$.

By Theorem~\ref{thm, P and P' rational in T} and
Lemma~\ref{lem, Pj is even in T}, the above analysis yields the following theorem.

\begin{theorem}
\label{thm, monodromy sys in X} Let $P=(T,C)\in\Gamma_{n,p}^{(1)}(\tau)$ and
$X=T^{2}$. Then $(r(P),s(P))$ is determined by the system
\begin{equation}
\left\{
\begin{array}
[c]{l}%
\wp(r(P)+s(P)\tau)=\dfrac{\hat{P}_{1}(X,p;\tau)}{\hat{P}_{2}(X,p;\tau)},\\
\wp^{\prime}(r(P)+s(P)\tau)=\dfrac{\hat{P}_{3}(X,p;\tau)}{\hat{P}_{4}%
(X,p;\tau)}\sqrt{-\widehat{Q}_{n,p}^{(1)}(X)},\\
\kappa(P)=Z(r(P),s(P),\tau)=\dfrac{\hat{P}_{5}(X,p;\tau)}{\hat{P}_{6}%
(X,p;\tau)}\sqrt{-\widehat{Q}_{n,p}^{(1)}(X)}.
\end{array}
\right.  \label{mono system for X, r(P) and s(P)}%
\end{equation}

\end{theorem}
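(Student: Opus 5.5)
The plan is to assemble Theorem~\ref{thm, monodromy sys in X} from three ingredients: Lemma~\ref{lem, relation bet r(P),s(P) and aj}, Proposition~\ref{prop, kappa is Zrs}, and the evenness statements of Theorem~\ref{thm, P and P' rational in T} and Lemma~\ref{lem, Pj is even in T}.

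\textbf{Step 1.} Lemma~\ref{lem, relation bet r(P),s(P) and aj} gives
\[
r(P)+s(P)\tau=\sigma_{n,p}^{(1)}(P).
\]
Hence $\wp(r(P)+s(P)\tau)=\wp(\sigma_{n,p}^{(1)}(P))$, and the same identity holds for $\wp'$. Proposition~\ref{prop, kappa is Zrs} gives $\kappa(P)=Z(r(P),s(P),\tau)$.

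\textbf{Step 2.} Substitute the expressions \eqref{P(sigma) in T}--\eqref{kappa rational in T} from Theorem~\ref{thm, P and P' rational in T}, then replace each $P_j(T,p;\tau)$ by $\hat P_j(T^2,p;\tau)$ using Lemma~\ref{lem, Pj is even in T}. Likewise replace $Q^{(1)}_{n,p}(T)$ by $\widehat Q^{(1)}_{n,p}(X)$ using \eqref{tilde Q is even in T}. This yields the three displayed identities in \eqref{mono system for X, r(P) and s(P)}.

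Two conventions need checking here. The branch of $\sqrt{-\widehat Q}$ is fixed by $C$, taking $\sqrt{-Q}=iC$ up to a sign absorbed into $P_3,P_5$. The choice of this sign is harmless because $P\mapsto P^*$ flips $C$, $\sigma$, and $\kappa$ simultaneously.

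\textbf{Step 3.} It remains to argue that the system determines $(r(P),s(P))$ modulo $\mathbb Z^2$.

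\emph{Generic case.} The first two equations fix the point $\sigma_0:=r+s\tau\in E_\tau$, not merely $\pm\sigma_0$, since $\wp$ and $\wp'$ together determine a point of $E_\tau$. The pair $(r,s)$ is then determined modulo a lattice shift $(r,s)\mapsto(r+m_1,s+m_2)$, $m_1,m_2\in\mathbb Z$. Under this shift, $Z$ changes by $-(m_1\eta_1+m_2\eta_2)$ because of quasi-periodicity of $\zeta$. The Legendre relation shows that $(m_1,m_2)\mapsto m_1\eta_1+m_2\eta_2$ is injective on $\mathbb Z^2$, so the third equation singles out $(r,s)$ in $\mathbb C^2$. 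Since $(r,s)$ are only defined in $\mathbb C^2/\mathbb Z^2$, the statement should be read as: $\sigma_0$ is determined by the first two equations, and the third pins the lift of $r\eta_1+s\eta_2=c(P)$.

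\emph{Degenerate case.} When $\sigma_0=0$ the right-hand sides have poles. By Remark~\ref{remark, P2,4,6's common zeros} these are exactly the zeros of $\hat P_{2},\hat P_4,\hat P_6$, and there I would use the relation $r\eta_1+s\eta_2=c(P)$ directly.

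The only delicate point is this degenerate case $\sigma_{n,p}^{(1)}(P)=0$ together with the sign convention for the square root. I would handle both by noting that $\kappa$ is defined through $c(P)$, which is finite on the affine curve.
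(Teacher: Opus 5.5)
Your Steps 1 and 2 are exactly the paper's argument. The paper derives the theorem in one line: it inserts Lemma~\ref{lem, relation bet r(P),s(P) and aj} and Proposition~\ref{prop, kappa is Zrs} into Theorem~\ref{thm, P and P' rational in T}, then rewrites the result using Lemma~\ref{lem, Pj is even in T} and \eqref{tilde Q is even in T}. Your treatment of the branch of $\sqrt{-\widehat Q^{(1)}_{n,p}}$ is fine.

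Step 3, which is your justification of the word ``determined'', is wrong as written, in three places.
\begin{enumerate}
\item Since $(r,s)\in\mathbb C^2$, knowing $\sigma_0=r+s\tau$ does not fix $(r,s)$ up to $\mathbb Z^2$. Every pair $(r+\alpha\tau,\,s-\alpha)$ with $\alpha\in\mathbb C$ has the same $\sigma_0$, so after the first two equations a whole complex one-parameter ambiguity remains.
\item $Z(r,s,\tau)$ does not change under $(r,s)\mapsto(r+m_1,s+m_2)$. The term $m_1\eta_1+m_2\eta_2$ coming from the quasi-periodicity of $\zeta$ cancels exactly against the shift of $r\eta_1+s\eta_2$. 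Hence the third equation cannot select an integer lift.
\item Your injectivity claim for $(m_1,m_2)\mapsto m_1\eta_1+m_2\eta_2$ does not follow from Legendre. It actually fails whenever $\eta_1(\tau)=0$, which occurs for some $\tau\in i\mathbb R_{>0}$.
\end{enumerate}

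The real role of the $\kappa$-equation is to remove the continuous ambiguity in item 1. It gives $r\eta_1+s\eta_2=\zeta(\sigma_0)-\kappa\;(=c(P))$. Together with $r+s\tau=\sigma_0$, this is a $2\times2$ linear system in $(r,s)$ with determinant $\eta_2-\tau\eta_1=-2\pi i\neq0$ by \eqref{legendre relation}. So each lift of $\sigma_0$ determines a unique $(r,s)$, and changing the lift by $m_1+m_2\tau$ shifts $(r,s)$ by $(m_1,m_2)$. This is exactly how the paper uses the system later, in the proofs of Lemma~\ref{lem, gle to lame r,s} and Theorem~\ref{thm, com red iff Zn=0}.

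Replace Step 3 by this linear-algebra argument. When $\sigma_0=0$, apply the same argument with $r+s\tau\in\Lambda_\tau$ and $r\eta_1+s\eta_2=c(P)$. With that change your proposal is complete.
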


Since $(\wp,\wp^{\prime})$ satisfies the classical differential equation
\begin{equation}
\wp^{\prime2}=4\wp^{3}-g_{2}\wp-g_{3}, \label{diff equ bet P and P'}%
\end{equation}
it follows from Theorem~\ref{thm, monodromy sys in X} that
the polynomials $\hat{P}_{j}(X,p;\tau)$, simply denoted by $\hat{P}_{j}(X)$,
$j=1,2,3,4$, satisfy the algebraic identity
\begin{equation}
-\frac{\hat{P}_{3}^{2}(X)}{\hat{P}_{4}^{2}(X)}\,\widehat{Q}_{n,p}%
^{(1)}(X)=4\left(  \frac{\hat{P}_{1}(X)}{\hat{P}_{2}(X)}\right)  ^{3}%
-g_{2}\frac{\hat{P}_{1}(X)}{\hat{P}_{2}(X)}-g_{3}. \label{alge identity on Pj}%
\end{equation}

For $(\sigma,\kappa)\in\mathbb{C}\times\mathbb{CP}^{1}$, consider
\begin{equation}%
\begin{cases}
\wp(\sigma)=\dfrac{\hat{P}_{1}(X,p;\tau)}{\hat{P}_{2}(X,p;\tau)},\\
\wp^{\prime}(\sigma)=\dfrac{\hat{P}_{3}(X,p;\tau)}{\hat{P}_{4}(X,p;\tau)}%
\sqrt{-\widehat{Q}_{n,p}^{(1)}(X)},\\
\kappa=\dfrac{\hat{P}_{5}(X,p;\tau)}{\hat{P}_{6}(X,p;\tau)}\sqrt
{-\widehat{Q}_{n,p}^{(1)}(X)}.
\end{cases}
\label{system for X, r and s}%
\end{equation}

In the degenerate case $(\sigma,\kappa)=(0,\infty)$, the system should be
interpreted as
\begin{equation}
D(X,p;\tau)=0, \label{gcd of P2,4,6}%
\end{equation}
where
\begin{equation}
D(X,p;\tau):=\gcd(\hat{P}_{2}(X,p;\tau),\hat{P}_{4}(X,p;\tau),\hat{P}%
_{6}(X,p;\tau)). \label{gcd's def}%
\end{equation}

\begin{proposition}
Let $P=(T(P),C(P))\in\Gamma_{n,p}^{(1)}(\tau)$ and set $X(P)=T^{2}(P)$,
$\hat{P}=\pi(P)$.

\begin{enumerate}
\item[(i)] $D(X(P),p,\tau)=0$ if and only if $\hat{\sigma}_{n,p}^{(1)}(\hat
{P})=0$.

\item[(ii)] Suppose $D(X(P),p,\tau)=0$. Then $\mathrm{GLE}_{n}^{(1)}%
(p,T(P),\tau)$ is completely reducible if and only if $(r(P),s(P))\neq(0,0)$.
\end{enumerate}
\end{proposition}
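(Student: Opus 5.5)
For (i), I will read off the pole structure of the three rational functions in \eqref{system for X, r and s}. By Remark~\ref{remark, P2,4,6's common zeros}, $\wp(\sigma^{(1)}_{n,p}(P))$, $\wp'(\sigma^{(1)}_{n,p}(P))$ and $\kappa(P)$ have poles exactly at the points $P$ with $\sigma^{(1)}_{n,p}(P)=0$. Hence $\hat P_2,\hat P_4,\hat P_6$ share the same zero set in $X$, and this set is the zero set of $D$. If $\hat\sigma^{(1)}_{n,p}(\hat P)=0$, then $\wp(\hat\sigma^{(1)}_{n,p}(\hat P))=\infty$. Since $\hat P_1,\hat P_2$ are coprime, this forces $\hat P_2(X(P))=0$, and therefore $D(X(P))=0$. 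Conversely, if $D(X(P))=0$ then $\hat P_2(X(P))=0$. Coprimality gives $\hat P_1(X(P))\ne0$, so $\wp(\sigma)=\infty$ and $\sigma=0$ in $E_\tau$. The only delicate point is that coprimality was stated for $P_1,P_2$ in $T$. Passing to $X=T^2$ preserves it, because a common factor in $X$ would pull back to a common factor in $T$.

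For (ii), assume $D(X(P))=0$. By (i) and Lemma~\ref{lem, relation bet r(P),s(P) and aj}, $r(P)+s(P)\tau\in\Lambda_\tau$, so $(r(P),s(P))\in\mathbb Z^2$ as elements of $\mathbb C^2$, up to the $\mathbb Z^2$ ambiguity. The key point is that $(r,s)$ is only defined modulo $\mathbb Z^2$ through $\lambda_j$. So I will pin down the representative using the second relation $r\eta_1+s\eta_2=c(P)$ from Lemma~\ref{lem, relation bet r(P),s(P) and aj}, taking the branch of $\sigma^{(1)}_{n,p}(P)=\sum a_i(P)$ in $\mathbb C$ determined by the zero divisor.

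The argument then splits into two directions. If $(r,s)=(0,0)$, equivalently it lies in $\frac12\mathbb Z^2$, then by Theorem~\ref{thm,criterion for} (iii)$\Rightarrow$(i) the equation is not completely reducible. If instead $(r,s)\neq(0,0)$ with $(r,s)\in\mathbb Z^2$ as a lifted pair, I must show complete reducibility. I would argue as follows. The representative $(r,s)$ with $r+s\tau=\sum a_i$ (a lift in $\mathbb C$) and $c=r\eta_1+s\eta_2$ is exactly the classical situation of Lam\'e theory. When $(r,s)\notin\frac12\mathbb Z^2$ in the sense of genuine monodromy, Theorem~\ref{thm,criterion for} applies directly. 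In the integer-lift case, I would use Theorem~\ref{thm,criterion for} (iv): show $\mathbf a(P)\cap\mathbf a(P^*)=\emptyset$. If $Q^{(1)}_{n,p}(T)=0$, then $P=P^*$ and $\psi(P)$ is the unique Bloch solution, with $c(P)=-c(P^*)=-c(P)$. This gives $c(P)=0$, and the lift $\sum a_i(P)=-\sum a_i(P)$ gives $\sum a_i=0$ in $\mathbb C$ (after a consistent lift). Then Legendre's relation makes $(r,s)=(0,0)$, which proves the contrapositive.

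The main obstacle is making precise the meaning of ``$(r,s)\neq(0,0)$'' when the pair is only defined mod $\mathbb Z^2$. I would fix this by lifting via the pair $(\sum a_i\in\mathbb C, c(P))$ and inverting with Legendre's relation $\eta_1\tau-\eta_2=2\pi i$. In particular, I need the nondegeneracy $\det\begin{pmatrix}1&\tau\\ \eta_1&\eta_2\end{pmatrix}\ne0$, so that this lift is unique.
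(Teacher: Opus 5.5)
Part (i) is correct and is essentially the paper's argument: poles of $\wp(\hat\sigma^{(1)}_{n,p})$ are the zeros of $\hat P_2$, and Remark~\ref{remark, P2,4,6's common zeros} identifies these with the zeros of $D$. Part (ii), however, rests on a false step.

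From $r(P)+s(P)\tau\in\Lambda_\tau$ you conclude $(r(P),s(P))\in\mathbb Z^2$. But $r,s$ are complex. After a $\mathbb Z^2$-shift the condition only says $r=-s\tau$, which is a whole complex line of monodromy data (e.g.\ $(-\tau/3,1/3)$), not a lattice. If your claim were true, every point with $D=0$ would have data $\equiv(0,0)$. By Theorem~\ref{thm,criterion for} it would then never be completely reducible, and (ii) would collapse.

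This misreading is what pushes you to reinterpret ``$(r,s)\neq(0,0)$'' via a lift, and the case you then set out to prove is false. A nonzero integer pair gives $\lambda_1=\lambda_2=1$, so by Lemma~\ref{lem,complete iff lambda neq pm1} the equation is \emph{not} completely reducible. There is also no lift of $\sum a_i(P)$ ``determined by the zero divisor''. Replacing it by $\sum a_i(P)+\omega_j$ replaces $c(P)$ by $c(P)+\eta_j$ and shifts $(r,s)$ by a unit vector, so your lifted statement depends on the lift. Similarly, $c(P^*)=-c(P)$ yields $c(P)=0$ only after the lift $\sum a_i(P)=0\in\mathbb C$ has been fixed, not as a consequence.

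The missing idea is elementary. By (i), choose the representative with $r+s\tau=0$. If $(r,s)\in\tfrac12\mathbb Z^2$, then $r,s$ are real, and the $\mathbb R$-linear independence of $1$ and $\tau$ forces $r=s=0$. Hence on the locus $D=0$, $(r,s)\in\tfrac12\mathbb Z^2$ if and only if $(r,s)\equiv(0,0)$. Then (ii) is exactly the equivalence (i)$\Leftrightarrow$(iii) of Theorem~\ref{thm,criterion for}; this is the paper's proof.

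Your computation for $Q^{(1)}_{n,p}(T)=0$ can be salvaged. Fix the lift $\sum a_i(P)=0$. Ellipticity of $\psi(P)^2$ gives $e^{2c\omega_j}=1$ for $j=1,2$, hence $c=0$ since $\tau\notin\mathbb R$, and Legendre's relation then gives $(r,s)=(0,0)$. Once the integrality hypothesis is dropped and the lift is fixed first, this is a valid alternative proof of ``not completely reducible $\Rightarrow(r,s)\equiv(0,0)$''. Together with your first direction it proves (ii), but the surrounding case split should be discarded.
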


\begin{proof}
\noindent(i) For sufficiency, since
\[
\sigma_{n,p}^{(1)}(P)=\hat{\sigma}_{n,p}^{(1)}(\hat{P})=r+s\tau=0,
\]
the functions $\wp(\sigma_{n,p}^{(1)}(P))$, $\wp^{\prime}(\sigma_{n,p}%
^{(1)}(P))$ and $\kappa(P)$ have a common pole at $P$. Equivalently, $X(P)$ is
a common root of $P_{j}(X)$ for $j=2,4,6$. Hence
\[
D(X(P),p;\tau)=0.
\]

For necessity, suppose, for a contradiction, that $\hat\sigma_{n,p}^{(1)}(\hat
P)\neq0$. Then 
$$\wp(\hat\sigma_{n,p}^{(1)}(\hat P))\neq\infty.$$
By
Theorem~\ref{thm, monodromy sys in X}, this implies that $\hat{P}_{2}(X)\neq
0$, contradicting the assumption $D(X,p;\tau)=0$. This proves (i).

\noindent(ii) Suppose $D(X,p;\tau)=0$. By (i), we have
\[
r(P)+s(P)\tau=\hat\sigma_{n,p}^{(1)}(\hat P)=0.
\]
The assertion then follows directly from Theorem~\ref{thm,criterion for}.
\end{proof}

From now on, we focus on the non-degenerate case $(\sigma,\kappa)\in
\mathbb{C}^{*}\times\mathbb{C}$. \medskip

\begin{proposition}
\label{prop, system implies data exists} Given $(\sigma,\kappa)\in
\mathbb{C}^{*}\times\mathbb{C}$. Suppose that $X\in\mathbb{C}$ satisfies the
system \eqref{system for X, r and s}. Then there exists a point $\hat{P}%
\in\hat{\Gamma}_{n,p}^{(1)}(\tau)$ such that
\[
\left(  \sigma_{n,p}^{(1)}(P),\kappa(P)\right)  =(\sigma,\kappa),
\]
for any $P\in\pi^{-1}(\hat{P})$.
\end{proposition}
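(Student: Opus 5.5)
Given $X\in\mathbb{C}$ satisfying \eqref{system for X, r and s}, I would first produce the point $\hat P$ and then check that it has the prescribed addition value and $\kappa$.

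\emph{Constructing $\hat P$.} Choose a square root $C$ of $\widehat{Q}^{(1)}_{n,p}(X)$. Then $\hat P=(X,C)\in\hat\Gamma^{(1)}_{n,p}(\tau)$ by \eqref{quotient curve's express}. Pick $T$ with $T^{2}=X$ and let $P=(T,C)\in\Gamma^{(1)}_{n,p}(\tau)$, so that $\pi(P)=\hat P$.

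\emph{Matching $\wp$.} By Theorem~\ref{thm, monodromy sys in X}, $\wp(\sigma^{(1)}_{n,p}(P))=\hat P_{1}(X)/\hat P_{2}(X)$. Since $\sigma\neq0$, the first equation of \eqref{system for X, r and s} shows this value equals $\wp(\sigma)$ and is finite. Hence $\hat P_{2}(X)\neq0$ after cancelling common factors, and $\sigma^{(1)}_{n,p}(P)=\pm\sigma$.

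\emph{Fixing the sign via $\wp'$.} Compare the second equation of \eqref{system for X, r and s} with the second line of \eqref{mono system for X, r(P) and s(P)}. Both are the same rational expression in $X$ times $\sqrt{-\widehat Q^{(1)}_{n,p}(X)}$, so they agree up to the branch of the square root, i.e.\ up to $C\mapsto -C$. Replacing $P$ by $P^{*}$ flips $\sigma^{(1)}_{n,p}$ by \eqref{sigma P* and P}, and changes $\kappa$ by $\kappa(P^{*})=-\kappa(P)$. I would choose the sign of $C$ so that the branch of $\sqrt{-\widehat Q}$ used in the system matches the one attached to $P$, giving $\wp'(\sigma^{(1)}_{n,p}(P))=\wp'(\sigma)$. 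When $\wp'(\sigma)\neq0$, this together with $\wp$ forces $\sigma^{(1)}_{n,p}(P)=\sigma$. With this same branch, the third line gives $\kappa(P)=\kappa$ directly.

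\emph{Independence of the choice of $P\in\pi^{-1}(\hat P)$.} The other preimage is $-P^{*}$. By \eqref{sigma P=sigma -P*} it has the same addition value. For $\kappa$: $\kappa(-P^{*})=\zeta(\sigma)-c(-P^{*})$, and Proposition~\ref{prop, kappa is Zrs} together with Proposition~\ref{prop, data P and -P} gives $\kappa(-P^{*})=Z(r(P),s(P),\tau)=\kappa(P)$. So $\kappa$ descends to $\hat\Gamma^{(1)}_{n,p}(\tau)$.

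\emph{Main obstacle: the $2$-torsion case.} The delicate step is when $\wp'(\sigma)=0$, i.e.\ $\sigma\in E_\tau[2]\setminus\{0\}$. Then $\wp$ and $\wp'$ only determine $\sigma$ up to sign, and the sign of $C$ cannot be read off from $\wp'$. Here $\sigma=-\sigma$, so $\sigma^{(1)}_{n,p}(P)=\sigma$ holds for both choices of $C$, and only $\kappa$ needs care. I would use the third equation: $\kappa(P)$ and $\kappa(P^{*})=-\kappa(P)$ are the two values $\pm\tfrac{\hat P_5}{\hat P_6}\sqrt{-\widehat Q}$. Choosing the sign of $C$ to realise the given $\kappa$ is possible unless $\hat P_6(X)=0$, which would make $\kappa(P)=\infty$ and contradict $\kappa\in\mathbb{C}$. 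The sign ambiguity of the square root thus corresponds exactly to the hyperelliptic involution $C\mapsto -C$, so a suitable $\hat P$ exists in every case.
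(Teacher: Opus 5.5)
Your proposal is correct and follows the paper's proof: take $\hat P=(X,C)$ with $P\in\pi^{-1}(\hat P)$, then apply Theorem~\ref{thm, monodromy sys in X} to match $\wp$, $\wp'$ and $\kappa$. You go beyond the paper's one-line argument in three places: you tie the branch of $\sqrt{-\widehat Q^{(1)}_{n,p}(X)}$ to the sign of $C$, you treat the two-torsion case where $\wp'$ no longer fixes the sign, and you check independence of the lift (which the paper records only after the proposition); these are refinements of the same argument.
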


\begin{proof}
Given $(\sigma,\kappa)\in\mathbb{C}^{*}\times\mathbb{C}$, suppose
$X\in\mathbb{C}$ is a solution of system \eqref{system for X, r and s}. Let
$\hat{P}=(X,C)\in\hat{\Gamma}_{n,p}^{(1)}(\tau)$ and $P\in\pi^{-1}(\hat{P})$.
By Theorem~\ref{thm, monodromy sys in X}, we obtain%
\[
\wp(\sigma_{n,p}^{(1)}(P))=\wp(\sigma),
\]%
\[
\wp^{\prime}(\sigma_{n,p}^{(1)}(P))=\wp^{\prime}(\sigma),
\]
and
\[
\kappa(P)=\kappa.
\]
This completes the proof.
\end{proof}

Moreover, by Lemma~\ref{lem, relation bet r(P),s(P) and aj} and
Proposition~\ref{prop, data P and -P}, we have
\[
\sigma_{n,p}^{(1)}(-P^{*})=\sigma_{n,p}^{(1)}(P), \qquad c(-P^{*})=c(P).
\]
Hence
\[
\kappa(-P^{*}) = \zeta\bigl(\sigma_{n,p}^{(1)}(-P^{*})\bigr)-c(-P^{*}) =
\zeta\bigl(\sigma_{n,p}^{(1)}(P)\bigr)-c(P) = \kappa(P).
\]
Thus $\kappa(P)$ descends to a rational function on the quotient curve
$\hat{\Gamma}_{n,p}^{(1)}(\tau)$, which we denote by $\kappa(\hat{P})$. Let
\begin{equation}
\widehat{W}_{n,p}(\sigma;\kappa)\in\mathbb{C}(\wp(\sigma),\wp^{\prime}%
(\sigma),\wp(p),\wp^{\prime}(p),e_{j}(\tau))[\kappa]
\end{equation}
be the monic minimal polynomial of $\kappa$.

Proposition~\ref{prop, system implies data exists} gives:

\begin{theorem}
\label{thm, equivalence of W and system} Let $(\sigma,\kappa)\in\mathbb{C}%
^{*}\times\mathbb{C}.$ Then the following statements are equivalent:

\begin{enumerate}
\item[(i)] $\widehat{W}_{n,p}(\sigma;\kappa)=0.$

\item[(ii)] There exists $\hat{P}\in\hat{\Gamma}_{n,p}^{(1)}(\tau)$ such that
\[
\bigl(
\hat{\sigma}_{n,p}^{(1)}(\hat{P}), {\kappa}(\hat{P}) \bigr)
= (\sigma,\kappa).
\]

\item[(iii)] The system \eqref{system for X, r and s} with the given pair
$(\sigma,\kappa)$ admits a solution $X$.
\end{enumerate}
\end{theorem}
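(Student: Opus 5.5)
I would prove the three equivalences cyclically, (ii)$\Rightarrow$(i)$\Rightarrow$(iii)$\Rightarrow$(ii), treating $\widehat W_{n,p}(\sigma;\kappa)$ as the monic minimal polynomial of the rational function $\kappa$ over the function field $\mathbb{C}(\wp(\hat\sigma),\wp'(\hat\sigma))$ pulled back by $\hat\sigma^{(1)}_{n,p}$, with coefficients in $\mathbb{C}(\wp(p),\wp'(p),e_j(\tau))$.

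\emph{Step 1: (ii)$\Rightarrow$(i).} Since $\widehat W_{n,p}$ annihilates $\kappa$ in the function field of $\overline{\hat\Gamma^{(1)}_{n,p}(\tau)}$, the identity $\widehat W_{n,p}(\hat\sigma^{(1)}_{n,p}(\hat P);\kappa(\hat P))=0$ holds as a rational function of $\hat P$. It therefore vanishes at every $\hat P$ where all coefficients (rational in $\wp(\sigma),\wp'(\sigma)$) are finite. These coefficients can only have poles over finitely many values of $\sigma$. The plan is to clear denominators and pass to a polynomial representative in $(\wp(\sigma),\wp'(\sigma),\kappa)$, so that the identity holds at every finite point with $\sigma\neq0$. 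Where needed, continuity in $\hat P$ gives the statement at the remaining isolated points.

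\emph{Step 2: (i)$\Rightarrow$(iii).} Fix $\sigma\neq0$. The roots $\kappa$ of $\widehat W_{n,p}(\sigma;\cdot)$ should coincide with the values $\kappa(\hat P)$ over the fibre $\hat\sigma^{-1}(\sigma)$. This is the standard fact that the characteristic polynomial of $\kappa$ with respect to the finite map $\hat\sigma$ is a power of the minimal polynomial. Its roots, counted with multiplicity, are exactly the $\kappa$-values over the fibre. Since $\hat\sigma$ is finite by Section~\ref{Section, Degree of the Non-Even Addition Map}, every root is attained by some $\hat P$, which would also give (ii) directly. Writing $\hat P=(X,C)$, Theorem~\ref{thm, monodromy sys in X} shows that $X$ solves \eqref{system for X, r and s} with this $(\sigma,\kappa)$. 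The point $\hat P$ cannot be $\widehat\infty^{(1)}$, since $\hat\sigma(\widehat\infty^{(1)})=0\neq\sigma$.

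\emph{Step 3: (iii)$\Rightarrow$(ii).} This is exactly Proposition~\ref{prop, system implies data exists}, with $\hat P=\pi(P)$ and $\hat\sigma^{(1)}_{n,p}(\hat P)=\sigma^{(1)}_{n,p}(P)$ by \eqref{add map's def on quotient}. One detail must be checked: the sign of $\sqrt{-\widehat Q^{(1)}_{n,p}(X)}$ has to be chosen consistently with $C$. I would handle this by taking $C$ to be the branch determined by the second equation of \eqref{system for X, r and s}.

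\emph{Main obstacle.} The difficult step is Step 2, surjectivity onto the roots of $\widehat W_{n,p}$. It must hold at every $\sigma\neq0$, including branch values of $\hat\sigma$ and values where the coefficients of $\widehat W_{n,p}$ degenerate. I would first argue at generic $\sigma$ via the norm/characteristic polynomial. I would then extend to all $\sigma\neq0$ by properness of $\hat\sigma$: roots depend continuously on $\sigma$, and limits of points $\hat P_k$ in fibres over $\sigma_k\to\sigma$ stay in the compact curve. Such limits avoid $\widehat\infty^{(1)}$ because $\sigma\neq0$. The $\kappa$-values stay finite because $\kappa$ has poles only over $\sigma=0$ (Remark~\ref{remark, P2,4,6's common zeros}).
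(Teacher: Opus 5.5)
Your argument is correct and is essentially the paper's: there (iii)$\Rightarrow$(ii) is Proposition~\ref{prop, system implies data exists} and (ii)$\Rightarrow$(iii) is Theorem~\ref{thm, monodromy sys in X}, while (i)$\Leftrightarrow$(ii) is left implicit in the definition of $\widehat W_{n,p}$ as the minimal polynomial of $\kappa$; your characteristic-polynomial argument for the finite map $\hat\sigma^{(1)}_{n,p}$ makes that step explicit. The one adjustment is in Step~1: clearing denominators would not help, and it is not needed, because $\kappa$ has poles only over $\sigma=0$ and is therefore integral over the regular functions on $E_\tau\setminus\{0\}$, so the coefficients of $\widehat W_{n,p}$ are regular at every $\sigma\neq0$ and your evaluation and continuity arguments apply there directly.
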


By Lemma~\ref{lem, Pj is even in T} and \eqref{add map is inde of wp'},
$\widehat{W}_{n,p}(\sigma,\kappa)$ is already independent of $\wp^{\prime}%
(p)$. We prove that it is in fact independent of $p$.

\begin{theorem}
\label{thm, Wn inde of p} The polynomial $\widehat{W}_{n,p}(\sigma;\kappa)$ is
independent of $\wp(p)$.
\end{theorem}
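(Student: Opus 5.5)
We argue by isomonodromy rigidity, following the outline announced in the introduction: a nontrivial dependence of $\widehat W_{n,p}$ on $\wp(p)$ would force isomonodromic deformations with varying $\tau$, which we rule out.

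Fix $\tau$. By Theorem~\ref{thm, equivalence of W and system}, the zero locus $\{\widehat W_{n,p}(\sigma;\kappa)=0\}\subset(\mathbb{C}^*\times\mathbb{C})$ is exactly the image of $\hat\Gamma^{(1)}_{n,p}(\tau)$ under $\hat P\mapsto(\hat\sigma^{(1)}_{n,p}(\hat P),\kappa(\hat P))$. By Proposition~\ref{prop, kappa is Zrs} and Lemma~\ref{lem, relation bet r(P),s(P) and aj}, this pair is determined by the monodromy data, $(\sigma,\kappa)=(r+s\tau,\,Z(r,s,\tau))$. Conversely, for $(r,s)\notin\frac12\mathbb{Z}^2$, the data $(r,s)$ is recovered from $(\sigma,\kappa)$ via $\sigma=r+s\tau$ and $c=\zeta(\sigma)-\kappa=r\eta_1+s\eta_2$, using the Legendre relation. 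Hence the zero set of $\widehat W_{n,p}$ is precisely the set of completely reducible monodromy data $(r,s)$ realized on the non-even component, viewed as a curve in the $(\sigma,\kappa)$-plane.

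The proof will proceed as follows.

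\emph{Step 1: a square-free family in $x_0=\wp(p)$.} By Lemma~\ref{lem, Pj is even in T}, the coefficients of $\widehat W_{n,p}$ lie in $\mathbb{C}(\wp(\sigma),\wp'(\sigma),x_0,e_j)$, so $\widehat W$ depends on $p$ only through $x_0$. Write $\widehat W(\sigma,\kappa;x_0)$ and suppose $\partial_{x_0}\widehat W\not\equiv0$. Since $\widehat W$ is monic and minimal, it is square-free in $\kappa$ for generic $(\sigma,x_0)$.

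\emph{Step 2: an isomonodromic deformation in $p$.} Choose a generic point $\hat P_0$ with data $(r_0,s_0)\notin\frac12\mathbb{Z}^2$, together with a local holomorphic family $\hat P(p)$ on $\hat\Gamma^{(1)}_{n,p}(\tau)$ along which $(\sigma,\kappa)$ stays fixed. Such a family exists by the implicit function theorem applied to system~\eqref{system for X, r and s}, using the unramified generic point and the fact that $\hat P_1/\hat P_2$ is nonconstant in $X$ by \eqref{deg hat P1 and P2}. Along this family the completely reducible data $(r,s)$ is constant, so the family is isomonodromic. The assumption $\partial_{x_0}\widehat W\neq0$ means that, for a generic fixed $(\sigma,\kappa)$, the solutions in $X$ are not available for all $x_0$. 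That is, the branch $x_0\mapsto X(x_0)$ with $(\sigma,\kappa)$ fixed ceases to exist along a proper analytic subset.

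\emph{Step 3: the contradiction.} We then degenerate: letting $p\to0$ along a path in the $p$-plane and using Theorem~\ref{thm, sigma neq 0 implies convergence} together with Lemma~\ref{lem, gle converges to lame}, any point with $\sigma_0\neq0$ converges to a Lam\'e equation $\mathrm L_n(\widetilde B,\tau)$ with the same $(\sigma,\kappa)$. This shows that the limiting polynomial $\lim_{p\to0}\widehat W_{n,p}$ is the classical Lam\'e minimal polynomial $\widetilde W_n(\sigma;\kappa)$, whose degree in $\kappa$ equals $\deg\widehat W_{n,p}$ by Theorem~\ref{thm, degree of hat sigma in appendix}. For the comparison at arbitrary $p$, we observe that the curve $\{\widehat W_{n,p}=0\}$ is an irreducible algebraic curve over $\mathbb{C}(x_0)$. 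Its points are monodromy data of the non-even component, and the set of such data realized along a $p$-continuation is locally constant. Indeed, the monodromy map $\hat P\mapsto(r,s)$ is a local biholomorphism at generic points, by the argument of \cite{Chen-Kuo-Lin-Lame I} using the nonvanishing of the $\kappa$-derivative. Rigidity then follows: a one-parameter family of such curves depending analytically on $x_0$, each consisting of realized monodromy data, must be constant. Otherwise the monodromy data of a fixed generic point $\hat P(p)$ of the total family $\{(p,\hat P)\}$ would move, and the isomonodromic leaves would have dimension zero. In that case, a dimension count over the two-dimensional total space $(p,\hat P)$ would force the map to $(r,s)$ to be locally dominant onto an open subset of $\mathbb{C}^2$, which is impossible since for fixed $\tau$ the image lies in the curve $\{Z(r,s,\tau)=\kappa(\sigma)\}$ described by finitely many branches of $\widehat W$. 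We conclude that $\partial_{x_0}\widehat W\equiv0$.

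The main obstacle is this last dominance step. It requires making precise that, for fixed $\tau$, a two-dimensional family of log-free equations cannot have its monodromy data fill an open subset of $(r,s)$-space. If direct rigidity fails, the first fallback is the coefficient-wise comparison: each coefficient of $\widehat W$ is a rational function of $x_0$ which is regular as $x_0\to\infty$ (that is, $p\to0$) with Lam\'e limit, and it is an elliptic, hence rational, function of $x_0$. One would then show that it has no poles at finite $x_0\notin\{e_k\}$, using the finiteness of fibres of $\hat\sigma$ from Proposition~\ref{prop, degree of hat sigma}, and bounded behaviour at the $e_k$. Liouville's theorem would then force each coefficient to be constant.
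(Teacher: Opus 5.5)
Your proposal has a genuine gap at the step you flag yourself, and that gap cannot be closed while $\tau$ stays fixed.

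In Step~3 you rule out local dominance of $(p,\hat P)\mapsto(r,s)$ because ``the image lies in the curve described by finitely many branches of $\widehat W$''. Under your contrary hypothesis, however, that curve is $\{\widehat W_{n,p}=0\}$, and it moves with $p$. The union over $p$ is then exactly a two-dimensional set, so the argument assumes what it is meant to prove. The same applies to your assertion that the realized data are ``locally constant'' along a $p$-continuation.

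More decisively, nothing in your dimension count is specific to the non-even component. On the even component $V^{(0)}_{n,p}(\tau)$ the fixed-$\tau$ map $(p,A)\mapsto(r,s)$ \emph{is} locally dominant. There the data $(r,s,\tau)$ determine $\wp(p)$ by \eqref{eq, p by premodular forms}, and the isomonodromic leaves are Painlev\'e~VI curves, which move $\tau$.

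Step~2 is also not available. Under your hypothesis, the system \eqref{system for X, r and s} with $(\sigma,\kappa)$ fixed is overdetermined in $X$. It is solvable only for $p$ in the zero set of $\widehat W_{n,p}(\sigma;\kappa)$, so the implicit function theorem yields no $p$-family. Such families are constructed in Section~\ref{Isomonodromic deformation} only \emph{after} the theorem is known.

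The missing idea is to let $\tau$ vary. Suppose $\partial_{x}\widehat W\neq0$ at a realized point with data $(r_0,s_0)$. The paper applies the implicit function theorem to $F(p,\tau)=\widehat W_{n,p}\bigl(r_0+s_0\tau;\,Z(r_0,s_0,\tau)\bigr)$ to get $p=p(\tau)$. Via Theorem~\ref{thm, equivalence of W and system} and an unramified solution $X(\tau)$, this produces a family $\mathrm{GLE}^{(1)}_n(p(\tau),T(\tau),\tau)$ with constant data $(r_0,s_0)$. That contradicts Theorem~\ref{thm, no isomonodromic deformation}, which carries the real content. There, a single-valued solution $\Omega$ with $\Omega(z+1)=\Omega(z)$ and $\Omega(z+\tau)=\Omega(z)-1$ must equal $A_1\zeta(z+p)+A_2\zeta(z-p)+\frac{i\eta_1}{2\pi}z+D$ with $A_1+A_2=-\frac{i}{2\pi}\neq0$. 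Matching principal parts at $\pm p$ then forces $T\equiv0$, then $A_1=A_2$, and finally $n(n+1)\wp'(p)=0$.

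Your Liouville fallback is only asserted. The uniform boundedness of the $\kappa$-values over a fixed $\sigma$ as $p$ varies is the whole difficulty, and it too needs a component-specific input. On the even side it fails at a $\sigma$-dependent finite point: for $n=1$, the formula in the proof of Theorem~\ref{thm, iterate} shows that one $\kappa$-value over a fixed $\sigma$ tends to $\infty$ as $\wp(p)\to\wp(\sigma)$. To make the fallback work you would need, for example:
\begin{enumerate*}[label=(\roman*)]
\item that both ends of the non-even curve lie over $\hat\sigma=0$ for every $p$ (Lemma~\ref{lem, closure of Yn,p}), with control uniform in $p$; and
\item a real analysis of $p\to\frac{\omega_k}{2}$, where $\wp''(p)/\wp'(p)$ blows up.
\end{enumerate*}
None of this is supplied.
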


The key is the following rigidity statement.

\begin{theorem}
\label{thm, no isomonodromic deformation} Let $\tau\in\mathbb{H}$ and $p\in
E_{\tau}\setminus E_{\tau}[2]$. Then equation $\mathrm{GLE}_{n}^{(1)}%
(p(\tau),T(\tau),$ $\tau)$ admits no nontrivial monodromy-preserving deformation
as $\tau$ varies.
\end{theorem}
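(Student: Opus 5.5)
The plan is to argue by contradiction through the general isomonodromy equations for Fuchsian equations on $E_\tau$ with apparent singularities. Suppose that along some $\tau$-interval there are holomorphic $p(\tau)\notin E_\tau[2]$ and $T(\tau)$ such that $\mathrm{GLE}_{n}^{(1)}(p(\tau),T(\tau),\tau)$ has constant monodromy and the family is not trivial. Since the monodromy is abelian, isomonodromy means the data are constant. In the completely reducible case this is $(r,s)$ with $(r,s)\notin\frac12\mathbb Z^2$. In the non-completely reducible case it is $\mathcal D$ with fixed signs $(\varepsilon_1,\varepsilon_2)$, and I will treat this case separately at the end.

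First I would embed the family into the larger two-point family \eqref{GLEn}, with independent apparent points $p_1,p_2$ of exponents $-\tfrac12,\tfrac32$, residues $T_1,T_2$, and accessory parameter $B$. For this larger family the standard isomonodromy theory, of Takasaki/Kawai type as used in \cite{Chen-Kuo-Lin-Hamiltonian}, gives a Hamiltonian system in which $2\pi i\,\frac{dp_k}{d\tau}$ is an explicit affine function of $T_k$ and of $\zeta$-terms in $p_1-p_2$ and $p_k$. In our family $p_1=p(\tau)$ and $p_2=-p(\tau)$, so the symmetric configuration is preserved, which forces $\frac{dp_1}{d\tau}+\frac{dp_2}{d\tau}=0$. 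Substituting the Hamiltonian expressions, with the log-free relation \eqref{AP2} for $B$, I expect this constraint to reduce, by oddness of $\zeta$, to $T_1+T_2=0$. This is exactly the even-component condition that underlies elliptic Painlev\'e VI in \cite{Chen-Kuo-Lin-Lame I}.

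Combining $T_1+T_2=0$ with the non-even relation $T_1-T_2+\frac{\wp''(p)}{2\wp'(p)}=0$ gives $T=0$, the intersection point $\mathbf T_*$ of Theorem~\ref{Main Thm log free decomposition}. Hence a monodromy-preserving deformation that stays on $V^{(1)}$ must satisfy $T(\tau)\equiv0$ and must simultaneously solve the even Painlev\'e VI flow through $A_0(p)=-\frac{\wp''(p)}{4\wp'(p)}$. It remains to check that $A(\tau)\equiv A_0(p(\tau))$ is not a solution of the Painlev\'e VI Hamiltonian system. The second Hamilton equation gives $\frac{dA}{d\tau}$ as a specific function of $(p,A,\tau)$. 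Comparing this with $\frac{d}{d\tau}A_0(p(\tau))$, computed from the first equation, gives an identity in $p$ that should fail; checking the leading order as $p\to0$, where $A_0\sim\frac{3}{4p}$, should suffice. So the only deformation left is the trivial one.

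In the non-completely reducible case the monodromy data is $\mathcal D$, and the set of such $T$ is finite for each $(p,\tau)$ by Theorem~\ref{thm,Q(T) neq 0 iff com.red.}. The same Hamiltonian argument applies, since it uses only isomonodromy and not complete reducibility. The main obstacle is the second step: writing down the isomonodromic Hamiltonian for two independent apparent points with the correct $\tau$-dependence, including the $\eta_1$ normalization and the $\zeta(2p)$ terms in \eqref{AP2}, and verifying that symmetry preservation really yields $T_1+T_2=0$ and not some other relation. If that computation is unwieldy, I would instead do a local computation near $p\to0$. There the non-even family degenerates to $\mathrm L_n(\widetilde B,\tau)$ by Lemma~\ref{lem, gle converges to lame}, and the Lam\'e equation has no nontrivial $\tau$-isomonodromy because its monodromy data $(r,s)$ determine $\tau$ through $Z(r,s,\tau)$, which is finite-to-one on the spectral curve.
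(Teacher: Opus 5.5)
Your two-step skeleton matches the paper's. First, keeping the singular pair at $\pm p(\tau)$ forces $T\equiv0$. Second, the resulting even (Painlev\'e~VI) flow cannot remain on the curve $A=A_0(p)=-\frac{\wp''(p)}{4\wp'(p)}$.

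The genuine gap is in how you finish the second step. Comparing leading orders as $p\to0$ only shows that the discrepancy $G(p,\tau)$ between the flow's $\frac{dA}{d\tau}$ and $\frac{d}{d\tau}A_0(p(\tau))$ is not identically zero. That does not exclude a deformation. A putative family only has to satisfy $G(p(\tau),\tau)=0$, so it could run inside the zero set of $G$, and a leading-order check says nothing about where that zero set lies. The theorem is claimed for every $p\notin E_\tau[2]$, so you need $G$ exactly.

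In the paper, after $T\equiv0$ the $(z\mp p)^{-1}$ coefficients of the inhomogeneous third-order equation for $\Omega$ force $A_1=A_2=-\frac{i}{4\pi}$. The remaining equation for $\frac{d}{d\tau}\bigl(\wp''(p)/\wp'(p)\bigr)$ is then compared with the direct computation from the heat equation for $\wp$ and the formula for $\frac{dp}{d\tau}$. What is left is exactly $n(n+1)\wp'(p)=0$. This discrepancy does vanish, but only at the excluded half-periods, and that is what gives rigidity at every admissible $p$.

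Your first step reaches the right conclusion, but you only expect it rather than derive it. The detour through a Hamiltonian for two independent apparent points is unnecessary and imprecise. Note that \eqref{GLEn} is already the symmetric $\pm p$ family, not one with independent points. In a family with independent $p_1,p_2$, the log-free locus is a curve over each $(p_1,p_2,\tau)$. So isomonodromic leaves are at least two-dimensional, and the individual velocities $\frac{dp_k}{d\tau}$ are not functions of the point.

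The paper avoids all this by applying Lemma~\ref{lem, mono pre as tau iff 3rd ode} directly to the symmetric family. After normalizing $\Omega$ to be holomorphic at $0$, one gets $\Omega=A_1\zeta(z+p)+A_2\zeta(z-p)+\frac{i\eta_1}{2\pi}z+D$, with $A_1+A_2=-\frac{i}{2\pi}$ by the Legendre relation. The two expressions for $\frac{dp}{d\tau}$ read off from the $(z\mp p)^{-3}$ and $(z\mp p)^{-2}$ coefficients differ by $2T(A_1+A_2)$, hence $T\equiv0$.

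Your fallback via $p\to0$ does not work either. A $\tau$-deformation starting at an arbitrary $p$ has no reason to reach $p=0$. Moving $p$ to $0$ with fixed monodromy requires the fixed-$\tau$ deformation, which the paper deduces from this very theorem through Theorem~\ref{thm, Wn inde of p}, so that route is circular.
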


As in the even case \cite{Chen-Kuo-Lin-Hamiltonian}, consider the third-order equation:%

\begin{equation}
\begin{aligned}
    \Omega^{\prime\prime\prime}(z)-&4q_{n}^{(1)}(z;p,T)\Omega^{\prime}%
(z)-2q_{n}^{(1)\prime}(z;p,T)\Omega(z)\\
+&2\frac{\partial}{\partial\tau}%
q_{n}^{(1)}(z;p,T)=0\ \text{in }\mathbb{C}\times\mathbb{H},
\end{aligned}
\label{monodromy 3rd ode}%
\end{equation}
where $^{\prime}=\frac{\partial}{\partial z}$.

\begin{lemma}
\cite{Chen-Kuo-Lin-Hamiltonian} \label{lem, mono pre as tau iff 3rd ode}
Equation $\operatorname{GLE}$$_{n}^{(1)}$($p(\tau),T(\tau),\tau$) is monodromy
preserving as $\tau$ deforms if and only if there exists a single-valued
solution $\Omega(z;\tau)$ to \eqref{monodromy 3rd ode} which satisfies
\begin{align}
\Omega(z+1;\tau)=\Omega(z;\tau),\label{Omega z+1}\\
\Omega(z+\tau;\tau)=\Omega(z;\tau)-1. \label{Omega z+tau}%
\end{align}

\end{lemma}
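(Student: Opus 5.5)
The plan is to realise isomonodromy through an auxiliary linear equation in $\tau$ of the standard Lax form
\[
\partial_\tau y=\Omega(z;\tau)\,y'-\tfrac12\,\Omega'(z;\tau)\,y ,
\]
and to check that the compatibility and periodicity conditions for this system are exactly \eqref{monodromy 3rd ode}, \eqref{Omega z+1} and \eqref{Omega z+tau}. The first step is a direct computation. Write $q=q_n^{(1)}(z;p(\tau),T(\tau))$, let $y$ solve $y''=qy$, and put $u:=\Omega y'-\tfrac12\Omega' y$. Using $y''=qy$ and $y'''=q'y+qy'$, one finds
\[
u''-qu=\bigl(\tfrac12\Omega'''\cdot(-1)+2q\Omega'+q'\Omega\bigr)y\cdot(-1)\,\text{-type terms},
\]
and more precisely
\[
u''-qu=\bigl(2q\Omega'+q'\Omega-\tfrac12\Omega'''\bigr)y .
\]
On the other hand, $\partial_\tau$ applied to $y''=qy$ gives $(\partial_\tau y)''-q\,\partial_\tau y=(\partial_\tau q)\,y$. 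So the system is compatible precisely when $\tfrac12\Omega'''-2q\Omega'-q'\Omega+\partial_\tau q=0$. This is \eqref{monodromy 3rd ode} divided by $2$.

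For the direction ``$\Leftarrow$'', take a single-valued $\Omega$ solving \eqref{monodromy 3rd ode}. Then the system can be integrated to give a fundamental matrix $Y(z;\tau)$ with $\partial_\tau Y=\Omega Y'-\tfrac12\Omega' Y$. Differentiate $Y(z+\omega_j;\tau)=Y(z;\tau)M_j(\tau)$ in $\tau$, remembering that $\omega_2=\tau$ also moves. Since $\Omega$ is single-valued, $\partial_\tau Y$ and $Y$ transform in the same way around the singular points. Using \eqref{Omega z+1} for $j=1$, the terms cancel and leave $Y\,\partial_\tau M_1=0$. For $j=2$, the extra term $Y'(z+\tau)$ produced by the moving period is cancelled exactly by the shift $\Omega(z+\tau)=\Omega(z)-1$ in \eqref{Omega z+tau}; hence $\partial_\tau M_2=0$. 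The local monodromies at $0$ and $\pm p(\tau)$ are already constant, namely $N_0=I$ and $N_{\pm p}=-I$, so the whole representation is constant.

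For the direction ``$\Rightarrow$'', start from a fundamental system with constant $M_1,M_2$ and set $A:=(\partial_\tau Y)\,Y^{-1}$ in the sense of the $2\times2$ Wronskian calculus. Writing $\partial_\tau Y=aY'+bY$, the two coefficients are given by Wronskian quotients:
\[
a=\frac{W(\partial_\tau y_1,\;\partial_\tau y_2)\text{-type expression}}{W(y_1,y_2)},
\]
with an analogous formula for $b$. The key points in this direction are the following.
\begin{itemize}
\item Constancy of $M_j$, together with the shift relation coming from $\omega_2=\tau$, shows that $a$ is invariant under $z\mapsto z+1$ and satisfies $a(z+\tau)=a(z)-1$.
\item Plugging into the differentiated equation forces $b=-\tfrac12 a'+\text{const}$. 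The constant can be absorbed by rescaling $Y$ by a function of $\tau$ alone.
\end{itemize}
Then $\Omega:=a$ solves \eqref{monodromy 3rd ode}.

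The main obstacle is single-valuedness of $\Omega$ near the moving singularities $\pm p(\tau)$ and the fixed one $0$. There one must check, using the log-free local exponents ($-n,n+1$ at $0$ and $-\tfrac12,\tfrac32$ at $\pm p$), that $a$ extends meromorphically. The way I would handle it is through the local Frobenius bases. Their $\tau$-derivatives pick up a pole from $\partial_\tau p$, but because the exponent differences are integers and the expansions are free of logarithms, no logarithmic terms arise. This is the same argument as in the even case of \cite{Chen-Kuo-Lin-Hamiltonian}, which I would follow verbatim.
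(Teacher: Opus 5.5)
Your proposal is correct and is essentially the standard Lax-pair argument ($\partial_\tau y=\Omega y'-\tfrac12\Omega' y$, with the shift $\Omega(z+\tau)=\Omega(z)-1$ absorbing the contribution of the moving period $\omega_2=\tau$) behind the result of Chen--Kuo--Lin, which the paper quotes without proof. Two points need tidying: the coefficient in $\partial_\tau y=ay'+by$ is $a=(y_1\partial_\tau y_2-y_2\partial_\tau y_1)/W(y_1,y_2)$, not a Wronskian of the $\tau$-derivatives; and in the direction ``$\Rightarrow$'' you should say why monodromy preserved up to conjugation lets you choose $Y(z;\tau)$, holomorphic in $\tau$, with literally constant $M_1,M_2$.
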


Recall $\Phi_{e,p}^{(1)}(z;T)$ is the unique elliptic solution of
\eqref{3rd ode, noneven}. If $\Omega(z;\tau)$ is a single-valued solution of
\eqref{monodromy 3rd ode}, then, for any $c\in\mathbb{C}$,
\begin{equation}
\widetilde{\Omega}(z;\tau) := \Omega(z;\tau)+c\,\Phi_{e,p}^{(1)}(z;T)
\end{equation}
is again a solution of \eqref{monodromy 3rd ode}.

Since the local exponents of \eqref{3rd ode, noneven} and
\eqref{monodromy 3rd ode} at $z=0$ coincide, we may choose $c\in\mathbb{C}$ so
that $\widetilde{\Omega}(z;\tau)$ is holomorphic at $z=0$. Replacing
$\Omega(z;\tau)$ by this $\widetilde{\Omega}(z;\tau)$, we may assume that
$\Omega(z;\tau)$ is holomorphic at $z=0$.

Equations \eqref{Omega z+1} and \eqref{Omega z+tau} imply that $\Omega
^{\prime}(z;\tau)$ is elliptic, with possible poles only at $\pm p$, of order
at most two. Hence there exist $A_{1},A_{2},R,C\in\mathbb{C}$ such that
\[
\Omega^{\prime}(z;\tau)=-A_{1}\wp(z+p)-A_{2}\wp(z-p)+R(\zeta(z+p)-\zeta
(z-p))+C.
\]
Then
\[
\Omega(z;\tau)=A_{1}\zeta(z+p)+A_{2}\zeta(z-p)+R(\ln\sigma(z+p)-\ln
\sigma(z-p))+Cz+D
\]
for some $D\in\mathbb{C}$. Since $\Omega(z;\tau)$ is single-valued, it forces
$R=0$. Then
\begin{equation}
\Omega(z;\tau)=A_{1}\zeta(z+p)+A_{2}\zeta(z-p)+Cz+D.
\end{equation}

Equations \eqref{Omega z+1}--\eqref{Omega z+tau} give
\begin{align}
(A_{1}+A_{2})\eta_{1}+C=0,\\
(A_{1}+A_{2})\eta_{2}+C\tau=-1.
\end{align}

By the Legendre relation
\begin{equation}
\tau\eta_{1}-\eta_{2}=2\pi i, \label{legendre relation}%
\end{equation}
we obtain
\begin{equation}
A_{1}+A_{2}=-\frac{i}{2\pi},\quad C=\frac{i}{2\pi}\eta_{1}.
\label{A1+A2 and C}%
\end{equation}
Hence
\begin{equation}
\Omega(z;\tau)=A_{1}\zeta(z+p)+A_{2}\zeta(z-p)+\frac{i\eta_{1}}{2\pi}z+D.
\label{Omega's form}%
\end{equation}

\begin{proof}
[Proof of Theorem~\ref{thm, no isomonodromic deformation}]Suppose, to the
contrary, that $\mathrm{GLE}_{n}^{(1)}(p(\tau),T(\tau),\tau)$ admits a
nontrivial monodromy-preserving deformation in $\tau$. By
Lemma~\ref{lem, mono pre as tau iff 3rd ode}, there exists a single-valued
solution $\Omega(z;\tau)$ of \eqref{monodromy 3rd ode} satisfying
\eqref{Omega z+1} and \eqref{Omega z+tau}. By the preceding discussion, we may
assume that $\Omega$ takes the form \eqref{Omega's form}.

Since
\[
q_{n}^{(1)}(z+1;p,T)=q_{n}^{(1)}(z+\tau;p,T)=q_{n}^{(1)}(z;p,T),
\]
differentiating with respect to $\tau$ gives
\begin{equation}
\frac{\partial}{\partial\tau}q_{n}^{(1)}(z+1;p,T)=\frac{\partial}{\partial
\tau}q_{n}^{(1)}(z;p,T), \label{0601equ1}%
\end{equation}
and
\begin{equation}
q_{n}^{(1)\prime}(z;p,T)+\frac{\partial}{\partial\tau}q_{n}^{(1)}%
(z+\tau;p,T)=\frac{\partial}{\partial\tau}q_{n}^{(1)}(z;p,T). \label{0601equ2}%
\end{equation}

Define
\begin{equation}
\begin{aligned} &U(z;\tau):=\Omega^{\prime\prime\prime}(z;\tau)-4q_{n}^{(1)}(z;p,T)\Omega^{\prime}(z;\tau)\\ &-2q_{n}^{(1)\prime}(z;p,T)\Omega(z;\tau)+2\frac{\partial}{\partial \tau}q_{n}^{(1)}(z;p,T). \end{aligned} \label{U(z)'s def}%
\end{equation}
By \eqref{0601equ1} and \eqref{0601equ2}, $U(z;\tau)$ is elliptic and, since
$\Omega(z;\tau)$ solves \eqref{monodromy 3rd ode}, $U(z;\tau)$ $\equiv0$.  It
therefore suffices to examine the principal parts at $z=\pm p$.

Near $z=p$,
\begin{equation}
q_{n}^{(1)}(z;p,T)=\sum_{l=-2}^{1}q_{l,p}(z-p)^{l}+O((z-p)^{2}),
\label{qn at z=p}%
\end{equation}
where
\begin{equation}
\begin{aligned} q_{-2,p}=\frac{3}{4},\ q_{-1,p}=T+\frac{\wp''(p)}{4\wp'(p)},\ q_{0,p}=\left(T+\frac{\wp''(p)}{4\wp'(p)}\right)^2, \end{aligned}
\end{equation}
and
\begin{equation}
\begin{aligned} q_{1,p} &= n(n+1)\wp'(p) + \frac{3}{4}\wp'(2p) \\ &\quad + T\bigl(2\wp(p)-\wp(2p)\bigr) + \frac{\wp''(p)}{4\wp'(p)}\wp(2p). \end{aligned} \label{q1,p}%
\end{equation}

Differentiating \eqref{qn at z=p} with respect to $\tau$ gives
\begin{equation}
\frac{\partial}{\partial\tau}q_{n}^{(1)}(z;p,T)=\sum_{l=-3}^{-1}\hat{q}%
_{l,p}(z-p)^{l}+O(1),
\end{equation}
where
\begin{gather}
\hat{q}_{-3,p}=\frac{3}{2}\frac{dp(\tau)}{d\tau},\\
\hat{q}_{-2,p}=\left(  T+\frac{\wp^{\prime\prime}(p)}{4\wp^{\prime}%
(p)}\right)  \frac{dp(\tau)}{d\tau},\\
\hat{q}_{-1,p}=\frac{d}{d\tau}\left(  T+\frac{\wp^{\prime\prime}(p)}%
{4\wp^{\prime}(p)}\right)  .
\end{gather}

Similarly, write
\begin{equation}
\Omega(z;\tau)=\sum_{l=-1}^{2}w_{l,p}(z-p)^{l}+O((z-p)^{3}),
\end{equation}
where
\begin{gather*}
w_{-1,p}=A_{2},\\
w_{0,p}=\frac{i\eta_{1}}{2\pi}p+A_{1}\zeta(2p) - (A_{1}-A_{2})\zeta(p),\\
w_{1,p}=\frac{i\eta_{1}}{2\pi}-\wp(2p)A_{1},\ w_{2,p}=-\frac{A_{1}}{2}%
\wp^{\prime}(2p).
\end{gather*}

Substituting these expansions into \eqref{U(z)'s def}, one obtains
\[
U(z;\tau) = U_{-3,p}(z-p)^{-3} + U_{-2,p}(z-p)^{-2} + U_{-1,p}(z-p)^{-1}
+O(1),
\]
with
\begin{gather*}
U_{-3,p}=2\left(  3q_{-1,p}w_{-1,p}+2q_{-2,p}w_{0,p}+\hat q_{-3,p}\right)  ,\\
U_{-2,p}=2\left(  2q_{0,p}w_{-1,p}+q_{-1,p}w_{0,p}+\hat q_{-2,p}\right)  ,\\
\end{gather*}
and
\[
U_{-1,p}=2\left(  q_{1,p}w_{-1,p}-q_{-1,p}w_{1,p}-2q_{-2,p}w_{2,p}+\hat
q_{-1,p}\right)  .
\]
The equations $U_{-3,p}=U_{-2,p}=0$ are equivalent to
\begin{equation}
\frac{dp(\tau)}{d\tau}=-2TA_{2}-\frac{i}{2\pi}(p\eta_{1}-\zeta(p))+\frac
{i\wp^{\prime\prime}(p)}{4\pi\wp^{\prime}(p)}, \label{U-3,p=0}%
\end{equation}

The coefficient $U_{-1,p}=0$ gives
\begin{equation}
\begin{aligned} &\frac{d}{d\tau} \left( T+\frac{\wp''(p)}{4\wp'(p)} \right)\\ =& -\frac{3}{4} A_1\wp'(2p)+ \left( T+\frac{\wp''(p)}{4\wp'(p)} \right) \left( \frac{i\eta_1}{2\pi} -A_1\wp(2p) \right) -A_2 q_{1,p}. \end{aligned} \label{d tau T+P''/4p'}%
\end{equation}
where $q_{1,p}$ is given by \eqref{q1,p}.

Similarly, the expansion at $z=-p$ gives
\begin{equation}
\frac{dp(\tau)}{d\tau} = 2TA_{1} - \frac{i}{2\pi}(p\eta_{1}-\zeta(p)) +
\frac{i\wp^{\prime\prime}(p)}{4\pi\wp^{\prime}(p)}, \label{U-3,-p=0}%
\end{equation}
and
\begin{equation}
\begin{aligned} &\frac{d}{d\tau} \left( T-\frac{\wp''(p)}{4\wp'(p)} \right)\\ ={}& \frac{3}{4} A_2\wp'(2p) + \left( T-\frac{\wp''(p)}{4\wp'(p)} \right) \left( \frac{i\eta_1}{2\pi} - A_2\wp(2p) \right) - A_1q_{1,-p}, \end{aligned} \label{d tau T-P''/4p'}%
\end{equation}
where
\begin{equation}
\begin{aligned} q_{1,-p} &= -n(n+1)\wp'(p) - \frac{3}{4}\wp'(2p) \\ &\quad + T\bigl(2\wp(p)-\wp(2p)\bigr) - \frac{\wp''(p)}{4\wp'(p)}\wp(2p). \end{aligned} \label{q1,-p}%
\end{equation}

Comparing \eqref{U-3,p=0} with \eqref{U-3,-p=0}, we obtain
\begin{equation}
(A_{1}+A_{2})T=0, \label{0602equ1}%
\end{equation}
and
\begin{equation}
\frac{dp(\tau)}{d\tau} = T(A_{1}-A_{2}) + \frac{i}{2\pi}\bigl(\zeta
(p)-p\eta_{1}\bigr) + \frac{i}{4\pi}\frac{\wp^{\prime\prime}(p)}{\wp^{\prime
}(p)}. \label{0602equ2}%
\end{equation}
Since $A_{1}+A_{2}\neq0$ by \eqref{A1+A2 and C}, it follows from
\eqref{0602equ1} that
\begin{equation}
T(\tau)\equiv0. \label{T(tau)=0}%
\end{equation}
Hence \eqref{0602equ2} reduces to
\begin{equation}
\frac{dp(\tau)}{d\tau} = \frac{i}{2\pi}\bigl(\zeta(p)-p\eta_{1}\bigr) +
\frac{i}{4\pi}\frac{\wp^{\prime\prime}(p)}{\wp^{\prime}(p)}.
\label{pdot reduced}%
\end{equation}

Combining \eqref{d tau T+P''/4p'} and \eqref{d tau T-P''/4p'}, and then using
$T=0$, we obtain
\begin{equation}
(A_{1}-A_{2}) \left(  \frac{3}{4}\wp^{\prime}(2p) + \frac{1}{4}\frac
{\wp^{\prime\prime}(p)}{\wp^{\prime}(p)}\wp(2p) \right)  + A_{1}q_{1,-p} +
A_{2}q_{1,p} = 0.
\end{equation}

Since
\[
q_{1,p}=-q_{1,-p} \qquad\text{when } T=0,
\]
we get
\begin{equation}
(A_{1}-A_{2}) \left(  \frac{3}{4}\wp^{\prime}(2p) + \frac{1}{4}\frac
{\wp^{\prime\prime}(p)}{\wp^{\prime}(p)}\wp(2p) - q_{1,p} \right)  = 0.
\end{equation}
Together with \eqref{q1,p}, this gives
\[
n(n+1)(A_{1}-A_{2})\wp^{\prime}(p)=0.
\]
Since $p\notin E_{\tau}[2]$, we have $\wp^{\prime}(p)\neq0$, and hence
\[
A_{1}=A_{2}.
\]
Together with \eqref{A1+A2 and C}, this yields
\begin{equation}
A_{1}=A_{2}=-\frac{i}{4\pi}. \label{A1=A2}%
\end{equation}

By \eqref{T(tau)=0} and \eqref{A1=A2}, equation \eqref{d tau T+P''/4p'} gives
\begin{equation}
\begin{aligned} &\frac{d}{d\tau} \left( \frac{\wp''(p)}{\wp'(p)} \right)\\ ={}& \frac{i}{\pi} \left[ n(n+1)\wp'(p) + \frac{3}{2}\wp'(2p) + \frac{1}{2} \frac{\wp''(p)}{\wp'(p)} \bigl(\wp(2p)+\eta_1\bigr) \right]. \end{aligned} \label{ratio derivative from U}%
\end{equation}

Using the addition formulae
\begin{equation}
\wp(2p) = -2\wp(p) + \frac{\wp^{\prime\prime}(p)^2}{4\wp^{\prime}(p)^2},
\nonumber\label{wp2p formula}%
\end{equation}
and
\begin{equation}
\wp^{\prime}(2p) = -\wp^{\prime}(p) + \frac{3\wp^{\prime\prime}(p)}%
{\wp^{\prime}(p)}\wp(p) - \frac{\wp^{\prime\prime}(p)^3}{4\wp^{\prime}(p)^3},
\nonumber\label{wp2p prime formula}%
\end{equation}
we reduce \eqref{ratio derivative from U} to
\begin{equation}
\begin{aligned} &\frac{d}{d\tau} \left( \frac{\wp''(p)}{\wp'(p)} \right) \\ ={}& \frac{i}{\pi} \left[ \left(n(n+1)-\frac{3}{2}\right)\wp'(p) + \frac{7}{2} \frac{\wp(p)\wp''(p)}{\wp'(p)} - \frac{1}{4} \frac{\wp''(p)^3}{\wp'(p)^3} + \frac{\eta_1\wp''(p)}{2\wp'(p)} \right]. \end{aligned} \label{ratio derivative from U reduced}%
\end{equation}

Using the identity
\begin{equation}
\frac{\partial}{\partial\tau}\wp(z) = -\frac{i}{4\pi} \left[  2\bigl(\zeta
(z)-z\eta_{1}\bigr)\wp^{\prime}(z) + 4\bigl(\wp(z)-\eta_{1}\bigr)\wp(z) -
\frac{2}{3}g_{2} \right] , \label{heat equation wp}%
\end{equation}
we compute
\[
\begin{aligned} \frac{\partial}{\partial\tau}\wp'(p) = -\frac{i}{4\pi} \left[ 2\bigl(\zeta(p)-p\eta_1\bigr)\wp''(p) + 6\bigl(\wp(p)-\eta_1\bigr)\wp'(p) \right], \end{aligned}\label{partial tau wp prime}%
\]
and
\[
\begin{aligned} \frac{\partial}{\partial\tau}\wp''(p) = -\frac{i}{4\pi} \left[ 2\bigl(\zeta(p)-p\eta_1\bigr)\wp'''(p) + 6\wp'(p)^2 + \bigl(4\wp(p)-8\eta_1\bigr)\wp''(p) \right]. \end{aligned}\label{partial tau wp second}%
\]
Combining these identities with \eqref{pdot reduced}, we obtain
\[
\begin{aligned} \frac{d}{d\tau}\wp'(p) &= \wp''(p)\frac{dp(\tau)}{d\tau} + \frac{\partial}{\partial\tau}\wp'(p) \\ &= \frac{i}{4\pi} \left[ \frac{\wp''(p)^2}{\wp'(p)} - 6\bigl(\wp(p)-\eta_1\bigr)\wp'(p) \right], \end{aligned}\label{total tau wp prime}%
\]
and
\[
\begin{aligned} \frac{d}{d\tau}\wp''(p) &= \wp'''(p)\frac{dp(\tau)}{d\tau} + \frac{\partial}{\partial\tau}\wp''(p) \\ &= \frac{i}{4\pi} \left[ \frac{\wp''(p)\wp'''(p)}{\wp'(p)} - 6\wp'(p)^2 - \bigl(4\wp(p)-8\eta_1\bigr)\wp''(p) \right]. \end{aligned}\label{total tau wp second}%
\]
Then
\begin{equation}
\begin{aligned} &\frac{d}{d\tau} \left( \frac{\wp''(p)}{\wp'(p)} \right) \\ ={}& \frac{ \left(\frac{d}{d\tau}\wp''(p)\right)\wp'(p) - \wp''(p)\left(\frac{d}{d\tau}\wp'(p)\right) } {\wp'(p)^2} \\ ={}& \frac{i}{\pi} \left[ -\frac{3}{2}\wp'(p) + \frac{7}{2} \frac{\wp(p)\wp''(p)}{\wp'(p)} - \frac{1}{4} \frac{\wp''(p)^3}{\wp'(p)^3} + \frac{\eta_1\wp''(p)}{2\wp'(p)} \right], \end{aligned} \label{ratio derivative direct}%
\end{equation}

Comparing \eqref{ratio derivative from U reduced} with
\eqref{ratio derivative direct}, we obtain
\[
n(n+1)\wp^{\prime}(p)=0,
\]
which is impossible because $n\geqslant1$ and $p\notin E_{\tau}[2]$. This
contradiction proves Theorem~\ref{thm, no isomonodromic deformation}.
\end{proof}

\begin{proof}
[Proof of Theorem~\ref{thm, Wn inde of p}]We argue by contradiction. Suppose
that $\widehat{W}_{n,p}(\sigma;\kappa)$ depends nontrivially on $\wp(p)$. Set
$x=\wp(p)$. Then we may choose
\[
\tau_{0}\in\mathbb{H},\qquad p_{0}\in E_{\tau_{0}}\setminus E_{\tau_{0}%
}[2],\qquad\sigma_{0}\in\mathbb{C}^{*},\qquad\kappa_{0}\in\mathbb{C }%
\]
such that, with $x_{0}=\wp(p_{0};\tau_{0})$,
\begin{equation}
\widehat{W}_{n,p_{0}}(\sigma_{0};\kappa_{0})=0,\label{0603equ1}%
\end{equation}
and
\begin{equation}
\left.  \frac{\partial\widehat{W}_{n,p}(\sigma_{0};\kappa_{0})} {\partial x}
\right| _{x=x_{0},\ \tau=\tau_{0}} \neq0.\label{partial x neq 0}%
\end{equation}
Choose the data also so that the corresponding system
\eqref{system for X, r and s} has a nonzero unramified solution $X_{0}$, since
the exceptional locus is a proper algebraic subset.

By Theorem~\ref{thm, equivalence of W and system}, \eqref{0603equ1} yields a
point $P_{0}\in\Gamma_{n,p_{0}}^{(1)}(\tau_{0})$ such that
\begin{equation}
\bigl( \sigma_{n,p_{0}}^{(1)}(P_{0}),\kappa(P_{0}) \bigr) = (\sigma_{0}%
,\kappa_{0}).\label{0603equ2}%
\end{equation}
Set
\[
(r_{0},s_{0}):=(r(P_{0}),s(P_{0})).
\]
Then
\[
\sigma_{0}=r_{0}+s_{0}\tau_{0}, \qquad\kappa_{0}=Z(r_{0},s_{0},\tau_{0}),
\]
and hence
\begin{equation}
\widehat{W}_{n,p_{0}} \bigl( r_{0}+s_{0}\tau_{0};\, Z(r_{0},s_{0},\tau_{0})
\bigr) =0.\label{0603equ3}%
\end{equation}

For the fixed pair $(r_{0},s_{0})$, define
\[
F(p,\tau):= \widehat{W}_{n,p} \bigl(
r_{0}+s_{0}\tau;\, Z(r_{0},s_{0},\tau) \bigr).
\]
Then \eqref{0603equ3} gives
\[
F(p_{0},\tau_{0})=0.
\]
Moreover, since $\wp^{\prime}(p_{0};\tau_{0})\neq0$, \eqref{partial x neq 0}
implies
\[
\frac{\partial F}{\partial p}(p_{0},\tau_{0}) = \left.  \frac{\partial
\widehat{W}_{n,p}(\sigma_{0};\kappa_{0})} {\partial x} \right| _{x=x_{0}%
,\ \tau=\tau_{0}} \wp^{\prime}(p_{0};\tau_{0}) \neq0.
\]

Thus, by the implicit function theorem, there exist a neighborhood
$O_{\tau_{0}}$ of $\tau_{0}$ and a holomorphic function
\[
p=p(\tau),\qquad p(\tau_{0})=p_{0},
\]
such that
\begin{equation}
\widehat{W}_{n,p(\tau)} \bigl( r_{0}+s_{0}\tau;\, Z(r_{0},s_{0},\tau)
\bigr) =0, \qquad\tau\in O_{\tau_{0}}.\label{0603equ4}%
\end{equation}
After shrinking $O_{\tau_{0}}$ if necessary, we may assume that
\[
p(\tau)\notin E_{\tau}[2], \qquad\tau\in O_{\tau_{0}}.
\]

Since $X_{0}\neq0$ is an unramified solution of \eqref{system for X, r and s}
at $\tau=\tau_{0}$, Theorem~\ref{thm, equivalence of W and system}, together
with the implicit function theorem, yields a holomorphic function
\[
X=X(\tau),\qquad X(\tau_{0})=X_{0},
\]
such that $X(\tau)\neq0$ and $X(\tau)$ solves \eqref{system for X, r and s}
for every $\tau\in O_{\tau_{0}}$.

After shrinking $O_{\tau_{0}}$ once more, we may choose a holomorphic branch
\[
T(\tau)=X(\tau)^{1/2}
\]
and a corresponding point
\[
P(\tau)=(T(\tau),C(\tau)) \in\Gamma_{n,p(\tau)}^{(1)}(\tau).
\]
By construction,
\[
(r(P(\tau)),s(P(\tau)))=(r_{0},s_{0}), \qquad\tau\in O_{\tau_{0}}.
\]
Hence $\mathrm{GLE}_{n}^{(1)} \bigl(p(\tau),T(\tau),\tau\bigr)$ forms a local
isomonodromic family with monodromy data $(r_{0},s_{0})$, contradicting
Theorem~\ref{thm, no isomonodromic deformation}.
\end{proof}

By Theorem \ref{thm, Wn inde of p}, write
\[
\widehat{W}_{n}(\sigma;\kappa)\in\mathbb{C}[\wp(\sigma),\wp^{\prime}%
(\sigma),e_{k}(\tau)][\kappa]. \label{Wn is poly in kappa}%
\]

\section{Identification with the Classical Lam\'e Curve}

\label{Isomonodromic deformation}

Let \(S\) denote the parameter space of pairs \((\tau,p)\) with
\(\tau\in\mathbb H\) and \(p\in E_\tau\setminus E_\tau[2]\), and let
\[
S^{\mathrm{sm}}\subset S
\]
be the open locus on which both
\(\hat{\Gamma}^{(1)}_{n,p}(\tau)\) and
\(\widetilde{\Gamma}_n(\tau)\) are nonsingular.
In this section we first construct the spectral isomorphism over
\(S^{\mathrm{sm}}\). The extension across \(\mathcal S\setminus\mathcal S^{\mathrm{sm}}\) will be proved in Proposition~\ref{prop:extension-singular-fibers}, using holomorphic dependence and the explicit coordinate translation.

 At the beginning, we
review the pre-modular form ${Z}_{n}(r,s,\tau)$ for the classical Lam\'{e}
potential from \cite{LW}. Recall the addition map
\[
\tilde{\sigma}_{n}:\overline{\widetilde{\Gamma}_{n}(\tau)}\to E_{\tau}
\]
for the classical Lam\'{e} potential introduced in \eqref{add map of lame}.
Since $\deg\tilde{\sigma}_{n}=\frac{n(n+1)}{2}$, the field extension
$K(\overline{\widetilde{\Gamma}_{n}(\tau)})/K(E_{\tau})$ has degree
\[
[K(\overline{\widetilde{\Gamma}_{n}(\tau)}):K(E_{\tau})]=\frac{n(n+1)}{2}.
\]

Define
\[
\tilde{\kappa}(\widetilde{P}):=\zeta(\tilde{\sigma}_{n}(\widetilde{P}%
))-c(\widetilde{P}),
\]
where $\widetilde{P}\in\overline{\widetilde{\Gamma}_{n}(\tau)}$ and
\[
c(\widetilde{P}):=\sum_{j=1}^{n}\zeta(\tilde{a}_{j}(\widetilde{P})).
\]
Here
\[
\tilde{\mathbf{a}}(\widetilde{P}) = \{\tilde a_{1}(\widetilde{P}%
),\ldots,\tilde a_{n}(\widetilde{P})\}
\]
denotes the zero set of the Baker--Akhiezer function $\psi(\widetilde{P};z)$,
as in \eqref{lame's zero set}. Then $\tilde{\kappa}(\widetilde{P})$ is a
rational function on $\overline{\widetilde{\Gamma}_{n}(\tau)}$. It is shown in
\cite{LW} that $\tilde\kappa(\widetilde{P})$ is in fact a primitive element of
the field extension and therefore its minimal polynomial, denoted by
$\widetilde{W}_{n}(\sigma;\kappa)$, certainly exists and belongs to
\[
\mathbb{C}[\wp(\sigma),\wp^{\prime}(\sigma),g_{2},g_{3}][\kappa]
\]
of degree $\frac{n(n+1)}{2}$ in $\kappa$.

Set the weights of $\kappa$, $\wp(\sigma)$, $e_{k}(\tau)$ and $\wp^{\prime
}(\sigma)$ to be $1,2,2,3$, respectively. Then $\widetilde{W}_{n}%
(\sigma;\kappa)$ is of homogeneous weight $\frac{n(n+1)}{2}$.

For $(r,s)\in\mathbb{C}^{2}$ with $\sigma=r+s\tau\neq0$, the pre--modular form
associated with the Lam\'{e} equation L$_{n}(\widetilde{B},\tau)$ is given by
\begin{equation}
{Z}_{n}(r,s,\tau):=\widetilde{W}_{n}\bigl(\sigma;Z(r,s,\tau)\bigr).
\label{def, lame's pre modular form}%
\end{equation}

We use the following result from \cite{LW}.

\begin{lemma}
\cite{LW} \label{lem, def, lame's pre modular form} Let $\tau\in\mathbb{H}$
and $(r,s)\in\mathbb{C}^{2}\setminus\frac{1}{2}\mathbb{Z}^{2}$ with
$r+s\tau\neq0$. Then ${Z}_{n}(r,s,\tau)=0$ if and only if there exists a
unique $\widetilde{B}\in\mathbb{C}$ such that the Lam\'{e} equation
L$_{n}(\widetilde{B},\tau)$ is completely reducible with monodromy data
$(r,s)$.
\end{lemma}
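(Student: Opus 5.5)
The plan is to follow the non-even argument of Theorem~\ref{thm, equivalence of W and system}, now on the classical Lam\'e curve $\overline{\widetilde\Gamma_n(\tau)}$. Throughout I use the Hermite--Halphen representation of the Baker--Akhiezer function $\psi(\widetilde P;z)$ and the classical criterion that $\mathrm L_n(\widetilde B,\tau)$ is completely reducible if and only if $\widetilde Q_n(\widetilde B;\tau)\neq0$, equivalently $(r,s)\notin\frac12\mathbb Z^2$. This criterion is the Lam\'e analogue of Theorem~\ref{thm,criterion for}.

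\emph{Step 1 (Lam\'e analogue of Lemma~\ref{lem, relation bet r(P),s(P) and aj}).} I first record the Lam\'e version of the transformation law. Applying \eqref{sigma's trans law} to
\[
\psi(\widetilde P;z)=e^{c(\widetilde P)z}\prod_j\sigma(z-\tilde a_j)/\sigma(z)^n
\]
gives $r+s\tau=\tilde\sigma_n(\widetilde P)$ and $r\eta_1+s\eta_2=c(\widetilde P)$. Hence $\tilde\kappa(\widetilde P)=Z(r(\widetilde P),s(\widetilde P),\tau)$, exactly as in Proposition~\ref{prop, kappa is Zrs}.

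\emph{Step 2 (the ``if'' direction).} Suppose $\mathrm L_n(\widetilde B,\tau)$ is completely reducible with data $(r,s)$. Then $\widetilde C\neq0$, and one of $\psi(\widetilde P)$, $\psi(\widetilde P^*)$ has multipliers $(e^{-2\pi is},e^{2\pi ir})$. Relabel so that it is $\psi(\widetilde P)$. Step~1 gives $\tilde\sigma_n(\widetilde P)=\sigma$ and $\tilde\kappa(\widetilde P)=Z(r,s,\tau)$. Since $\widetilde W_n$ is the minimal polynomial of $\tilde\kappa$ over $K(E_\tau)$, it vanishes at $(\tilde\sigma_n(\widetilde P),\tilde\kappa(\widetilde P))$, and therefore $Z_n(r,s,\tau)=0$.

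\emph{Step 3 (the ``only if'' direction).} I use a norm identity. Because $\tilde\kappa$ is a primitive element and $\deg\tilde\sigma_n=\deg_\kappa\widetilde W_n=\frac{n(n+1)}2$, one has
\[
\widetilde W_n(\sigma;\kappa)=\prod_{\widetilde P\in\tilde\sigma_n^{-1}(\sigma)}\bigl(\kappa-\tilde\kappa(\widetilde P)\bigr),
\]
counted with multiplicity. This holds first for generic $\sigma$, and then for every $\sigma\neq0$ by continuity, since $\tilde\kappa$ has poles only over $\sigma=0$; this parallels Remark~\ref{remark, P2,4,6's common zeros}. Thus $Z_n(r,s,\tau)=0$ produces a point $\widetilde P$ with $\tilde\sigma_n(\widetilde P)=r+s\tau$ and $\tilde\kappa(\widetilde P)=Z(r,s,\tau)$. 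Writing $(r',s')=(r(\widetilde P),s(\widetilde P))$, Step~1 yields
\[
(r'-r)+(s'-s)\tau\in\Lambda_\tau,\qquad (r'-r)\eta_1+(s'-s)\eta_2=0 .
\]
The Legendre relation \eqref{legendre relation} then forces $(r',s')\equiv(r,s)\bmod\mathbb Z^2$. Since $(r,s)\notin\frac12\mathbb Z^2$, the equation $\mathrm L_n(\widetilde B(\widetilde P),\tau)$ is completely reducible with data $(r,s)$.

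\emph{Step 4 (uniqueness of $\widetilde B$).} This is the step I expect to be the main obstacle. Suppose $\widetilde B_1\neq\widetilde B_2$ both realize $(r,s)$, with Bloch solutions $\psi_1$ and $\psi_2$ carrying the same multipliers. Let $\psi_2^\sharp$ be the companion solution of $\mathrm L_n(\widetilde B_2,\tau)$ with inverse multipliers. First try: form
\[
F=\psi_1\psi_2^\sharp,\qquad \mathcal W=\psi_1'\psi_2^\sharp-\psi_1\psi_2^{\sharp\prime}.
\]
Both are elliptic with poles only at $0$, and $\mathcal W'=(\widetilde B_1-\widetilde B_2)F$. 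Comparing Laurent expansions at $0$, using the exponents $-n,n+1$ of the two equations, together with the vanishing of the residue of $F$, should force $F\equiv0$ unless $\widetilde B_1=\widetilde B_2$. If this local bookkeeping does not close, the fallback is to show that two distinct points of the fiber $\tilde\sigma_n^{-1}(\sigma)$ cannot share the same $\tilde\kappa$-value away from the branch locus. For that I would use that the Hermite--Halphen data $(\tilde{\mathbf a},c)$ determine $\psi$, and hence $\widetilde B$, together with the embedding of $\widetilde\Gamma_n(\tau)$ into $\mathrm{Sym}^n E_\tau$; the analogous non-even statement is Proposition~\ref{prop, inp is an embedding}.
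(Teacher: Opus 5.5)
Your Steps 1--3 are sound. They carry over to the Lam\'e curve the arguments the paper runs on the non-even curve: Lemma~\ref{lem, relation bet r(P),s(P) and aj}, Proposition~\ref{prop, kappa is Zrs}, Theorem~\ref{thm, equivalence of W and system}, and the Legendre step in Theorem~\ref{thm, com red iff Zn=0}. The paper itself gives no argument here; it imports the lemma from \cite{LW}.

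There is one slip in Step~3. If $\tilde\sigma_n(\widetilde P)-(r+s\tau)=m+k\tau\neq0$, quasi-periodicity of $\zeta$ turns your second relation into $(r'-r)\eta_1+(s'-s)\eta_2=m\eta_1+k\eta_2$, not $0$. As written, the pair of relations does not force $(r',s')\equiv(r,s)$. Since $Z(r,s,\tau)$ is $\mathbb Z^2$-invariant, first choose the representative of $(r,s)$ with $r+s\tau=\tilde\sigma_n(\widetilde P)$ in $\mathbb C$; then Legendre gives $(r',s')=(r,s)$.

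The genuine gap is Step~4, the uniqueness of $\widetilde B$. This is not cosmetic: Theorem~\ref{thm, com red iff Zn=0} uses it. Your first try aims at $F\equiv0$, which cannot happen, since $F$ is a product of two nonzero functions. More seriously, the relations you plan to exploit never use $(r,s)\notin\frac12\mathbb Z^2$. These relations are: $F$ and $\mathcal W$ elliptic with poles only at $0$, $\operatorname{Res}_0F=0$, and $\mathcal W'=(\widetilde B_1-\widetilde B_2)F$. They are in fact all satisfied with $\widetilde B_1\neq\widetilde B_2$. Take $n=2$, $g_2(\tau)\neq0$ and $\widetilde B_\pm=\pm\sqrt{3g_2}$. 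Then $\psi_\pm=\wp\mp\sqrt{3g_2}/6$ solves $\mathrm L_2(\widetilde B_\pm,\tau)$ with multipliers $(1,1)$. Taking $\psi_1=\psi_+$ and $\psi_2^\sharp=\psi_-$ gives $F=\wp^2-g_2/12=\wp''/6$ and $\mathcal W=(\widetilde B_+-\widetilde B_-)\wp'/6$. So no Laurent bookkeeping built on these relations alone can reach a contradiction.

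Your fallback is circular. That two points of $\tilde\sigma_n^{-1}(\sigma)$ cannot share a $\tilde\kappa$-value is exactly the uniqueness claim. The embedding into $\mathrm{Sym}^nE_\tau$ separates points by the whole divisor $\tilde{\mathbf a}$, whereas the monodromy data fix only $\sum_j\tilde a_j$ and $\sum_j\zeta(\tilde a_j)$.

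The missing idea is to use the \emph{parity} of $F$, not its primitive.
\begin{enumerate}
\item Since the potential is even, $\psi_2^\sharp(z)$ is a multiple of $\psi_2(-z)$. So take $F(z)=\psi_1(z)\psi_2(-z)$, which is elliptic with poles only at $0$.
\item The Laurent expansion of $n(n+1)\wp+\widetilde B_i$ at $0$ has only even powers. Hence the Frobenius coefficients of odd index $<2n+1$ vanish, and each factor equals $z^{-n}e_i(z)+O(z^{n+1})$ with $e_i$ an even polynomial.
\item Therefore $F(z)-F(-z)=O(z)$ is an odd elliptic function without poles, so it vanishes. Thus $F$ is even, and its zero divisor $\tilde{\mathbf a}_1+(-\tilde{\mathbf a}_2)$ is $[-1]$-invariant.
\item Complete reducibility gives $\tilde{\mathbf a}_1\cap(-\tilde{\mathbf a}_1)=\emptyset$, because $\psi_1(z)$ and $\psi_1(-z)$ have different multipliers, so their Wronskian is a nonzero constant.
\item Comparing multiplicities then gives $\tilde{\mathbf a}_1\leqslant\tilde{\mathbf a}_2$, hence $\tilde{\mathbf a}_1=\tilde{\mathbf a}_2$, since both have degree $n$.
\item Now $\psi_1/\psi_2$ is elliptic without zeros or poles, hence constant. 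So $\psi_1$ solves both equations, and $(\widetilde B_1-\widetilde B_2)\psi_1=0$.
\end{enumerate}
Step~4 of this list is exactly where $(r,s)\notin\frac12\mathbb Z^2$ enters. In the $n=2$ example it fails, because the zero divisor of $\wp\mp\sqrt{3g_2}/6$ has the form $\{a,-a\}$.
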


The main result of this section shows that the two polynomials $\widehat{W}%
_{n}(\sigma;\kappa)$ and $\widetilde{W}_{n}(\sigma;\kappa)$, which arise from
different algebraic constructions, in fact coincide.

\begin{theorem}
\label{thm, three W coincide} We have
\begin{equation}
\widehat{W}_{n}(\sigma;\kappa)=\widetilde{W}_{n}(\sigma;\kappa).
\label{two Wn coincides}%
\end{equation}

Consequently, $\kappa(\hat P)$ is a primitive generator of the finite
extension of rational function field $K(\overline{\hat{\Gamma}_{n,p}%
^{(1)}(\tau)})$ over $K(E_{\tau})$. Equivalently,
\[
K\bigl(\overline{\hat{\Gamma}_{n,p}^{(1)}(\tau)}\bigr)
= K(E_{\tau})\bigl(\kappa(\hat P)\bigr).
\]

\end{theorem}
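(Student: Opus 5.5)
The strategy is a degeneration argument. By Theorem~\ref{thm, Wn inde of p}, $\widehat{W}_{n}(\sigma;\kappa)$ does not depend on $p$, so it can be computed in the limit $p\to0$. There the non-even equation degenerates to the Lam\'e equation $\mathrm{L}_n(\widetilde B,\tau)$ (Lemma~\ref{lem, gle converges to lame}), and the degree computation of Section~\ref{Section, Degree of the Non-Even Addition Map} already matches zeros and poles of $\wp\circ\hat\sigma^{(1)}_{n,p}$ with those of $\wp\circ\tilde\sigma_n$. The plan has three parts: show every root of $\widetilde W_n$ is a root of $\widehat W_n$, compare degrees in $\kappa$, and conclude equality together with primitivity.

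\textbf{Step 1: roots of $\widetilde W_n$ are roots of $\widehat W_n$.} Fix $\tau$ and a generic $\sigma_0\in E_\tau$ with $\wp(\sigma_0)\notin\{0,\infty\}$. Take a non-branch point $\widetilde P=(\widetilde B,\widetilde C)\in\widetilde\Gamma_n(\tau)$ with $\tilde\sigma_n(\widetilde P)=\sigma_0$, and set $\kappa_0=\tilde\kappa(\widetilde P)$.
\begin{itemize}
\item Choose $X(p)=n(n+1)p^{-2}+\widetilde B+o(1)$ with $\hat P(p)\in\hat\Gamma^{(1)}_{n,p}(\tau)$. This is possible because the fiber count $m=n(n+1)/2$ and the asymptotics in Lemmas~\ref{lem, zero of P(sigma hat)}--\ref{lem, Xi infty's asy} show that the $m$ preimages of $\sigma_p\to\sigma_0$ converge to the $m$ preimages under $\tilde\sigma_n$.
\item By Proposition~\ref{prop, B and a}(i), $\mathbf a(p)\to\{\tilde{\mathbf a}(\widetilde P),0\}$, up to replacing $\widetilde P$ by $\widetilde P^*$, which is handled by the choice of sign of $C$.
\item By Lemma~\ref{lem, q converges iff c converges}, $c(p)$ converges. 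I expect the limit to be $c(\widetilde P)=\sum\zeta(\tilde a_j)$. This can be checked from \eqref{c in terms of aj}: the term $\zeta(a_{n+1}(p))$ with $a_{n+1}(p)\to0$ blows up, and it must cancel against the term $\tfrac{1}{2(n+1)}\sum\wp'(a_i)/(\wp(a_i)-\wp(p))$. I would verify the cancellation using $c=\lim(r\eta_1+s\eta_2)$ from Lemma~\ref{lem, relation bet r(P),s(P) and aj}. Since the monodromy data of the limiting Lam\'e equation are the limits of the monodromy data, this cancellation is automatic, because $c$ is determined by $(r,s)$.
\item Hence $\kappa(\hat P(p))=Z(r(p),s(p),\tau)\to Z(\tilde r,\tilde s,\tau)=\tilde\kappa(\widetilde P)$.
\item Since $\widehat W_n(\sigma_p;\kappa(\hat P(p)))=0$ and $\widehat W_n$ is independent of $p$ with coefficients continuous in $\sigma$, passing to the limit gives $\widehat W_n(\sigma_0;\kappa_0)=0$.
\end{itemize}
Thus every root of $\widetilde W_n(\sigma_0;\cdot)$ is a root of $\widehat W_n(\sigma_0;\cdot)$ for generic $\sigma_0$. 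Since $\widetilde W_n$ is irreducible over $K(E_\tau)$, it divides $\widehat W_n$.

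\textbf{Step 2: degree comparison.} $\deg_\kappa\widehat W_n\le[K(\hat\Gamma^{(1)}_{n,p}):K(E_\tau)]=\deg\hat\sigma^{(1)}_{n,p}=n(n+1)/2$ by Theorem~\ref{thm, degree of hat sigma in appendix}. Also $\deg_\kappa\widetilde W_n=n(n+1)/2$ by \cite{LW}. Both polynomials are monic, and divisibility plus these degrees forces equality. Then $\kappa$ has degree $n(n+1)/2$ over $K(E_\tau)$, which equals the full extension degree, so $\kappa(\hat P)$ is primitive and $K(\overline{\hat\Gamma^{(1)}_{n,p}})=K(E_\tau)(\kappa)$.

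\textbf{Main obstacle.} The delicate step is the convergence $\kappa(\hat P(p))\to\tilde\kappa(\widetilde P)$, specifically the convergence of $c(p)$ to the Lam\'e value. The hazard is that $a_{n+1}(p)\to0$, which could leave a residual constant. I would first try to avoid the explicit formula by arguing through monodromy: the Bloch solutions converge locally uniformly away from $0$, so the multipliers $\lambda_j$ converge. This requires choosing the branch of $(r,s)$ continuously, and the genericity of $\sigma_0$ avoids the half-period and ramification issues. As a fallback, I would use the explicit comparison of \eqref{c in terms of aj} with \eqref{z=0} and \eqref{T in terms of aj} to show $c(p)-\sum_{i\le n}\zeta(a_i(p))\to0$.
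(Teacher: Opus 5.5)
Your proposal is correct, but it proves the divisibility in the opposite direction from the paper.

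\textbf{The paper's route.} The paper starts from a root $(\sigma_0,\kappa_0)$ of $\widehat W_n$. It uses $p$-independence and Theorem~\ref{thm, equivalence of W and system} to produce, for every $p$, a point $\hat P(p)$ with $(\hat\sigma^{(1)}_{n,p},\kappa)(\hat P(p))=(\sigma_0,\kappa_0)$. By Legendre's relation this is an isomonodromic family with fixed $(r,s)$, so $c(p)=r\eta_1+s\eta_2$ is constant. Lemma~\ref{lem, gle to lame r,s} then gives a completely reducible limit $\mathrm{L}_n(\widetilde B,\tau)$ with the same data, and Lemma~\ref{lem, def, lame's pre modular form} turns this into $Z_n(r,s,\tau)=0$. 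Hence $\widehat W_n\mid\widetilde W_n$, and equality follows because both are monic and irreducible.

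\textbf{Your route.} You start instead from a Lam\'e point $\widetilde P$ and take an arbitrary, non-isomonodromic family with $B(p)\to\widetilde B$. You show that $(\hat\sigma^{(1)}_{n,p},\kappa)(\hat P(p))\to(\tilde\sigma_n,\tilde\kappa)(\widetilde P)$. Then $p$-independence is used only to pass to the limit in $\widehat W_n(\hat\sigma;\kappa)=0$. This avoids both the monodromy characterization of the zeros of $Z_n$ and the construction of a point with prescribed $(\sigma_0,\kappa_0)$ for every $p$. The cost is that $(r,s)$ is no longer fixed, so you must identify $\lim c(p)$ yourself.

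\textbf{The step you flag as the main obstacle.} It is easier than you fear, and you need neither the explicit cancellation nor branch-tracking of $(r,s)$.
\begin{itemize}
\item Once $c(p)\to c^0$ (Lemma~\ref{lem, q converges iff c converges}) and $\mathbf a(p)\to\{a_1^0,\dots,a_n^0,0\}$ (Proposition~\ref{prop, B and a}(i)), the ansatz converges to $e^{c^0z}\prod_{j\le n}\sigma(z-a_j^0)/\sigma(z)^n$.
\item This limit solves $\mathrm{L}_n(\widetilde B,\tau)$.
\item For such a Lam\'e ansatz, the $z^{-1}$ coefficient at $z=0$ of $(y'/y)'+(y'/y)^2-n(n+1)\wp-\widetilde B$ is $-2n\bigl(c^0-\sum_{j\le n}\zeta(a_j^0)\bigr)$, which forces $c^0=\sum_{j\le n}\zeta(a_j^0)$.
\end{itemize}
This is the same mechanism as in Lemma~\ref{lem, gle to lame r,s}.

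\textbf{Choice of $X(p)$ and the sign of $C$.} Choosing $X(p)=n(n+1)p^{-2}+\widetilde B+o(1)$ needs no fiber-counting argument, since $\hat\Gamma^{(1)}_{n,p}(\tau)$ is a double cover of the $X$-line. The sign of $C(p)$ is settled because the two points over $X(p)$ have addition values $\pm\hat\sigma^{(1)}_{n,p}$, and $\sigma_0\neq-\sigma_0$.

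\textbf{Step 2.} Your degree comparison is not needed for the equality itself. $\widehat W_n$ is a minimal polynomial, hence irreducible, so $\widetilde W_n\mid\widehat W_n$ already forces equality. In both routes, however, $\deg\hat\sigma^{(1)}_{n,p}=n(n+1)/2$ is what yields the primitivity statement.
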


For the proof, let $\sigma_{0}=r+s\tau\notin E_{\tau}[2]$ and $\kappa_{0}%
\in\mathbb{C}$ satisfy
\begin{equation}
\widehat{W}_{n}(\sigma_{0};\kappa_{0})=0. \label{Wn(sigma,kappa)=0}%
\end{equation}
Since $\widehat{W}_{n}(\sigma;\kappa)$ is independent of $p$,
Theorem~\ref{thm, equivalence of W and system} implies that, for each $p\in
E_{\tau}\setminus E_{\tau}[2]$, there exists $\hat{P}(p)=(X(p),C(p))\in
\hat{\Gamma}_{n,p}^{(1)}(\tau)$ such that
\begin{equation}
(\hat{\sigma}_{n,p}^{(1)}(\hat{P}(p)),\hat{\kappa}(\hat{P}(p)))=(\sigma
_{0},\kappa_{0}). \label{sigma and kappa are inva}%
\end{equation}

Choose $P(p)=(T(p),C(p))\in\pi^{-1}(\hat{P}(p))$ and consider the
corresponding equation $\mathrm{GLE}_{n}(p,T(p),\tau)$. Since
\begin{equation}
\sigma_{n,p}^{(1)}(P(p))=\hat{\sigma}_{n,p}^{(1)}(\hat{P}(p))=\sigma
_{0}\not \in E_{\tau}[2],
\end{equation}
$\mathrm{GLE}_{n}^{(1)}(p,T(p),\tau)$ is completely reducible. Let
$(r(P(p)),s(P(p)))$ denote its monodromy data. It then follows from
\eqref{sigma and kappa are inva} that
\begin{equation}
(r(P(p)),s(P(p)))\equiv(r,s).
\end{equation}
Thus the family $\mathrm{GLE}_{n}^{(1)}(p,T(p),$ $\tau)$ is isomonodromic as
$p\in E_{\tau}\setminus E_{\tau}[2]$ varies.

\begin{lemma}
\label{lem, gle to lame r,s} Under the same assumptions above, let
GLE$_{n}^{(1)}(p,T(p),$ $\tau)$ be the isomonodromic family constructed there.
Then, as $p\rightarrow0$, GLE$_{n}^{(1)}(p,T(p),\tau)$ converges to
L$_{n}(\widetilde{B},\tau)$ for some $\widetilde{B}\in\mathbb{C}$. Moreover,
the limiting equation L$_{n}(\widetilde{B},\tau)$ is completely reducible and
has the same monodromy data $\left(  r,s\right)  $.
\end{lemma}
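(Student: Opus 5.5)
The plan is to derive convergence from Theorem~\ref{thm, sigma neq 0 implies convergence}, and then pass the monodromy data to the limit through the Hermite--Halphen representation.

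\textbf{Step 1: convergence.} By construction, $\hat\sigma^{(1)}_{n,p}(\hat P(p))=\sigma_0$ for every $p$. Here $\sigma_0=r+s\tau\notin E_\tau[2]$, so in particular $\sigma_0\neq0$. Theorem~\ref{thm, sigma neq 0 implies convergence} then applies to the constant sequence and shows that $B(p)$ stays bounded. Along any sequence $p\to0$, a subsequence gives $B(p)\to\widetilde B$, and $\mathrm{GLE}^{(1)}_n(p,T(p),\tau)\to\mathrm L_n(\widetilde B,\tau)$ locally uniformly on $E_\tau\setminus\{0\}$. This follows from Lemma~\ref{lem, gle converges to lame}, and we have $\tilde\sigma_n(\widetilde B,\widetilde C)=\sigma_0$ for a suitable sign of $\widetilde C$.

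\textbf{Step 2: identify the limit of the Bloch solution.} Consider the Hermite--Halphen solutions $y_{\mathbf a(p),c(p)}$. By Lemma~\ref{lem, q converges iff c converges}, $c(p)$ converges, say to $c_0$. By Proposition~\ref{prop, B and a}(i), $\mathbf a(p)\to\{a^0_1,\dots,a^0_n,0\}$, where $\{a^0_j\}$ is the zero divisor of a Baker--Akhiezer function $\psi(\widetilde P_0;z)$ of $\mathrm L_n(\widetilde B,\tau)$. Now pass to the limit in \eqref{Hermite noneven}. The factor $\sigma(z-0)$ cancels one power of $\sigma(z)$, and $\sqrt{\sigma(z+p)\sigma(z-p)}\to\sigma(z)$. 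Hence $y_{\mathbf a(p),c(p)}$ converges locally uniformly on $\mathbb C\setminus\Lambda_\tau$ to
\[
e^{c_0z}\prod_{j=1}^n\sigma(z-a^0_j)/\sigma(z)^n .
\]
This limit is a nontrivial solution of $\mathrm L_n(\widetilde B,\tau)$, since solutions converge along with the potentials. Multipliers pass to the limit by continuity; equivalently, Lemma~\ref{lem, relation bet r(P),s(P) and aj} gives $\sum a_j(p)=r+s\tau$ and $c(p)=r\eta_1+s\eta_2$, and these relations persist at $p=0$. The limiting solution therefore satisfies the Lam\'e version of the same relations, $\sum_j a^0_j=r+s\tau$ and $c_0=r\eta_1+s\eta_2$. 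Its multipliers are $e^{-2\pi is}$ and $e^{2\pi ir}$, so the limit Lam\'e equation has a Bloch solution with data $(r,s)$.

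\textbf{Step 3: complete reducibility.} The same argument applies to $P(p)^*$, whose data are $(-r,-s)$ by \eqref{0120equ2}. It produces a second Bloch solution of $\mathrm L_n(\widetilde B,\tau)$ with multipliers $e^{2\pi is}$ and $e^{-2\pi ir}$. Since $(r,s)\notin\frac12\mathbb Z^2$, these multipliers differ from those of the first solution, so the two Bloch solutions are linearly independent. Hence $\mathrm L_n(\widetilde B,\tau)$ is completely reducible with monodromy data $(r,s)$.

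\textbf{Main obstacle.} The delicate point is independence of the subsequence, that is, that the full family converges rather than only subsequences. By Lemma~\ref{lem, def, lame's pre modular form}, the $\widetilde B$ realizing a completely reducible Lam\'e equation with data $(r,s)$ is unique. Every subsequential limit therefore gives the same $\widetilde B$, so $B(p)\to\widetilde B$ along the whole family. A secondary care point is justifying that the limit in Step 2 is not identically zero. This holds because the limit divisor and constant are finite, and the normalization $c_0=1$ in the expansion at a regular base point $z_0$ is preserved.
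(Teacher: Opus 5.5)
Your proof is correct, and its core (Steps 2 and 3) is the paper's argument. You pass to the limit in the Hermite--Halphen solution using Proposition~\ref{prop, B and a}(i), then read off the Lam\'e data from $\sum_j a_j^0=r+s\tau$ and $c^0=r\eta_1+s\eta_2$. The paper solves these two linear relations with the Legendre relation, and your use of the $\sigma$ transformation law is the same computation.

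The difference is in the convergence step. The paper uses that $c(p)=r\eta_1+s\eta_2$ is independent of $p$, which follows from Lemma~\ref{lem, relation bet r(P),s(P) and aj} because the data are fixed. Then the implication ``$c(p)$ converges $\Rightarrow$ the family converges'' in Lemma~\ref{lem, q converges iff c converges}, together with Lemma~\ref{lem, gle converges to lame}, gives convergence of the whole family at once, with no subsequences. You note this constancy yourself in Step 2, but you apply Lemma~\ref{lem, q converges iff c converges} only in the reverse direction. That forces you through a detour: boundedness of $B(p)$ from Theorem~\ref{thm, sigma neq 0 implies convergence}, passage to subsequences, and the uniqueness in Lemma~\ref{lem, def, lame's pre modular form} to identify the subsequential limits. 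The detour is valid, and there is no circularity because that uniqueness comes from classical Lam\'e theory. But the ``main obstacle'' you name does not arise on the paper's route.

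In the other direction, your Step 3 is more self-contained than the paper. The paper simply asserts complete reducibility from $(r,s)\notin\frac12\mathbb Z^2$, via the Lam\'e analogue of Lemma~\ref{lem,complete iff lambda neq pm1}. You prove it directly by producing a second Bloch solution from $P(p)^*$ with inverse multipliers.
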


\begin{proof}

Let $\psi(P(p);z)$ be the associated Baker--Akhiezer function of
GLE$_{n}^{(1)}$ $(p,T(p),\tau)$, where $P(p)=(T(p),C(p))\in\Gamma_{n,p}%
^{(1)}(\tau)$. By \eqref{Ba is given by a(P) and c} and
Lemma~\ref{lem, relation bet r(P),s(P) and aj}, the ansatz $(\mathbf{a}%
(p),c(p))$ satisfies
\begin{equation}%
\begin{cases}
\sum_{j=1}^{n+1}a_{j}(p)=r+s\tau,\\
c(p)=r\,\eta_{1}+s\,\eta_{2}.
\end{cases}
\label{fixed (r,s) and aj(p)}%
\end{equation}

In particular,
\begin{equation}
c^{0}:=\lim_{p\to0}c(p)=r\eta_{1}+s\eta_{2}%
\end{equation}
converges since $(r,s)$ is fixed.

By Lemmas~\ref{lem, q converges iff c converges}
and~\ref{lem, gle converges to lame}, it follows that GLE$_{n}^{(1)}%
(p,T(p),\tau)$ converges to L$_{n}(\widetilde{B},\tau)$ as $p\to0$ for some
$\widetilde{B}\in\mathbb{C}$, where $\widetilde{B}$ is determined by \eqref{X(p)'s behavior at 0}.

As in \eqref{a(p) as p to 0}, we may assume that $\mathbf{a}(p)\to
\mathbf{a}^{0}$ as $p\to0$ . Then Proposition~\ref{prop, B and a}(i) implies
$\mathbf{a}^{0}$ satisfies \eqref{a0 lies in lame's zero}, where $\{a_{1}%
^{0},\ldots,a_{n}^{0}\}$ is the zero divisor of the Baker--Akhiezer function
associated with $\mathrm{L}_{n}(\widetilde{B},\tau)$. Therefore,
\begin{equation}
\psi_{0}(z):=e^{c^{0}z}\frac{\prod_{j=1}^{n}\sigma(z-a_{j}^{0})}{\sigma
(z)^{n}}%
\end{equation}
is precisely the Baker--Akhiezer function of $\mathrm{L}_{n}(\widetilde{B}%
,\tau)$.

Let $(\tilde r,\tilde s)\in\mathbb{C}^{2}/\mathbb{Z}^{2}$ denote the monodromy
data of $\mathrm{L}_{n}(\widetilde{B},\tau)$. Then
\begin{equation}%
\begin{cases}
\tilde{r}+\tilde{s}\tau=\sum_{j=1}^{n}a_{j}^{0}=\lim_{p\to0}\sum_{j=1}%
^{n+1}a_{j}(p)\equiv r+s\tau,\\
\tilde{r}\eta_{1}+\tilde{s}\eta_{2}=c^{0}=\lim_{p\to0}c(p)\equiv r\eta
_{1}+s\eta_{2}.
\end{cases}
\label{lame's mono data}%
\end{equation}

Comparing this with \eqref{lame's mono data} yields
\begin{equation}%
\begin{cases}
\displaystyle \tilde{r}+\tilde{s}\tau=r+s\tau,\\
\tilde{r}\eta_{1}+\tilde{s}\eta_{2}=r\eta_{1}+s\eta_{2}.
\end{cases}
\end{equation}

By Legendre relation \eqref{legendre relation}, we conclude that
\[
(\tilde{r},\tilde{s})=(r,s).
\]

Since $(r,s)\in\mathbb{C}^{2}\setminus\frac12\mathbb{Z}^{2}$, the limiting
Lam\'e equation $\mathrm{L}_{n}(\widetilde{B},\tau)$ is completely reducible
with the same monodromy data $(r,s)$.
\end{proof}

\begin{proof}
[Proof of Theorem~\ref{thm, three W coincide}]It suffices first to consider
generic $\sigma_{0}\notin E_{\tau}[2]$ for which both polynomials have only
simple zeros.

Let $(\sigma_{0},\kappa_{0})\in E_{\tau}\setminus E_{\tau}[2]\times\mathbb{C}$
satisfy \eqref{Wn(sigma,kappa)=0}. By the previous analysis, the family
$\mathrm{GLE}_{n}^{(1)}(p,T(p),\tau)$ is completely reducible with monodromy
data $(r,s)$, where $\sigma_{0}=r+s\tau$. Hence, by
Theorem~\ref{thm, monodromy sys in X} and
Theorem~\ref{thm, equivalence of W and system}, $X(p)=T(p)^{2}$ solves the
system \eqref{system for X, r and s} with the pair $(\sigma_{0},\kappa_{0})$.
Moreover, comparing \eqref{mono system for X, r(P) and s(P)} with
\eqref{system for X, r and s} leads to
\begin{equation}
\kappa_{0}=\frac{\hat{P}_{5}(X(p),p;\tau)}{\hat{P}_{6}(X(p),p;\tau)}%
\sqrt{-\widehat{Q}_{n,p}^{(1)}(X(p))}=\kappa(P(p))=Z(r,s,\tau).
\label{kappa0=kappa(P)}%
\end{equation}

As $p\to0$, Lemma~\ref{lem, gle to lame r,s} implies that this isomonodromic
family converges to $\mathrm{L}_{n}(\widetilde{B},\tau)$ for some
$\widetilde{B}\in\mathbb{C}$. Moreover, the limiting equation $\mathrm{L}%
_{n}(\widetilde{B},\tau)$ is also completely reducible with the same monodromy
data $(r,s)$. Therefore, Lemma~\ref{lem, def, lame's pre modular form} yields
\begin{equation}
{Z}_{n}(r,s,\tau)=0. \label{lame's Z=0}%
\end{equation}

Combining \eqref{kappa0=kappa(P)}, \eqref{lame's Z=0} and
\eqref{def, lame's pre modular form}, we obtain
\begin{equation}
\widetilde{W}_{n}(\sigma_{0};\kappa_{0})=\widetilde{W}_{n}(\sigma
_{0};Z(r,s,\tau))={Z}_{n}(r,s,\tau)=0.
\end{equation}
Thus, for generic $\sigma_{0}$, every zero of $\widehat{W}_{n}(\sigma
_{0};\kappa)$ is also a zero of $\widetilde{W}_{n}(\sigma_{0};\kappa)$. It
follows that
\[
\widehat{W}_{n}(\sigma_{0};\kappa)\mid\widetilde{W}_{n}(\sigma_{0}%
;\kappa)\quad\text{in }K(E_{\tau})[\kappa].
\]

Since both $\widetilde{W}_{n}(\sigma;\kappa)$ and $\widehat{W}_{n}%
(\sigma;\kappa)$ are irreducible polynomials over $K(E_{\tau})$ and monic in
$\kappa$, we conclude that
\[
\widehat{W}_{n}(\sigma;\kappa) = \widetilde{W}_{n}(\sigma;\kappa).
\]
This proves \eqref{two Wn coincides}.

Consequently,
\[
\deg_{\kappa}\widehat{W}_{n}(\sigma;\kappa) =\deg_{\kappa}\widetilde{W}%
_{n}(\sigma;\kappa)= \frac{n(n+1)}2.
\]
Hence,
\[
[K(E_{\tau})(\kappa(\hat P)):K(E_{\tau})] =[K(\overline{\hat{\Gamma}%
_{n,p}^{(1)}(\tau)}):K(E_{\tau})]= \frac{n(n+1)}{2} .
\]

Observe that
\[
K(E_{\tau})\subset K(E_{\tau})(\kappa(\hat P))\subset K(\overline{\hat{\Gamma
}_{n,p}^{(1)}(\tau)}),
\]
it follows that
\begin{equation}
[K(\overline{\hat{\Gamma}_{n,p}^{(1)}(\tau)}):K(E_{\tau})(\kappa(\hat P)]=1.
\label{primitive generator}%
\end{equation}
Then
\[
K(E_{\tau})(\kappa(\hat P))=K(\overline{\hat{\Gamma}_{n,p}^{(1)}(\tau)}).
\]

Therefore, $\kappa(\hat P)$ is a primitive generator of $K(\overline
{\hat{\Gamma}_{n,p}^{(1)}(\tau)})$ over $K(E_{\tau})$.
\end{proof}

Since
\[
K\!\left( \hat{\Gamma}^{(1)}_{n,p}(\tau)\right)  = K(E_{\tau})(\kappa),
\]
and the minimal polynomial of $\kappa$ over $K(E_{\tau})$ is $W_{n}%
(\sigma;\kappa)$, the field $K(E_{\tau})(\kappa)$ is precisely the function
field of the algebraic curve
\[
C_{n}:= \left\{  (\sigma,\kappa)\in E_{\tau}\times\mathbb{P}^{1}\mid W_{n}%
(\sigma;\kappa)=0 \right\} .
\]
Thus,
\[
K\!\left( \hat{\Gamma}^{(1)}_{n,p}(\tau)\right)  = K(C_{n}).
\]
Hence both $\hat\Gamma_{n,p}^{(1)}(\tau)$ and $\widetilde{\Gamma}_{n}(\tau)$
are normalizations of $\mathcal{C}_{n}$.
By the uniqueness of normalization, there is a unique isomorphism
\begin{equation}
\iota_{p}: \overline{\hat{\Gamma}_{n,p}^{(1)}(\tau)} \longrightarrow
\overline{\widetilde{\Gamma}_{n}(\tau)},\ \hat{P}\longmapsto\widetilde{P}%
,\label{isomorphism}%
\end{equation}
compatible with the normalization maps. In particular,
\begin{equation}
    \tilde{\sigma}_{n}\circ\iota_{p} = \hat{\sigma}^{(1)}_{n,p}, \qquad
\tilde{\kappa}\circ\iota_{p} = \kappa,
\label{isomorphism sigma and kappa}
\end{equation}
and
\begin{equation}
(\hat{\sigma}_{n,p}^{(1)}(\hat{P}),\kappa(\hat{P}))=(\tilde{\sigma}%
_{n}(\widetilde{P}),\tilde{\kappa}(\widetilde{P}%
)).\label{isomorphism property}%
\end{equation}

\begin{theorem}
\label{thm, com red iff Zn=0} Let $\tau\in\mathbb{H}$ and $(r,s)\in
\mathbb{C}^{2}\setminus\frac12\mathbb{Z}^{2}$. Then
\[
Z_{n}(r,s,\tau)=0
\]
if and only if, for every $p\in E_{\tau}\setminus E_{\tau}[2]$, there exists a
unique point
\[
\hat P(p)=(X(p),C(p)) \in\hat\Gamma_{n,p}^{(1)}(\tau)
\]
such that the corresponding equation
\[
\mathrm{GLE}_{n}^{(1)}(p,T(p),\tau), \qquad T(p)^{2}=X(p),
\]
is completely reducible with monodromy data
\[
(r(P(p)),s(P(p)))=(r,s).
\]
In this case, $\mathrm{GLE}_{n}^{(1)}(p,T(p),\tau)$ forms an isomonodromic
family with monodromy data $(r,s)$ as $p$ varies in $E_{\tau}\setminus
E_{\tau}[2]$.
\end{theorem}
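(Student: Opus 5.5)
The proof will run in two directions, both routed through the identification $\widehat W_n=\widetilde W_n$ of Theorem~\ref{thm, three W coincide} and the characterization in Theorem~\ref{thm, equivalence of W and system}. Throughout, set $\sigma_0:=r+s\tau$ and $\kappa_0:=Z(r,s,\tau)$. The case $\sigma_0=0$ is excluded implicitly, since $Z_n$ is defined only for $r+s\tau\neq0$. We first handle the case $\sigma_0\notin E_\tau[2]$, and then treat separately the case where $\sigma_0$ is a nonzero half-period with $(r,s)\notin\frac12\mathbb Z^2$.

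For the direction ($\Rightarrow$), assume $Z_n(r,s,\tau)=0$. By \eqref{def, lame's pre modular form} and Theorem~\ref{thm, three W coincide}, this gives $\widehat W_n(\sigma_0;\kappa_0)=0$. Since $\widehat W_n$ is independent of $p$ (Theorem~\ref{thm, Wn inde of p}), Theorem~\ref{thm, equivalence of W and system} applies for every $p$. It yields $\hat P(p)\in\hat\Gamma^{(1)}_{n,p}(\tau)$ with $(\hat\sigma^{(1)}_{n,p}(\hat P(p)),\kappa(\hat P(p)))=(\sigma_0,\kappa_0)$. Choose $P(p)\in\pi^{-1}(\hat P(p))$. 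Lemma~\ref{lem, relation bet r(P),s(P) and aj} and Proposition~\ref{prop, kappa is Zrs} then give $r(P)+s(P)\tau=r+s\tau$ and $Z(r(P),s(P),\tau)=Z(r,s,\tau)$. Subtracting $\zeta(\sigma_0)$, we get $r(P)\eta_1+s(P)\eta_2=r\eta_1+s\eta_2$. The Legendre relation \eqref{legendre relation} then forces $(r(P),s(P))=(r,s)$ modulo $\mathbb Z^2$. Since $(r,s)\notin\frac12\mathbb Z^2$, Theorem~\ref{thm,criterion for} shows that the equation is completely reducible. The same computation shows that both lifts $P,-P^*$ carry the same data, consistent with Proposition~\ref{prop, data P and -P}. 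The isomonodromy statement is then immediate, because the data is $(r,s)$ for every $p$.

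For uniqueness, suppose $\hat P,\hat P'$ both realize $(r,s)$. Then they have equal $(\hat\sigma,\kappa)$. Through the isomorphism $\iota_p$ of \eqref{isomorphism}, their images $\widetilde P,\widetilde P'$ on $\widetilde\Gamma_n(\tau)$ have equal $(\tilde\sigma_n,\tilde\kappa)$, hence realize the same completely reducible Lam\'e data. Lemma~\ref{lem, def, lame's pre modular form} then gives uniqueness of $\widetilde B$, so $\widetilde P=\widetilde P'$ up to the hyperelliptic involution. The involution is ruled out because it sends $(r,s)\mapsto(-r,-s)$ by \eqref{0120equ2}, and $(-r,-s)\ne(r,s)$ modulo $\mathbb Z^2$ outside $\frac12\mathbb Z^2$. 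Since $\iota_p$ is bijective, $\hat P=\hat P'$.

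For the direction ($\Leftarrow$), it suffices to have the realizing point for one $p$. Its $(\hat\sigma,\kappa)$ equals $(\sigma_0,\kappa_0)$ by Lemma~\ref{lem, relation bet r(P),s(P) and aj} and Proposition~\ref{prop, kappa is Zrs}. Hence $\widehat W_n(\sigma_0;\kappa_0)=0$ via Theorem~\ref{thm, equivalence of W and system}, and therefore $Z_n(r,s,\tau)=\widetilde W_n(\sigma_0;\kappa_0)=0$ by Theorem~\ref{thm, three W coincide}.

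I expect the main obstacle to be the edge cases, not the generic argument. The first is when $\sigma_0\in E_\tau[2]\setminus\{0\}$ while $(r,s)\notin\frac12\mathbb Z^2$. The equivalence in Theorem~\ref{thm, equivalence of W and system} is stated for $\sigma\in\mathbb C^*$, so this should be fine, but the proof of Theorem~\ref{thm, three W coincide} used generic $\sigma_0$. Here I would rely on the polynomial identity \eqref{two Wn coincides} holding identically rather than only at generic points. The second is ensuring that the point $\hat P(p)$ is affine, that is, not $\widehat\infty^{(1)}$. This follows because $\hat\sigma$ vanishes at infinity while $\sigma_0\ne0$.
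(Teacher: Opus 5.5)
Your proof is correct and follows the paper's argument. You identify the monodromy data via Lemma~\ref{lem, relation bet r(P),s(P) and aj}, Proposition~\ref{prop, kappa is Zrs} and the Legendre relation, obtain uniqueness from $\iota_p$ together with the uniqueness of $\widetilde B$ in Lemma~\ref{lem, def, lame's pre modular form}, and read off the converse from $\widehat W_n=\widetilde W_n$. The only differences are minor: you get existence of $\hat P(p)$ from $\widehat W_n(\sigma_0;\kappa_0)=0$ and Theorem~\ref{thm, equivalence of W and system}, rather than by pulling back the Lam\'e point $\widetilde P_0$ through $\iota_p^{-1}$, and you handle the hyperelliptic involution and the half-period case more explicitly than the paper does.
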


\begin{proof}
Suppose first that
\[
Z_{n}(r,s,\tau)=0.
\]
Then Lemma~\ref{lem, def, lame's pre modular form} yields a unique point
\[
\widetilde{P}_{0}=(\widetilde{B},\widetilde{C}) \in\widetilde{\Gamma}_{n}%
(\tau)
\]
such that $\mathrm{L}_{n}(\widetilde{B},\tau)$ is completely reducible with
monodromy data $(r,s)$. In particular,
\[
\bigl(
\tilde\sigma_{n}(\widetilde{P}_{0}), \tilde\kappa(\widetilde{P}_{0}) \bigr)
= (\sigma_{0},\kappa_{0}),
\]
where
\begin{equation}
\sigma_{0}=r+s\tau, \qquad\kappa_{0}=Z(r,s,\tau).\label{sigma0 and kappa0}%
\end{equation}

By the uniqueness of $\widetilde{P}$, \eqref{isomorphism} and
\eqref{isomorphism property},
\[
\hat{P}(p)=\iota_{p}^{-1}(\widetilde{P})
\]
is the unique point satisfying
\[
\bigl(
\hat\sigma_{n,p}^{(1)}(\hat P(p)), \kappa(\hat P(p)) \bigr)
= (\sigma_{0},\kappa_{0}).
\]

Choose any $P(p)$ over $\hat P(p).$ By
Lemma~\ref{lem, relation bet r(P),s(P) and aj},
\[
r(P(p))+s(P(p))\tau= \hat\sigma_{n,p}^{(1)}(\hat P(p)) = r+s\tau,
\]
and
\[
r(P(p))\eta_{1}+s(P(p))\eta_{2} = \zeta(\sigma_{0})-\kappa_{0} = r\eta
_{1}+s\eta_{2}.
\]
Hence
\[%
\begin{cases}
(r(P(p))-r)+(s(P(p))-s)\tau=0,\\[2mm]%
(r(P(p))-r)\eta_{1}+(s(P(p))-s)\eta_{2}=0.
\end{cases}
\]
By Legendre's relation~\eqref{legendre relation},
\[
(r(P(p)),s(P(p)))=(r,s).
\]

Thus, for every $p\in E_{\tau}\setminus E_{\tau}[2]$, the corresponding
equation $\mathrm{GLE}_{n}^{(1)}(p,T(p),$ $\tau)$ is completely reducible with
monodromy data $(r,s)$. Consequently, these equations form an isomonodromic
family as $p$ varies in $E_{\tau}\setminus E_{\tau}[2]$.

Conversely, suppose that for every $p\in E_{\tau}\setminus E_{\tau}[2]$ there
exists such a point $\hat P(p)$ and that the corresponding equation is
completely reducible with monodromy data $(r,s)$. Then
\[
\hat\sigma_{n,p}^{(1)}(\hat P(p)) = r+s\tau=\sigma_{0}
\]
and
\[
\kappa(\hat P(p)) = Z(r,s,\tau)=\kappa_{0}.
\]
Hence
\[
\widehat{W}_{n}(\sigma_{0};\kappa_{0})=0.
\]
By Theorem~\ref{thm, three W coincide},
\[
\widehat{W}_{n}(\sigma_{0};\kappa_{0}) = \widetilde{W}_{n}(\sigma_{0}%
;\kappa_{0}).
\]
Therefore,
\[
Z_{n}(r,s,\tau) = \widetilde{W}_{n} \bigl(
r+s\tau;Z(r,s,\tau) \bigr)
=0.
\]
This completes the proof.
\end{proof}

\section{The Monodromy Correspondence}

\label{Section, Mono Equiv} 
We now identify the monodromy correspondence
between $\mathrm{GLE}_{n}^{(1)}(p,T,\tau)$ and $\mathrm{L}_{n}(\widetilde{B}%
,\tau)$ explicitly.

We first treat the completely reducible case. Suppose $\mathrm{GLE}_{n}%
^{(1)}(p,T,\tau)$ is completely reducible with monodromy data $(r,s)$. By
Theorem~\ref{thm, com red iff Zn=0},
\[
Z_{n}(r,s,\tau)=0.
\]
Moreover, for each $p\in E_{\tau}\setminus E_{\tau}[2]$, there exists a unique
point
\[
\hat{P}(p)=(X(p),C(p))\in\hat{\Gamma}_{n,p}^{(1)}(\tau),
\]
such that for $P(p)=(T(p),C(p))\in\Gamma_{n,p}^{(1)}(\tau)$ with
$T(p)^{2}=X(p)$, the family
\[
\mathrm{GLE}_{n}^{(1)}(p,T(p),\tau), \qquad p\in E_{\tau}\setminus E_{\tau
}[2],
\]
is completely reducible with common monodromy data $(r,s)$.  Hence $X(p)$ is
single-valued on $E_{\tau}\setminus E_{\tau}[2]$.  Since $X(p)$ solves the
algebraic system \eqref{system for X, r and s} with $(\sigma,\kappa
)=(\sigma_{0},\kappa_{0}),$ where
\[
\sigma_{0}=r+s\tau, \qquad\kappa_{0}=Z(r,s,\tau),
\]
it follows that $X(p)$ is holomorphic on $E_{\tau}\setminus E_{\tau}[2]$.
Lemma~\ref{lem, p to wk/2} gives the required extension across the nonzero half-periods.

\begin{lemma}
\label{lem, p to wk/2} Fix $(r,s)\in\mathbb{C}^{2}\setminus\frac{1}%
{2}\mathbb{Z}^{2}$ such that $Z_{n}(r,s,\tau)=0$. Then the corresponding
parameter $T(p)$ of the isomonodromic family $\mathrm{GLE}_{n}^{(1)}%
(p,T(p),\tau)$ converges as $p\to\frac{\omega_{k}}{2},k=1,2,3$.
\end{lemma}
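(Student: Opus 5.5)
Throughout, fix $(r,s)$ with $Z_n(r,s,\tau)=0$ and the corresponding family $X(p)=T(p)^2$, which Theorem~\ref{thm, com red iff Zn=0} shows to be single-valued and holomorphic on $E_\tau\setminus E_\tau[2]$. The plan is to show that $X(p)$ stays bounded as $p\to\frac{\omega_k}{2}$. Since $X$ is holomorphic in a punctured neighborhood, boundedness makes the singularity removable, so $X$ has a limit. Because $T(p)^2=X(p)$, the parameter $T(p)$ then converges along the chosen branch as well, up to the sign ambiguity that the family already carries.

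We argue by contradiction. Suppose $X(p)\to\infty$ along some sequence $p_\ell\to\frac{\omega_k}{2}$. By \eqref{B(T),noneven}, $B(p_\ell)=X(p_\ell)+\frac{\wp''(p)}{2\wp'(p)}\zeta(p)-\frac12(2n^2+2n-3)\wp(p)$. Near $\frac{\omega_k}{2}$, only the term $\frac{\wp''(p)}{2\wp'(p)}\zeta(p)$ is singular. The relation $\wp'(p)\sim\wp''(\frac{\omega_k}{2})(p-\frac{\omega_k}{2})$ gives a simple pole of order $(p-\frac{\omega_k}{2})^{-1}$. Hence $X\to\infty$ means either $B\to\infty$, or at least $T\to\infty$ with $T\gg (p-\frac{\omega_k}{2})^{-1/2}$.

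Now use the fixed monodromy data. By Lemma~\ref{lem, relation bet r(P),s(P) and aj}, $\sum a_j(p)=r+s\tau=\sigma_0\notin\{0\}$ and $c(p)=r\eta_1+s\eta_2$ are constant. Pass to a subsequence with $\mathbf a(p_\ell)\to\mathbf a^0\in\mathrm{Sym}^{n+1}E_\tau$. The key step is an analogue of Proposition~\ref{prop, B and a}(ii) at the half-period, proved in the same way as in \cite[Propositions~5.3, 5.5]{Chen-Kuo-Lin-Lame I}. As $p\to\frac{\omega_k}{2}$, the points $\pm p$ collide at $\frac{\omega_k}{2}$. We expect that if $T(p)\to\infty$ and the limiting zero divisor is taken through the residue equations \eqref{z=p}, \eqref{z=-p} and \eqref{z=0}, then the zeros must escape to the singular points.

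To see this, use \eqref{T in terms of aj}: $T=\frac{n}{2(n+1)}\sum\frac{\wp'(a_i)}{\wp(a_i)-\wp(p)}$. This tends to $\infty$ only if some $a_i\to\pm p$, that is, $a_i\to\frac{\omega_k}{2}$, or if some $a_i\to0$. The type (a-ii)/(a-iii) formula \eqref{T in terms of aj,2} gives the same conclusion. The residue equation \eqref{z=0}, $T=n(c-\sum\zeta(a_i))$ with $c$ bounded, forces $\sum\zeta(a_i)\to\infty$, hence some $a_i\to0$. We then balance the orders of blow-up in \eqref{z=p} and \eqref{z=0} to show that all zeros accumulate on $\{0,\frac{\omega_k}{2}\}$ in a pattern whose sum lies in $E_\tau[2]$. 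This would yield $\sigma_0\in E_\tau[2]$, contradicting $(r,s)\notin\frac12\mathbb Z^2$.

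The main obstacle is this local blow-up analysis at the colliding singularity. An alternative route to the same conclusion is via the Lam\'e comparison. Under $\iota_p$ the point $\hat P(p)$ corresponds to a fixed $\widetilde P_0$ with finite $\widetilde B$. If the correspondence $\widetilde B=T^2-n(n+1)\wp(p)$ of Theorem~\ref{Main Thm 1} were available here, boundedness would be immediate, since $X(p)=\widetilde B+n(n+1)\wp(p)\to\widetilde B+n(n+1)e_k$. However, that result is established later and may depend on the present lemma, so I would rely on the divisor argument. The case-by-case bookkeeping of which $a_i$ go to $0$ versus to $\frac{\omega_k}{2}$, checked against the residue identities, is where I expect most of the work.
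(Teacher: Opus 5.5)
There is a genuine gap, at exactly the step you flag as the main obstacle. From $T=n\bigl(c-\sum_i\zeta(a_i)\bigr)$ with $c$ fixed you correctly deduce that some $a_i(p_\ell)\to0$. Your contradiction, however, needs \emph{all} $n+1$ zeros to accumulate on $\{0,\frac{\omega_k}{2}\}$, so that $\sigma_0\in E_\tau[2]$. This is only announced (``we expect'', ``balance the orders of blow-up''). Nothing you wrote prevents the other zeros from converging to generic points of $E_\tau$, where the residue identities merely become finite relations among the limits. What you would actually need is a half-period analogue of Proposition~\ref{prop, B and a}(ii) in the joint limit $p\to\frac{\omega_k}{2}$, $T\to\infty$. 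That is not available in the paper, and it does not follow from the fixed-$p$ description of $\overline{Y^{(1)}_{n,p}(\tau)}$ in Lemma~\ref{lem, closure of Yn,p}, which is not uniform in $p$.

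The missing idea is to use the constancy $c(p)=r\eta_1+s\eta_2$ from Lemma~\ref{lem, relation bet r(P),s(P) and aj} directly, not by contradiction. The paper does this through the half-period version of the first equivalence in Lemma~\ref{lem, q converges iff c converges}: the family converges iff $c(p)$ converges. Concretely, pass to a subsequence with $\mathbf a(p_\ell)\to\mathbf a^0$ in the compact space $\mathrm{Sym}^{n+1}E_\tau$; re-choosing bounded lifts of the $a_i$ changes $c$ only by $m_1\eta_1+m_2\eta_2$ with bounded integers $m_j$. Then $y'/y=c+\sum_i\zeta(z-a_i)-n\zeta(z)-\frac12\bigl(\zeta(z+p)+\zeta(z-p)\bigr)$ converges locally uniformly away from the lattice translates of $0$, $\frac{\omega_k}{2}$ and the points of $\mathbf a^0$, and hence so does $q=(y'/y)'+(y'/y)^2$. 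By the $\zeta$-addition formula,
\[
\begin{aligned}
q_n^{(1)}(z;p,T)={}&n(n+1)\wp(z)+\tfrac34\bigl(\wp(z+p)+\wp(z-p)\bigr)+\frac{\wp''(p)}{4(\wp(z)-\wp(p))}\\
&-\tfrac12(2n^2+2n-3)\wp(p)+T\,\frac{\wp'(z)}{\wp(z)-\wp(p)}+T^2,
\end{aligned}
\]
in which the $(p-\frac{\omega_k}{2})^{-1}$ singularities coming from $\wp''(p)/\wp'(p)$ have cancelled. Hence $T^2+T\,\wp'(z)/(\wp(z)-\wp(p))$ converges at each such $z$. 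Comparing two points where the nonconstant limit $\wp'(z)/(\wp(z)-e_k)$ takes different values shows that $T$ converges along the subsequence. Boundedness of $X$ follows, and your removable-singularity step finishes the proof.

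Your Lam\'e route can also be rescued without circularity. Proposition~\ref{rev:prop:darboux} is a direct potential identity that does not use this lemma. It makes $\mathrm L_n\bigl(T(p)^2-n(n+1)\wp(p),\tau\bigr)$ completely reducible with data $(r,s)$, and the uniqueness in Lemma~\ref{lem, def, lame's pre modular form} then forces $T(p)^2-n(n+1)\wp(p)$ to be constant.
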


\begin{proof}
The argument of \cite{Chen-Kuo-Lin-Lame I} gives the first equivalence of
Lemma~\ref{lem, q converges iff c converges} also holds as $p\to\frac
{\omega_{k}}{2}$: the family $\mathrm{GLE}_{n}^{(1)}(p,T(p),\tau)$ converges
if and only if $c(p)$ converges.

Since the monodromy data $(r,s)$ are fixed,
Lemma~\ref{lem, relation bet r(P),s(P) and aj} gives
\[
c(p)=r\eta_{1}+s\eta_{2},
\]
which is independent of $p$. Hence $\mathrm{GLE}_{n}^{(1)}(p,T(p),\tau)$
converges as $p\to\frac{\omega_{k}}{2}$. In particular, the convergence of the
potential $q_{n}^{(1)}(z;p,T(p))$ implies that $T(p)$ converges.
\end{proof}

\begin{theorem}
\label{thm, X(p)=n*n+1} Let $\tau\in\mathbb{H}$ and $(r,s)\in\mathbb{C}%
^{2}\setminus\frac{1}{2}\mathbb{Z}^{2}$ be fixed such that the family
$\mathrm{GLE}_{n}^{(1)}(p,T(p),\tau)$ is completely reducible with monodromy
data $(r,s)$. Then $X(p)=T(p)^{2}$ satisfies
\begin{equation}
X(p)=n(n+1)\wp(p)+\widetilde{B}, \label{260715equ1}%
\end{equation}
where $\widetilde{B}$ is the parameter of the limiting Lam\'e equation
$\mathrm{L}_{n}(\widetilde{B},\tau)$ obtained in
Lemma~\ref{lem, gle to lame r,s}. Furthermore, equations $\mathrm{GLE}%
_{n}^{(1)}(p,T(p),\tau)$ and $\mathrm{L}_{n}(\widetilde{B},\tau)$ are
monodromy equivalent.
\end{theorem}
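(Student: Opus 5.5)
Consider the function $G(p):=X(p)-n(n+1)\wp(p)$ on $E_\tau\setminus E_\tau[2]$. I will show that $G$ is a bounded elliptic function of $p$, hence constant, and then identify the constant. The text just before the statement already records that $X(p)$ is single-valued and holomorphic on $E_\tau\setminus E_\tau[2]$, because it solves the algebraic system \eqref{system for X, r and s} with the fixed pair $(\sigma_0,\kappa_0)$.

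The first step is periodicity. The potential satisfies $q_n^{(1)}(z;p+\omega_k,T)=q_n^{(1)}(z;p,T)$ and $q_n^{(1)}(z;-p,T)=q_n^{(1)}(z;p,T)$ by \eqref{q(z;p) is elliptic in p}. So the point $\hat P(p+\omega_k)$ also gives a completely reducible equation with data $(r,s)$. By the uniqueness in Theorem~\ref{thm, com red iff Zn=0} we get $X(p+\omega_k)=X(p)$, and in the same way $X(-p)=X(p)$. Hence $X$ is an even elliptic function of $p$, holomorphic away from $E_\tau[2]$.

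The second step is the behaviour at the half-periods. By Lemma~\ref{lem, p to wk/2}, $T(p)$ converges as $p\to\omega_k/2$ for $k=1,2,3$. Therefore $X$ is bounded near these points, and by Riemann's removable singularity theorem it extends holomorphically across them.

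The third step, and the crucial one, is the behaviour at $p=0$. Lemma~\ref{lem, gle to lame r,s} says the family converges to $\mathrm L_n(\widetilde B,\tau)$, so $B(p)\to\widetilde B$. Lemma~\ref{lem, gle converges to lame} then gives $X(p)=n(n+1)p^{-2}+\widetilde B+O(p)$. Since $n(n+1)\wp(p)=n(n+1)p^{-2}+O(p^2)$, the function $G$ extends holomorphically to $p=0$ with $G(0)=\widetilde B$. Thus $G$ is an entire elliptic function, hence constant, which yields \eqref{260715equ1}.

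I expect the main obstacle to be justifying the uniformity of the limit at $p=0$. Lemma~\ref{lem, gle to lame r,s} may produce $\widetilde B$ only along subsequences. However, once $G$ is known to be meromorphic at $0$ with an at most removable singularity, the limit is unique, and boundedness of $G$ near $0$ follows from the convergence of $c(p)=r\eta_1+s\eta_2$ via Lemma~\ref{lem, q converges iff c converges}. If that lemma only yields boundedness, I would first apply Riemann's theorem to conclude that $G$ is constant and only afterwards read off the constant as $\widetilde B$.

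Finally, monodromy equivalence. Both $\mathrm{GLE}_n^{(1)}(p,T(p),\tau)$ and $\mathrm L_n(\widetilde B,\tau)$ are completely reducible with the same data $(r,s)$ (Lemma~\ref{lem, gle to lame r,s}). Their period monodromy matrices are therefore simultaneously conjugate to the diagonal forms \eqref{m1}, while the local monodromies are $\pm I$. Hence the period monodromy representations coincide up to conjugation.
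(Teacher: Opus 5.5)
Your proposal is correct and follows essentially the same route as the paper. You show that $X(p)-n(n+1)\wp(p)$ extends holomorphically across the nonzero half-periods (Lemma~\ref{lem, p to wk/2}) and across $p=0$ (via $X(p)=n(n+1)p^{-2}+\widetilde B+O(p)$ from Lemmas~\ref{lem, q converges iff c converges} and~\ref{lem, gle converges to lame}), so it is constant and equal to $\widetilde B$, and you read off monodromy equivalence from the common data $(r,s)$. Your explicit periodicity check via the invariance of $q_n^{(1)}$ under $p\mapsto p+\omega_k$, and your use of removability at $p=0$ to avoid the subsequence issue, make explicit steps the paper takes for granted.
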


\begin{proof}
By the preceding discussion, $X(p)$ is single-valued and holomorphic on $p\in
E_{\tau}\setminus\{0\}$. We analyze the behavior of $X(p)$ as $p\to0$.

By Lemma~\ref{lem, gle to lame r,s}, as $p\to0$, $\mathrm{GLE}_{n}%
^{(1)}(p,T(p),\tau)$ converges to $\mathrm{L}_{n}(\widetilde{B},\tau)$ for some
$\tilde{B}\in\mathbb{C}$, which is completely reducible with the same
monodromy data $(r,s)$. Then Lemma~\ref{lem, q converges iff c converges} and
Lemma~\ref{lem, gle converges to lame} imply that $X(p)$ satisfies the
asymptotic form
\begin{equation}
X(p)=\frac{n(n+1)}{p^{2}}+\widetilde{B}+O(p). \label{260724equ1}%
\end{equation}
Hence, $p=0$ is the unique pole of $X(p)$, and it is a double pole.

Then the elliptic function $X(p)-n(n+1)\wp(p) $ has no poles on $E_{\tau}$ and
is therefore constant. Comparing this with \eqref{260724equ1}, we conclude
\[
X(p)-n(n+1)\wp(p)=\widetilde{B}.
\]
This proves \eqref{260715equ1}.

By Lemma~\ref{lem, gle to lame r,s}, both $\mathrm{GLE}_{n}^{(1)}%
(p,T(p),\tau)$ and $\mathrm{L}_{n}(\widetilde{B},$ $\tau)$ have monodromy data
$(r,s)$, and are therefore monodromy equivalent.
\end{proof}

\begin{proposition}[Extension across singular fibers]\label{prop:extension-singular-fibers}
The spectral isomorphism \eqref{isomorphism}, with its compatibility with the addition
maps and $\kappa$, extends to singular fibers for every $\tau\in\mathbb H$ and
$p\in E_\tau\setminus E_\tau[2]$.
\end{proposition}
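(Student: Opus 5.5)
The plan is to replace the abstract normalization argument by an explicit polynomial identity, which can then be extended by continuity. On the smooth locus $S^{\mathrm{sm}}$, Theorem~\ref{thm, X(p)=n*n+1} already gives the coordinate translation: a completely reducible point $\hat P=(X,C)$ with data $(r,s)$ corresponds to $\widetilde P=(\widetilde B,\widetilde C)$ with $X=n(n+1)\wp(p)+\widetilde B$. Such data $(r,s)$ with $Z_n(r,s,\tau)=0$ are Zariski dense on the curve. Hence on $S^{\mathrm{sm}}$ the isomorphism $\iota_p$ must be $(X,C)\mapsto (X-n(n+1)\wp(p),\,\epsilon C)$, where $\epsilon$ is a fixed constant (a sign or scalar fixed by normalization). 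The first step is therefore to prove the spectral-polynomial identity
\[
\widehat Q^{(1)}_{n,p}(X)=\widetilde Q_n\bigl(X-n(n+1)\wp(p);\tau\bigr)
\]
on $S^{\mathrm{sm}}$, up to a constant fixed by the leading coefficients. The branch points $X$ of $\hat\Gamma^{(1)}_{n,p}$ are exactly the non-completely reducible parameters (Theorem~\ref{thm,criterion for}). Under $\iota_p$ they correspond to the branch points of $\widetilde\Gamma_n$, since $\iota_p$ is an isomorphism of hyperelliptic curves commuting with the hyperelliptic involutions; the involution is the deck transformation of $\sigma\mapsto-\sigma$, $\kappa\mapsto-\kappa$. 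Both polynomials have degree $2n+1$ with simple roots on $S^{\mathrm{sm}}$, and their leading coefficients can be compared. This gives the identity there.

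The second step is the extension. The coefficients of $\widehat Q^{(1)}_{n,p}$ lie in $\mathbb C(\wp(p),e_k(\tau))$ and are holomorphic on $S$, because $Q^{(1)}_{n,p}$ is defined through $\Phi^{(1)}_{e,p}$, whose coefficients are polynomials in $T$ with no poles for $p\notin E_\tau[2]$. The right-hand side is polynomial in $\wp(p),e_k(\tau)$. The locus $S\setminus S^{\mathrm{sm}}$ is a proper analytic subset, given by the vanishing of the discriminants, and $S$ is connected. The identity therefore extends to all of $S$ by the identity theorem. Consequently the map $(X,C)\mapsto(X-n(n+1)\wp(p),\epsilon C)$ is a well-defined isomorphism of the affine curves $C^2=\widehat Q^{(1)}_{n,p}(X)$ and $\widetilde C^2=\widetilde Q_n(\widetilde B)$ for every $(\tau,p)$, including singular fibers. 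It sends $\hat\infty^{(1)}$ to $\widetilde\infty$.

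The third step is the compatibilities. Both $\hat\sigma^{(1)}_{n,p}$ and $\tilde\sigma_n\circ\iota_p$ are given through $\wp,\wp'$ by the rational expressions of Theorem~\ref{thm, monodromy sys in X}, with coefficients holomorphic in $(p,\tau)$ once the numerators and denominators are cleared. The same holds for $\kappa$ and $\tilde\kappa\circ\iota_p$. The identities $\tilde\sigma_n\circ\iota_p=\hat\sigma^{(1)}_{n,p}$ and $\tilde\kappa\circ\iota_p=\kappa$ hold on the dense open set $S^{\mathrm{sm}}$ by \eqref{isomorphism sigma and kappa}. Written as polynomial identities $\hat P_1\widetilde P_2-\hat P_2\widetilde P_1=0$ and similar, after substituting $X=\widetilde B+n(n+1)\wp(p)$, they extend across the singular locus by analytic continuation. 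On a singular fiber this pins down the maps at all points where the denominators do not vanish. The finitely many remaining points are handled by continuity: these are the points with $\sigma=0$ and the nodes, where values are read off the zero divisor.

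The main obstacle is the continuity of the addition map at nodal points of singular fibers. There, $\mathbf a(P)$ is defined directly from the Baker--Akhiezer function, and one must check that the limit from nearby smooth fibers agrees with it. To settle this, I would use the continuity of $\Phi^{(1)}_{e,p}$ and of $\psi(P;z)$ in $(T,C,p,\tau)$. Since the zero divisor of a continuous family of nonzero functions varies continuously, $\sigma^{(1)}_{n,p}$ and $c(P)$, and hence $\kappa$, are continuous on the total family. This forces the extended identities to hold pointwise.
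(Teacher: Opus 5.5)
Your proposal is correct and follows the paper's own proof. You identify the branch values on the nonsingular locus via $\iota_p$ and the coordinate formula $X=n(n+1)\wp(p)+\widetilde B$, compare leading coefficients to get $\widehat Q^{(1)}_{n,p}(X)=\widetilde Q_n\bigl(X-n(n+1)\wp(p);\tau\bigr)$, continue this identity holomorphically in $(\tau,p)$, and read off the explicit isomorphism $(X,C)\mapsto(X-n(n+1)\wp(p),\pm C)$ on singular fibers.

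The one difference is in the compatibilities with the addition maps and $\kappa$. The paper transports them through the locally holomorphic period multipliers on $C\neq0$, together with $\sigma=r+s\tau$, $c=r\eta_1+s\eta_2$ and $\kappa=Z(r,s,\tau)$. You instead use the rational formulas for $\wp(\hat\sigma)$, $\wp'(\hat\sigma)$, $\kappa$ plus continuity of the Baker--Akhiezer zero divisor. This is the same density-and-continuation argument, and your continuity step on its own already suffices.
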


\begin{proof}
The normalized elliptic solution $\Phi^{(1)}_{e,p}(z;T)$ and hence
the coefficients of $\widehat Q^{(1)}_{n,p}(X;\tau)$ depend
holomorphically on $(\tau,p)$ away from $E_\tau[2]$. On the dense
nonsingular locus, \eqref{isomorphism} and Theorem~\ref{thm, X(p)=n*n+1} identify the branch values
under $\widetilde B=X-n(n+1)\wp(p;\tau)$. The leading
coefficients agree, so
\[
\widehat Q^{(1)}_{n,p}(X;\tau)
=\widetilde Q_n\bigl(X-n(n+1)\wp(p;\tau);\tau\bigr).
\]
Holomorphic continuation extends this identity to all admissible
parameters. The roots correspond with multiplicities, and
\[
(X,C)\mapsto\bigl(X-n(n+1)\wp(p;\tau),C\bigr)
\]
and its inverse define isomorphisms on singular fibers as well.
Choose the sign of $C$ consistently with \eqref{isomorphism}.

On $C\neq0$, the Baker--Akhiezer representation gives locally
holomorphic period multipliers. Their equality extends from the
dense nonsingular locus. By \eqref{algebraic relations between ai,r(P),s(P),c(P)} and
\eqref{kappa(P)'s express}, compatibility with
the addition maps and $\kappa$ holds on a dense open subset of
each fiber, hence on the whole fiber as identities of holomorphic
maps and meromorphic functions.
\end{proof}

The intrinsic construction of Sections~\ref{Log-free variety}--\ref{Isomonodromic deformation} determines the
monodromy correspondence on $V^{(1)}_{n,p}(\tau)\simeq\mathbb A^1_T$.
Its coordinate formula \eqref{260715equ1} now yields a first-order realization.

Theorem~\ref{thm, X(p)=n*n+1} identifies the accessory parameter without assuming a
transformation between the two equations. Its conclusion now reveals
such a transformation. Set
\begin{equation}\label{rev:eq:darboux-parameters}
 N=n(n+1),\qquad f_p(z)=\wp(z)-\wp(p),\qquad
 \widetilde B=T^2-N\wp(p).
\end{equation}
Then $q_L=N\wp(z)+\widetilde B=T^2+Nf_p$, and
\begin{equation}\label{rev:eq:factorization}
 (\partial_z+T)(\partial_z-T)y=Nf_p\,y.
\end{equation}
With $u=y'-Ty$, this becomes
\begin{equation}\label{rev:eq:first-order-system}
 y'=Ty+u,\qquad u'=Nf_p y-Tu.
\end{equation}
Eliminating $y$ introduces the logarithmic derivative $f_p'/f_p$.
Since the divisor $\operatorname{div}(f_p)=[p]+[-p]-2[0]$,
 the new scalar equation has
the prescribed additional pair $\pm p$. The normalization
$v=f_p^{-1/2}u$ removes its first derivative.

The inverse step is encoded in the apparency conditions. For a
solution $v$ of $\mathrm{GLE}^{(1)}_n(p,T,\tau)$, put $u=f_p^{1/2}v$
and $a_p=\wp''(p)/(4\wp'(p))$. The exponents of $u$ at
$\pm p$ are $0,2$, and the first Frobenius coefficients give
\[
 u'(p)+(T_2-a_p)u(p)=0,\qquad
 u'(-p)+(T_1+a_p)u(-p)=0.
\]
On the non-even component $T_2-a_p=T_1+a_p=T$; thus
$(u'+Tu)/f_p$ is regular at both points.

Write $g_p=f_p'/f_p$ and $h_p=f_p^{-1/2}$. The square root is
continued with the convention of $\Psi_p$ in~\eqref{Psi,p}--\eqref{Psi is elliptic}.

\begin{proposition}[Darboux realization]
\label{rev:prop:darboux}
Fix $n\geqslant1$, $\tau\in\mathbb H$, and
$p\in E_\tau\setminus E_\tau[2]$. For $T\in\mathbb C$, let
$\widetilde B$ be given by~\eqref{rev:eq:darboux-parameters}. Then the
first-order operators
\begin{equation}
\label{Darboux transformation}
\begin{aligned}
\mathcal D_{p,T}y
&=h_p(y'-Ty),\
\mathcal E_{p,T}v
&=\frac{h_p}{N}
\left(v'+\left(T+\frac{g_p}{2}\right)v\right)
\end{aligned}
\end{equation}
are mutually inverse isomorphisms between the solution spaces of
$L_n(\widetilde B,\tau)$ and
$\mathrm{GLE}_n^{(1)}(p,T,\tau)$. These transformations preserve the
period monodromy. 
\end{proposition}

\begin{proof}
Eliminating $y$ from~\eqref{rev:eq:first-order-system} gives
\begin{equation}
 u''-g_pu'=(q_{\mathrm L}+Tg_p)u.\nonumber
\end{equation}
Since $h_p'/h_p=-g_p/2$, the function $v=h_pu$ satisfies
\begin{equation}
 v''=q_1v,\qquad
 q_1=q_{\mathrm L}+Tg_p-\frac12g_p'+\frac14g_p^2.\nonumber
\end{equation}
The Weierstrass addition formulas yield
\begin{align}
 g_p(z)&=\zeta(z+p)+\zeta(z-p)-2\zeta(z),\nonumber\\
 -\frac12g_p'(z)+\frac14g_p(z)^2
   &=\frac34\bigl(\wp(z+p)+\wp(z-p)\bigr)\notag\\
   &\quad-a_p\bigl(\zeta(z+p)-\zeta(z-p)\bigr)
          +2a_p\zeta(p)+\frac32\wp(p).
          \nonumber
\end{align}
Together with~\eqref{rev:eq:darboux-parameters}, these identify
$q_1$ with the potential $q_n^{(1)}(z;p,T)$ in~\eqref{potential, noneven}--\eqref{B(T),noneven}.
Thus $\mathcal D_{p,T}$ maps Lam\'e solutions to non-even solutions.

The second equation in~\eqref{rev:eq:first-order-system} gives
\[
 y=\frac{u'+Tu}{Nf_p}
   =\frac{h_p}{N}\left(v'+\left(T+\frac{g_p}{2}\right)v\right).
\]
Moreover,
\[
 \begin{pmatrix}u\\u'\end{pmatrix}
 =\begin{pmatrix}-T&1\\q_{\mathrm L}&-T\end{pmatrix}
       \begin{pmatrix}y\\y'\end{pmatrix}.
\]
Consequently,
\begin{equation}
    W(h_pu_1,h_pu_2)
  =h_p^2(T^2-q_{\mathrm L})W(y_1,y_2)
  =-N\,W(y_1,y_2).
  \label{Darboux Wronskian}
\end{equation}

Thus a fundamental system remains independent for every $T$, and
the operators are inverse isomorphisms.

Finally,
\[
 \Psi_p(z)^2=-\frac{1}{\sigma(p)^2 f_p(z)}
\]
shows that $h_p$ differs from $\Psi_p$ by a nonzero constant
depending only on $p$. By~\eqref{Psi is elliptic},  continuation along the chosen cycles
$\ell_j$ gives
\[
 h_p^{\ell_j}=h_p,\qquad j=1,2.
\]
The operators therefore preserve both period matrices.
\end{proof}

\begin{proof}[Completion of the proof of Theorem~\ref{Main Thm 1}]
The coordinate identity holds under the spectral isomorphism by
Theorem~\ref{thm, X(p)=n*n+1} and continuation. At a non-completely reducible
parameter $T_0$, apply $\mathcal E_{p,T_0}$ to a fundamental
system. Proposition~\ref{rev:prop:darboux} gives a Lam\'e fundamental system with the
same period matrices, including the datum $D=\infty$. The converse
uses $\mathcal D_{p,T_0}$. This pointwise argument does not divide
by the spectral polynomial and also covers discriminant parameters.
\end{proof}

\medskip

\noindent\textit{Complementary descriptions of the monodromy data.}
Non-completely reducible monodromy also admits descriptions by
period integrals and limits of completely reducible data.
Fix such a parameter $T_0$ and write
$\Phi_e(z;T)=\Phi^{(1)}_{e,p}(z;$ $T)$. Choose an ordinary base
point and period cycles avoiding the zeros of $\Phi_e(\cdot;T_0)$.
For $T$ near $T_0$, set
\begin{equation}\label{rev:eq:chi-periods}
 \chi_j(T)=\int_z^{z+\omega_j}\frac{d\xi}{\Phi_e(\xi;T)},
 \qquad j=1,2,
\end{equation}
These periods are holomorphic in $T$ and locally independent of the base point.

\begin{lemma}\label{rev:lem:jordan-periods}
If $\mathrm{GLE}_n^{(1)}(p,T_0,\tau)$ is non-completely
reducible, its monodromy data is $\mathcal D\in\mathbb C\cup\{\infty\}$.
Then
\begin{equation}\label{rev:eq:jordan-period-ratio}
 \mathcal D=\frac{\chi_2(T_0)}{\chi_1(T_0)}\in\mathbb P^1.
\end{equation}
\end{lemma}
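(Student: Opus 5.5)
We need to prove that at a non-completely reducible parameter $T_0$ the monodromy datum satisfies $\mathcal D=\chi_2(T_0)/\chi_1(T_0)$. The plan is to build an explicit fundamental system out of $\Phi_e$ and read $\mathcal D$ off its translation law.

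First, the local fundamental system. At $T_0$ we have $Q^{(1)}_{n,p}(T_0)=0$, and Theorem~\ref{thm, com red iff BA are indep} gives $\psi(P)=\psi(P^*)$. By \eqref{two psi's product} and \eqref{y1,non-com reduc}, the common eigenfunction is $y_1=\psi(P_0;z)$, and
\[
y_1^2=\Phi_e(z;T_0)/\Phi_e(z_0;T_0),
\]
up to normalization. Here $y_1(z+\omega_j)=\varepsilon_j y_1(z)$ with $\varepsilon_j=\pm1$. Reduction of order then gives a second solution
\[
y_2(z)=y_1(z)\int_{z_0}^z\frac{d\xi}{y_1(\xi)^2}=c\,y_1(z)\int_{z_0}^z\frac{d\xi}{\Phi_e(\xi;T_0)},
\]
whose Wronskian with $y_1$ is a nonzero constant. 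The integral is well defined along paths avoiding the zeros of $\Phi_e$. By the log-free property, which makes $y_1^{-2}$ residue-free at its poles, it is single valued up to the choice of homotopy class on the chosen cycles.

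Second, the translation law. Translating by $\omega_j$ gives
\[
y_2(z+\omega_j)=\varepsilon_j\bigl(y_2(z)+c\,\chi_j(T_0)\,y_1(z)\bigr).
\]
Here we use that $\Phi_e(\cdot;T_0)$ is elliptic, so the integral over $[z,z+\omega_j]$ is $\chi_j(T_0)$. This value is independent of the base point because the integrand has zero residues. To justify that, I would argue that the residues of $1/\Phi_e=c'/y_1^2$ at the zeros of $y_1$ vanish, since $y_2$ is log-free there, or equivalently since $y_1$ satisfies a second-order equation with no first-derivative term.

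In the basis $(y_1,y_2)$ the monodromy is therefore $\varepsilon_j\bigl(\begin{smallmatrix}1&c\chi_j\\0&1\end{smallmatrix}\bigr)$. This is the transpose of the convention in \eqref{m2}, and reordering the basis to $(y_2,y_1)$ fixes the convention. Since $M_1,M_2$ are not both $\pm I$ (Lemma~\ref{lem, Mj not +-Id}), at least one $\chi_j(T_0)\neq0$.

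Third, normalization. If $\chi_1\neq0$, rescale $y_1$ so that $c\chi_1=1$, which brings $M_1$ to the normal form of \eqref{m2}. Then $M_2$ carries the entry $\chi_2/\chi_1$, so $\mathcal D=\chi_2/\chi_1$. If $\chi_1=0$, then $M_1=\varepsilon_1 I$ and $M_2$ is a nontrivial Jordan block, which is exactly \eqref{m3}, that is, $\mathcal D=\infty=\chi_2/0$ in $\mathbb P^1$. Finally, $\mathcal D$ is a conjugacy invariant of the pair in normal form: conjugations preserving the lower-triangular unipotent shape act on both off-diagonal entries by the same scalar, so the ratio is well defined. This justifies the equality in $\mathbb P^1$.

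The main obstacle is the single-valuedness and base-point independence of $\int d\xi/\Phi_e$, i.e. the vanishing of residues of $1/\Phi_e(\cdot;T_0)$. This covers the zeros of $y_1$, including the possibility of a double zero of type (a-ii)/(a-iii) at $\pm p$, where $y_1$ behaves like $(z\mp p)^{3/2}$. I would handle it by noting that $y_1^{-2}=(y_2/y_1)'/W$ and that $y_2/y_1$ is meromorphic, since all singularities are log-free with integer exponent differences and Proposition~\ref{lem, existence of Hermite Ansatz} applies. The derivative of a meromorphic function has zero residues, which closes the gap.
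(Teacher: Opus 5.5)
Your proposal is correct and follows the paper's route: reduction of order against the common eigenfunction with $\Phi_e(z;T_0)=c\,\psi_0(z)^2$, translation of the quotient of solutions by $\chi_j(T_0)$, and normalization of the nonzero lower-left entry, with $\mathcal D=\infty$ when $\chi_1(T_0)=0$. Your additions fill in points the paper leaves implicit. These are the vanishing of the residues of $1/\Phi_e$, obtained from the meromorphy of a ratio of solutions at the log-free singularities; the reordering of the basis to match \eqref{m2}; and the invariance of the ratio under conjugation.
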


\begin{proof}
Its common eigenfunction $\psi_0$ satisfies
$\psi_0(z+\omega_j)=\epsilon_j\psi_0(z)$, with
$\epsilon_j\in\{\pm1\}$, and $\Phi_e(z;T_0)=c\psi_0(z)^2$
for some $c\neq0$. For an independent solution $\widetilde y$,
reduction of order gives, with $\chi=\widetilde y/\psi_0$,
\[
\chi'=\frac{b}{\Phi_e(z;T_0)},\qquad
\chi(z+\omega_j)=\chi(z)+b\chi_j(T_0),\qquad b\neq0.
\]
The period matrices are therefore
\[
M_j=\epsilon_j
 \begin{pmatrix}1&0\\b\chi_j(T_0)&1\end{pmatrix}.
\]
The two periods cannot both vanish, by non-complete reducibility.
Normalizing the nonzero lower-left entry gives \eqref{rev:eq:jordan-period-ratio}, including
$\mathcal D=\infty$ when $\chi_1(T_0)=0$.
\end{proof}

\begin{lemma}\label{rev:lem:jordan-limit}
Suppose $\mathrm{GLE}_n^{(1)}(p,T_0,\tau)$ is non-completely
reducible, and let $P_0=(T_0,0)$ be the corresponding spectral
point. For compatible local lifts of the monodromy data,
\begin{equation}\label{rev:eq:jordan-limit}
\mathcal  D=\lim_{\substack{P=(T,C)\to P_0\\ C^2=Q_{n,p}^{(1)}(T)\ne0}}
       \frac{r(P)-r(P_0)}{s(P_0)-s(P)},
\end{equation}
where the limit is taken in $\mathbb P^1$ along any local branch.
\end{lemma}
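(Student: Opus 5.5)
We prove Lemma~\ref{rev:lem:jordan-limit} by differentiating the completely reducible monodromy data along the spectral curve near the branch point $P_0$ and comparing the result with the period-integral description of Lemma~\ref{rev:lem:jordan-periods}.

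\emph{Step 1: a local formula for $(r,s)$ near $P_0$.} For $P=(T,C)$ near $P_0$ with $C\neq0$, the Baker--Akhiezer function satisfies
\[
\psi(P;z)=\exp\int^z\frac{iC+\frac12\Phi_e'(\xi;T)}{\Phi_e(\xi;T)}\,d\xi .
\]
The term $\frac12\Phi_e'/\Phi_e$ integrates to $\frac12\log\Phi_e$. Since $\Phi_e(z;T)$ is elliptic in $z$, this term contributes only a multiplier $\pm1$ around each period, and that sign is locally constant in $P$. Hence, by \eqref{BA is 2nd} and \eqref{def,r(P),s(P)},
\[
-2\pi i\,s(P)=iC\,\chi_1(T)+\log\epsilon_1,\qquad 2\pi i\,r(P)=iC\,\chi_2(T)+\log\epsilon_2 .
\]
Here $\chi_j$ are the holomorphic periods of \eqref{rev:eq:chi-periods}, and the compatible lifts are fixed by the logarithm branches. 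At $C=0$ these give $r(P_0)$ and $s(P_0)$, consistent with the half-integral data of Theorem~\ref{thm,criterion for}. One must check that the integration cycles can be chosen to avoid the zeros of $\Phi_e(\cdot;T)$ uniformly for $T$ near $T_0$. Because $\chi_j$ is single-valued in $T$ (independent of base point and homotopy class away from zeros of $\Phi_e$, whose residues of $1/\Phi_e$ vanish), the formula holds on a full punctured neighborhood of $P_0$.

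\emph{Step 2: compute the ratio.} Subtracting the values at $P_0$ gives
\[
r(P)-r(P_0)=\frac{C\chi_2(T)}{2\pi},\qquad s(P_0)-s(P)=\frac{C\chi_1(T)}{2\pi}.
\]
The quotient is therefore $\chi_2(T)/\chi_1(T)$, so the factor $C$ cancels and the choice of local branch plays no role. Because $\chi_j$ is holomorphic and $(\chi_1(T_0),\chi_2(T_0))\neq(0,0)$ by Lemma~\ref{rev:lem:jordan-periods}, the limit as $T\to T_0$ exists in $\mathbb P^1$. It equals $\chi_2(T_0)/\chi_1(T_0)=\mathcal D$ by \eqref{rev:eq:jordan-period-ratio}, and this includes the case $\mathcal D=\infty$.

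\emph{Main obstacle.} The delicate part is Step 1, namely justifying that the multiplier decomposes exactly as $iC\chi_j$ plus a locally constant sign term. This needs two facts. First, $\sqrt{\Phi_e}$ has period multipliers $\pm1$ that are constant in $T$ near $T_0$; this follows from $\Phi_e=c\psi_0^2$ at $T_0$ and from continuity. Second, the poles of the integrand at the zeros of $\Phi_e$ and at $0,\pm p$ contribute only integer multiples of $2\pi i$. At the points $0,\pm p$ this holds by the log-free property, and at the moving zeros of $\Phi_e$ the residue of $iC/\Phi_e+\tfrac12\Phi_e'/\Phi_e$ lies in $\frac12\mathbb Z$. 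These contributions are then absorbed into the compatible lift.
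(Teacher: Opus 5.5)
Your Steps 1--2 are essentially the paper's proof. The period multipliers are written as $\exp(iC\chi_j(T))$ times a locally constant sign, and the half-integral constants are fixed by letting $C\to0$. Then $C$ cancels in the ratio, leaving $\chi_2(T)/\chi_1(T)\to\chi_2(T_0)/\chi_1(T_0)=\mathcal D$. The paper avoids the $\tfrac12\Phi_e'/\Phi_e$ term entirely by using $F_P=\psi(P)/\psi(P^*)$. By \eqref{two psi's product}--\eqref{wronskian of psi}, its logarithmic derivative is exactly $2iC/\Phi_e$, and its multipliers are $\lambda_j(P)^2$. Your winding-number treatment of $\sqrt{\Phi_e}$ is an equally valid substitute.

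The supporting claims in Step 1 and in your \emph{main obstacle} paragraph are false, although the step itself survives without them.

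\begin{itemize}
\item Take a zero $z_1$ of $\Phi_e(\cdot;T)$ with $C\neq0$. The definition of $Q^{(1)}_{n,p}$ gives $\Phi_e'(z_1)^2=-4C^2$, so $\operatorname{Res}_{z_1}(1/\Phi_e)=\pm1/(2iC)\neq0$, and these residues blow up as $P\to P_0$. Hence $\chi_j$ is not independent of the homotopy class of the cycle.
\item At $\pm p$ the residue of the integrand $\phi$ is the local exponent $-\tfrac12$ or $\tfrac32$, so crossing $\pm p$ changes the multiplier by $-1$. This is just $N_{\pm p}=-I$; log-freeness does not make the residue integral there.
\item At a moving zero the residue of $\phi$ is $\tfrac12\pm\tfrac12\in\{0,1\}$.
\end{itemize}

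None of this residue bookkeeping is needed. Fix the cycles once, avoiding $0,\pm p$ and the double zeros of $\Phi_e(\cdot;T_0)=c\psi_0^2$. Since $\Phi_e(\cdot;T)\to\Phi_e(\cdot;T_0)$ uniformly on these compact cycles, $\Phi_e(\cdot;T)$ has no zeros on them for $T$ near $T_0$. Consequently $\chi_j(T)$ is holomorphic there. Also $\tfrac12\int\Phi_e'/\Phi_e$ along each cycle equals $\pi i$ times a constant winding number. Your formulas for $r(P),s(P)$ then hold on the whole punctured neighbourhood of $P_0$ on each branch. With this replacement the argument is complete.
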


\begin{proof}
Put $(r_0,s_0)=(r(P_0),s(P_0))\in\tfrac12\mathbb Z^2$ and
$F_P=\psi(P)/\psi(P^*)$. Equations~\eqref{two psi's product}, \eqref{wronskian of psi} and \eqref{BA is 2nd}
\[
(\log F_P)'=\frac{2iC}{\Phi_e(z;T)},\quad
F_P(z+1)=e^{-4\pi i s(P)}F_P(z),\quad
F_P(z+\tau)=e^{4\pi i r(P)}F_P(z).
\]
Integration along the fixed cycles yields integers $m_1,m_2$ with
\begin{equation}
C\chi_1(T)=-2\pi s(P)+\pi m_1,\qquad
C\chi_2(T)=2\pi r(P)+\pi m_2.
\end{equation}
As $P\to P_0$, the periods remain bounded and $C\to0$, so on
each local branch $m_1=2s_0$ and $m_2=-2r_0$. Hence
\[
\frac{\chi_2(T)}{\chi_1(T)}
=\frac{r(P)-r_0}{s_0-s(P)}.
\]
The pair $[\chi_1(T_0):\chi_2(T_0)]$ is nonzero. Taking the limit
and applying Lemma~ \ref{rev:lem:jordan-periods} proves the assertion.
\end{proof}

\section{Componentwise Isomonodromy and Degeneration}

\label{Section 8} We first recall the even isomonodromic deformation. For
$\mathrm{GLE}_{n}^{(0)}(p,A,\tau)$, the isomonodromic deformation with respect
to $\tau$ is governed by the elliptic Painlev\'e~VI
equation~\cite{Chen-Kuo-Lin-Hamiltonian}
\begin{equation}
\frac{d^{2}p}{d\tau^{2}} = -\frac{1}{4\pi^{2}} \sum_{k=0}^{3} \alpha_{k}
\wp^{\prime}\left(  p+\frac{\omega_{k}}{2};\tau\right) ,\label{PVI equation}%
\end{equation}
where
\[
\alpha_{0} = \frac{1}{2}\left( n+\frac{1}{2}\right) ^{2}, \qquad\alpha
_{1}=\alpha_{2}=\alpha_{3}=\frac{1}{8}.
\]
More precisely, $p(\tau)$ satisfies \eqref{PVI equation} if and only if there
exists $A(\tau)$ such that $\mathrm{GLE}_{n}^{(0)} \bigl(p(\tau),A(\tau
),\tau\bigr)$ is monodromy preserving as $\tau$ varies.

For $(r,s)\in\mathbb{C}^{2}\setminus\frac12\mathbb{Z}^{2},$ let $p_{r,s}%
^{(n)}(\tau)$ denote the completely reducible solution of \eqref{PVI equation}
with monodromy data $(r,s)$. Recall $Z_{m}(r,s,\tau)$ is the pre-modular form
defined in \eqref{def, lame's pre modular form} associated with the classical
Lam\'e equation of weight $m$, with the convention
\[
Z_{-1}(r,s,\tau) = Z_{0}(r,s,\tau) :=1.
\]

By \cite[Theorem~1.5]{Chen-Kuo-Lin-Dahmen}, one has
\begin{equation}
\wp\left(  p_{r,s}^{(n)}(\tau);\tau\right)  = \frac{ P_{n}\bigl(Z(r,s,\tau
);r+s\tau,\tau\bigr) }{ Z_{n-1}(r,s,\tau) Z_{n+1}(r,s,\tau) }%
,\label{eq, p by premodular forms}%
\end{equation}
where $P_{n}(\,\cdot\,;r+s\tau,\tau)$
is a polynomial whose coefficients are rational functions of
\[
\wp(r+s\tau;\tau), \qquad\wp^{\prime}(r+s\tau;\tau), \qquad e_{k}(\tau),\quad
k=1,2,3.
\]
Furthermore, provided $r+s\tau\notin E_{\tau}[2],$ any two of
\[
P_{n}\bigl(Z(r,s,\tau);r+s\tau,\tau\bigr), \qquad Z_{n-1}(r,s,\tau), \qquad
Z_{n+1}(r,s,\tau)
\]
have no common zeros.

Suppose that
\[
p(\tau):=p_{r,s}^{(n)}(\tau)\to0 \quad\text{as}\quad\tau\to\tau_{0}%
\in\mathbb{H}.
\]
Then \eqref{eq, p by premodular forms} implies that either $Z_{n-1}%
(r,s,\tau_{0})=0$ or $Z_{n+1}(r,s,\tau_{0})=0.$ In particular,
\cite{Chen-Kuo-Lin-Hamiltonian} has proved that
\begin{equation}
\mathrm{GLE}_{n}^{(0)} \bigl(p(\tau),A(\tau),\tau\bigr) \longrightarrow
\mathrm{L}_{n\pm1}(\widetilde{B}_{\pm},\tau_{0})\quad\text{as }\tau\to\tau
_{0}\label{even equ converges as tau to tau0}%
\end{equation}
for some $\widetilde{B}_{\pm}\in\mathbb{C}$, and the limiting Lam\'e equation
is completely reducible with the same monodromy data $(r,s)$. Furthermore, it
is known in \cite[Theorem~1.3]{Chen-Kuo-Lin-Lame III} that monodromy data
$(r,s)$ uniquely determine this family $\mathrm{GLE}_{n}^{(0)} \bigl(p(\tau
),A(\tau),\tau\bigr)$.

In contrast, Theorem~\ref{Main Thm 1} gives, for fixed $\tau$,
\[
T(p)^{2}-n(n+1)\wp(p)=\widetilde{B},
\]
and hence an isomonodromic non-even family satisfying
\[
\mathrm{GLE}_{n}^{(1)}(p,T(p),\tau) \longrightarrow L_{n}(\widetilde{B},\tau)
\qquad(p\to0).
\]
Thus, the even deformation varies $\tau$ and reaches weights $n\pm1$, whereas
the non-even deformation fixes $\tau$ and reaches weight $n$.

In the present setting, the generalized Lam\'e equations have weight data
$\left( n,\frac12,\frac12\right) $, which corresponds to the singularities
$(0,p,-p)$ respectively. As $p\to0$, C--W--W's degeneration theory
\cite{Chou-Wang-Wu-II} shows that the corresponding generalized Lam\'e curve
degenerates scheme-theoretically as
\begin{equation}
\overline{\mathcal{Y}}_{n,p}^{\,r,s} \longrightarrow\overline{\mathcal{Y}%
}_{n+1,0}^{\,r,s} \cup2\bigl( \{0\}\times\overline{\mathcal{Y}}_{n,0}^{\,r,s}
\bigr) \cup\bigl( \{0\}^{2}\times\overline{\mathcal{Y}}_{n-1,0}^{\,r,s}
\bigr).\label{degendration projection as p to 0}%
\end{equation}
In particular, the limiting classical Lam\'e components of weights $n+1$, $n$,
and $n-1$ occur with multiplicities $1,2,1$, respectively.

The even and non-even isomonodromic deformations realize these limiting
components explicitly. Let
\[
\mathcal{Y}_{n,p}^{r,s\ (0)} := \bigl\{
\mathrm{GLE}_{n}(p,\mathbf{T},\tau)\in\mathcal{Y}_{n,p}^{r,s}\mid \mathbf{T}\in
V_{n,p}^{(0)}(\tau) \bigr\},
\]
and
\[
\mathcal{Y}_{n,p}^{r,s\ (1)} := \bigl\{
\mathrm{GLE}_{n}(p,\mathbf{T},\tau)\in\mathcal{Y}_{n,p}^{r,s}\mid \mathbf{T}\in
V_{n,p}^{(1)}(\tau) \bigr\}.
\]
Then
\[
\overline{\mathcal{Y}}_{n,p}^{r,s} = \overline{\mathcal{Y}}_{n,p}^{r,s\ (0)}
\cup\overline{\mathcal{Y}}_{n,p}^{r,s\ (1)}.
\]

\begin{theorem}
\label{thm:componentwise-121} Fix $n\in\mathbb{N}$, and let $(r,s)\in
\mathbb{C}^{2}\setminus\frac12\mathbb{Z}^{2}$ be the monodromy data of
$\mathrm{GLE}_{n}(p,\mathbf{T},\tau)$. Then, as $p\to0$,
\begin{equation}
\overline{\mathcal{Y}}_{n,p}^{r,s\,(0)} \longrightarrow\overline{\mathcal{Y}%
}_{n+1,0}^{\,r,s} \cup\bigl( \{0\}^{2}\times\overline{\mathcal{Y}}%
_{n-1,0}^{\,r,s} \bigr),\label{even p to 0}%
\end{equation}
and
\begin{equation}
\overline{\mathcal{Y}}_{n,p}^{r,s\,(1)} \longrightarrow2\bigl( \{0\}\times
\overline{\mathcal{Y}}_{n,0}^{\,r,s} \bigr).\label{noneven p to 0}%
\end{equation}

\end{theorem}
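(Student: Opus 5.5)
Our plan is to treat the two components separately and then assemble them with the global degeneration formula \eqref{degendration projection as p to 0} of C--W--W. The global statement gives the limit with multiplicities $1,2,1$ on weights $n+1,n,n-1$. So it suffices to establish two facts. First, every limit point of the non-even part $\overline{\mathcal{Y}}_{n,p}^{r,s\,(1)}$ lies in $\{0\}\times\overline{\mathcal{Y}}_{n,0}^{r,s}$ and carries total multiplicity two there. Second, every limit point of the even part lies in the weight $n\pm1$ pieces. Multiplicities then follow by subtraction.

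\textbf{Non-even part.} By Theorem~\ref{thm, com red iff Zn=0}, for fixed $(r,s)\notin\frac12\mathbb Z^2$ the points of $\mathcal{Y}_{n,p}^{r,s\,(1)}$ correspond to the zeros of $Z_n(r,s,\tau)$. Each such zero gives a unique $\hat P(p)\in\hat\Gamma^{(1)}_{n,p}(\tau)$, hence exactly two points $P(p),-P(p)^*$ over it, with $T$-coordinates $\pm T(p)$. Theorem~\ref{thm, X(p)=n*n+1} gives $T(p)^2=n(n+1)\wp(p)+\widetilde B$. Lemma~\ref{lem, gle converges to lame} and Lemma~\ref{lem, gle to lame r,s} then show that both sheets converge to the same $\mathrm L_n(\widetilde B,\tau)$ with data $(r,s)$. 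Proposition~\ref{prop, B and a}(i) identifies the limiting divisor $\mathbf a(p)\to\{a^0_1,\dots,a^0_n,0\}$, i.e.\ a point of $\{0\}\times\overline{\mathcal Y}^{r,s}_{n,0}$. Since $P(p)$ and $-P(p)^*$ are distinct for $p\neq0$ (they differ by $T\mapsto -T$ with $T(p)\sim\sqrt{n(n+1)}/p\ne0$) but have the same limit, each point of the weight-$n$ piece is hit with multiplicity at least two. Counting degrees confirms this is exactly two: $\deg\sigma^{(1)}_{n,p}=n(n+1)=2\deg\tilde\sigma_n$ by Theorem~\ref{thm, sigma's degree}. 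Hence no non-even point escapes to weight $n\pm1$. We should also note that points with $B(p)\to\infty$ map to $\infty_\pm(p)$ with $\sigma=0$, which is excluded because $(r,s)\notin\frac12\mathbb Z^2$. This gives \eqref{noneven p to 0}.

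\textbf{Even part.} We use that the even family is the isomonodromic Painlev\'e VI family $p^{(n)}_{r,s}$ together with \eqref{eq, p by premodular forms}. If the even points were to degenerate to weight $n$, the limiting Lam\'e equation would force $Z_n(r,s,\tau)=0$ while $p\to0$ forces $Z_{n-1}Z_{n+1}=0$. We then compare with the total degree: $\deg\sigma^{(0)}_{n,p}=n(n+1)+1=\tfrac{(n+1)(n+2)}{2}+\tfrac{(n-1)n}{2}$, which matches the weight $n\pm1$ degrees exactly. The cleanest route is subtraction. The global formula has total weight-$n$ multiplicity $2$, and the non-even part already accounts for $2$, so the even part contributes $0$ there and must carry $\overline{\mathcal Y}^{r,s}_{n+1,0}\cup(\{0\}^2\times\overline{\mathcal Y}^{r,s}_{n-1,0})$ with multiplicity one each. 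The realization of both weight $n\pm1$ limits by even branches is then \eqref{even equ converges as tau to tau0}.

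The main obstacle is making the multiplicity count rigorous rather than set-theoretic. That requires showing the two non-even sheets are genuinely distinct for small $p\ne0$, so that they contribute multiplicity exactly two, and that the flat limit of the non-even component has no embedded or extra components. We would handle this by working with the explicit model $\hat\Gamma^{(1)}_{n,p}\cong\widetilde\Gamma_n$ via $\iota_p$ (Proposition~\ref{prop:extension-singular-fibers}). Under this model the family $\overline{\Gamma^{(1)}_{n,p}}$ is, over the $X$-line, a pullback of the constant Lam\'e curve by the degree-two map $T\mapsto T^2-n(n+1)\wp(p)$. Its flat limit is then visibly the double of the weight-$n$ curve.
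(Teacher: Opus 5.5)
Your proposal follows the paper's proof. The paper likewise takes the $1,2,1$ multiplicities from C--W--W and shows that the two non-even sheets $T(p)=\pm\sqrt{\widetilde B+n(n+1)\wp(p)}$ do not coalesce at $p=0$; it does this via $U=pT\to\pm\sqrt{n(n+1)}$, which is your $T(p)\sim\sqrt{n(n+1)}/p$. Both sheets degenerate to the same $\mathrm L_n(\widetilde B,\tau)$, and the remaining weight-$n\pm1$ pieces go to the even Painlev\'e~VI branches by subtraction, so your degree check $n(n+1)=2\deg\tilde\sigma_n$ is only an extra consistency confirmation.

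One small correction: your aside that $B(p)\to\infty$ is excluded ``because $(r,s)\notin\frac12\mathbb Z^2$'' gives the wrong reason, since complex data can have $r+s\tau\equiv0$ without lying in $\frac12\mathbb Z^2$. The aside is also unnecessary: fixed $(r,s)$ makes $c(p)=r\eta_1+s\eta_2$ constant, and the $c(p)$-convergence criterion then already forces $B(p)\to\widetilde B$.
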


\begin{proof}
The multiplicities in \eqref{degendration projection as p to 0} are those
of~\cite{Chou-Wang-Wu-II}. It remains to identify them componentwise.

For the non-even component, \eqref{correspondence} gives, for $p\neq 0$
sufficiently small,
\[
T(p)
=
\pm\sqrt{\widetilde{B}+n(n+1)\wp(p)}.
\]
 Set $U(p)=pT(p).$
Then
\[
U(p)^{2}
=
n(n+1)p^{2}\wp(p)+\widetilde{B}p^{2}.
\]
Since $p^{2}\wp(p)\to1$ as $p\to 0$, the rescaled equation has two distinct simple roots $U=\pm\sqrt{n(n+1)}$ at $p=0$.
Thus the two branches do not coalesce; each of them is unramified over $p$ and
contributes multiplicity one. Both branches degenerate to the same $L_{n}(\widetilde{B},\tau)$ as $p\to0$, so their
contributions add to $2\bigl(
\{0\}\times\overline{\mathcal{Y}}_{n,0}^{\,r,s}
\bigr),$
which proves \eqref{noneven p to 0}.

For monodromy data $(r,s)$, every even equation lies on a unique local
Painlev\'e~VI isomonodromic family $\mathrm{GLE}_{n}^{(0)}
\bigl(p(\tau),A(\tau),\tau\bigr).$
The remaining multiplicity-one components
$\overline{\mathcal{Y}}_{n+1,0}^{\,r,s}$ and
$\overline{\mathcal{Y}}_{n-1,0}^{\,r,s}$ are therefore realized by the
corresponding even Painlev\'e~VI branches, proving
\eqref{even p to 0}.
\end{proof}

For the iterative descent, define the exceptional locus
\[
\mathcal{B }:= \left\{  (r,s)\in(\mathbb{C}/\mathbb{Z})^{2} \;\middle|\;
cr+ds\in\frac12\mathbb{Z}/\mathbb{Z }\text{ for some coprime integers }c,d
\right\} .
\]
By abuse of notation, we also denote by $\mathcal{B}$ its full inverse image
in $\mathbb{C}^{2}$ under the natural projection $\mathbb{C}^{2}%
\longrightarrow(\mathbb{C}/\mathbb{Z})^{2}.$

For a fixed $n\in\mathbb{N}$, let $\mathcal{B}_{n}$ denote the set of
monodromy data $(r,s)$ realized by some completely reducible equation
$\mathrm{GLE}_{n}(p,\mathbf{T},\tau)$, with
\[
(p,\mathbf{T},\tau)\in\bigl(E_{\tau}\setminus E_{\tau}[2]\bigr)\times
V_{n,p}(\tau)\times\mathbb{H}.
\]

By~\cite[Remark~6.21]{Chou-Wang-Wu-II}, for each $(r,s)\in\mathcal{B}%
_{n}\setminus\mathcal{B}$, the fibers $Y_{k,0}^{r,s}$ and $Y_{k-1,0}^{r,s}$
are joined by deformation paths. Hence, for each $1\leqslant k\leqslant n$,
there exist $\tau_{k}\in\mathbb{H}$ and $B_{k}\in\mathbb{C}$ such that
\begin{equation}
\mathrm{L}_{k}(\widetilde{B}_{k},\tau_{k})\ \text{satisfies }Z_{k}%
(r,s,\tau_{k})=0.\label{eq:Zk-degeneration}%
\end{equation}

For $k\in\mathbb{N}$, consider the classical Lam\'e equation $\mathrm{L}%
_{k}(\widetilde{B},\tau)$. Let $\mathcal{M}_{k}$ denote the completely
reducible Lam\'e moduli space, parametrized locally by $(\tau,\widetilde{B})$,
with monodromy map
\[
\Phi_{k}:\mathcal{M}_{k}\longrightarrow\mathcal{R}, \qquad(\tau,\widetilde{B}%
)\longmapsto(r,s),
\]
where $\mathcal{R}$ denotes the corresponding monodromy-data space. For
$j=1,2,3$, define
\[
\mathcal{D}_{k,j} := \left\{  (\tau,\widetilde{B})\in\mathcal{M}_{k}\mid
\widetilde{B}=-k(k+1)e_{j}(\tau) \right\} ,
\]
and set
\[
\mathcal{E}_{k,j}:=\Phi_{k}(\mathcal{D}_{k,j}).
\]
For every $n\geqslant2$, set
\[
\mathcal{E}_{n} := \bigcup_{k=2}^{n}\bigcup_{j=1}^{3}\mathcal{E}_{k,j}.
\]

\begin{lemma}
\label{lem, generic avoidance two torsion} For every $k\geqslant1$ and
$j=1,2,3$, the set $\mathcal{E}_{k,j}$ is a proper lower-dimensional subset of
the monodromy-data space. Consequently, for every $n\geqslant2$,
$\mathcal{E}_{n}$ is a proper exceptional subset.
\end{lemma}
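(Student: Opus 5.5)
We need to show $\mathcal{E}_{k,j}=\Phi_k(\mathcal{D}_{k,j})$ has dimension at most one inside the two-dimensional monodromy-data space. The plan is a dimension count, followed by a check that the image is not open.

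First, $\mathcal{D}_{k,j}$ is the graph of the holomorphic function $\tau\mapsto -k(k+1)e_j(\tau)$ over $\mathbb{H}$, intersected with the completely reducible locus $\mathcal{M}_k$. So it is a (possibly empty) complex curve, locally parametrized by $\tau$ alone. Along it, the monodromy data $(r,s)=\Phi_k(\tau,-k(k+1)e_j(\tau))$ is locally holomorphic in $\tau$. This holds because, away from the branch points of $\widetilde\Gamma_k(\tau)$, the Baker--Akhiezer zero divisor and $c$ depend holomorphically on parameters, and $(r,s)$ is recovered from $(\sigma,c)$ through Lemma~\ref{lem, relation bet r(P),s(P) and aj} and the Legendre relation. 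At branch points we are outside the completely reducible locus anyway. Hence $\mathcal{E}_{k,j}$ is a countable union of images of one-dimensional holomorphic curves, so it has (real) codimension at least two. In particular it has empty interior and measure zero in $\mathbb{C}^2$, and is therefore a proper lower-dimensional subset.

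To make the union countable, cover $\mathcal{D}_{k,j}$ by countably many simply connected coordinate discs in $\tau$ and choose local lifts of $(r,s)$ on each disc. Each lift is a holomorphic map from a disc to $\mathbb{C}^2$, so its image is contained in a countable union of analytic arcs. The statement for $\mathcal{E}_n$ then follows, since it is a finite union over $2\le k\le n$ and $j=1,2,3$ of such sets, and a finite union of measure-zero (lower-dimensional) sets is still proper.

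The main obstacle is the claim of holomorphic dependence of $(r,s)$ on $\tau$ along $\mathcal{D}_{k,j}$, particularly near parameters where $\tilde\sigma_k$ or $\kappa$ is ramified, or where $\widetilde{Q}_k(-k(k+1)e_j;\tau)=0$. For the latter, I would simply delete the zero set: at points of $\mathcal{D}_{k,j}$ with $\widetilde{Q}_k=0$ the equation is not completely reducible, so these points lie outside $\mathcal{M}_k$. On the remaining points the two Bloch multipliers $\lambda_1,\lambda_2$ are distinct simple eigenvalues of holomorphically varying monodromy matrices, hence holomorphic, and $(r,s)$ is a holomorphic logarithm of them. This avoids the addition map entirely and gives the needed one-parameter holomorphic family directly.
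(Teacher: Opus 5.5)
Your argument is correct, but it takes a more elementary route than the paper.

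\textbf{The paper's route.} It invokes the Chen--Lin universal law $d\tau\wedge d\widetilde B=8\pi^2\,dr\wedge ds$ on the completely reducible locus. From this it concludes that $\Phi_k$ is a local biholomorphism. Hence $\mathcal{E}_{k,j}=\Phi_k(\mathcal{D}_{k,j})$ is locally the biholomorphic image of a curve.

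\textbf{Your route.} You use only that the lifted monodromy data $(r,s)$ depends holomorphically on $\tau$ along the one-dimensional set $\mathcal{D}_{k,j}$. A dimension count then shows that the image has measure zero and empty interior, since it is a countable union of images of discs under holomorphic maps into a two-dimensional space.

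\textbf{Comparison.} Your argument avoids the universal law entirely. It needs only standard holomorphic parameter dependence of solutions and of the period matrices over small $\tau$-discs. It also directly yields what the later generic-avoidance step uses: the complement of $\mathcal{E}_n$ is dense, and in fact comeagre. The paper's route gives sharper information that the lemma does not require: wherever $\mathcal{E}_{k,j}$ is nonempty, it is locally an honest immersed complex curve of dimension exactly one.

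\textbf{One fix needed.} On the completely reducible locus, only one period matrix is guaranteed to have distinct eigenvalues. For example, $M_1=\pm I$ together with $\lambda_2^2\neq1$ is allowed. So you should not claim that both $\lambda_1$ and $\lambda_2$ are simple eigenvalues. Instead, near each point choose $j$ with $\lambda_j^2\neq1$, follow the corresponding eigenline of $M_j$ holomorphically, and read off both multipliers on that line. This gives the holomorphic local lift of $(r,s)$ your argument needs.
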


\begin{proof}
By the universal law for the classical Lam\'e
equation~\cite{Chen-Lin-universal law},
\[
d\tau\wedge d\widetilde{B}=8\pi^{2}\,dr\wedge ds
\]
on the completely reducible locus. Hence $\Phi_{k}$ is locally biholomorphic.
Since $\mathcal{D}_{k,j}$ is one-dimensional, $\mathcal{E}_{k,j}=\Phi
_{k}(\mathcal{D}_{k,j})$ is locally one-dimensional in the two-dimensional
monodromy-data space, and is therefore proper. The finite union $\mathcal{E}%
_{n}$ is proper as well.
\end{proof}

For $n\in\mathbb{Z}_{>0}$, define
\begin{equation}
\mathcal{A}_{n}:=
\begin{cases}
  \mathcal{B}_1\setminus \mathcal{B},\quad &n=1 \\
    \mathcal{B}_{n}\setminus(\mathcal{B}\cup\mathcal{E}%
_{n}),&n\geqslant2.\label{def, An}%
\end{cases}
\end{equation}
If $(r,s)\in\mathcal{A}_{n}$, then, for every $2\leqslant k\leqslant n$,
\begin{equation}
\widetilde{B}_{k}\neq-k(k+1)e_{j}(\tau_{k}), \quad
j=1,2,3.\label{Bk is not k(k+1)ej}%
\end{equation}
Hence there exists $p_{k}^{*}\in E_{\tau_{k}}\setminus E_{\tau_{k}}[2]$ such
that
\[
k(k+1)\wp(p_{k}^{*};\tau_{k})+\widetilde{B}_{k}=0.
\]

\begin{theorem}
[=Theorem~\ref{thm, connectivity intro}]\label{thm, iterate} Fix
$n\in\mathbb{Z}_{>0}$ and let $(r,s)\in\mathcal{A}_{n}$. Any completely reducible
equation $\mathrm{GLE}_{n}(p,\mathbf{T},\tau)$ realizing the monodromy data
$(r,s)$ is isomonodromically connected to a classical Lam\'e equation of
weight $1$ on a suitable elliptic curve through a finite sequence of
Painlev\'e VI and non-even isomonodromic deformations.
\end{theorem}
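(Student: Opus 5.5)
We argue by induction on the weight, descending one unit at a time via alternating moves. Given a completely reducible $\mathrm{GLE}_n(p,\mathbf T,\tau)$ with data $(r,s)\in\mathcal A_n$, the parameter $\mathbf T$ lies on $V^{(0)}_{n,p}(\tau)$ or on $V^{(1)}_{n,p}(\tau)$ by Theorem~\ref{Main Thm log free decomposition}, and we treat the two cases separately.

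\emph{Non-even case.} By Theorem~\ref{thm, X(p)=n*n+1}, the equation lies on the non-even isomonodromic family $T(p)^2=n(n+1)\wp(p)+\widetilde B$ with $\tau$ fixed. Letting $p\to0$ along this family gives the completely reducible Lam\'e equation $\mathrm L_n(\widetilde B,\tau)$ with the same data $(r,s)$, by Lemma~\ref{lem, gle to lame r,s}.

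\emph{Even case.} The equation lies on the unique Painlev\'e-VI family $p^{(n)}_{r,s}(\tau)$. We need to reach a collision $p(\tau)\to0$ at some $\tau_0\in\mathbb H$. By \eqref{eq, p by premodular forms}, this happens exactly where $Z_{n-1}(r,s,\tau_0)Z_{n+1}(r,s,\tau_0)=0$. We then invoke \eqref{eq:Zk-degeneration} (from \cite[Remark~6.21]{Chou-Wang-Wu-II}, using $(r,s)\notin\mathcal B$) to produce such a $\tau_0$ with $Z_{n-1}(r,s,\tau_0)=0$. By \eqref{even equ converges as tau to tau0}, the Painlev\'e-VI branch then converges to $\mathrm L_{n-1}(\widetilde B_-,\tau_0)$ with data $(r,s)$. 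Continuing along the PVI family through $\tau$-analytic continuation is legitimate because the monodromy data determine the family uniquely \cite[Theorem~1.3]{Chen-Kuo-Lin-Lame III}.

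\emph{Re-entry step.} We must pass from a Lam\'e equation $\mathrm L_k(\widetilde B_k,\tau_k)$, with $Z_k(r,s,\tau_k)=0$, back into a generalized equation of weight $k$ with $p\ne0$. We run the non-even correspondence of Theorem~\ref{Main Thm 2} backwards: choose $p$ with $T(p)^2=k(k+1)\wp(p)+\widetilde B_k$. We want to land at the point where $T=0$, i.e.\ at $p=p_k^*$ with $k(k+1)\wp(p_k^*;\tau_k)+\widetilde B_k=0$. Since $(r,s)\notin\mathcal E_n$, condition \eqref{Bk is not k(k+1)ej} guarantees $p_k^*\notin E_{\tau_k}[2]$. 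At $T=0$ the equation $\mathrm{GLE}^{(1)}_k(p_k^*,0,\tau_k)$ coincides with the even equation $\mathrm{GLE}^{(0)}_k(p_k^*,A_0,\tau_k)$ by \eqref{id1}, the intersection point $\mathbf T_*$. It is completely reducible with data $(r,s)$ because the whole non-even family is isomonodromic. From there we apply the even PVI step above with $n$ replaced by $k$, reaching $\mathrm L_{k-1}$.

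\emph{Iteration and main obstacle.} Alternating ``non-even family to $p\to0$'' with ``enter at $\mathbf T_*$, then PVI to $\mathrm L_{k-1}$'' lowers the weight by one at each stage. After finitely many steps we reach weight $1$, which gives the theorem. (If the starting equation is even, we first go down by PVI; if PVI instead reaches $\mathrm L_{n+1}$, we choose the branch toward $\mathrm L_{n-1}$ using the zero supplied by \eqref{eq:Zk-degeneration}.) The main obstacle is guaranteeing that, at each stage $k$, the PVI flow starting from the specific point $p_k^*$ actually attains the collision $\tau\to\tau_{k-1}$ with $Z_{k-1}=0$, rather than only some collision point or one reached after passing through a pole. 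We plan to handle this by noting that the PVI family with data $(r,s)$ is unique and global in $\tau$ via \eqref{eq, p by premodular forms}, which expresses $\wp(p(\tau))$ as a meromorphic function of $\tau$ on all of $\mathbb H$. Hence the zero $\tau_{k-1}$ of $Z_{k-1}(r,s,\cdot)$ from \eqref{eq:Zk-degeneration} is reachable by continuation along a path avoiding the zeros of $Z_{k+1}$. Finally, the condition $(r,s)\notin\mathcal B$ keeps $r+s\tau\notin E_\tau[2]$ along the path, which preserves complete reducibility and the no-common-zero property.
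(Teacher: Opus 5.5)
For $n\geqslant2$ your scheme matches the paper's. PVI collisions $\mathrm{GLE}^{(0)}_k\to\mathrm L_{k-1}$ occur at zeros of $Z_{k-1}$ supplied by \eqref{eq:Zk-degeneration}. These alternate with the fixed-$\tau$ non-even family of weight $k-1$, run from $p=0$ to the point $p^*_{k-1}$ where $T=0$; the exclusion $\mathcal E_n$ keeps $p^*_{k-1}$ off $E_\tau[2]$. A non-even start is first moved to $\mathbf T_*$.

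\textbf{The gap.} The case $n=1$ with an even starting equation is covered by the theorem, but your even step cannot treat it. That step needs a collision toward $\mathrm L_{n-1}$ at a zero of $Z_{n-1}$. For $n=1$ one has $Z_0\equiv1$, so by \eqref{eq, p by premodular forms} a weight-one PVI family reaches $p=0$ only at zeros of $Z_2$. Its limit there is $\mathrm L_2$, so the weight goes up, not down. Moreover, \eqref{eq:Zk-degeneration} for $n=1$ only supplies some $\tau_1$ with $Z_1(r,s,\tau_1)=Z(r,s,\tau_1)=0$. Your parenthetical ``choose the branch toward $\mathrm L_{n-1}$'' therefore has nothing to choose.

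\textbf{The missing idea.} At $\tau_1$ the weight-one PVI family does not collide; it passes through the intersection with the non-even component. The paper verifies this with the explicit weight-one formula
\[
\wp\bigl(p_1(\tau)\bigr)=\wp+\frac{3\wp' Z^2+(12\wp^2-g_2)Z+3\wp\wp'}{2Z_2(r,s,\tau)},\qquad Z_2=Z^3-3\wp Z-\wp',
\]
where $\wp=\wp(r+s\tau;\tau)$ and $Z=Z(r,s,\tau)$ on the right. At $Z(r,s,\tau_1)=0$ this gives $\wp(p_1(\tau_1))=-\tfrac12\wp(r+s\tau_1)=-\tfrac12\widetilde B_1$, since for weight one $\widetilde B_1=\wp(r+s\tau_1)$. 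This is exactly the zero of $T_1(p)^2=2\wp(p;\tau_1)+\widetilde B_1$ from \eqref{0904equ1}. The paper thus identifies the even equation at $\tau_1$ with $\mathrm{GLE}^{(1)}_1(p_1(\tau_1),0,\tau_1)$, and the non-even family then carries it to $\mathrm L_1(\widetilde B_1,\tau_1)$ as $p\to0$.

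\textbf{A fix with your own tools.} You could re-enter from $\mathrm L_1(\widetilde B_1,\tau_1)$ to the point where $T=0$. Uniqueness of the PVI family with data $(r,s)$ then places that even equation on the family of your starting equation. However, you would then have to show separately that $p_1^*\notin E_{\tau_1}[2]$, i.e.\ $\widetilde B_1\neq-2e_j(\tau_1)$. This is not automatic, because $\mathcal A_1=\mathcal B_1\setminus\mathcal B$ carries no $\mathcal E$-exclusion at weight one.
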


\begin{proof}

We first treat the case \(n=1\). By definition,
\[
\mathcal A_{1}
=\{(r,s)\notin\mathcal{B}\mid\exists \tau_1\in\mathbb{H}\ \text{such that }Z({r,s,\tau_1})=0\}
\]

Suppose first that the initial equation lies on the non-even component, namely, there exists $\mathrm{GLE}_1^{(1)}(p,T_1,\tau_1)$ is completely reducible with data $(r,s)$.
Set $\widetilde B_{1}
=
T_1^{2}-2\wp(p;\tau_1).$
By Theorem~\ref{Main Thm 1}, the branch of
\begin{equation}
   T_1(p)^{2}-2\wp(p;\tau_1)=\widetilde B_{1} 
   \label{0904equ1}
\end{equation}
passing through \(T_1\) defines a fixed-\(\tau_1\) isomonodromic
deformation satisfying
\[
\mathrm{GLE}^{(1)}_{1}
\bigl(p,T_{1}(p),\tau_1\bigr)
\longrightarrow
\mathrm{L}_{1}(\widetilde B_{1},\tau_1)
\qquad\text{as }p\to0.
\]

Now suppose that the initial equation lies on the even component. Let $\mathrm{GLE}^{(0)}_{1}
\bigl(p_{1}(\tau),A_{1}(\tau),\tau\bigr)$
be the corresponding even isomonodromic family with monodromy data
\((r,s)\). Since
\[
\wp\bigl(p_{1}(\tau)\bigr)
=\wp+
\frac{
3\wp^{\prime}Z^2+(12\wp^2-g_2)Z+3\wp\wp^{\prime}
}{
2Z_2(r,s,\tau)
},
\]
where
\[
Z=Z(r,s,\tau),\ \wp=\wp(r+s\tau;\tau),\ \wp^{\prime}=\wp^{\prime}(r+s\tau;\tau),
\]
\[
Z_2(r,s,\tau)=Z^3-3\wp Z-\wp^{\prime},
\]
and the numerator and \(Z_{2}\) have no common zeros. When $\tau=\tau_1$, $Z(r,s,\tau_1)=0$ implies 
\[
\wp(r+s\tau_1;\tau_1)=-2\wp\bigl(p_{1}(\tau_1);\tau_1\bigr).
\]
By the correspondence \eqref{0904equ1} with $T_1(p_1(\tau_1))=0$, we have
\[
\widetilde B_1=-2\wp(p_1(\tau_1);\tau_1),
\]
and
\[
\mathrm{GLE}_{1}^{(0)}(p_1(\tau_1),A_1(\tau_1),\tau_1)=\mathrm{GLE}_{1}^{(1)}(p_1(\tau_1),0,\tau_1).
\]
Then by Theorem~\ref{Main Thm 1},
\[
\mathrm{GLE}_{1}^{(1)}
\bigl(p,T_1(p),\tau_1\bigr)
\longrightarrow
\mathrm{L}_{1}(\widetilde B_1,\tau_1)
\qquad\text{as }p\to0.\]
This proves the theorem for \(n=1\).

For $n\geqslant2$,
suppose first that the initial equation lies on the even component. Let $\mathrm{GLE}_{n}^{(0)} \bigl(p_{n}(\tau),A_{n}(\tau),\tau\bigr)$ be the
corresponding even isomonodromic family with monodromy data $(r,s)$. By \eqref{eq, p by premodular forms} and \eqref{eq:Zk-degeneration},
 there exists $\tau_{n-1}\in\mathbb{H}$
such that
\[
p_{n}(\tau)\longrightarrow0 \qquad\text{as }\tau\longrightarrow\tau_{n-1},
\]
and
\[
\mathrm{GLE}_{n}^{(0)} \bigl(p_{n}(\tau),A_{n}(\tau),\tau\bigr)
\longrightarrow\mathrm{L}_{n-1} \bigl(\widetilde{B}_{n-1},\tau_{n-1}\bigr).
\]

Fixing $\tau_{n-1}$, Theorem~\ref{Main Thm 1} gives the non-even deformation
\[
\mathrm{GLE}_{n-1}^{(1)} \bigl(p,\pm T_{n-1}(p),\tau_{n-1}\bigr)
\longrightarrow L_{n-1}\bigl(\widetilde{B}_{n-1},\tau_{n-1}\bigr)\ \text{as
}p\to0,
\]
where
\[
T_{n-1}(p)^{2} - n(n-1)\wp(p;\tau_{n-1}) = \widetilde{B}_{n-1}.
\]

By \eqref{Bk is not k(k+1)ej}, there exists $p_{n-1}^{*}\in E_{\tau_{n-1}%
}\setminus E_{\tau_{n-1}}[2]$ with $T_{n-1}\bigl(p_{n-1}^{*}\bigr)=0$, and
therefore
\[
\mathrm{GLE}_{n-1}^{(1)} \bigl(p_{n-1}^{*},0,\tau_{n-1}\bigr)
= \mathrm{GLE}_{n-1}^{(0)} \bigl(p_{n-1}^{*},A_{n-1}^{*},\tau_{n-1}\bigr).
\]

We claim that
\[
p_{n-1}^{*}\notin E_{\tau_{n-1}}[2].
\]
Indeed, $p_{n-1}^{*}=\frac{\omega_{j}}{2}$ would imply
\[
\widetilde{B}_{n-1} = -n(n-1)e_{j}(\tau_{n-1}),
\]
contrary to \eqref{Bk is not k(k+1)ej}.

We may therefore switch at $p_{n-1}^{*}$ to the even isomonodromic family
\[
\mathrm{GLE}_{n-1}^{(0)} \bigl(p_{n-1}(\tau),A_{n-1}(\tau),\tau\bigr), \qquad
p_{n-1}(\tau_{n-1})=p_{n-1}^{*}.
\]
By \eqref{eq, p by premodular forms} and \eqref{eq:Zk-degeneration}, there
exists $\tau_{n-2}\in\mathbb{H}$ such that
\[
p_{n-1}(\tau)\longrightarrow0 \qquad\text{as }\tau\longrightarrow\tau_{n-2},
\]
and hence
\[
\mathrm{GLE}_{n-1}^{(0)} \bigl(p_{n-1}(\tau),A_{n-1}(\tau),\tau\bigr)
\longrightarrow\mathrm{L}_{n-2} \bigl(\widetilde{B}_{n-2},\tau_{n-2}\bigr).
\]
This reduces the weight from $n-1$ to $n-2$ without changing the monodromy
data $(r,s)$.

Repeating the same construction yields the chain
\[
\begin{aligned}
\mathrm{GLE}_n^{(0)}
&\xrightarrow[\tau\to\tau_{n-1}]{\mathrm{PVI}}
\mathrm L_{n-1}
\xrightarrow{\mathrm{non\text{-}even}}
\bigl(
\mathrm{GLE}_{n-1}^{(1)}
\cap
\mathrm{GLE}_{n-1}^{(0)}
\bigr)
\\
&\xrightarrow[\tau\to\tau_{n-2}]{\mathrm{PVI}}
\mathrm L_{n-2}
\xrightarrow{\mathrm{non\text{-}even}}
\bigl(
\mathrm{GLE}_{n-2}^{(1)}
\cap
\mathrm{GLE}_{n-2}^{(0)}
\bigr)
\\
&\hspace{15mm}\longrightarrow\cdots\longrightarrow
\mathrm L_2
\xrightarrow{\mathrm{non\text{-}even}}
\bigl(
\mathrm{GLE}_2^{(1)}
\cap
\mathrm{GLE}_2^{(0)}
\bigr)
\xrightarrow[\tau\to\tau_1]{\mathrm{PVI}}
\mathrm L_1.
\end{aligned}
\]

If the initial equation lies on the non-even component, first deform it at
fixed $\tau$ to the unique intersection with the even component; the same
alternating construction then applies. Iterating this procedure terminates at
a classical Lam\'e equation of weight $1$.
\end{proof}

\section{Genus-dependent componentwise geometry}
\label{Section, Genus-dependent componentwise geometry}
The even component remains rational throughout the symmetric DTV
family. The non-even normalizations vary in genus, which the global
arithmetic genus alone does not determine.

\subsection{Arithmetic genus and componentwise genera}
Consider the symmetric data
\begin{equation}\label{rev:eq:genus-data}
 \boldsymbol n=\left(n_0,n_1,n_2,n_3,\frac12,\frac12\right),
 \qquad n_i\in\mathbb Z_{\geqslant 0},
\end{equation}
with pole configuration
\[
 \boldsymbol p=\left(0,\frac{\omega_1}{2},\frac{\omega_2}{2},
               \frac{\omega_3}{2},p,-p\right),
 \qquad p\in E_\tau\setminus E_\tau[2].
\]
The global theory of C--W--W~\cite{Chou-Wang-Wu-II} gives the
arithmetic genus of the compactified log-free curve
$\overline V_{\boldsymbol n,\boldsymbol p}(\tau)$.
For a half-integral weight vector $\boldsymbol m=(m_1,\ldots,m_r)$ with $|\boldsymbol m|=\sum_{j=1}^r m_j\in\mathbb Z_{\geqslant 0},$ set
\[
 F(\boldsymbol m)=\#\left\{(k_1,\ldots,k_r)\in\mathbb Z_{\geqslant 0}^r\mid
        \sum_{j=1}^r k_j=|\boldsymbol m|,\quad0\leqslant k_j\leqslant 2m_j\right\}.
\]
By~\cite[Corollary~5.6]{Chou-Wang-Wu-II},
\begin{equation}\label{rev:eq:arithmetic-general}
 p_a\!\left(\overline V_{\boldsymbol m,\boldsymbol p}(\tau)\right)
 =\frac{|\boldsymbol m|-1}{2}F(\boldsymbol m)
  -\frac14\left(\prod_{j=1}^r(2m_j+1)
                  +\delta_{\boldsymbol m,\mathbb N^r}\right)+1,
\end{equation}
where $\delta_{\boldsymbol m,\mathbb N^r}=1$ if
$\boldsymbol m\in\mathbb N^r$ and is zero otherwise.
 For~\eqref{rev:eq:genus-data}, write $N=n_0+n_1+n_2+n_3$. Then
$|\boldsymbol n|=N+1$ and the indicator
vanishes, giving
\begin{equation}\label{rev:eq:arithmetic-symmetric}
 p_a\!\left(\overline V_{\boldsymbol n,\boldsymbol p}(\tau)\right)
     =\frac N2F(\boldsymbol n)-\prod_{i=0}^3(2n_i+1)+1,
\end{equation}
with
\begin{equation}\label{rev:eq:F-coefficient}
 F(\boldsymbol n)=[x^{N+1}](1+x)^2
             \prod_{i=0}^3(1+x+\cdots+x^{2n_i}).
\end{equation}
For a connected
reduced projective curve 
$\overline V_{\boldsymbol n,\boldsymbol p}(\tau)=C_1\cup\cdots\cup C_s$,
 let $\widetilde C_\alpha$ be the normalization of $C_\alpha$.
The normalization genus formula is
\begin{equation}\label{rev:eq:genus-decomposition}
 p_a\!\left(\overline V_{\boldsymbol n,\boldsymbol p}(\tau)\right)
     =\sum_{\alpha=1}^s g(\widetilde C_\alpha)
                       +\sum_x\delta_x-s+1,
\end{equation}
where $\delta_x$ is the length of the normalization quotient at $x$.
At a locally planar point $x$, 
\begin{equation}\label{rev:eq:delta-planar}
 \delta_x=\sum_{\alpha=1}^s\delta_x(C_\alpha)
                 +\sum_{\alpha<\beta}I_x(C_\alpha,C_\beta),
\end{equation}
Formula~\eqref{rev:eq:genus-decomposition} does not require local
planarity. Positive arithmetic genus may therefore be carried by singularity and intersection
defects rather than by the component normalizations.

\begin{proposition}\label{rev:prop:pa-zero}
If $C=\bigcup_{\alpha=1}^s C_\alpha$ is connected, reduced and
projective with $p_a(C)=0$, every $C_\alpha$ is rational.
\end{proposition}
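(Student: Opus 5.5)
The plan is to read off rationality from the normalization genus formula \eqref{rev:eq:genus-decomposition}, once we show that its defect terms cannot be negative. Write
\[
0=p_a(C)=\sum_{\alpha=1}^s g(\widetilde C_\alpha)+\Bigl(\sum_x\delta_x-s+1\Bigr).
\]
Every genus $g(\widetilde C_\alpha)$ is nonnegative. So it suffices to prove
\[
\sum_x\delta_x\geqslant s-1 .
\]
Granting this, every $g(\widetilde C_\alpha)$ vanishes. Hence each $\widetilde C_\alpha\simeq\mathbb P^1$, i.e.\ each $C_\alpha$ is rational.

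\emph{Key step: connectedness forces enough $\delta$-invariant.} Let $\nu:\widetilde C=\bigsqcup_\alpha\widetilde C_\alpha\to C$ be the normalization. There is the exact sequence
\[
0\to\mathcal O_C\to\nu_*\mathcal O_{\widetilde C}\to\bigoplus_x \bigl(\nu_*\mathcal O_{\widetilde C}/\mathcal O_C\bigr)_x\to 0 .
\]
Its cohomology sequence begins
\[
0\to H^0(\mathcal O_C)\to H^0(\mathcal O_{\widetilde C})\to\bigoplus_x(\cdots)_x .
\]
Since $C$ is connected, reduced and projective, $H^0(\mathcal O_C)=\mathbb C$. Since $\widetilde C$ has $s$ connected components, $H^0(\mathcal O_{\widetilde C})=\mathbb C^s$. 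Exactness then gives $\sum_x\delta_x\geqslant s-1$, as required.

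One could instead argue combinatorially from the dual graph: since $C$ is connected, the graph has at least $s-1$ edges, each intersection point contributes $\delta_x\geqslant$ (number of branches through $x$) $-1$, and at a planar point \eqref{rev:eq:delta-planar} gives positive intersection multiplicities. But the cohomological route avoids local planarity, which matters because the paper emphasizes that \eqref{rev:eq:genus-decomposition} does not require it.

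The only delicate point is this local-to-global count, together with confirming that \eqref{rev:eq:genus-decomposition} is exactly the Euler-characteristic identity $\chi(\mathcal O_{\widetilde C})=\chi(\mathcal O_C)+\sum_x\delta_x$ obtained from the same sequence. Once that is in place, the proposition follows immediately. As a byproduct, $p_a(C)=0$ also forces $\sum_x\delta_x=s-1$, so the configuration is a tree of rational curves meeting as transversally as possible.
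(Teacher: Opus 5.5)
Your proof is correct and matches the paper's: both split $p_a(C)=0$ via \eqref{rev:eq:genus-decomposition} into the two nonnegative quantities $\sum_\alpha g(\widetilde C_\alpha)$ and $\sum_x\delta_x-s+1$, the second being nonnegative by connectedness. The paper only asserts $\sum_x\delta_x\geqslant s-1$. Your $H^0$ sequence for $0\to\mathcal O_C\to\nu_*\mathcal O_{\widetilde C}\to\bigoplus_x(\nu_*\mathcal O_{\widetilde C}/\mathcal O_C)_x\to0$ supplies a valid justification of that inequality without local planarity.
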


\begin{proof}
Connectedness gives $\sum_x\delta_x\geqslant s-1$. Both
$\sum_\alpha g(\widetilde C_\alpha)$ and
$\sum_x\delta_x-s+1$ in ~\eqref{rev:eq:genus-decomposition} are nonnegative, hence vanish.
\end{proof}

The compactified log-free curves considered here are connected and
reduced by \cite{Chou-Wang-Wu-II}.

\subsection{The first genus jump}
With no positive integral weight, regularity at the origin forces
$T_++T_-=0$. Thus
\begin{equation}\label{rev:eq:level-zero}
 V_{(0,0,0,0,\frac{1}{2},\frac{1}{2}),\boldsymbol p}(\tau)\simeq\mathbb A^1_A.
\end{equation}
Formula~\eqref{rev:eq:arithmetic-symmetric} gives arithmetic genus zero.

\medskip

For one support, $\mathbf n=(n,0,0,0,\tfrac12,\tfrac12)$,
$n\geqslant1$, the even and non-even components are affine lines.
Indeed,
\[
 F(\boldsymbol n)=[x^{n+1}](1+x+\cdots+x^{2n})(1+x)^2=4,
\]
so
\[
 p_a\!\left(\overline V_{\boldsymbol n,\boldsymbol p}(\tau)\right)
       =\frac n2\cdot4-(2n+1)+1=0.
\]
Their common genus does not determine their deformation geometry:
the even component carries Painlev\'e VI, while the non-even
component carries fixed-$\tau$ isomonodromy.

For $\mathbf n=(1,1,0,0,\tfrac12,\tfrac12)$,
$F(\mathbf n)=10$ and $\prod_i(2n_i+1)=9$, so
\begin{equation}\label{rev:eq:pa-two}
 p_a\!\left(\overline V_{(1,1,0,0,\frac12,\frac12),\boldsymbol p}
             (\tau)\right)
        =\frac{3-1}{2}\cdot10-\frac{36}{4}+1=2.
\end{equation}
 Appendix~\ref{Appendix 1} gives two irreducible components: a rational even
component and the following non-even component.

\begin{theorem}\label{thm:first-genus-jump-section9}
Fix $\tau\in\mathbb H$. For generic
$p\in E_\tau\setminus E_\tau[2]$, the normalization of the
non-even log-free component for
$\boldsymbol n=(1,1,0,0,\frac12,\frac12)$ is the smooth
projective curve with affine model
\begin{equation}\label{rev:eq:elliptic-model}
 C_p(\tau)=\{(t,W)\in\mathbb C^2\mid W^2=t(t+1)P(t;\tau)\},
\end{equation}
where
\begin{equation}\label{rev:eq:quadratic}
\begin{split}
 P(t;\tau)={}&\left(\wp(p)+\wp\left(p+\frac{\omega_1}{2}\right)+e_1\right)t^2
                    +(2\wp(p)+e_1)t\\
            &+e_1-\wp\left(p+\frac{\omega_1}{2}\right).
\end{split}
\end{equation}
In particular, the non-even component has geometric genus one.
\end{theorem}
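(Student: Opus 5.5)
We will work directly with the log-free conditions for the potential
\[
q(z)=2\wp(z)+2\wp\!\left(z+\tfrac{\omega_1}{2}\right)+\tfrac34\bigl(\wp(z+p)+\wp(z-p)\bigr)+T_1\bigl(\zeta(z+p)-\zeta(z)\bigr)+T_2\bigl(\zeta(z-p)-\zeta(z)\bigr)+B,
\]
and compute the log-free curve in the $(T_1,T_2)$-plane, as in Appendix~\ref{Appendix 1}.

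\textbf{Local conditions at $\pm p$.} First we write down the two Frobenius conditions at $z=\pm p$. These are exactly the $j=2$ equations of Lemma~\ref{apparent,necessary condition}, but with the constant terms $q_{0,\pm p}$ now also containing $2\wp(p+\frac{\omega_1}{2})$. Subtracting them eliminates $B$. What remains is one quadratic relation in $(T_1,T_2)$, and we factor it in the form $(T_1+T_2)\,L(T_1,T_2)=R$. Here $R$ is no longer identically zero, because the extra half-period term breaks the factorization \eqref{AP1}.

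\textbf{Conditions at $0$ and $\frac{\omega_1}{2}$.} Next we impose log-freeness at $z=0$ and at $z=\frac{\omega_1}{2}$. Both have exponent difference $3$ (weight $1$), so the condition at each point is a single Frobenius equation at order $3$. It reads
\[
q_{-1}\bigl(q_{-1}^2-q_0\bigr)\cdots=\text{(explicit)},
\]
cubic in the residue $q_{-1}=-(T_1+T_2)$ at $0$ (resp.\ residue $0$ at $\frac{\omega_1}{2}$, where the condition becomes linear in $B$ and in the odd part of the expansion). Substituting $B$ from the $\pm p$ condition leaves us with plane curves in $(T_1,T_2)$.

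\textbf{Splitting off the even line.} The even line $T_1+T_2=0$ should split off, as the known rational even component, by the same symmetry argument as in Lemma~\ref{lemma apparent,T1+T2=0}. We then pass to coordinates adapted to the residual curve. The choice is
\[
s=T_1+T_2,\qquad t=\frac{\text{(linear form)}}{s},
\]
normalized so that the residual equation becomes $s^2=t(t+1)P(t)\cdot(\text{square})$. Setting $W=s\cdot(\text{square root factor})$ then yields the stated model. The coefficients of $P$ should emerge from the expansions $\wp(p\pm\frac{\omega_1}{2})$ and the identity
\[
\wp\!\left(p+\tfrac{\omega_1}{2}\right)=e_1+\frac{(e_1-e_2)(e_1-e_3)}{\wp(p)-e_1}.
\]
This identity lets us express all coefficients in terms of $\wp(p)$, $\wp(p+\frac{\omega_1}{2})$ and $e_1$.

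\textbf{Genus count.} Finally we compute the genus of the resulting curve. The quartic $t(t+1)P(t)$ has distinct roots for generic $p$: the discriminant of $P$ and the resultants of $P$ with $t$ and with $t+1$ are nonzero elliptic functions of $p$. We check this by evaluating their leading behaviour as $p\to0$, using $p^2\wp(p)\to1$, exactly as in Proposition~\ref{prop, theta1 theta2 finite}. A curve $W^2=(\text{squarefree quartic})$ has genus one. Irreducibility of this component, i.e.\ that it does not split further, follows because its normalization is an irreducible elliptic curve. As a consistency check we compare with $p_a=2$ in \eqref{rev:eq:pa-two}: genus $1+0$, two components, and the intersection defects must then contribute $\sum\delta_x=2$.

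\textbf{Main obstacle.} The hard step is the Frobenius condition at $z=0$ (and at $\frac{\omega_1}{2}$) and the explicit elimination that yields the birational change of variables. It is the condition at $0$ that produces a genuine curve rather than a line. Getting the substitution $t$ right so that the equation collapses to $t(t+1)P(t)$ is the delicate computation. We would guide this by first computing the $p\to0$ and $p\to\frac{\omega_1}{2}$ degenerations, where the model should become rational.
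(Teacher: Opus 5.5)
There is a genuine gap at the very first step. Your potential has no accessory term at the weight-one pole $\frac{\omega_1}{2}$. With singular points $0,\frac{\omega_1}{2},\pm p$, the residues are constrained only by summing to zero. So the family must also contain a term $\mu\bigl(\zeta(z-\frac{\omega_1}{2})-\zeta(z)\bigr)$. This is the parameter called $T_1$ in \eqref{eq:appendix-potential}; that equation's $T_+,T_-$ are your $T_1,T_2$.

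Without $\mu$, the component you are looking for does not exist in your family. At $\frac{\omega_1}{2}$ your residue vanishes, so the criterion $A^3-4AB+4C=0$ reduces to $C=0$ (it does not involve $B$ at all). Expand your $q$ at $z=\frac{\omega_1}{2}+\xi$. The terms $2\wp(z)$, $2\wp(z+\frac{\omega_1}{2})$ and $\frac34(\wp(z+p)+\wp(z-p))$ give no $\xi$-coefficient, and one finds $C=(T_1+T_2)\bigl(e_1-\wp(p+\frac{\omega_1}{2})\bigr)$. Since $\wp(p+\frac{\omega_1}{2})\neq e_1$ for $p\neq 0$, this forces $T_1+T_2=0$. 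So your family's log-free locus is only the even line, and no elimination can produce $W^2=t(t+1)P(t)$.

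Your claim that the half-period term breaks \eqref{AP1} is also false in your setting. The term $2\wp(z+\frac{\omega_1}{2})$ contributes the same constant $2\wp(p+\frac{\omega_1}{2})=2\wp(-p+\frac{\omega_1}{2})$ at $z=p$ and at $z=-p$. It therefore cancels in the difference, and \eqref{AP1} holds verbatim.

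With $\mu$ restored, the difference of the $\pm p$ conditions becomes $(T_1+T_2)\bigl(T_1-T_2+\frac{\wp''(p)}{2\wp'(p)}\bigr)=-2\rho\mu$, where $\rho=\frac{\wp'(p)}{2(\wp(p)-e_1)}$. The nonzero right-hand side comes entirely from $\mu$. This is the paper's relation $\mu=St$, with $S=T_1+T_2$ and $t=-\frac1{2\rho}\bigl(T_1-T_2+\frac{\wp''(p)}{2\wp'(p)}\bigr)$. So your guess $t=(\text{linear form})/s$ is right, with the linear form being precisely the missing residue.

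The criteria at $0$ and $\frac{\omega_1}{2}$ then read $S(t+2)\bigl(S^2t(t+1)-4P(t)\bigr)=0$ and $S(t-1)\bigl(S^2t(t+1)-4P(t)\bigr)=0$. Their shared factor is the redundancy that makes the locus a curve, and $W=St(t+1)/2$ gives the stated model. Your slice $\mu=0$ corresponds to $t=0$ on that curve. There $S^2t(t+1)=4P(t)$ would require $P(0)=e_1-\wp(p+\frac{\omega_1}{2})=0$, which matches the computation above.

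Your concluding genus count ($P(0)$, $P(-1)$ and $\operatorname{disc}P$ nonzero for generic $p$, hence four distinct roots) is the same as the paper's and is fine once the correct model is in hand.
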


The proof is the explicit Frobenius elimination in Appendix~\ref{Appendix 1}.
Let $E(\tau)$ be the finite discriminant locus defined in~\eqref{eq:appendix-E}.
For $p\notin \mathcal E(\tau)$, the component genera are
$g(\widetilde V^{(0)})=0$ and $g(\widetilde V^{(1)})=1$.
For $p\in \mathcal E(\tau)$, the model~\eqref{rev:eq:elliptic-model}
is nodal and its
normalization is rational (Theorem~\ref{thm:exceptional-genus}). Consequently,
\[
2=g(\widetilde V^{(0)})+g(\widetilde V^{(1)})
                         +\sum_x\delta_x-1,
\qquad
\sum_x\delta_x=
\begin{cases}
2,&p\notin\mathcal E(\tau),\\
3,&p\in\mathcal E(\tau).
\end{cases}
\]
The loss of one in the non-even genus is compensated by one in the
total defect. The arithmetic genus stays equal to two.

\subsection{The one-pair constraint}
\label{subsection 9.3}

The direct extension of Proposition~\ref{rev:prop:darboux} changes the apparent-pole
configuration when several half-periods are active.

\begin{proposition}
\label{prop, Darboux trans for DTV}
Let
\[
U(z)=\sum_{j=0}^3n_j(n_j+1)\wp\left(z+\frac{\omega_j}{2}\right),
\qquad m=\#\{j:n_j>0\}\geqslant1.
\]
For generic $p$, set $d_p(z)=U(z)-U(p)$ and $\beta=T^2-U(p)$.
The transformation
\[
v=d_p^{-1/2}(y'-Ty),\qquad y''=(U+\beta)y,
\]
produces $m$ distinct apparent pairs $\{\pm p_1,\ldots,\pm p_m\}$,
with $p_1=p$.
\end{proposition}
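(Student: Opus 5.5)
Run the one-pair computation of Proposition~\ref{rev:prop:darboux} with $f_p$ replaced by $d_p$ and read off the new singularities from the divisor of $d_p$. With $u=y'-Ty$, the Lam\'e-type equation $y''=(U+\beta)y=(T^2+d_p)y$ factors as $(\partial_z+T)(\partial_z-T)y=d_py$. This gives the system
\[
y'=Ty+u,\qquad u'=d_py-Tu .
\]
Eliminating $y$ gives
\[
u''-g u'=(U+\beta+Tg)u,\qquad g=d_p'/d_p .
\]
Setting $v=d_p^{-1/2}u$ yields $v''=q_1v$ with
\[
q_1=U+\beta+Tg-\tfrac12 g'+\tfrac14 g^2 .
\]
The new singularities of $q_1$ not already poles of $U$ are the zeros of $d_p$. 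Near a simple zero $z_*$ of $d_p$, $g$ has a simple pole with residue $1$. So $-\tfrac12g'+\tfrac14g^2$ has leading term $\tfrac34(z-z_*)^{-2}$, giving local exponents $-\tfrac12,\tfrac32$. This is exactly the apparent pair structure. Apparency (log-freeness) follows as in Proposition~\ref{rev:prop:darboux}: the inverse map $y=(u'+Tu)/d_p$ together with the Wronskian identity $W(v_1,v_2)=-W(y_1,y_2)$ shows that $v$ inherits a log-free fundamental system from $y$. At the poles $\omega_j/2$ of $U$, $g$ has a simple pole with residue $-2$, and a short exponent check shows only the weight shifts there. So the \emph{new} singular set is precisely the zero set of $d_p$.

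Next we count the zeros of $d_p$. $U$ is even and elliptic with a double pole at each of the $m$ active half-periods, so $\deg U=2m$. Hence $d_p=U-U(p)$ has exactly $2m$ zeros counted with multiplicity. Evenness of $U$ makes the zero set symmetric under $z\mapsto -z$, and it contains $\pm p$. Since $U$ factors through the degree-two map $z\mapsto\wp(z)$ as a rational function $R(\wp)$ of degree $m$ in $x=\wp$, the zeros of $d_p$ are the $\pm$-lifts of the $m$ roots $x_1=\wp(p),x_2,\dots,x_m$ of $R(x)=R(\wp(p))$.

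The remaining work, and the main obstacle, is genericity. We must show that for generic $p$ these $m$ roots are distinct and none equals some $e_j$ (which would place a zero on $E_\tau[2]$ and merge $\pm p_i$). The plan is as follows. Distinctness of the roots for generic value $c=R(\wp(p))$ holds because $R:\mathbb P^1\to\mathbb P^1$ has only finitely many critical values. Because $R$ is nonconstant, $\wp(p)$ sweeps all but finitely many $c$, and $p$ generic avoids them. The condition $x_i\neq e_j$ for all $i$ fails only when $c\in\{R(e_1),R(e_2),R(e_3)\}$ (or $c=\infty$), again a finite exclusion. Then each $x_i$ lifts to two distinct points $\pm p_i\notin E_\tau[2]$, so $d_p$ has $2m$ simple zeros forming $m$ distinct pairs, with $p_1=p$.

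One further detail is needed: $R$ has degree exactly $m$. We argue this by noting that each active $\omega_k/2$ contributes one simple pole of $R$ in $x$, at $x=e_k$ for $k\ge1$ and at $x=\infty$ for $k=0$. With $m\ge1$, the count $m$ of distinct apparent pairs follows.
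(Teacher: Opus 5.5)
Your proposal is correct and is essentially the paper's proof. Both use the same Darboux elimination giving $q_{\mathrm{new}}=U+\beta+Tg-\tfrac12 g'+\tfrac14 g^2$, the same local analysis at a simple zero of $d_p$ (leading term $\tfrac34(z-a)^{-2}$, exponents $-\tfrac12,\tfrac32$, log-freeness inherited from the ordinary source point through the Wronskian factor $-1$), and the same count of $2m$ zeros paired by evenness.

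Your genericity step through $U=R(\wp)$ is an explicit form of the paper's ``$U(p)$ is a regular value'' argument, with two small corrections. First, your exclusion list must also contain $c=R(\infty)=U(0)$ when $n_0=0$; the regular-value formulation avoids this, since $U'$ vanishes at every half-period where $U$ is finite. Second, at the active half-periods the double-pole coefficient $n_j(n_j+1)$ is unchanged and only a residue $-2T$ is added, so nothing there ``shifts''; this does not affect the statement.
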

\par\smallskip
\begin{proof}
The double poles at the $m$ active half-periods give $d_p$ a zero
divisor of degree $2m$. For generic $p$, $U(p)$ is a regular
value of $U:E_\tau\to\mathbb P^1$, so the zeros are simple. Evenness
places them in antipodal pairs.

Put $g=d_p'/d_p$. As in Section~\ref{Section, Mono Equiv},
\[
q_{\mathrm{new}}=U+\beta+Tg-\tfrac12g'+\tfrac14g^2.
\]
At a simple zero $a$ of $d_p$,
\[
g(z)=\frac1{z-a}+O(1),\qquad
q_{\mathrm{new}}(z)=\frac{3}{4(z-a)^2}+O((z-a)^{-1}).
\]
The exponents are $-\tfrac12,\tfrac32$. The source equation is
ordinary at $a$, and its analytic fundamental system transforms
into half-integral Laurent series with Wronskian multiplier $-1$.
Thus all the new singularities are log-free. Their nonzero
double-pole coefficients prevent cancellation.
\end{proof}

For example, with $(n_0,n_1,n_2,n_3)=(1,1,0,0)$ and
$h=\frac{\omega_1}{2}$, the function $U(z)=2\wp(z)+2\wp(z+h)$
satisfies $U(z+h)=U(z)$. The zeros of $U(z)-U(p)$ are generically
\[
p,\quad -p,\quad p+h,\quad -p+h.
\]
The transformation therefore gives two constrained pairs, whereas
Theorem~\ref{thm:first-genus-jump-section9} concerns a single pair. This restriction belongs to
the displayed construction and is distinct from the positive genus
of the parameter component.

We now specialize to the discriminant locus
$p\in\mathcal E(\tau)$, where the non-even parameter component is
rational.  This degeneration provides a useful test of whether genus zero
of the parameter space should be interpreted as a Darboux reduction.
The answer is negative already on the fixed elliptic curve.

A general point \((\tau,p)\) in the degeneration locus
$ \Delta_P(p;\tau)=0,$ means that the corresponding spectral curve is nonsingular.

\begin{proposition}\label{prop:exceptional-spectral-obstruction}
For a general point $(\tau,p)$ on the degeneration locus, the
corresponding non-even family on $E_\tau$ admits no birational,
period-monodromy-preserving Darboux reduction to a pure DTV family on
the same elliptic curve.
\end{proposition}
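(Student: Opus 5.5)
We argue by contradiction, comparing spectral invariants that such a Darboux reduction would have to preserve. Fix a general $(\tau,p)$ on the discriminant locus $\Delta_P(p;\tau)=0$. By Theorem~\ref{thm:first-genus-jump-section9} and Theorem~\ref{thm:exceptional-genus}, the non-even component is then nodal with rational normalization $\mathbb P^1$. Suppose there is a birational, period-monodromy-preserving first-order operator onto a pure DTV family $y''=(q_{\mathrm{DTV}}+\beta)y$ on the same $E_\tau$.

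First, we transport the structure of Sections~\ref{spectral theory non-even}--\ref{Isomonodromic deformation}. Period monodromy is preserved, so complete reducibility is preserved, and so is the locus where the period data lie in $\frac12\mathbb Z^2$. That locus is exactly the branch locus of the spectral curve, by Theorem~\ref{thm,criterion for} and its DTV analogue. The induced birational map on parameter lines therefore carries the hyperelliptic spectral curve of the non-even family onto the DTV spectral curve $\widetilde C^2=Q_{\mathbf n'}(\beta)$. It also intertwines the addition maps to $E_\tau$ and the functions $\kappa=Z(r,s,\tau)$, by Lemma~\ref{lem, relation bet r(P),s(P) and aj} and Proposition~\ref{prop, kappa is Zrs}. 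Passing to function fields, we obtain an isomorphism of the two spectral curves over $K(E_\tau)$. Consequently they have the same geometric genus, the same degree of the addition map, and the same minimal polynomial of $\kappa$.

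Second, we compute the non-even side explicitly. At a general discriminant point, the hypothesis that the spectral curve is nonsingular fixes its genus $g_{\mathrm{sp}}$ by the degree of the spectral polynomial, and the addition-map degree is computed by the counting method of Section~\ref{Section, Degree of the Non-Even Addition Map}. We then list the pure DTV weights $\mathbf n'=(n_0',\dots,n_3')$ whose spectral curves are nonsingular of that genus. Their genera $\sum$ and addition-map degrees $\deg\sigma_{\mathbf n'}$ are known from Treibich--Verdier and Chou--Wang--Wu. Only finitely many $\mathbf n'$ are compatible, and the aim is to show that each fails one of three tests: equality of addition-map degrees, the local exponent pattern at the half-periods, or the requirement that the parameter component of the target be a single rational line through the matched branch points.

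The main obstacle is the last comparison. A plain genus count will probably not separate all candidates, for example a Lamé weight of the same genus. For that case I would use the Hecke/premodular polynomial: the minimal polynomial of $\kappa$ for a pure DTV family is independent of auxiliary parameters and depends only on $\tau$. The non-even minimal polynomial, by contrast, depends nontrivially on $p$ along the discriminant curve, since the factor $P(t;\tau)$ of Theorem~\ref{thm:first-genus-jump-section9} involves $\wp(p+\omega_1/2)$. Varying $(\tau,p)$ along the locus while keeping $\tau$ fixed then produces a one-parameter family of pairwise non-isomorphic branch configurations, that is, distinct cross-ratios of the roots of the spectral polynomial. These would all have to equal the single configuration of $Q_{\mathbf n'}$ on $E_\tau$, which is a contradiction for general $p$. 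Making this rigorous requires verifying that the branch cross-ratios genuinely vary with $p$ along the locus. I would check this by the explicit elimination of Appendix~\ref{Appendix 1} at one convenient point, and then use analyticity to conclude for general points.
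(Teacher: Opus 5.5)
Your first step is the right mechanism, and it is the one the paper uses: a birational, period-monodromy-preserving Darboux map would force equality of spectral genus and addition degree. But the proposal never computes the invariants that make this comparison work, and the route you suggest for them is not available.

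\textbf{How the paper gets the invariants.} The paper obtains them from the global C--W--W counts minus the even contributions.
\begin{itemize}
\item[(1)] The total branch number is $4\prod_i(2n_i+1)=36$. As $p\to0$ the even spectral divisor splits into the DTV divisors of weights $(2,1,0,0)$ and $(0,1,0,0)$, contributing $5+3=8$. This leaves $28$ simple branch points, so the normalization $X$ of the non-even spectral curve has genus $13$.
\item[(2)] The total addition degree is $19$, and the even limits contribute $4+1$, so $\deg\sigma^{(1)}=14$.
\end{itemize}
The method of Section~\ref{Section, Degree of the Non-Even Addition Map} that you invoke rests on degenerating the one-support family to a single Lam\'e equation. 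No analogue of it is established for $(1,1,0,0)$.

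\textbf{Filter by degree, not genus.} With the degree known, one filters the DTV weights by addition degree. The condition $\frac12\sum_j m_j(m_j+1)=14$ forces $\mathbf m\in\{(4,2,1,0),(3,3,1,1)\}$. Their spectral genera, $4$ and $3$ as quoted in the paper from Takemura, differ from $13$. The Lam\'e candidate you worry about never arises, since $n(n+1)=28$ has no integer solution. Your obstacle comes from filtering by genus first.

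\textbf{The missing quotient level.} The one-support model shows this level is essential. There the Darboux parameter map $T\mapsto T^2-n(n+1)\wp(p)$ is $2{:}1$, and the Lam\'e curve matches $\hat\Gamma^{(1)}_{n,p}$, not the full spectral curve. The paper covers this case as follows.
\begin{itemize}
\item[(1)] It lifts $\lambda\mapsto-\lambda$ from \eqref{eq:appendix-parametrization} to an involution $\jmath$ of $X$.
\item[(2)] Riemann--Hurwitz then gives $g(X/\jmath)\in\{6,7\}$, and the quotient addition degree is $\deg\hat\sigma^{(1)}=7$.
\item[(3)] Degree $7$ forces $(3,1,0,0)$ or $(2,2,1,0)$, both of genus $3$. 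Lam\'e is again impossible, since $n(n+1)=14$ has no solution.
\end{itemize}
Your hypothesis that the induced map is birational on parameter lines, carrying $X$ itself onto the target, excludes precisely this case.

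\textbf{The fallback argument fails.} Your substitute argument for the ``main obstacle'' does not work as stated.
\begin{itemize}
\item[(1)] For fixed $\tau$ the degeneration locus $\mathcal E(\tau)$ is finite, because the discriminant of $P$ is a nonzero elliptic function of $p$. So there is no one-parameter family of $p$ along the locus with $\tau$ fixed.
\item[(2)] If $\tau$ is allowed to vary instead, the target DTV curve moves with it. No contradiction then follows from varying cross-ratios.
\item[(3)] The $p$-dependence of $P(t;\tau)$ concerns the parameter curve. It says nothing about the roots of the non-even spectral polynomial or about the minimal polynomial of $\kappa$. In the one-support case, $q^{(1)}_n$ depends on $p$ while $\widehat W_n$ does not (Theorem~\ref{thm, Wn inde of p}), so such dependence cannot be inferred from the potential or the parameter curve.
\end{itemize}
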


\par\smallskip

\begin{proof}
Fix a general point $(\tau,p)$ on the degeneration locus, so that all
spectral branch points are simple and the relevant degrees do not drop.
For
\[
\mathbf n=\left(1,1,0,0,\frac12,\frac12\right),
\]
the C--W--W count gives the total branch number
\[
4\prod_{i=0}^3(2n_i+1)=36,
\]
which is the sum of the even and non-even contributions. In the
limit $p\to0$, the even spectral divisor splits into the DTV spectral
divisors with weights
\[
(2,1,0,0)
\qquad\text{and}\qquad
(0,1,0,0),
\]
whose spectral polynomials have degrees $5$ and $3$, respectively. Thus the even contribution is $8$, and the non-even
branch number is
\[
36-8=28.
\]

Let $X$ be the smooth projective normalization of the non-even spectral
curve. Its spectral projection
\[
\pi\colon X\longrightarrow\mathbb P^1_\lambda
\]
is a double cover branched at $28$ points. Thus Riemann--Hurwitz gives
\begin{equation}\label{eq:exceptional-genus-thirteen}
g(X)=\frac{28-2}{2}=13.
\end{equation}

By \eqref{eq:appendix-parametrization},
\[
t(-\lambda)=t(\lambda),
\qquad
W(-\lambda)=-W(\lambda),
\]
and the accessory formulas imply
\[
q(z;-\lambda)=q(-z;\lambda).
\]
Then its spectral polynomial satisfies $Q_{\mathrm{nev}}(-\lambda)=Q_{\mathrm{nev}}(\lambda)$.
Therefore $\lambda\mapsto-\lambda$ preserves the period monodromy and
lifts to an involution $\jmath$ of $X$. 
Let
\[
\widehat X:=X/\langle\jmath\rangle
\]
be the quotient spectral curve.
Since every fixed point of $\jmath$ only lies
over $\lambda=0$ or $\infty$, we have
\[
r:=\#\operatorname{Fix}(\jmath)\leqslant4.
\]
Applying Riemann--Hurwitz to $X\to\widehat X$ and using
\eqref{eq:exceptional-genus-thirteen}, we obtain
\[
24=2\bigl(2g(\widehat X)-2\bigr)+r,
\qquad
g(\widehat X)=7-\frac r4.
\]
Since $g(\widehat X)\in\mathbb{N}$, it follows that $r=0$ or $4$. Hence
\begin{equation}\label{eq:exceptional-quotient-genus}
g(\widehat X)\in\{6,7\}.
\end{equation}

On the other hand, the C--W--W theory gives
the total addition degree $19$. The two even limiting DTV components
contribute $4$ and $1$, respectively. Hence the addition map on the non-even component has degree
\[
\deg\sigma^{(1)}=19-(4+1)=14.
\]
Since $\sigma^{(1)}$ is invariant under $\jmath$, it induces the quotient addition map $\hat{\sigma}^{(1)}$ on $\widehat{X}$  which satisfies
\[
\deg\hat\sigma^{(1)}=
\frac12\deg\sigma^{(1)}=7.
\]

For a pure DTV potential with weights
$\mathbf m=(m_0,m_1,m_2,m_3)$, its addition degree is given by \cite{Chen-Kuo-Lin-Lame I}
\[
d_{\mathbf m}
=
\frac12\sum_{j=0}^3m_j(m_j+1).
\]
If $d_{\mathbf m}=14$, then, up to permutation, the only
possibilities are
\[
\mathbf m=(4,2,1,0)
\qquad\text{or}\qquad
\mathbf m=(3,3,1,1).
\]
The corresponding spectral curves have genera \(4\) and \(3\),
respectively; see \cite{Takemura4}. Hence neither is
birational to the genus-$13$ curve $X$. Likewise, if $d_{\mathbf m}=7$, then, up to
permutation, the only possibilities are
\[
\mathbf m=(3,1,0,0)
\qquad\text{or}\qquad
\mathbf m=(2,2,1,0).
\]
Both corresponding spectral curves have genus $3$, contradicting
\eqref{eq:exceptional-quotient-genus}. Therefore no such Darboux
reduction exists.
\end{proof}

The obstruction persists under any fixed two-torsion shift of the period monodromy, which only translates the addition map and leaves its degree and spectral genus unchanged.

\begin{remark}\label{rem:scope-of-obstruction}
Proposition~\ref{prop:exceptional-spectral-obstruction} excludes a
generic birational Darboux reduction on the fixed curve \(E_\tau\) that
preserves monodromy and leads to a pure DTV family. It does not
exclude transformations at special parameters, on a different elliptic
curve, or into families with additional apparent pairs or different
half-period weights.
\end{remark}

The intrinsic spectral theory of Sections~~\ref{Log-free variety}--\ref{Isomonodromic deformation} is therefore the
primary monodromy theory, not a substitute used only when a Darboux formula
is unavailable.  In the one-support case its Lam\'e correspondence is an
external realization of the intrinsic data.  Proposition~\ref{prop:exceptional-spectral-obstruction}
shows that parameter-space genus zero does not in general produce such a
realization.  On any normalized parameter component, the same construction
is carried out over its function field while the prescribed pair $\pm p$ is
retained.

If $\overline V^{(1)}_{\mathbf n,\mathbf p}(\tau)
=C_1\cup\cdots\cup C_s$, define
\begin{equation}
g_{\mathrm{nev}}(\mathbf n,p;\tau)=\sum_{\alpha=1}^s g(\widetilde C_\alpha),
\end{equation}
with the empty sum equal to zero.

\begin{conjecture}
For generic $(\tau,p)$, $g_{\mathrm{nev}}(\mathbf n,p;\tau)=0$ if and
only if at most one of $n_0,n_1,n_2,n_3$ is positive.
\end{conjecture}

Genericity is necessary by Theorem~\ref{thm:exceptional-genus}. Formula \eqref{rev:eq:genus-decomposition} explains why
positive arithmetic genus alone cannot establish the converse.

\subsection{Green functions and componentwise genus}

Chen--Fu--Lin--Song~\cite{Chen-Fu-Lin-Song} studied the critical points of the symmetric
two-point Green function
\[
G_p(z)
=
\frac{1}{2}\bigl(G(z-p)+G(z+p)\bigr),
\qquad
p\in E_\tau\setminus E_\tau[2].
\]
This corresponds to the zero-support case $(0,0,0,0,\frac{1}{2},\frac{1}{2})$, for which the log-free locus consists only of the even component. The associated critical-point problem is linked to elliptic Painlev\'e~VI
and Hitchin's formula.

For the first one-support case $\left(1,0,0,0,\frac{1}{2},\frac{1}{2}\right)$, the relevant multiple Green function is
\[
G_p(z_1,z_2)
=
G(z_1-z_2)
-
\sum_{i=1}^{2}
\left(
G(z_i)
+\frac{1}{2}G(z_i-p)
+\frac{1}{2}G(z_i+p)
\right).
\]
Through the developing-map correspondence, its critical points are
related to the unitary-monodromy locus of the generalized Lam\'e family.
The log-free curve now has an even component governed by
Painlev\'e~VI and a non-even component carrying the Lam\'e
correspondence constructed above. For $\left(1,1,0,0,\frac{1}{2},\frac{1}{2}\right),$
the two-component decomposition persists, but the non-even normalization
is generically elliptic and becomes rational on the discriminant locus. The appearance of a non-even component and the
jump of its genus are therefore distinct phenomena.

For general symmetric data $\left(n_0,n_1,n_2,n_3,\frac{1}{2},\frac{1}{2}\right),$
one must first determine the irreducible non-even components and the
genera of their normalizations.
Formula~\eqref{rev:eq:arithmetic-symmetric} controls only the
arithmetic genus of the full curve and does not determine these
componentwise genera. On a positive-genus component \(C\), the spectral
cover, Baker--Akhiezer data, addition map, and monodromy map must be
constructed intrinsically  over \(\mathbb C(C)\), with the prescribed pair \(\pm p\)
retained. Propositions~\ref{prop, Darboux trans for DTV}
and~\ref{prop:exceptional-spectral-obstruction} show that this problem
cannot be replaced by a uniform reduction to pure DTV geometry. On the
analytic side, one must accordingly determine how the component
decomposition and its genus-changing degenerations are reflected in the
critical-point structure of multiple Green functions.

The one-support theory and
Theorem~\ref{thm, (1,1,0,0) non-even component} therefore exhibit the
first two componentwise regimes of the symmetric one-pair problem:
genus zero and generic genus one. Their distinct monodromy geometries
suggest that componentwise genus is also the natural organizing
invariant for the corresponding Green-function problems.

\appendix

\section{Proof of Theorem~\ref{thm:first-genus-jump-section9}}
\label{Appendix 1}

Let
\[
 \mathbf n=(1,1,0,0,\tfrac12,\tfrac12),\qquad
 \mathbf p=(0,\tfrac{\omega_1}{2},\tfrac{\omega_2}{2},
                    \tfrac{\omega_3}{2},p,-p),
\]
and consider $y''=qy$ with
\begin{align}
q(z)={}&2\left(\wp(z)+\wp\left(z-\frac12\right)\right)
 +\frac34\bigl(\wp(z+p)+\wp(z-p)\bigr)\nonumber\\
&+T_1\left(\zeta\left(z-\frac12\right)-\zeta(z)\right)
 +T_+\bigl(\zeta(z+p)-\zeta(z)\bigr)\nonumber\\
&+T_-\bigl(\zeta(z-p)-\zeta(z)\bigr)+B.\label{eq:appendix-potential}
\end{align}
Put
\begin{gather}
P(t;\tau)={}\left(\wp(p)+\wp\left(p+\frac{\omega_1}{2}\right)+e_1\right)t^2
 +(2\wp(p)+e_1)t
 +e_1-\wp\left(p+\frac{\omega_1}{2}\right),\label{eq:appendix-P}\\
\mathcal E(\tau)={}\{p\in E_\tau\setminus E_\tau[2]:
                  \operatorname{disc}_tP(t;\tau)=0\}.\label{eq:appendix-E}
\end{gather}
The discriminant is elliptic in $p$, with fourth-order poles only at
$0$ and $\frac{\omega_1}{2}$; hence $\mathcal E(\tau)$ is finite.

\begin{theorem}\label{thm:appendix-generic}
If $p\notin\mathcal E(\tau)$, the non-even log-free component is
birational to
\begin{equation}
             W^2=t(t+1)P(t;\tau).\label{eq:appendix-curve}
\end{equation}
Its smooth projective model has genus one.
\end{theorem}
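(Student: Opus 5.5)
We will run the Frobenius elimination of Section~\ref{Log-free variety} directly on the potential \eqref{eq:appendix-potential}. Log-freeness at the four singular points splits into two groups. At $z=\pm p$ (exponents $-\tfrac12,\tfrac32$), the $j=1,2$ Frobenius steps, exactly as in Lemma~\ref{apparent,necessary condition}, give two equations $q_{-1,\pm p}^2=q_{0,\pm p}$. Each is linear in $B$ and quadratic in $(T_1,T_+,T_-)$. At $z=0$ and $z=\tfrac{\omega_1}{2}$ (exponents $-1,2$, weight $1$), the $j=3$ step gives one cubic-type condition at each point; these involve the residues $-(T_1+T_++T_-)$ at $0$ and $T_1$ at $\tfrac{\omega_1}{2}$. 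We take the coordinates $S=T_++T_-$ and $D=T_+-T_-$. Subtracting the two $\pm p$ equations eliminates $B$ and factors as a product generalizing \eqref{AP1}: one factor is $S=0$ (the even component, rational as in \eqref{rev:eq:level-zero}), and the other factor is affine in $D$ with $T_1$ entering. On the non-even factor we then solve for $D$ and $B$ rationally in $(T_1,S)$. This leaves the two conditions at $0$ and $\tfrac{\omega_1}{2}$ as two polynomial equations in $(T_1,S)$.

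The second step is to eliminate one of these variables. The translation $z\mapsto z+\tfrac{\omega_1}{2}$ swaps the two integral singularities and sends $p$ to $p+\tfrac{\omega_1}{2}$. This is why both $\wp(p)$ and $\wp(p+\tfrac{\omega_1}{2})$ appear in $P$. We introduce the ratio $t=T_1/S$, or $t=-T_1/(T_1+S)$ adapted to the residues at the two integral points; this choice is suggested by the factor $t(t+1)$, whose zeros are exactly where one of the two residues vanishes. Scaling by weight (so that $\operatorname{wt}T=1$ and $\operatorname{wt}\wp=2$), the two conditions should reduce to a single relation of the form $S^2\,\Delta(t)=\text{(quadratic in }t)$, and one of them becomes linear in $S^2$ after we use the other. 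Solving then expresses $S$ as a square root of a rational function in $t$. After clearing square denominators, setting $W$ equal to $S$ times the appropriate polynomial in $t$ gives $W^2=t(t+1)P(t;\tau)$. We will simplify the coefficients using the addition formulas for $\wp(p\pm\tfrac{\omega_1}{2})$ and the identity $(\wp(p+\tfrac{\omega_1}{2})-e_1)(\wp(p)-e_1)=(e_1-e_2)(e_1-e_3)$. We expect this elimination and the verification of the exact coefficients of $P$ to be the main obstacle. We would first carry it out symbolically in the weighted variables, treating $x=\wp(p)$ and $\wp(p+\tfrac{\omega_1}{2})$ as independent, and only then impose the relation above. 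We must also check that the map from the component to $(t,W)$ is birational, meaning that $(T_1,S,D,B)$ are recovered rationally from $(t,W)$. This holds because $S$ is rational in $(t,W)$, $T_1=tS$, and $D,B$ are rational in $(T_1,S)$.

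Finally, the genus count. For $p\notin\mathcal E(\tau)$ the quartic $t(t+1)P(t)$ has four distinct roots. This requires two checks. First, $P(0)=e_1-\wp(p+\tfrac{\omega_1}{2})\ne0$, which holds since $p\notin E_\tau[2]$. Second, $P(-1)=\wp(p)-e_1+\wp(p+\tfrac{\omega_1}{2})-e_1+e_1\cdot0$ (recomputed), which must be nonzero; if it vanishes for finitely many $p$, we absorb those $p$ into the finite exceptional set. The leading coefficient must also be nonzero generically, since otherwise the quartic drops to a cubic, which still gives genus one. With distinct roots the smooth projective model of $W^2=t(t+1)P(t)$ is an elliptic curve by Riemann--Hurwitz, so the non-even component has geometric genus one. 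Irreducibility of the component follows because the right-hand side is not a square in $\mathbb C(t)$.
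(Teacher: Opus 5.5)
Your outline follows the paper's proof in outline: the coordinates $S,D$, the ratio $t=T_1/S$, the substitution $W=St(t+1)/2$, and the four-distinct-roots genus count. But the step where you separate the even component is wrong. Because of the term $T_1(\zeta(z-\tfrac12)-\zeta(z))$, the difference of the two half-integral conditions is
\[
S\Bigl(D+\frac{\wp''(p)}{2\wp'(p)}\Bigr)+2\rho\,T_1=0,
\qquad 2\rho=\frac{\wp'(p)}{\wp(p)-e_1}\neq0 .
\]
This equation is irreducible and linear in $T_1$. It has no factor $S$, and on $S=0$ it forces $T_1=0$. The even component $\{S=T_1=0\}$ (with $D$ free) separates only at the integral points. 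Substitute $T_1=St$, together with the value of $B$ obtained from the sum of the two half-integral conditions. Then the residues and the linear Laurent coefficients at $0$ and $\tfrac{\omega_1}{2}$ are all multiples of $S$, and the two cubic conditions become
\[
S(t+2)\bigl(S^2t(t+1)-4P(t;\tau)\bigr)=0,
\qquad
S(t-1)\bigl(S^2t(t+1)-4P(t;\tau)\bigr)=0 .
\]

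This shared factor is what the theorem rests on, and your proposal only anticipates it without establishing it. Four conditions in the four unknowns $(T_1,T_+,T_-,B)$ would generically cut out finitely many points. Likewise, after you solve for $D$ and $B$, two independent equations in $(T_1,S)$ would have only finitely many common zeros. The non-even locus is a curve only because the two integral-point conditions are proportional, with cofactors $t+2$ and $t-1$. It is not that one condition becomes linear in $S^2$ after you use the other. Each condition, once the factor $S$ is removed, is already linear in $S^2$, and eliminating the $S^3$-terms between them returns the zero polynomial. You must actually carry out the substitution, using
\[
\rho^2=\wp(p)+\wp\bigl(p+\tfrac{\omega_1}{2}\bigr)+e_1,
\qquad
\frac{\wp''(p)}{2(\wp(p)-e_1)}=3\wp(p)+\wp\bigl(p+\tfrac{\omega_1}{2}\bigr)+2e_1 .
\]
With these identities the coefficients of $P$ come out exactly.

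Finally, you miscomputed $P(-1)$: in fact $P(-1)=e_1-\wp(p)$, which never vanishes for $p\notin E_\tau[2]$. The leading coefficient $\rho^2$ is also nonzero. So no values of $p$ need to be added to the exceptional set. Adding any would prove less than the theorem, whose exceptional set $\mathcal E(\tau)$ is cut out by $\operatorname{disc}_tP$ alone.
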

\begin{proof}
At an integral weight-one pole, if
$q(\xi)=2\xi^{-2}+A_j\xi^{-1}+B_j+C_j\xi+O(\xi^2)$,
the log-free condition is
\begin{equation} A_j^3-4A_jB_j+4C_j=0.\label{eq:appendix-integral-logfree}\end{equation}
At a half-integral pole,
$q(\xi)=3(4\xi^2)^{-1}+A_\pm\xi^{-1}+B_\pm+O(\xi)$,
it is $B_\pm=A_\pm^2$.

Set
\[
 S=T_++T_-,\quad D=T_+-T_-,\quad
 \rho=\frac{\wp'(p)}{2(\wp(p)-e_1)}.
\]
The half-period addition formulas give
\[
 \rho^2=\wp(p)+\wp\left(p+\frac{\omega_1}{2}\right)+e_1.
\]
The two half-integral conditions yield
\[
 T_1=-\frac{S}{2\rho}\left(D+\frac{\wp''(p)}{2\wp'(p)}\right).
\]
Define
\[
 t=-\frac1{2\rho}\left(D+\frac{\wp''(p)}{2\wp'(p)}\right),
 \qquad T_1=St.
\]
Substitution in \eqref{eq:appendix-integral-logfree} at $0$ and $\frac{\omega_1}{2}$ gives
\[
 S(t+2)\bigl(S^2t(t+1)-4P(t;\tau)\bigr)=0,
 \qquad
 S(t-1)\bigl(S^2t(t+1)-4P(t;\tau)\bigr)=0.
\]
Thus $S=0$ is the even component and the non-even component is
$S^2t(t+1)=4P(t;\tau)$.  With $W=St(t+1)/2$ this becomes \eqref{eq:appendix-curve}.
Since $P(0),P(-1)$ and $\operatorname{disc}P$ are nonzero, the right-hand
side has four distinct roots; Riemann--Hurwitz gives genus one.
\end{proof}

\begin{theorem}\label{thm:exceptional-genus}
If $p\in\mathcal E(\tau)$, the curve \eqref{eq:appendix-curve} is nodal and its
normalization is rational.  More precisely, for
\[
 t_p=-\frac{2\wp(p)+e_1}
 {2\left(\wp(p)+\wp(p+\frac{\omega_1}{2})+e_1\right)},
\]
one has $P(t;\tau)=\rho^2(t-t_p)^2$, $t_p\notin\{0,-1\}$, and
\begin{equation}
 t=\frac1{\lambda^2-1},\qquad
 W=\rho\frac{\lambda}{\lambda^2-1}
             \left(\frac1{\lambda^2-1}-t_p\right),
 \qquad \lambda\in\mathbb P^1.\label{eq:appendix-parametrization}
\end{equation}
\end{theorem}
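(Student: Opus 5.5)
The plan is to treat $P(t;\tau)$ as a quadratic in $t$ with leading coefficient $\rho^2=\wp(p)+\wp(p+\frac{\omega_1}{2})+e_1$ and to use the vanishing discriminant directly.

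\emph{Perfect square.} When $\operatorname{disc}_tP=0$ and $\rho^2\neq0$, the quadratic has a double root at the vertex $t_p=-(2\wp(p)+e_1)/(2\rho^2)$. Hence $P(t;\tau)=\rho^2(t-t_p)^2$. Note that $\rho\neq0$ because $\wp'(p)\neq0$ for $p\notin E_\tau[2]$. Also $\rho^2$ is finite, since $p\notin E_\tau[2]$ gives $\wp(p)\neq e_1$.

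\emph{Excluding $t_p\in\{0,-1\}$.} We have $P(0)=e_1-\wp(p+\frac{\omega_1}{2})$. The half-period addition formula gives
\[
\wp\Bigl(p+\tfrac{\omega_1}{2}\Bigr)=e_1+\frac{(e_1-e_2)(e_1-e_3)}{\wp(p)-e_1}\neq e_1,
\]
so $P(0)\neq0$ and $t_p\neq0$. Next, $P(-1)=\rho^2-(2\wp(p)+e_1)+e_1-\wp(p+\frac{\omega_1}{2})$, which simplifies to $e_1-\wp(p)$. This is nonzero, so $t_p\neq-1$.

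\emph{Nodal structure.} The curve becomes $W^2=\rho^2 t(t+1)(t-t_p)^2$. Its only singular point in the affine part is $(t_p,0)$. Because $t_p(t_p+1)\neq0$, the two local branches $W=\pm\rho\sqrt{t_p(t_p+1)}(t-t_p)$ have distinct tangents, so this point is an ordinary node. The leading term of $t(t+1)(t-t_p)^2$ has even degree $4$, so the points at infinity are smooth.

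\emph{Rational normalization.} Set $V=W/(t-t_p)$. Then $V^2=\rho^2 t(t+1)$, which is a smooth conic and hence rational. To match the stated formula, parametrize this conic with $t=1/(\lambda^2-1)$. Then $t+1=\lambda^2/(\lambda^2-1)$ and
\[
t(t+1)=\frac{\lambda^2}{(\lambda^2-1)^2},
\]
so $V=\rho\lambda/(\lambda^2-1)$. It follows that $W=V(t-t_p)$ is exactly \eqref{eq:appendix-parametrization}.

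This map $\mathbb P^1\to$ curve is birational, since $\lambda=V(\lambda^2-1)/\rho$ can be recovered from $(t,V)$. It is therefore the normalization, and the normalization is rational.

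The only real subtlety is the use of the $\frac{\omega_1}{2}$-translation formula to show $P(0)\neq0$ and to simplify $P(-1)$. The rest is elementary conic geometry.
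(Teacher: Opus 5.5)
Your proof is correct and follows the paper's route. The vanishing discriminant gives $P=\rho^2(t-t_p)^2$, the point $(t_p,0)$ is the unique ordinary node, and dividing $W$ by $t-t_p$ reduces the curve to the smooth conic $\bigl(W/(\rho(t-t_p))\bigr)^2=t(t+1)$, which is parametrized by $t=(\lambda^2-1)^{-1}$. You also justify $\rho\neq0$ and $t_p\notin\{0,-1\}$ via $P(0)=-(e_1-e_2)(e_1-e_3)/(\wp(p)-e_1)$ and $P(-1)=e_1-\wp(p)$, which the paper asserts without proof; note that $P(-1)$ needs no addition formula, since the $\wp(p+\frac{\omega_1}{2})$ terms cancel directly.
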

\begin{proof}
The discriminant condition gives the displayed square factorization.
The point $(t_p,0)$ is then the unique singular point and is an ordinary
node.  Dividing by it,
\[
 U=\frac{W}{\rho(t-t_p)},\qquad U^2=t(t+1).
\]
The conic has the parametrization
$t=(\lambda^2-1)^{-1}$ and
$U=\lambda(\lambda^2-1)^{-1}$, which proves \eqref{eq:appendix-parametrization}.
\end{proof}

\end{document}